\newtheorem{theorem}{Theorem}[section]
\newtheorem{lemma}[theorem]{Lemma}
\theoremstyle{definition}
\newtheorem{definition}[theorem]{Definition}
\theoremstyle{remark}
\newtheorem{remark}[theorem]{Remark}
\numberwithin{equation}{section}
\begin{document}

\title[perturbed geodesics and Yang-Mills connections]{Perturbed geodesics on the moduli space of flat connections and Yang-Mills theory}

%    Information for first author
\author{R\'emi Janner}
%    Address of record for the research reported here
\address{R\'emi Janner, Department of Mathematics, HG F28.2, ETH, R\"amistrasse 101, 8092 Zurich, Switzerland}
%    Current address
%\curraddr{Department of Mathematics and Statistics,}
\email{remi.janner@math.ethz.ch}
%    \thanks will become a 1st page footnote.
\thanks{The author want to express his gratitude to Dietmar Salamon for suggesting this problem to him and for valuable discussions.  For partial financial support we are most grateful to the ETH Zurich (grant TH-0106-1).}

%    Information for second author
%\author{Author Two}
%\address{Mathematical Research Section, School of Mathematical Sciences,
%Australian National University, Canberra ACT 2601, Australia}
%\email{two@maths.univ.edu.au}
%\thanks{Support information for the second author.}

%    General info
\subjclass[2010]{53C22, 53C07, 53D20, 35J60}

\date{\today.}

%\dedicatory{This paper is dedicated to our advisors.}

\keywords{adiabatic process, flat connection, geodesic, moduli space, Yang-Mills equation.}

\begin{abstract}
If we consider the moduli space of flat connections of a non trivial principal $\textbf{SO}(3)$-bundle over a surface, then we can define a map from the set of perturbed closed geodesics, below a given energy level, into families of perturbed Yang-Mills connections depending on a parameter $\varepsilon$. In this paper we show that this map is a bijection and maps perturbed geodesics into perturbed Yang-Mills connections with the same Morse index.
\end{abstract}

 \maketitle

\tableofcontents

\section{Introduction}

The moduli space of flat connections for a principal bundle over a surface $\Sigma$ with genus $g$ is an infinite dimensional analogue of a symplectic reduction and was investigated for the first time in 1983 by Atiyah and Bott (cf. \cite{MR702806}) who showed that, on this particular moduli space, one can define an almost complex structure induced by the Hodge-*-operator acting on the 1-forms over $\Sigma$ and hence induced by its conformal structure; with the almost complex structure and the inner product on the 1-forms one can also obtain a symplectic form. Furthermore, if we choose a principal non trivial $\textrm{\bf SO}(3)$-bundle, then the moduli space $\mathcal M^g(P)$, defined as the quotient between the space of the flat connections $\mathcal A_0(P)\subset \mathcal A(P)$ and the identity component of the gauge group $\mathcal G_0(P)$, is a smooth compact symplectic manifold of dimension $6g-6$ (cf. \cite{MR1297130}). In the nineties some aspects of the topology of $\mathcal M^g(P)$ were investigated by Dostoglou and Salamon in \cite{MR1283871}, where they proved an isomorphism between the symplectic and the instanton Floer homology related to this moduli space, and in the work of Hong (cf. \cite{MR1715156}). Hong took an oriented compact manifold $B$ with a Riemannian metric $g_B$ and a harmonic map $\phi:B\to \mathcal M^g(P)$ and he showed that if the Jacobi operator of $\phi$ is invertible, then there exist a constant $\varepsilon_0$ and, for $0<\varepsilon<\varepsilon_0$, a family $A^\varepsilon$ of Yang-Mills connections of the principal $\textbf{SO}(3)$-bundles $P\times B\to \Sigma\times B$, where the base manifold has a partial rescaled metric $g_\Sigma\oplus \frac 1{\varepsilon^2}g_B$, which converges to the connection that generates $\phi$. In this paper we choose $B=S^1$ and a slightly different rescaling of the metric and we extend the results of Hong; more precisely the setting is the following one.\\

On the one hand, we consider the loop space on $\mathcal M^g(P)$ and its elements can be seen as connections $A(t)+\Psi(t) dt$ on a the manifold $\Sigma \times S^1$, where $A(t)\in \mathcal A_0(P)$ and $\Psi(t)$ is a $0$-form in $\Omega^0(\Sigma,\mathfrak g_P)$, satisfying the condition $d_A^*\left(\partial_tA-d_A\Psi\right)=0$. The 1-form $\partial_tA-d_A\Psi$ corresponds to the speed vector of our loop and thus the perturbed energy functional is  
\begin{equation}\label{intro:EH}
E^H(A)=\frac 12 \int_0^1\left(\left\|\partial_tA-d_A\Psi\right\|^2_{L^2(\Sigma)}-H_t(A)\right) dt
\end{equation}
where $H_t:\mathcal A(P)\to \mathbb R$ is a generic equivariant Hamiltonian map which is introduced in order to obtain an invertible second variational form. On the other hand, we can take the 3-manifold $\Sigma \times S^1$ with the metric $\varepsilon^2 g_\Sigma \oplus g_{S^1}$ for a positive parameter $\varepsilon$ and consider the principal $\textrm{\bf SO}(3)$-bundle $P\times S^1\to \Sigma\times S^1$. In this case, for a connection $\Xi=A+\Psi \,dt\in \mathcal A(P\times S^1)$, where $A(t)\in\mathcal A(P)$, $\Psi(t)\in\Omega^0(\Sigma,\mathfrak g_P)$ the curvature is $F_{\Xi}=F_{A}-(\partial_{t}A-d_{A}\Psi)\wedge dt$ and thus the perturbed Yang-Mills functional can be written as
 \begin{equation}\label{intro:YME}
 \mathcal{YM}^{\varepsilon,H}(\Xi)
  =\frac12\int_{0}^1 \left(\frac 1 {\varepsilon^2}\|F_{A}\|_{L^2(\Sigma)}^2
  +\|\partial_{t}A-d_{A}\Psi\|_{L^2(\Sigma)}^2-H_{t}(A)\right) dt.
\end{equation}
Then, by a contraction argument one can define a map between the perturbed geodesics below an energy level $b$, denoted by $\mathrm{Crit}^b_{E^{H}}$, and the set of the perturbed Yang-Mills connections $\mathrm{Crit}^b_{\mathcal{YM}^{\varepsilon,H}}$ with energy less than $b$ provided that the parameter $\varepsilon$ is small enough. Furthermore, this map can also be defined uniquely, it is surjective and maps perturbed geodesics to perturbed Yang-Mills connections with the same Morse index. Summarizing, in this paper, we prove the following theorem.

\begin{theorem}\label{thm:mainthm}
We assume that the Jacobi operators of all the perturbed geodesics are invertible and we choose a regular value $b$ of the energy $E^H$ and $p\geq 2$. Then there are two positive constants $\varepsilon_0$ and $c$ such that the following holds. For every $\varepsilon\in(0,\varepsilon_0)$ there is a unique gauge equivariant map
\begin{equation*}
\mathcal T^{\varepsilon,b}:\mathrm{Crit}^b_{E^H}
\to \mathrm{Crit}^b_{\mathcal{YM}^{\varepsilon,H}}
\end{equation*}
satisfying, for $\Xi^0 \in \mathrm{Crit}^b_{E^H}$, 
\begin{equation}\label{intro:A:a1}
d_{\Xi^0 }^{*_\varepsilon}\left(\mathcal T^{\varepsilon,b }(\Xi^0)-\Xi^0\right)=0,\quad\left\| \mathcal T^{\varepsilon,b}(\Xi^0)-\Xi^0
\right\|_{\Xi^0,2,p,\varepsilon}
\leq c \varepsilon^2.
\end{equation}
Furthermore, this map is bijective and $\textrm{index}_{E^H}(\Xi^0)=\textrm{index}_{\mathcal {YM}^{\varepsilon,H}}(\mathcal T^{\varepsilon,b}(\Xi^0))$.
\end{theorem}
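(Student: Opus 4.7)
The plan is to construct $\mathcal{T}^{\varepsilon,b}$ via a quantitative implicit function theorem, using a perturbed geodesic $\Xi^0 = A^0 + \Psi^0\,dt$ as an approximate critical point of $\mathcal{YM}^{\varepsilon,H}$ and solving for the nearby true critical point inside the gauge slice $d_{\Xi^0}^{*_\varepsilon}\xi = 0$. The first step is to verify that $\Xi^0$ is indeed an approximate YM connection in the $\varepsilon$-rescaled geometry: since $A^0(t)$ is flat, the singular term $\tfrac1{\varepsilon^2}\|F_A\|^2$ in (\ref{intro:YME}) vanishes at $\Xi^0$, and the perturbed geodesic equation is precisely the projection of the YM equation onto the tangent space to the moduli space. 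Hence $\nabla\mathcal{YM}^{\varepsilon,H}(\Xi^0)$ lies in the directions orthogonal to the moduli space and is controlled in the weighted norm by the non-flatness of small perturbations; a careful expansion yields $\|\nabla\mathcal{YM}^{\varepsilon,H}(\Xi^0)\|_{\Xi^0,p,\varepsilon}=O(\varepsilon^2)$.

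The analytical heart is a uniform invertibility estimate for the Hessian $D_{\Xi^0}^\varepsilon$ on the slice, together with a uniform quadratic estimate for the nonlinear remainder. The Hessian splits, with respect to the splitting of $\Omega^1(\Sigma,\mathfrak g_P)$ into the kernel of $d_{A^0}d_{A^0}^* + d_{A^0}^*d_{A^0}$ (the tangent space to $\mathcal M^g(P)$) and its $L^2$-orthogonal complement, into a block along the moduli directions that reduces modulo $O(\varepsilon)$ to the Jacobi operator of the perturbed geodesic, plus a transverse block dominated by $\tfrac1{\varepsilon^2}$ times a positive-definite Laplace-type operator. Invertibility of the Jacobi operator together with this $\varepsilon^{-2}$ spectral gap furnishes a bounded right inverse $Q^\varepsilon$ in the weighted norms $\|\cdot\|_{\Xi^0,2,p,\varepsilon}$, uniformly in $\varepsilon$. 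Newton iteration then produces a unique $\xi^\varepsilon$ with $\|\xi^\varepsilon\|_{\Xi^0,2,p,\varepsilon}\leq c\varepsilon^2$, and I define $\mathcal T^{\varepsilon,b}(\Xi^0):=\Xi^0+\xi^\varepsilon$; the slice condition and the quantitative bound in (\ref{intro:A:a1}) are built into the construction.

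For bijectivity the nontrivial direction is surjectivity. Given $\Xi^\varepsilon\in\mathrm{Crit}^b_{\mathcal{YM}^{\varepsilon,H}}$, the energy bound $b$ forces $\tfrac1{\varepsilon^2}\|F_{A^\varepsilon}\|_{L^2}^2$ to stay bounded, so after an appropriate gauge transformation $A^\varepsilon(t)$ converges to a flat connection and $\Xi^\varepsilon$ subconverges to some $\Xi^0\in\mathrm{Crit}^b_{E^H}$; the distance $\mathrm{dist}(\Xi^\varepsilon,\Xi^0)$ is eventually smaller than the radius of the Newton ball, and uniqueness of the contraction forces $\Xi^\varepsilon=\mathcal T^{\varepsilon,b}(\Xi^0)$. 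Injectivity follows from the $O(\varepsilon^2)$ estimate combined with the fact that distinct critical points of $E^H$ below level $b$ are uniformly separated. For the Morse index equality I compare spectra: by the block decomposition above and perturbation theory applied at $\mathcal T^{\varepsilon,b}(\Xi^0)$ rather than at $\Xi^0$, the negative eigenspace of the YM Hessian splits, up to $O(\varepsilon)$, into the negative eigenspace of the Jacobi operator of $E^H$ in the moduli directions plus a zero-dimensional contribution from the transverse block (which is positive definite of order $\varepsilon^{-2}$); a min-max argument matches dimensions.

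The step I expect to be hardest is the uniform invertibility in the mixed-scale norms, because tracking how the $\varepsilon$-rescaled metric $\varepsilon^2 g_\Sigma\oplus g_{S^1}$ interacts with the $t$-dependent $L^2$-orthogonal projection onto $\ker(d_{A^0(t)}d_{A^0(t)}^* + d_{A^0(t)}^* d_{A^0(t)})$ requires controlling the connection on the moduli bundle induced by the family $t\mapsto A^0(t)$ and estimating the off-diagonal blocks of $D_{\Xi^0}^\varepsilon$ between moduli and transverse directions; these corrections must be shown to be $O(\varepsilon)$ in operator norm, uniformly over $\mathrm{Crit}^b_{E^H}$, which is what ultimately underwrites both the contraction argument and the preservation of Morse index.
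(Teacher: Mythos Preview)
Your outline matches the paper's strategy almost exactly: contraction/Newton on the gauge slice for existence and local uniqueness, an indirect compactness argument for surjectivity, and a block decomposition for the index. Two places, however, gloss over the real work.

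First, the claim $\|\nabla\mathcal{YM}^{\varepsilon,H}(\Xi^0)\|_{\Xi^0,p,\varepsilon}=O(\varepsilon^2)$ is not literally correct. At a perturbed geodesic $\Xi^0$ one has $F_{A^0}=0$, so the first component of the Yang--Mills gradient reduces to $-\nabla_t(\partial_tA^0-d_{A^0}\Psi^0)-*X_t(A^0)$, which by Theorem~\ref{geodesics} equals $d_{A^0}^*\omega$ and is $O(1)$ in $L^p$. The paper deals with this by inserting an explicit first Newton step: it replaces $\Xi^0$ by $\Xi_1^\varepsilon=\Xi^0+\alpha_0^\varepsilon$ with $\alpha_0^\varepsilon\in\mathrm{im}\,d_{A^0}^*$ solving $d_{A^0}^*d_{A^0}\alpha_0^\varepsilon=\varepsilon^2\bigl(\nabla_t(\partial_tA^0-d_{A^0}\Psi^0)+*X_t(A^0)\bigr)$, after which $\|\mathcal F_1^\varepsilon(\Xi_1^\varepsilon)\|_{L^p}\le c\varepsilon^2$ while $\|\mathcal F_2^\varepsilon(\Xi_1^\varepsilon)\|_{L^p}\le c$ (Lemma~\ref{lemma:step1xi1}). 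Your iteration would produce $\alpha_0^\varepsilon$ at the first step anyway, but the statement as written is inaccurate and obscures why the refined linear estimates of Theorems~\ref{lemma:evaluate} and~\ref{lemma:evaluate2}, which weight the harmonic and non-harmonic parts differently, are needed.

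Second, and more seriously, your surjectivity paragraph hides the genuine difficulty. Subconvergence of $\Xi^\varepsilon$ to some $\Xi^0$ gives only $o(1)$ closeness, whereas the local uniqueness theorem (Theorem~\ref{thm:localuniqueness}) requires $\|\Xi^\varepsilon-\Xi^0\|_{1,p,\varepsilon}+\|\Xi^\varepsilon-\Xi^0\|_{\infty,\varepsilon}\le\delta\varepsilon$, a ball whose radius shrinks with $\varepsilon$. Bridging this gap is most of the paper: Section~\ref{chapter:aprioriestimates} establishes uniform a~priori bounds $\|F_{A^\varepsilon}\|_{L^\infty}\le c\varepsilon^{2-1/p}$ and $\|\partial_tA^\varepsilon-d_{A^\varepsilon}\Psi^\varepsilon\|_{L^\infty}\le c$ (the latter via a nontrivial bubbling/rescaling trichotomy), and the proof of Theorem~\ref{thm:surjj} then runs through five steps to upgrade an initial $O(\varepsilon^{1-1/p})$ distance to $O(\varepsilon^{1+\delta_1})$ in the harmonic direction, which is what finally places $\Xi^\varepsilon$ inside the $\delta\varepsilon$ ball. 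The sentence ``eventually smaller than the radius of the Newton ball'' is exactly where the hard analysis lives.
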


\textbf{The result of Hong.} Hong could assume that the harmonic map $\phi$ has an invertible Jacobi operator because, even for an unperturbed energy functional, you can reach this condition for example for a 2-dimensional manifold $B$ and eventually slightly perturbing the metric $g_B$. For $B=S^1$ the Jacobi operator of a geodesic is never invertible and for this reason we need to introduce a perturbation in our functional $E^H$ as we will discuss in the section \ref{chapter:intro}. Another important point worth to be remarked is the different choice of the rescaling. On the one side, both choices give the same equations for the Yang-Mills connections, for $B=S^1$, if we do not consider the Hamiltonian perturbation, and hence his methods work also in our case; we can therefore say that Hong proved the existence of the map $\mathcal T^{\varepsilon,b}$. However, he did not prove its uniqueness and its surjectivity. On the other side, the different choice of the metric gives two different Yang-Mills energy functionals; in fact, using the metric $g_\Sigma\oplus\frac 1{\varepsilon^2}g_{S^1}$ one obtains the Yang-Mills energy functional $\varepsilon\mathcal{YM}^{\varepsilon,\frac 1\varepsilon H}$ instead of $ \mathcal{YM}^{\varepsilon,H}$ and the properties of $ \mathcal{YM}^{\varepsilon,H}$ will play a major role in the proof of the surjectivity of $\mathcal T^{\varepsilon,b}$ and in particular, to obtain the a priori estimates for the curvature of the perturbed Yang-Mills connections.\\

\textbf{Outline.} The second section is of preliminary nature; in fact, first, we briefly introduce the moduli space $\mathcal M^g(P):=\mathcal A_0(P)/\mathcal G_0(P)$ of flat connections of a non-trivial principal $\mathrm{\bf SO}(3)$-bundle $P$ over a surface $(\Sigma,\,g_\Sigma)$ of genus $g$. Then, on the one hand, we discuss the equations of the perturbed closed geodesics on $\mathcal M^g(P)$ and on the other hand, we introduce for a given $\varepsilon>0$ the equations for the perturbed Yang-Mills connections of the principal $\mathrm{\bf SO}(3)$-bundle $P\times S^1\to \Sigma\times S^1$ where the metric on $\Sigma$ is rescaled by a factor $\varepsilon^2$. Next, we also define the norm which will play a fundamental role in the proof of the theorem \ref{thm:mainthm}. In the successive two sections we compute the linear (section \ref{chapter:ellest}) and the quadratic (section \ref{chapter:qe}) estimates and in section \ref{section:construction}, we define the injective map $\mathcal T^{\varepsilon,b}$ and furthermore, we prove that this map is unique under the condition (\ref{intro:A:a1}).
In the next section, we show some a priori estimates (section \ref{chapter:aprioriestimates}) that we need to prove the surjectivity of the map $\mathcal T^{\varepsilon,b}$ (section \ref{chapter:surjectivity}). We prove the surjectivity of the map $\mathcal T^{\varepsilon,b}$ indirectly: We assume that there is a sequence of perturbed Yang-Mills connections $\Xi^{\varepsilon_\nu}$, $\varepsilon_\nu\to 0$, which is not in the image of $\mathcal T^{\varepsilon_\nu,b}$, and we show that this sequence has a subsequence  which converges to a geodesic $\Xi^0$; then using the uniqueness property of $\mathcal T^{\varepsilon,b}$ this subsequence turn out to be in the image of $\mathcal T^{\varepsilon_\nu,b}(\Xi^0)$ yielding a contradiction. In the last section, we conclude the proof of the theorem \ref{thm:mainthm} proving that $\mathcal T^{\varepsilon,b}$ maps perturbed geodesics to perturbed Yang-Mills connections with the same index (theorem \ref{thm:indexym}); in fact the theorem \ref{thm:mainthm} follows directly from the definition \ref{thm:defT} of the map $\mathcal T^{\varepsilon,b}$, its surjectivity (theorem \ref{thm:surjj}) and the index theorem \ref{thm:indexym}.\\

\begin{remark}
We denote by $\mathcal L^b\mathcal M^g(P)\subset \mathcal{LM}^g(P)$ and by $\mathcal A^{\varepsilon,b}(P\times S^1)$ respectively the subsets where $E^H\leq b$ and $\mathcal {YM}^{\varepsilon,H}\leq b$. Since we have a bijection between the critical points of the two functionals, we can also expect an isomorphism between the Morse homology, defined with the $L^2$-flows, of the bounded loop space $\mathcal L^b\mathcal M^g(P)$ and that of the moduli space $\mathcal A^{\varepsilon,b}(P\times S^1)/\mathcal G_0(P\times S^1)$, as it is explained in \cite{remyj3}:
\begin{theorem}
We assume that the energy functional $E^H$ is Morse-Smale. For every regular value $b>0$ of $E^H$ there is a positive constant $\varepsilon_0$ such that, for $0<\varepsilon<\varepsilon_0$, the inclusion $\mathcal L^b\mathcal M^g(P)\to \mathcal A^{\varepsilon, b}\left(P\times S^1\right)/\mathcal G_0\left(P\times S^1\right)$ induces an isomorphism
$$HM_*\left(\mathcal L^b\mathcal M^g(P),E^H,\mathbb Z_2\right)\cong HM_*\left(\mathcal A^{\varepsilon,b}\left(P\times S^1\right)/\mathcal G_0\left(P\times S^1\right),\mathcal{YM}^{\varepsilon,b},\mathbb Z_2\right).$$  
\end{theorem}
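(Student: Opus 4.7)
The strategy is to lift the bijection between critical points provided by Theorem \ref{thm:mainthm} to a chain isomorphism of Morse complexes. Let $CM_*(E^H,b)$ and $CM_*(\mathcal{YM}^{\varepsilon,H},b)$ denote the $\mathbb Z_2$-chain complexes generated respectively by $\mathrm{Crit}^b_{E^H}$ and $\mathrm{Crit}^b_{\mathcal{YM}^{\varepsilon,H}}$, graded by Morse index. Theorem \ref{thm:mainthm} already provides a grading-preserving bijection of generators, hence a linear isomorphism $T^\varepsilon\colon CM_*(E^H,b)\to CM_*(\mathcal{YM}^{\varepsilon,H},b)$. Everything reduces to showing that, after possibly shrinking $\varepsilon_0$, this $T^\varepsilon$ intertwines the two boundary operators, defined by counting modulo $2$ the isolated unparametrized negative $L^2$-gradient flow lines between critical points of relative index one.

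\textbf{Flow-line bijection via adiabatic limit.} The core step is to promote Theorem \ref{thm:mainthm} from critical points to connecting trajectories. I would prove the following cylinder analogue: for each Morse--Smale pair $(\Xi^0_-,\Xi^0_+)$ of relative index one and every $\varepsilon$ sufficiently small, there is a bijection between the zero-dimensional moduli space of geodesic flow lines $u\colon\mathbb R\to\mathcal L\mathcal M^g(P)$ from $\Xi^0_-$ to $\Xi^0_+$ and the zero-dimensional moduli space of Yang-Mills flow lines for $\mathcal{YM}^{\varepsilon,H}$ on $\mathbb R\times\Sigma\times S^1$, equipped with the metric $ds^2\oplus\varepsilon^2 g_\Sigma\oplus dt^2$, connecting $\mathcal T^{\varepsilon,b}(\Xi^0_-)$ and $\mathcal T^{\varepsilon,b}(\Xi^0_+)$. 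The existence direction is a Newton iteration in an $s$-dependent variant of the norm $\|\cdot\|_{\Xi^0,2,p,\varepsilon}$, seeded by the geodesic flow line and resting on the linear and quadratic estimates of Sections \ref{chapter:ellest}--\ref{chapter:qe}; the scheme mirrors that of Section \ref{section:construction}. The reverse direction adapts the indirect compactness argument of Section \ref{chapter:surjectivity} to the cylinder: a sequence of $\mathcal{YM}^{\varepsilon_\nu,H}$-flow lines with $\varepsilon_\nu\to 0$ is shown, after gauge transformation, to converge to a (possibly broken) geodesic flow line, and the uniqueness of the Newton iteration then identifies the original sequence with images of $T^\varepsilon$. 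This is in the spirit of the Dostoglou--Salamon adiabatic limit theorem \cite{MR1283871} and the stationary analysis of \cite{MR1715156}.

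\textbf{Main obstacle.} The hardest step is the $s$-parametrised compactness. One must propagate the curvature estimate $\|F_A(s,\cdot)\|_{L^2(\Sigma)}=O(\varepsilon^2)$ uniformly in $s$ along the Yang-Mills flow, exclude energy concentration in the $\Sigma$-direction (which is prevented by regularity of $b$ together with the energy quantisation of instantons on the two-sphere), and extract exponential decay at $s=\pm\infty$ from the hyperbolicity of the asymptotic critical points furnished by the invertibility of the Jacobi operators. A broken-trajectory gluing at intermediate geodesics is then required to match ends coming from different components of the limit, and the Morse--Smale hypothesis ensures that no new breakings appear in the passage $\varepsilon\to 0$. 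Once both directions of the flow-line bijection are in place, $T^\varepsilon$ is a chain isomorphism, and passing to homology yields the stated isomorphism.
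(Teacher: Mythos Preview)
The paper does not contain a proof of this theorem. It appears inside a remark immediately after Theorem~\ref{thm:mainthm}, with the explicit attribution ``as it is explained in \cite{remyj3}''; the present paper only establishes the critical-point bijection (Theorem~\ref{thm:mainthm}) and the analytic estimates that feed into the flow-line identification carried out in \cite{remyj3}. So there is no ``paper's own proof'' to compare your proposal against.

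That said, your outline matches what the paper signals as the intended strategy. The remark preceding Theorem~\ref{lemma:evaluate} states that the $L^p$ estimates for $p>2$ are proved precisely ``for the identification between the flows between the critical points, which is discussed in \cite{remyj3}'', confirming that the argument there is the adiabatic-limit bijection of index-one trajectories you describe: Newton iteration on the cylinder for existence, and a parabolic analogue of Section~\ref{chapter:surjectivity} for surjectivity. Your identification of the main obstacle---uniform-in-$s$ curvature bounds, exclusion of bubbling, exponential decay from hyperbolicity, and gluing across broken limits---is accurate.

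What you have written is a programme rather than a proof: each of the four items under ``main obstacle'' is a theorem in its own right, and none of them follows from the stationary estimates of this paper without substantial new work (in particular, the a priori estimates of Section~\ref{chapter:aprioriestimates} are for the elliptic equations~(\ref{epsiloneq1})--(\ref{epsiloneq2}), not for the associated parabolic flow). If your intent is only to sketch the route, that is fine; if you mean this as a self-contained proof, the analytic content is missing.
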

\end{remark}

\begin{remark}The manifold $\mathcal M^g(P)=\mathcal A_0(P)/\mathcal G_0(P)$ can be also interpreted as a symplectic quotient defined with the moment map $\mu:\mathcal A(P)\to \Omega^0(\mathfrak g_P)$, $\mu(A)=*F_A$ and thus we can also investigate the finite dimensions analogue of the correspondence stated in the theorem \ref{thm:mainthm}. For this purpose we choose a finite dimensional symplectic manifold $X$ and a Lie group $G$ acting free on it; we assume in addition that a Hamiltonian action is generated by an equivariant moment map $\mu: X\to \mathfrak g$, where $\mathfrak g$ denotes the Lie algebra of $G$, with regular value $0$ and that the compatible almost complex structure $J$ on $X$ is $G$-invariant. Furthermore, we choose a time dependent and $G$ invariant potential $V_t:X\to \mathbb R$. On the one side, we can study the perturbed geodesics on the symplectic quotient $\mathcal M:=\mu^{-1}(0)/G$, that we assume compact, and hence the critical points of
\begin{equation}
\mathcal E^{\mu,V}(x,\xi):=\frac12\int^1_0\left(|\dot x+L_x\xi |^2-V_t(x)\right)dt
\end{equation}
for $(x,\xi)\in \mathcal L(\mu^{-1}(0))\times \mathcal L(\mathfrak g)$ and where $L_{x(t)}\xi(t)\in T_{x(t)}X$ denotes the fundamental vector field generated by $\xi(t)\in \mathfrak g$ and evaluated at $x(t)$. On the other side, we choose on the loop space of $X\times\mathfrak g$ the twisted energy functional 
\begin{equation}
\mathcal E^{\mu,V,\varepsilon }(x,\xi):=\frac12\int^1_0\left(\frac1{\varepsilon^2 }|\mu(x) |^2+|\dot x+L_x\xi |^2-V_t(x)\right)\,dt
\end{equation}
for $(x,\xi)\in \mathcal L(X)\times\mathcal L( \mathfrak g)$. This last energy functional is the analogue of the perturbed Yang-Mills energy functional $\mathcal {YM}^{\varepsilon,H}$. Also for the finite dimensional case, we can prove a bijection between the critical loops below a given energy level and  for $\varepsilon$ small enough (cf. \cite{remyj}).
\end{remark}

\section{Preliminaries}\label{sec:pre}

In the next sections we briefly explain the setting for our results, if the reader is interested in more details we refer to \cite{remyj2}. In order to introduce the moduli space of flat connections for a non-trivial principal 
 $\textbf{SO}(3)$-bundle over a surface $\Sigma$, we first explain some facts about a principal $G$-bundle $\pi:P\to \Sigma$ where $G$ is a compact Lie group with Lie algebra $\mathfrak g$ and $P$ and $\Sigma$ are smooth manifolds. The action of $G$ on $P$ defines a vertical space
 $$V:=\left\{\left.\left.\left(p,p\xi:=\frac d{dt}\right|_{t=0}p\exp(t\xi)\right)\right| p\in P, \, \xi\in \mathfrak g\right\}
 \subset TP$$
 in the tangent bundle and hence a choice of a {connection}, i.e. an equivariant function $A:TP\to \mathfrak g$
 which satisfies
$$\begin{array}{rll}i) & A(p,\,p\xi)=\xi & \forall p\in P,\,\forall \xi\in\mathfrak g,\\
ii) & A(pg,\,vg)=g^{-1}A(p,v)g & \forall p\in P,\forall v\in T_pP,\end{array}$$
could also be seen as a choice of an equivariant horizontal distribution $H\subset TP$
 which corresponds to the kernel of $A$
and at each point $p\in P$ induces the short exact sequence
$$0\longrightarrow H_{p}=\ker A(p,.)\stackrel{\iota}{\longrightarrow} T_{p}P 
{\longrightarrow}V_{p} \longrightarrow0,$$ where $\iota$ is the inclusion of $H_{p}$ in $T_{p}P$ and $V_p$ the restriction of $V$ at the point $p$. In addition, since $V_{p}=\ker(d\pi(p))$ and $T_{p}P=H_{p}\oplus V_{p}$, $d\pi(p)$ induces an isomorphism between 
$H_{p}$ and $T_{\pi(p)}\Sigma$, hence the horizontal distribution is isomorph to the pullback $\pi^*T\Sigma$ and this observation implies 
that a vectorfield $X$ on $\Sigma$ has a unique horizontal lift $\tilde X\subset H$ on $P$ such that $\tilde X(p)\in H_{p}$ and 
$d_{p}\pi(\tilde X(p))=X(\pi(p))$. 
The set of all the connections of a principal bundle is denoted by $\mathcal A(P)$ and it is an affine space; in fact, for every connection $A_{0}\in \mathcal A(P)$, $\mathcal A(P)=A_{0}+\Omega^1_{\textrm{Ad},H}(P,\mathfrak g)$ where $\Omega^1_{\textrm{Ad},H}(P,\mathfrak g)$ denotes the set of all equivariant functions $\alpha:TP\to\mathfrak g$ such that $V\subset \ker\alpha$, i.e. $\alpha$ is 
horizontal. Similarly, $\Omega^k_{Ad,H}(P,\mathfrak g)$ is 
the space of equivariant and horizontal $k$-forms, i.e for an $\omega\in\Omega^k_{Ad,H}(P,\mathfrak g)$ we have
\begin{equation*}
\begin{split}
\omega(pg;v_{1}g,v_{2}g,...,v_{k}g)=&g^{-1}\omega(p;v_{1},v_{2},...,v_{k})g,\\
\omega(p;v_{1},...,v_{k})=&0, \mbox{ if $v_{i}=p\xi$ for an $i\in\{1,...,k\}$},
\end{split}
\end{equation*}
where $p\in P,\, g\in G, \, \xi\in\mathfrak g, \,v_{i}\in T_{p}P,\,1\leq i\leq k$. 
Therefore, the equivariant and horizontal $k$-forms 
$\Omega^k_{\textrm{Ad},H}(P,\mathfrak g)$ correspond to the $k$-forms over $\Sigma$ with values in the {adjoint bundle}, i.e. 
$\Omega^k_{\textrm{Ad},H}(P,\mathfrak g)\cong\Omega^k(\Sigma,\mathfrak g_{P})$, where $\mathfrak g_P:=P\times_{\mathrm{Ad}}\mathfrak g$ is the associated bundle defined by the equivalence classes $[pg,\xi]\equiv [p,\mathrm{Ad}_g\xi]\equiv [p, g\xi g^{-1}]$.\\

The Lie group $\mathcal{G}(P)$ of equivariant smooth maps $u:P\to G$ is called the {gauge group}
of $P$, i.e. 
$$\mathcal{G}(P):=\{u\in C^\infty(P,G)\mid u(pg)=g^{-1}u(p)g,\,\forall p\in P, \,\forall g\in G \}.$$ 
Since $G$ acts on $P$, every element of the gauge group induces a {gauge transformation}
 of the bundle $P$, i.e. $ \tilde u: P\to P;\, p\mapsto pu(p)$ which is a $G$-{bundle isomorphism}. A gauge transformation $u$ acts on the space of connections by
$$u^*A=u^{-1}Au+u^{-1}du$$
for $A\in \mathcal A(P)$ and hence we can consider $u$ as a change of coordinates. Furthermore, since the Lie algebra of $\mathcal G(P)$ is the space of the equivariant, horizontal 0-forms over $P$, i.e $\Omega^0(\Sigma,\mathfrak g_P)$, in order to compute the infinitesimal gauge transformation on a connection $A$, we choose  an element $\phi$ of the Lie algebra $\Omega^0(\Sigma,\mathfrak g_P)$ and we set $u_t=\exp(t\phi)=1+t\phi+O(t^2)$, then
\begin{equation}\label{gauge1}
\frac{d}{dt}\Big|_{t=0}(u_t^*A)=-\frac{d}{dt}\Big|_{t=0}(u_t^{-1}Au_t+u_t^{-1}du_t)
=-[A,\phi]-d\phi=-d_A\phi.
\end{equation}
In fact, choosing a connection $A\in\mathcal A(P)$, we can define the \textbf{covariant derivative} {Covariant derivative}
$$d_A:\Omega^0(\Sigma, \mathfrak g_P)\to \Omega^1(\Sigma, \mathfrak g_P);\, \phi\mapsto d_A\phi=d\phi+[A,\phi]$$
and the {exterior derivative}
$$d_A:\Omega^k(\Sigma, \mathfrak g_P)\to \Omega^{k+1}(\Sigma, \mathfrak g_P); \omega\mapsto d_A\omega=d\omega+[A\wedge\omega]$$
where $[\omega_{1}\wedge\omega_{2}]:=\omega_{1}\wedge\omega_{2}-(-1)^{lk}\omega_{2}\wedge \omega_{1}$ 
denotes the super Lie bracket operator for $\omega_{1}\in \Omega^l(\Sigma,\mathfrak g_{P})$ and $\omega_{2}\in \Omega^k(\Sigma,\mathfrak g_{P})$. The Hodge operator acts not only on $\Omega^k(\Sigma)$, but on $\Omega^k(\Sigma,\mathfrak g_P)$, too; 
in fact, since\footnote{$\Gamma\big(\wedge^kT^*\Sigma\otimes\mathfrak g_P\big)$ denotes the sections of the bundle $\wedge^kT^*\Sigma\otimes\mathfrak g_P\to \Sigma$.} $\Omega^k(\Sigma,\mathfrak g_P)=\Gamma\big(\wedge^kT^*\Sigma\otimes\mathfrak g_P\big)$,
for all $\omega\in\Omega^k(\Sigma)$, and all $\xi\in\Omega^0(\Sigma,\mathfrak g_P)$, we define $*(\omega\otimes\xi):=*\omega\otimes\xi$. %with other words, a conformal structure on $\Sigma$ induces a conformal structure on $\Omega^k(\Sigma,\mathfrak g_P)$.
Therefore, using two inner products, one on $\Omega^k(\Sigma)$ defined using the Hodge operator and an invariant inner product on the Lie algebra on $\Omega^0(\Sigma,\mathfrak g_P)$, 
we have an inner product on the $k$-forms $\Omega^k(\Sigma,\mathfrak g_P)$
\begin{equation}\label{pro3}
\langle a,b\rangle=\int_\Sigma\langle a\wedge *b\rangle\quad \forall a,b\in\Omega^k(\Sigma,\mathfrak g_P);
\end{equation}
for two vectorfields $X$, $Y$ on $\Sigma$, $\langle a\wedge b\rangle(X,Y)=\langle a(X), b(Y)\rangle-\langle a(Y), b(X)\rangle$. Using this inner product we can define the adjoint operator
$$d_A^*:\Omega^{k+1}(\Sigma,\mathfrak g_P)\to\Omega^{k}(\Sigma,\mathfrak g_P)$$
of the exterior derivative $d_A$, $A\in \mathcal A(P)$.
\\

%%%%%%%%%%
For any connection $A\in\mathcal A(P)$, the two form $F_A:=dA+\frac{1}{2}[A\wedge A]\in\Omega^2(\Sigma,\mathfrak g_P)$
 is called \textbf{curvature} of $A$ and the gauge group acts by 
$F_{u^*A}=u^{-1}F_{A}u$ for every $u\in\mathcal G(P)$. With this last definition it is possible to introduce the set of flat connections
$$\mathcal A_0(P):=\{A\in \mathcal A(P)\mid F_{A}=0\}$$
and for an $A\in\mathcal A_0(P)$, since $d_A\circ d_A=0$, the cohomology groups
$$H^k_A(\Sigma,\mathfrak g_P ):=\ker d_A/\textrm{im } d_A\Big|_{\Omega^k(\Sigma,\mathfrak g_P)}=\ker d_A\cap \ker d_A^*\Big|_{\Omega^k(\Sigma,\mathfrak g_P)}$$ 
are well defined for any $k\in\mathbb N$. Moreover, we have the orthogonal splitting
\begin{equation}\label{split}
\Omega^k(\Sigma,\mathfrak g_P)=d_A\Omega^{k-1}(\Sigma,\mathfrak g_P)\oplus 
H^k_A(\Sigma,\mathfrak g_P )\oplus d_A^*\Omega^{k+1}(\Sigma,\mathfrak g_P)
\end{equation}
and we denote the canonical projection in to the harmonic forms by $\pi_{A}$, i.e.
$$\pi_{A}: \Omega^k(\Sigma,\mathfrak g_{P})\to H_{A}^k(\Sigma,\mathfrak g_{P}).$$

\section{The moduli space $\mathcal M^g(P)$}\label{chapter:intro}

For the following, we choose a compact oriented Riemann 
surface $\Sigma$ of genus $g\geq 1$ and a non-trivial principal 
 $\textbf{SO}(3)$-bundle $\pi:P\to\Sigma$; next, we define the even gauge group $\mathcal G_{0}(P)$ as the unit component of $\mathcal G(P)$ and for more details we refer to \cite{MR1297130}. Finally we can introduce the moduli space $$\mathcal M^g(P):=\mathcal A_0(P)/\mathcal G_0(P)$$
which is a compact smooth manifold of dimension $6g-6$ and if $g\geq 2$, then it is also connected and simply connected; these results were proved Dostoglou and Salamon (cf. \cite{MR1297130}) using the works of Newstead (cf. \cite{MR0232015}). 

\begin{remark}
If $g=2$, then the moduli space $\mathcal M^2(P)$ can be seen as an intersection of quadrics in $P_5$ (cf. \cite{MR0237500}) .
\end{remark}

\begin{remark}
Since an element $u\in \mathcal G_0(P)$, which is an element of the isotropy 
group\footnote{An $u\in\mathcal G(P)$ is an element of the isotropy group of a connection $A$ if and only if $u^*A=A$.}, maps $P$ to the 
identity, the operator
 $d_{A}:\Omega^0(\Sigma,\mathfrak g_{P})\to \Omega^1(\Sigma,\mathfrak g_{P})$ is injective. Moreover,
 $d_{A}^*d_{A}:\Omega^0(\Sigma,\mathfrak g_{P})\to \Omega^0(\Sigma,\mathfrak g_{P})$ is invertible, because 
 the fact that $d_{A}$ is injective implies that $d_{A}^*$ is surjective by the decomposition of 
 $\Omega^0(\Sigma,\mathfrak g_{p})$, see equation (\ref{split}) and in addition $\textrm{im } d_{A}^*=\textrm{im } d_{A}^*d_{A}$ 
 by the decomposition of 
 $\Omega^1(\Sigma,\mathfrak g_{p})$.
\end{remark}

The infinitesimal gauge transformation for $\Psi\in\Omega^0(\Sigma,\mathfrak g_P)$ acts on a connection by $$\mathcal A(P)\to T\mathcal A(P);\,
A\mapsto -d_A\Psi$$
and thus, the tangent space at 
 $[A]\in \mathcal M^g(P)$, $A\in\mathcal A_{0}(P)$, can be identified with the first homological group 
 $H_{A}^1(\Sigma, \mathfrak g_{P})$, in fact by (\ref{gauge1}) and by the orthogonal splitting $\ker d_A=\textrm{im } d_A\oplus H_A^1(\Sigma,\mathfrak g_P)$, we have
 \begin{equation}\label{TM}
T_{A}\mathcal A_{0}(P)/\textrm{im } d_{A}=\ker d_{A}/\textrm{im } d_{A} =H_{A}^1(\Sigma,\mathfrak g_{P})
\end{equation}
because the tangent space $T_{A}\mathcal A_0(P)$ corresponds to the kernel of $d_{A}$. Hence if we choose a conformal structure on $\Sigma$, then we have a complex structure on $\mathcal M^g(P)$ which 
is not, but the Hodge-*- operator acting on $H_{A}^1(\Sigma,\mathfrak g_{P})$. We refer to \cite{MR1297130} and \cite{MR1698616} for more details.\\

Moreover, since the tangent space of $[A]\in \mathcal M^g(P)$, for every $A\in\mathcal A_0(P)$, can be identified with $H_A^1(\Sigma,\mathfrak g_P)$, we have a symplectic form $\omega_A(a,\,b)=\int_\Sigma\langle a\wedge b\rangle$, for $a,b\in H_A^1(\Sigma,\mathfrak g_P)$, and a complex structure defined by the Hodge-$*$-operator. Since the symplectic 2-form does not depend on the base connection $A$, it is constant and thus, closed. Hence, $\mathcal M^g(P)$ is a K\"ahler manifold; this symplectic approach of the space of connections was introduced by Atiyah and Bott in \cite{MR702806}. We conclude this section with the following result (cf. \cite{remyj2} for the computations).

\begin{lemma}\label{lemma:regauge}
We choose two flat connections $A'$, $A'' \in \mathcal A_0(P)$, then
\begin{equation*}
\min_{u\in \mathcal G(P)}\|A'-u^*A''\|_{L^2(\Sigma)}\leq d([A'],[A''])
\end{equation*}
where $d(\cdot,\cdot)$ denotes the distance between $[A']$ and $[A'']$ on the smooth compact manifold $\mathcal M^g(P)$. 
\end{lemma}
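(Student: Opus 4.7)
The plan is to exhibit an explicit gauge transformation $v \in \mathcal G_0(P)\subset \mathcal G(P)$ whose associated chord length $\|A'-v^*A''\|_{L^2(\Sigma)}$ in the affine space $\mathcal A_0(P)$ is bounded by the Riemannian distance in the quotient $\mathcal M^g(P)$. Since $\mathcal M^g(P)$ is a compact smooth Riemannian manifold (of dimension $6g-6$ with the K\"ahler metric of Atiyah--Bott), Hopf--Rinow furnishes a smooth minimizing curve $\gamma:[0,1]\to\mathcal M^g(P)$ from $[A']$ to $[A'']$ with $\mathrm{length}(\gamma)=d([A'],[A''])$.

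The central step is a horizontal lift of $\gamma$. Because $P$ is a non-trivial $\mathbf{SO}(3)$-bundle, every flat connection is irreducible and the $\mathcal G_0(P)$-action on $\mathcal A_0(P)$ is free, so $\mathcal A_0(P)\to\mathcal M^g(P)$ is a principal bundle. The orthogonal decomposition (\ref{split}) equips this bundle with a canonical Ehresmann connection whose horizontal subspace at $A\in\mathcal A_0(P)$ is precisely $H^1_A(\Sigma,\mathfrak g_P)\subset\ker d_A = T_A\mathcal A_0(P)$. Integrating the horizontal lift of $\dot\gamma(t)$ produces a smooth path $A:[0,1]\to\mathcal A_0(P)$ with $A(0)=A'$ and $\dot A(t)\in H^1_{A(t)}(\Sigma,\mathfrak g_P)$ for every $t$. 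Under the identification (\ref{TM}), $\dot A(t)$ is the harmonic representative of $\dot\gamma(t)$, so
\begin{equation*}
\|\dot A(t)\|_{L^2(\Sigma)}=\|\dot\gamma(t)\|_{T_{\gamma(t)}\mathcal M^g(P)}.
\end{equation*}

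Since $[A(1)]=[A'']$ in the quotient by $\mathcal G_0(P)$, there exists $v\in\mathcal G_0(P)\subset\mathcal G(P)$ with $v^*A''=A(1)$. Viewing $A(\cdot)$ as a smooth curve in the Hilbert space $\Omega^1(\Sigma,\mathfrak g_P)$ and applying the Minkowski integral inequality yields
\begin{equation*}
\min_{u\in\mathcal G(P)}\|A'-u^*A''\|_{L^2(\Sigma)}\leq \|A(0)-A(1)\|_{L^2(\Sigma)}\leq \int_0^1\|\dot A(t)\|_{L^2(\Sigma)}\,dt = \mathrm{length}(\gamma) = d([A'],[A'']),
\end{equation*}
which is the desired bound.

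The only delicate point I expect is verifying that the horizontal distribution $A\mapsto H^1_A(\Sigma,\mathfrak g_P)$ varies smoothly along $\gamma$ so that the lifted ODE can be integrated over all of $[0,1]$. This reduces to smooth dependence of the harmonic projector $\pi_A$ on a flat connection $A$, which holds on the non-trivial $\mathbf{SO}(3)$-bundle because $d_A$ has no zero modes on $\Omega^0$ (freeness of the action) and the kernel of the twisted Hodge Laplacian on $\Omega^1$ has locally constant dimension. Granted this standard smoothness, the lift is global on the compact parameter interval, and the argument above closes the proof; the detailed verification is the content referenced in \cite{remyj2}.
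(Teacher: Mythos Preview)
Your argument is correct and is the canonical proof: horizontally lift a length-minimizing curve in $\mathcal M^g(P)$ through the Riemannian submersion $\mathcal A_0(P)\to\mathcal M^g(P)$, then use that chord length is bounded by arc length in the Hilbert space $L^2(\Sigma,\mathfrak g_P)$. The paper does not spell out a proof here but defers to \cite{remyj2}; given that the horizontal-lift condition $d_{A(t)}^*\dot A(t)=0$ is exactly the gauge condition (\ref{eq:thm:geod:eq2}) the paper sets up for loops in $\mathcal A_0(P)$, your approach is almost certainly the intended one, and your discussion of the smoothness of $\pi_A$ correctly isolates the only nontrivial technicality.
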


\section{Perturbed geodesics on $\mathcal M^g(P)$}

The idea is to find a loop $A\subset C^\infty(\mathbb R/\mathbb Z,\mathcal A_{0})$ such that the projection 
$\Pi(A)$ on $\mathcal M^g(P)$ is a geodesic, where $\Pi: \mathcal A_{0}(P)\to \mathcal M^g(P);\,B\mapsto [B]$, and since $\partial_{t}A\in T_A\mathcal A_{0}=H_{A}^1(\Sigma,\mathfrak g_{P})\oplus \textrm{im } d_{A}$ and 
$d\Pi(A)\partial_tA\in T_{\Pi(A)}\mathcal M^g(P)$ which corresponds to $H_{A}^1(\Sigma,\mathfrak g_{P})$, 
$$0=d_{A}^*(\partial_tA-d_{A}\Psi)=d_{A}(\partial_{t}A-d_{A}\Psi)$$
for a $\Psi$ such that 
$\Psi(t)\in\Omega^0(\Sigma,\mathfrak{g}_P)$ for all $t\in S^1$. Hence, 
since $d_{A}^*d_{A}$ is invertible, $\Psi$ is uniquely determined and
 $$\pi_A(\partial_tA)=\partial_{t}A-d_{A}(d_{A}^*d_{A})^{-1}d_{A}^*\partial_{t}A
 =\partial_tA-d_{A}\Psi.$$
 The unperturbed energy of our curve is, therefore,
\begin{equation}\label{unperturbed}
E(A)=\frac12\int_0^1|d\Pi(A)\partial_tA|^2dt=\frac12\int_{0}^1|\partial_tA-d_A\Psi|^2dt.
\end{equation}

%%%%%%%%%%%%%%%%%
%%Perturbation
%%%%%%%%%%%%%%%%%
%\vspace{0.5cm}
If we consider a time dependent Hamiltonian map
$$\bar H:\mathbb R/\mathbb Z\times\mathcal A_0(P)\to \mathbb R;\,(t,A)\mapsto \bar H_t(A)$$
which is invariant under $\mathcal G_{0}(P)$ and constructed using the holonomy (see \cite{MR1297130}); then we can perturbe the energy functional subtracting from $E$ the integral of $\bar H_{t}(.)$, i.e.
\begin{equation}\label{perturbation1}
E^{\bar H}(A)=\frac12\int_{0}^1|\partial_tA-d_A\Psi|^2dt-\int_{0}^1 \bar H_{t}(A) dt.
\end{equation}
The equivariance of $\bar H_{t}(\cdot)$ means that we indroduce a perturbation on the energy functional on the loop space 
of the smooth manifold $\mathcal M^g(P)$. Weber (\cite{MR1930985}) using the Thom-Smale transversality 
proved that the set
\begin{align*}
\nu_{reg}:=\{&H \in C^\infty(S^1\times\mathcal M^g(P),\mathbb R)\mid \\
&\mbox{ The Jacobi operator for $E^{\bar H}$ is bijective for all critical loops}\}
\end{align*}
is open and dense in $C^\infty(S^1\times \mathcal M^g(P),\mathbb R)$ endoved with the compact-open topology and $\nu_{reg}$ is residual. 
Therefore we can choose $\bar H_t$ near the zero function as we like such that the Jacobi operator of $E^{\bar H}$ for all the perturbed geodesics is invertible and from now our perturbation is choosen with this property. Furthermore, in the same paper Weber proved that below a given energy level we have only finite perturbed geodesics.\\

Next, we define a perturbation $H_t:\mathcal A(P)\to \mathbb R$, where $H_t(A)=\bar H(A)$ for every $A\in \mathcal A_0(P)$. A first approach is to pick a gauge invariant holonomy perturbation on $\mathcal A(P)$ since every Hamiltonian $H_t$ can be constructed in this way (cf. \cite{MR1297130}); since $H_t$ is constant along $\mathcal G(P)^*A$ for a given connection $A\in \mathcal A(P)$ and $T_{A}(\mathcal G(P)^*A)=\textrm{im } d_{A}$,
\begin{equation}\label{daX0}
 d_{A}X_t(A)=0.
\end{equation}

Another possibility is the following. We pick a smooth map $\rho:[0,\infty)\to [0,1]$ with the property that $\rho(x)=0$ if $x\geq \delta_0^2$ and $\rho(x)=1$ if $x\leq \left(\frac {2\delta_0}3\right)^2$ for a $\delta_0$ which satisfies the conditions of the lemmas \ref{lemma76dt94} and \ref{lemma82dt94} for $p=2$ and $q=4$. Then we define $H_t(A)=0$ for every $A$ with $\|F_A\|_{L^2}\geq \delta_0$ and
$$H_t(A):=\rho\left(\|F_A\|_{L^2}^2\right)\bar H_t\left(A+*d_{A}\eta(A)\right)$$
otherwise, where $\eta(A)$ is the unique $0$-form given by the theorem \ref{lemma82dt94} for the connection $A$. In this case, if $A$ is flat then $H_t(A+*sd_A\eta)$ is constant for every $0$-form $\eta\in \Omega^0(\Sigma,\mathfrak g_P)$ and every $s\in (-\varepsilon,\varepsilon)$ with $\varepsilon$ sufficiently small and we can conclude that $d_{A}*X_t(A)=0$. In both cases, the time-dependent Hamiltonian vector field $X_{t}:\mathcal A(P)\to \Omega^1(\mathfrak g_{P})$ is defined such that, for any 1-form $\alpha$ and any connection $A$, $dH_{t}(A)\alpha=\int_{\Sigma}\langle X_t(A)\wedge \alpha\rangle$. 

%%%%%%%%%%%%%
%%%%equation
%%%%%%%%%%%%%

\begin{theorem}\label{geodesics}
A closed curve $A$, $A(t)\in\mathcal A_{0}$ for all $t\in S^1\cong\mathbb R/\mathbb Z$, descends to  a  perturbed geodesic if and only if there are $\Psi(t)\in\Omega^0(\Sigma,\mathfrak{g}_P)$ and $\omega(t)\in\Omega^2(\Sigma,\mathfrak g_P)$ such that
\begin{equation}\label{eq:thm:geod:eq1}
-\nabla_t(\partial_t A-d_A \Psi)-*X_t(A)-d_A^* \omega=0,
\end{equation}
\begin{equation}\label{eq:thm:geod:eq2}
d_A^*(\partial_t A-d_A \Psi)=0,
\end{equation}
\begin{comment}
\begin{equation}\label{eq:thm:geod:eq2}
d_A^*(\partial_t A-d_A \Psi)\big|_{t=0}=0,
\end{equation}
\end{comment}
where $\nabla_t:=\partial_t+[\Psi,.]$. If this holds, $\omega$ is the unique solution of
\begin{comment}
\begin{equation}\label{eq:thm:geod:das}
d_A^*(\partial_t A-d_A \Psi)=0,
\end{equation}
\end{comment}
\begin{equation}\label{eq:thm:geod:dasdsgf}
d_Ad_A^*\omega=[(\partial_t A-d_A \Psi) \wedge(\partial_t A-d_A \Psi )]-d_A*X_t(A).
\end{equation}
\end{theorem}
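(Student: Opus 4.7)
The plan is to view $E^{\bar H}$ as a functional on the pair $(A,\Psi)\in C^\infty(S^1,\mathcal A_0(P))\times C^\infty(S^1,\Omega^0(\Sigma,\mathfrak g_P))$ with $A$ and $\Psi$ treated as independent variables, compute the first variation in each slot separately, and read off the Euler-Lagrange equations. The admissible variations of $A$ are the 1-forms $a\in\Omega^1(\Sigma,\mathfrak g_P)$ with $d_Aa=0$ (the tangent condition to $\mathcal A_0$), while variations $\dot\psi$ of $\Psi$ are unrestricted.

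Set $v:=\partial_tA-d_A\Psi$. Varying $\Psi\mapsto\Psi+s\dot\psi$ gives $\partial_sv=-d_A\dot\psi$, and since $\bar H_t$ is independent of $\Psi$,
\[\frac{d}{ds}\bigg|_{s=0}E^{\bar H}=-\int_0^1\langle d_A^*v,\dot\psi\rangle\,dt,\]
so vanishing for all $\dot\psi$ is equation (\ref{eq:thm:geod:eq2}). For a variation $A\mapsto A+sa$ with $d_Aa=0$, I would compute $\partial_sv=\partial_ta+[\Psi,a]=\nabla_ta$, integrate by parts in $t$ using periodicity and the $L^2$-skew-adjointness of $\nabla_t$ (from the $\mathrm{Ad}$-invariance of the inner product on $\mathfrak g$), and use $dH_t(A)a=\int_\Sigma\langle X_t(A)\wedge a\rangle=\langle *X_t(A),a\rangle_{L^2(\Sigma)}$ to obtain
\[\frac{d}{ds}\bigg|_{s=0}E^{\bar H}=-\int_0^1\langle\nabla_tv+*X_t(A),a\rangle\,dt.\]
The critical-point condition requires this to vanish for every $a\in\ker d_A$, hence by the splitting (\ref{split}) the quantity $-\nabla_tv-*X_t(A)$ lies in $\textrm{im }d_A^*$ pointwise in $t$; writing it as $d_A^*\omega$ for a 2-form $\omega(t)$ yields (\ref{eq:thm:geod:eq1}). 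The converse is immediate: the two equations together force the first variation of $E^{\bar H}$ to vanish on all gauge-orthogonal directions, so $\Pi(A)$ descends to a perturbed geodesic on $\mathcal M^g(P)$.

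For the formula (\ref{eq:thm:geod:dasdsgf}), I would apply $d_A$ to (\ref{eq:thm:geod:eq1}). The flatness of $A(t)$ for every $t$ gives $d_Av=d_A\partial_tA=\partial_tF_A=0$; combined with the commutator $[\partial_t,d_A]\alpha=[\partial_tA\wedge\alpha]$ this produces $d_A\partial_tv=-[\partial_tA\wedge v]$. The graded Leibniz rule yields $d_A[\Psi,v]=[d_A\Psi\wedge v]=[\partial_tA\wedge v]-[v\wedge v]$, and summing gives
\[d_A\nabla_tv=-[v\wedge v],\]
so applying $d_A$ to (\ref{eq:thm:geod:eq1}) produces
\[d_Ad_A^*\omega=[v\wedge v]-d_A*X_t(A),\]
which is (\ref{eq:thm:geod:dasdsgf}). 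Uniqueness of $\omega$ follows because (\ref{eq:thm:geod:eq1}) already determines $d_A^*\omega$ pointwise, and by the Hodge decomposition on the surface $\Sigma$ one has $\Omega^2(\Sigma,\mathfrak g_P)=d_A\Omega^1\oplus H_A^2$ with $d_Ad_A^*$ an isomorphism on the first summand, singling out a unique $\omega\in d_A\Omega^1$ solving the equation.

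The main obstacle I anticipate is the identity $d_A\nabla_tv=-[v\wedge v]$, where flatness of $A(t)$ for every $t$ and the sign conventions for the super Lie bracket must combine correctly to pin down the precise right-hand side of (\ref{eq:thm:geod:dasdsgf}); the rest of the argument is routine bookkeeping with the Hodge decomposition (\ref{split}) and the standard first-variation formula on the loop space.
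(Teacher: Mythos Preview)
Your argument is correct. The paper does not supply its own proof here but simply cites \cite{MR1715156} and \cite{remyj2}; your direct first-variation computation, together with the identity $d_A\nabla_t v=-[v\wedge v]$ (which you derive correctly from flatness and the commutator $[d_A,\partial_t]$), is precisely the standard argument those references contain.

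One small refinement on uniqueness: your argument as written yields a unique $\omega\in d_A\Omega^1$, but for $\omega$ to be the unique solution in all of $\Omega^2(\Sigma,\mathfrak g_P)$ you need $H_A^2(\Sigma,\mathfrak g_P)=0$. This holds here because for the non-trivial $\mathrm{SO}(3)$-bundle the isotropy is trivial, so $H_A^0=0$ (cf.\ Remark~3.2), and Hodge duality on the surface gives $H_A^2\cong H_A^0=0$; hence $d_Ad_A^*$ is an isomorphism on the whole of $\Omega^2$.
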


\begin{proof}See \cite{MR1715156} or \cite{remyj2}.
\end{proof} 

\begin{remark} 
We defined the moduli space of flat connections $\mathcal M^g(P)$ by taking the quotient $\mathcal A_0(P)/\mathcal G_0(P)$ where $\mathcal G_0(P)$ is the even gauge group and thus a geodesic $\gamma(t)\subset\mathcal M^g(P)$ lifts to a closed path in $\mathcal A_0(P)$ which is unique modulo
$$\mathcal G_0\left(P\times S^1\right):=\{g\in \mathcal G(P\times S^1)\mid g(t)\in \mathcal G_0(P)\quad\forall t\in S^1\}.$$ 
The group $\mathcal G_0\left(P\times S^1\right)$ acts clearly also on the connections $\mathcal A(P\times S^1)$ of a principal bundle $P\times S^1\to \Sigma \times S^1$.
\end{remark}

We can therefore characterise the perturbed geodesics using the map
\begin{equation}\label{f0}
\mathcal F^0(A,\Psi):=\left(\begin{matrix}-\nabla_t(\partial_t A-d_A \Psi)-*X_{t}(A)\\
-d_A^*(\partial_t A-d_A \Psi)\wedge dt\end{matrix}\right)
=\left(\begin{matrix}\mathcal F_1^0(A,\Psi)\\
\mathcal F_2^0(A,\Psi)\end{matrix}\right)\end{equation}
defined for two loops $A(t)\in \mathcal A(P)$ and $\Psi(t)\in\Omega^0(\Sigma,\mathfrak g_P)$. In fact, a closed curve $A$, $A(t)\in\mathcal A_{0}(P)$ for all $t\in S^1\cong\mathbb R/\mathbb Z$, 
descends to  a perturbed geodesic if and only if $\mathcal F^0(A,\Psi)\in \textrm{im } d_A^*\times \{0\}$. Next, we denote the set of perturbed geodesics below a energy level $b$ by
\begin{equation*}
\begin{split}
\mathrm{Crit}^b_{E^H}:=\big\{A+\Psi dt\in \mathcal L(\mathcal A_0(P)\otimes &\Omega^0(\Sigma,\mathfrak g_P)\wedge dt)|\,E^H(A)\leq b, (\ref{eq:thm:geod:eq1}),(\ref{eq:thm:geod:eq2})\big\}.
\end{split}
\end{equation*}
The Jacobi operator of a loop $A\subset \mathcal A_{0}$,
 which descends to a perturbed geodesic on $\mathcal M^g(P)$, is given by (cf. \cite{MR1715156} or \cite{remyj2})
\begin{equation}\label{jacoperator1}
\begin{split}
\mathcal D^0(A)(\alpha,\psi)=&\pi_{A}\left(2[\psi,(\partial_tA-d_{A}\Psi)]+d*X_t(A)\alpha+\nabla_t\nabla_t\alpha\right)\\
&+\pi_{A}\left(*\left[\alpha\wedge*d_{A}(d_{A}^*d_{A})^{-1}
\left(\nabla_t(\partial_tA-d_{A}\Psi)+*X_t(A)\right)\right]
\right)
\end{split}
\end{equation}
where $\alpha(t)\in H^1_{A(t)}(\Sigma,\mathfrak g_P)$, $\Psi$ is defined uniquely by 
\begin{equation}
d_A^*(\partial_tA-d_A\Psi)=0
\end{equation}
and 
$\psi(t)\in\Omega^0(\Sigma,\mathfrak g_P)$ by
\begin{equation}\label{jacoperator2}
-2*[\alpha\wedge*(\partial_tA-d_{A}\Psi)]-d_{A}^*d_{A}\psi=0.
\end{equation}

\section{Perturbed Yang-Mills connections}\label{chapter:YM}

Now, we choose a Riemann metric $g_\Sigma$ on the surface $\Sigma$ and we consider the manifold 
$\Sigma\times S^1$ with the partial rescaled metric $(\varepsilon^2g_\Sigma\oplus g_{S^1})$ for a given 
$\varepsilon\in]0,1]$; furthermore, we denote 
by $\pi^\varepsilon:P\times S^1\to \Sigma\times S^1$ the principal $\textrm{\bf SO}(3)$-bundle over $\Sigma\times S^1$
 and we assume that the restriction $P\times \{s\}\to \Sigma\times \{s\}$ is non-trivial. If we choose a connection $\Xi=A+\Psi \,dt\in \mathcal A(P\times S^1)$ where 
 $A(t)\in\mathcal A(P)$, $\Psi(t)\in\Omega^0(\Sigma,\mathfrak g_P)$ for all $t\in S^1$, then the $L^2$-norm induced by the metric $(\varepsilon^2g_\Sigma\oplus g_{S^1})$ of the curvature 
$F_{\Xi}=F_{A}-(\partial_{t}A-d_{A}\Psi)\wedge dt$ is given by

 \begin{equation*}
\|F_{\Xi}\|_{L^2}^2
   =\int_{0}^1 \left(\frac 1 {\varepsilon^2}\|F_{A}\|_{L^2(\Sigma)}^2
  +\|\partial_{t}A-d_{A}\Psi\|_{L^2(\Sigma)}^2\right)\,dt;
 \end{equation*}
 if we add the same perturbation as in (\ref{perturbation1}), we can define the perturbed Yang-Mills functional
 \begin{equation}
 \mathcal{YM}^{\varepsilon,H}(\Xi)
  :=\frac12\int_{0}^1 \left(\frac 1 {\varepsilon^2}\|F_{A}\|_{L^2(\Sigma)}^2
  +\|\partial_{t}A-d_{A}\Psi\|_{L^2(\Sigma)}^2\right)\,dt-\int_0^1 H_{t}(A)\,dt.
\end{equation}

A critical connection $\Xi^\varepsilon=A^\varepsilon+\Psi^\varepsilon dt\in\mathcal A(P\times S^1)$ of $\mathcal {YM}^{\varepsilon,H}$ is called a { perturbed Yang-Mills connection} and  has to satisfy the equation $d_{\Xi^\varepsilon }^{*_\varepsilon} F_{\Xi^\varepsilon}-*X_t(A)=0$ that is equivalent to the two conditions
 \begin{equation}\label{epsiloneq1}
 \frac1{\varepsilon^2}d_{A^\varepsilon}^*F_{A^\varepsilon}-
  \nabla_t(\partial_t A^\varepsilon-d_{A^\varepsilon}\Psi^\varepsilon)
 -*X_{t}(A^\varepsilon)=0,
  \end{equation}
 \begin{equation}\label{epsiloneq2}
 -\frac 1{\varepsilon^2 }d_{A^\varepsilon}^*(\partial_tA^\varepsilon-d_{A}\Psi^\varepsilon)=0.
\end{equation}
In the following, if we write a perturbed Yang-Mills connection as $\Xi^\varepsilon=A^\varepsilon+\Psi^\varepsilon dt$ 
with apex $\varepsilon$, then we mean that $\Xi^\varepsilon$ is a critical point of the functional $\mathcal{YM}^{\varepsilon,H}$ and we denote the set of perturbed Yang-Mills connections below an energy level $b$ by
\begin{equation*}
\begin{split}
\mathrm{Crit}^b_{\mathcal{YM}^{\varepsilon,H}}:=
\big\{\Xi^\varepsilon\in& \mathcal A(P\times S^1)|\,\mathcal {YM}^{\varepsilon,H }\left(\Xi^\varepsilon\right)\leq b, (\ref{epsiloneq1}),(\ref{epsiloneq2})\big\}.
\end{split}
\end{equation*}
If we fix a connection $\Xi^0=A^0+\Psi^0 dt$, then we can define an 
$\varepsilon$-dependent map $\mathcal F^\varepsilon$, for $\varepsilon>0$, by $\mathcal F^\varepsilon(A,\Psi)
=\mathcal F^\varepsilon_1(A,\Psi)+\mathcal F^\varepsilon_2(A,\Psi)$ and
\begin{equation}\label{eq:fe1}
\begin{split}
\mathcal F^\varepsilon_1(A,\Psi)=&\frac1{\varepsilon^2}d_{A}^*F_{A}-
  \nabla_t(\partial_t A-d_{A}\Psi)
 -*X_{t}(A)\\
&+\frac 1{\varepsilon^2}d_Ad_A^*(A-A^0)-d_A\nabla_t(\Psi-\Psi^0),
\end{split}
\end{equation}
\begin{equation}\label{eq:fe2}
\mathcal F^\varepsilon_2(A,\Psi)=\left(-\frac 1{\varepsilon^2}d_A^*(\partial_t A-d_A \Psi)
+\frac1{\varepsilon^2}\nabla_td_A^*(A-A^0)-\nabla^2_t(\Psi-\Psi^0)\right)\wedge dt;
\end{equation}
then the zeros of $\mathcal F^\varepsilon$ are perturbed $\varepsilon$-Yang-Mills connections and they satisfy the local gauge condition $$d_{\Xi^0}^{*_\varepsilon}\left(\Xi-\Xi^0\right)=\frac 1{\varepsilon^2}d_{A^0}^*(A-A^0)-\nabla_t^{\Psi^0}(\Psi-\Psi^0)=0$$
respect to the reference connection $A^0+\Psi^0 dt$ by the following remark already considered by Hong (cf. \cite{MR1715156}). 

\begin{remark} $\Xi^\varepsilon=A^\varepsilon+\Psi^\varepsilon dt$ is a perturbed Yang-Mills
 connection on $P\times S^1$ and satisfies the gauge condition $d_{\Xi^\varepsilon}^{*_\varepsilon}(\alpha^\varepsilon+\psi^\varepsilon dt)=0$ with $\alpha^\varepsilon+\psi^\varepsilon dt:=\Xi^\varepsilon-\Xi^0$ if and 
 only if 
\begin{equation}\label{yymm}
d_{\Xi^\varepsilon}d_{\Xi^\varepsilon}^{*_\varepsilon}(\alpha^\varepsilon+\psi^\varepsilon dt)
+d_{\Xi^\varepsilon}^{*_\varepsilon}F_{\Xi^\varepsilon}-*X_{t}(A^\varepsilon)=0.
\end{equation}
One can see this deriving \ref{yymm} by $d_{\Xi^\varepsilon}^{*_\varepsilon}$.
\end{remark}

\begin{remark}
If we choose $\frac 32< p<\infty$ and $b>0$, then for every perturbed Yang-Mills connection $\Xi^\varepsilon=A^\varepsilon+\Psi^\varepsilon dt\in \mathcal A^{1,p}(P\times S^1)$, there exists a gauge transformation $u\in \mathcal G_0^{2,p}(P\times S^1)$ such that $u^*\Xi^\varepsilon$ is smooth. A proof of this statement for weak Yang-Mills connections can be found in \cite{MR2030823} (cf. theorem 9.4) and the proof holds also for perturbed Yang-Mills connections.
\end{remark}

If we linearise the equations (\ref{epsiloneq1}) and (\ref{epsiloneq2}) we obtain the two components of the Jacobi operator 
$$\mathrm{Jac}^{\varepsilon,H}(\Xi^\varepsilon):
\Omega^1(\Sigma\times S^1,\mathfrak g_P)
\to \Omega^1(\Sigma\times S^1,\mathfrak g_P)
$$ 
of a perturbed Yang-Mills connection:
\begin{equation}\label{jacym3}
\begin{split}
\mathrm{Jac}^{\varepsilon,H}&(A^\varepsilon+\Psi^\varepsilon dt)(\alpha,\psi)
=\frac1{\varepsilon^2}d_{A^\varepsilon}^*d_{A^\varepsilon}\alpha
+\frac1{\varepsilon^2}*[\alpha\wedge*F_{A^\varepsilon}]\\
&-d*X_t(A^\varepsilon)\alpha-\nabla_{t}\nabla_{t}\alpha
+d_{A^\varepsilon}\nabla_{t} \psi-2[\psi,(\partial_tA^\varepsilon-d_{A^\varepsilon}\Psi^\varepsilon)]\\
&+\left(\frac 1{\varepsilon^2}*[\alpha\wedge*(\partial_tA^\varepsilon-d_{A^\varepsilon}\Psi^\varepsilon)]
-\frac 1{\varepsilon^2}\nabla_{t}d_{A^\varepsilon}^*\alpha
+\frac 1{\varepsilon^2}d_{A^\varepsilon}^*d_{A^\varepsilon}\psi\right) dt,
\end{split}
\end{equation}
for any $\alpha(t)\in \Omega^1 (\Sigma,\mathfrak g_P)$ and $\psi(t)\in\Omega^0(\Sigma,\mathfrak g_P)$.
In 1982, Atiyah and Bott (cf. \cite{MR702806}) showed that the Jacobi operator of a Yang-Mills connection $\Xi^\varepsilon=A^\varepsilon+\Psi^\varepsilon dt$ is Fredholm of index 0; for the perturbed case we have the same result. First, we recall that the gauge group acts on the 1-forms adding 
the image of $d_{\Xi^\varepsilon}$ and hence $\alpha+\psi\, dt $ is an element of 
$\Omega^1(\Sigma\times S^1,\mathfrak g_P)/\mathcal G_\Sigma(P\times S^1)$ if and only if 
\begin{equation*}
0=\langle  \alpha+\psi \,dt , d_{\Xi^\varepsilon}
\phi\rangle=\langle  d_{\Xi^\varepsilon}^{*_\varepsilon}( \alpha+\psi\, dt) ,
\phi\rangle
\end{equation*}
for every $\phi\in \Omega^0(\Sigma\times S^1,\mathfrak g_P)$ and consequently, if and only if  $\alpha+\psi\, dt \in \ker d_{\Xi^\varepsilon}^{*_\varepsilon}$. Therefore,  under the condition $d_{\Xi^\varepsilon}^{*_\varepsilon}( \alpha+\psi\, dt)=0$ we have that
\begin{equation}\label{fredholmlemmaeq2}
\mathrm{Jac}^{\varepsilon,H}(\Xi^\varepsilon)(\alpha+\psi\,dt)
=\mathrm{Jac}^{\varepsilon,H}(\Xi^\varepsilon)(\alpha+\psi\,dt)
+d_{\Xi^\varepsilon}d_{\Xi^\varepsilon}^*( \alpha+\psi\, dt)
\end{equation}
%
\begin{comment}
%forse puo essere ancora utile
\end{equation}
the gauge condition implies that
\begin{equation*}
0=d_{\Xi^\varepsilon}^{*_\varepsilon}(\alpha+\psi\,dt)=
d_{A^\varepsilon}^{*_\varepsilon}\alpha-\varepsilon^2\nabla_{t}\psi
\end{equation*}
and hence
\begin{equation*}
\begin{split}
0=&d_{A^\varepsilon+\Psi^\varepsilon dt}d_{A^\varepsilon+\Psi^\varepsilon dt}^{*_\varepsilon}(\alpha+\psi\,dt)\\
=&d_{A^\varepsilon}d_{A^\varepsilon}^{*_\varepsilon}\alpha
-\varepsilon^2d_{A^\varepsilon}\nabla_{\Psi^\varepsilon,s}\psi
+\left(\nabla_{t}d_{A^\varepsilon}^{*_\varepsilon}\alpha
-\varepsilon^2\nabla_{t}\nabla_{\Psi^\varepsilon,s}\psi\right)\,dt.
\end{split}
\end{equation*}
\end{comment}
%
which can be written as
\begin{equation}\label{fredholmlemmaeq23}
%\begin{split}
\left(d_{\Xi^\varepsilon}^{*_\varepsilon}d_{\Xi^\varepsilon}
+d_{\Xi^\varepsilon}d_{\Xi^\varepsilon}^{*_\varepsilon}\right)(\alpha+\psi\,dt)+*[(\alpha+\psi\,dt)\wedge 
*F_{\Xi^\varepsilon}]- d*X_t(A^\varepsilon)(\alpha+\psi\,dt)
%\end{split}
\end{equation}
where the first term is the Laplace operator of $\alpha+\psi\,dt$ and the second one is of order zero and thus, we have a selfadjoint elliptic operator and therefore, a Fredholm operator with index 0. In addition, this can allow us to work with (\ref{fredholmlemmaeq23}) instead of using the Jacobi operator and (\ref{fredholmlemmaeq23}) can be written as the operator $\mathcal D^\varepsilon(A+\Psi dt):=\mathcal D^\varepsilon_1(A+\Psi dt)+\mathcal D^\varepsilon_2(A+\Psi dt)\,dt$ given by
\begin{equation}\label{jacoperator3}
\begin{split}
\mathcal D^\varepsilon_1(A+\Psi dt)(\alpha,\psi):
=&\frac 1{\varepsilon^2}\left(d_{A}^*d_{A}\alpha
+d_{A}d_{A}^*\alpha+*[\alpha\wedge*F_{A}]\right)
-d*X_t(A)\alpha\\
&-\nabla_{t}\nabla_{t}\alpha
-2[\psi,(\partial_tA-d_{A}\Psi)]\\
\mathcal D^\varepsilon_2(A+\Psi dt)(\alpha,\psi):
=&\frac1{\varepsilon^2}\left(2*[\alpha\wedge*(\partial_tA-d_{A}\Psi)]
+d_{A}^*d_{A}\psi\right)-\nabla_{t}\nabla_{t}\psi.
\end{split}
\end{equation}
Moreover, the operator $\mathcal D^\varepsilon$ is almost the linearisation of 
$\mathcal F^\varepsilon$; to be precise $\mathcal D^\varepsilon$ does not contain the derivatives of $d_A$, $d_A^*$ and $\nabla_t$ of the last two terms in both components  
(\ref{eq:fe1}) and (\ref{eq:fe2}), because these can be treated like quadratic terms as we will see in the lemma \ref{lemma:estimate:c}.  If the reference connection $A+\Psi \,dt$ is clear from the context, then we will write the operators without indicating it.

\section{Norms}

If we fix a connection $\Xi_{0}=A_{0}+\Psi_{0} dt\,\in\mathcal A(\Sigma\times S^1)$, then we can define a norm on its tangential space 
and since $\mathcal A(\Sigma\times S^1)$ is 
an affine space, we can use it as a metric on $\mathcal A(\Sigma\times S^1)$.
Let $\xi(t)=\alpha(t)+\psi (t)\wedge dt$ such that $\alpha(t)\in \Omega^1(\Sigma,\mathfrak g_{P})$ and $\psi(t)\in \Omega^0(\Sigma,\mathfrak g_{P})$ or $\alpha(t)\in \Omega^2(\Sigma,\mathfrak g_{P})$ and $\psi(t)\in \Omega^1(\Sigma,\mathfrak g_{P})$. Then we define the following norms%\footnote{We recall that $\|\cdot\|^2_{L^2(\Sigma)}=\langle\cdot,\cdot\rangle$ is the inner product (\ref{pro3}) induced by the metric $g_{\Sigma}$.}
\begin{equation*}
\|\xi\|_{0,p,\varepsilon}^p:=\int_{0}^1\left(\|\alpha\|_{L^p(\Sigma)}^p
+\varepsilon^{p}\|\psi\|_{L^p(\Sigma)}^p\right)dt,
\end{equation*}
\begin{equation*}
\|\xi\|_{\infty,\varepsilon}:=\|\alpha\|_{L^\infty(\Sigma\times S^1)}
+\varepsilon\|\psi\|_{L^\infty(\Sigma\times S^1)}
\end{equation*}
and
\begin{equation*}
\begin{split}
\|\xi\|_{\Xi_0,1,p,\varepsilon}^p:=&\int_{0}^1\left(\|\alpha\|_{L^p(\Sigma)}^p
+\|d_{A_{0}}\alpha\|_{L^p(\Sigma)}^p+
\|d_{A_{0}}^*\alpha\|_{L^p(\Sigma)}^p
+\varepsilon^{p}\|\nabla_{t}\alpha\|_{L^p(\Sigma)}^p\right)dt\\
&+\int_{0}^1\varepsilon^p\left(\|\psi\|_{L^p(\Sigma)}^p+\|d_{A_{0}}\psi\|_{L^p(\Sigma)}^p
+\varepsilon^{p}\|\nabla_{t}\psi\|_{L^p(\Sigma)}^p\right) dt,
\end{split}
\end{equation*}
\begin{equation*}
\|\psi\|_{0,p,\varepsilon}^p:=\int_{0}^1\|\psi\|_{L^p(\Sigma)}^p\,dt.
\end{equation*}
Inductively,
\begin{equation*}
\begin{split}
\|\xi\|_{\Xi_0,k+1,p,\varepsilon}^p:=&\int_{0}^1\left(\|\alpha\|_{\Xi_0,k,p,\varepsilon}^p
+\|d_{A_{0}}\alpha\|_{\Xi_0,k,p,\varepsilon}^p+\|d_{A_{0}}^*\alpha\|_{\Xi_0,k,p,\varepsilon}^p\right)dt\\
+&\int_{0}^1\left(\varepsilon^p\|\nabla_{t}\alpha\|_{\Xi_0,k,p,\varepsilon}^p+\|\psi \,dt\|_{\Xi_0,k,p,\varepsilon}^p\right)dt\\
+&\int_{0}^1\left(\|d_{A_{0}}\psi \wedge dt\|_{\Xi_0,k,p,\varepsilon}^p
+\varepsilon^{p}\|\nabla_{t}\psi\, dt\|_{\Xi_0,k,2,\varepsilon}^p
\right) dt.
\end{split}\end{equation*}
Also in this case, if the reference connection is clear from the context we write the norms without mentioning it.
\begin{remark}
For $i=1,2$, we can define by $W^{k,p}(\Sigma\times S^1,\Lambda^iT^*(\Sigma\times S^1)\otimes\mathfrak g_{P\times S^1 })$ 
the Sobolev space of the sections of $\Lambda^iT^*(\Sigma\times S^1)\otimes\mathfrak g_{P\times S^1}\to \Sigma\times S^1$ 
as the completion of\footnote{Let $E\to M$ a vector bundle, then $\Gamma{E}$ denotes the space of section of the bundle.}
$$\Gamma(\Lambda^iT^*(\Sigma\times S^1)\otimes\mathfrak g_{P})=\Omega^i(\Sigma\times S^1,\mathfrak g_{P\times S^1})$$ 
respect to the norm $\|\cdot\|_{\Xi_0,k,p,1}$. Furthermore, we can define the Sobolev space of the connections on $P\times S^1$ 
as\footnote{For more information, see appendix B of \cite{MR2030823}.} 
$$\mathcal A^{k,p}(P\times S^1)=\Xi_{0}+ W^{k,p},$$
where $W^{k,p}=W^{k,p}(\Sigma\times S^1,T^*(\Sigma\times S^1)\otimes\mathfrak g_{P\times S^1})$, $\Xi_0\in\mathcal A(P\times S^1)$.
\end{remark}
\begin{remark}
The Sobolev space of gauge transformation $\mathcal G^{2,p}_0(P\times S^1)$ is the completion of $\mathcal G_0(P\times S^1)$ with respect the Sobolev $ W^{1,p}$-norm on 1-forms, i.e. $g\in \mathcal G^{2,p}_0(P\times S^1)$ if $g^{-1}d_{\Sigma\times S^1}g\in  W^{1,p}$ and hence $g:\mathcal A^{1,p}(P\times S^1)\to \mathcal A^{1,p}(P\times S^1)$.
\end{remark}
\begin{remark}
The gauge condition $d_{\Xi^\varepsilon}^*(\Xi^\varepsilon-\Xi_0)=0$ assures us that if the perturbed Yang-Mills connection $\Xi^\varepsilon$ is an element of $\mathcal A^{1,2}(P\times S^1)$, then, for any $k\geq2$, there is an $u\in\mathcal G_0^{2,p}(P\times S^1)$ such that $u^*\Xi^\varepsilon\in\mathcal A^{k,2}(P)$ (cf. \cite{MR2030823}, Chapter 9).
\end{remark}
We now choose a reference connection $\Xi_0$ and analogously as for the lemma 4.1 in \cite{MR1283871}, if we define $\bar \xi=\bar\alpha+\bar\psi\,dt$ where 
$\bar\alpha(t)=\alpha(\varepsilon s)$ and $\bar\psi(t)=\varepsilon \psi(\varepsilon s)$, $0\leq t\leq \varepsilon^{-1}$, then $\|\xi\|_{k,p,\varepsilon}=\varepsilon^{\frac1p}\|\bar \xi\|_{W^{k,p}}$. In addition, all the Sobolev inequalities 
hold as follows by the Sobolev embedding theorem (cf. theorem B2 in \cite{MR2030823}).
\begin{theorem}[Sobolev estimates]\label{lemma:sobolev}
We choose $1\leq p,q<\infty$ and $l\leq k$. Then there is a constant $c_s$ such that for every 
$\xi\in  W^{k,p}(\Sigma\times S^1,\Lambda^iT^*(\Sigma\times S^1)\otimes\mathfrak g_{P\times S^1})$, $i=1,2$, and any reference connection $\Xi_0$:
\begin{enumerate}
\item If $l-\frac 3q\leq k-\frac3p$, then
\begin{equation}\label{eq:sobolev1}
\|\xi\|_{\Xi_0,l,q,\varepsilon}
\leq c_s\varepsilon^{1/q-1/p}\,\|\xi\|_{\Xi_0,k,p,\varepsilon}.
\end{equation}
\item If $0< k-\frac3p$, then
\begin{equation}\label{eq:sobolev2}
\|\xi\|_{\Xi_0,\infty,\varepsilon}
\leq c_s\varepsilon^{-1/p}\,\|\xi\|_{\Xi_0,k,p,\varepsilon}.
\end{equation}
\end{enumerate}
\end{theorem}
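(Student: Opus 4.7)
The plan is to reduce the statement to the standard (unweighted) Sobolev embedding on the rescaled product manifold $\Sigma\times S^1_{\varepsilon^{-1}}$, exactly along the lines of the rescaling indicated in the paragraph preceding the statement. Given $\xi=\alpha+\psi\,dt$, I would introduce the rescaled object $\bar\xi=\bar\alpha+\bar\psi\,ds$ on $\Sigma\times\mathbb{R}/(\varepsilon^{-1}\mathbb{Z})$ by $\bar\alpha(s):=\alpha(\varepsilon s)$ and $\bar\psi(s):=\varepsilon\,\psi(\varepsilon s)$, and correspondingly replace $\Xi_0=A_0+\Psi_0\,dt$ by $\bar\Xi_0:=A_0(\varepsilon\cdot)+\varepsilon\Psi_0(\varepsilon\cdot)\,ds$ so that $d_{\bar A_0}=d_{A_0(\varepsilon\cdot)}$ and $\bar\nabla_s=\partial_s+[\varepsilon\Psi_0(\varepsilon\cdot),\,\cdot\,]$.

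First I would verify the identity $\|\xi\|_{\Xi_0,k,p,\varepsilon}^{p}=\varepsilon\,\|\bar\xi\|_{\bar\Xi_0,W^{k,p}(\Sigma\times S^1_{\varepsilon^{-1}})}^{p}$ for every $k\ge 0$. The bookkeeping is simple: the change of variable $t=\varepsilon s$ contributes the overall factor $\varepsilon$ via $dt=\varepsilon\,ds$; the chain rule $\partial_s\bar\alpha=\varepsilon\,(\partial_t\alpha)(\varepsilon\cdot)$ converts each weighted derivative $\varepsilon\nabla_t\alpha$ into an unweighted $\bar\nabla_s\bar\alpha$; and the built-in factor $\varepsilon$ in $\bar\psi$ together with the additional factor coming from $\partial_s\bar\psi=\varepsilon^2(\partial_t\psi)(\varepsilon\cdot)$ absorb the $\varepsilon^p$, $\varepsilon^{2p}$ weights on the $\psi$-block. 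Spatial derivatives $d_{A_0}$, $d_{A_0}^*$ are pulled back unchanged since the rescaling is only in the $S^1$-direction. The inductive definition of $\|\cdot\|_{\Xi_0,k+1,p,\varepsilon}$ is compatible with this identification, so an immediate induction on $k$ settles the claim. For the $L^\infty$-norm the same computation gives $\|\xi\|_{\infty,\varepsilon}=\|\bar\xi\|_{L^\infty(\Sigma\times S^1_{\varepsilon^{-1}})}$ with no power of $\varepsilon$ in front.

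Second I would apply the ordinary Sobolev embedding theorem (e.g.\ Theorem B.2 of \cite{MR2030823}) to $\bar\xi$ on the compact manifold $\Sigma\times S^1_{\varepsilon^{-1}}$, obtaining
\begin{equation*}
\|\bar\xi\|_{\bar\Xi_0,l,q}\le C_\varepsilon\,\|\bar\xi\|_{\bar\Xi_0,k,p}\quad\text{and}\quad\|\bar\xi\|_{L^\infty}\le C_\varepsilon\,\|\bar\xi\|_{\bar\Xi_0,k,p}.
\end{equation*}
Rescaling back via $\|\bar\xi\|_{W^{m,r}}=\varepsilon^{-1/r}\|\xi\|_{\Xi_0,m,r,\varepsilon}$ turns these into
\begin{equation*}
\|\xi\|_{\Xi_0,l,q,\varepsilon}\le C_\varepsilon\,\varepsilon^{1/q-1/p}\|\xi\|_{\Xi_0,k,p,\varepsilon},\qquad\|\xi\|_{\infty,\varepsilon}\le C_\varepsilon\,\varepsilon^{-1/p}\|\xi\|_{\Xi_0,k,p,\varepsilon},
\end{equation*}
which are exactly (\ref{eq:sobolev1}) and (\ref{eq:sobolev2}) provided $C_\varepsilon$ is bounded uniformly in $\varepsilon\in(0,1]$.

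The only non-routine step, and the one I expect to require the most care, is the uniform bound on the Sobolev constant $C_\varepsilon$. The point is that $\Sigma\times S^1_{\varepsilon^{-1}}$ has uniformly bounded geometry as $\varepsilon\to 0$: it is locally isometric (on balls of radius $1$, say) to $\Sigma\times\mathbb{R}$, and the injectivity radius and curvature bounds are independent of $\varepsilon$. A partition-of-unity argument, combined with the classical Sobolev inequality on a fixed chart of $\Sigma\times\mathbb{R}$ and the fact that $S^1_{\varepsilon^{-1}}$ can be covered by $O(\varepsilon^{-1})$ unit balls with uniformly controlled multiplicity, yields a Sobolev constant depending only on $p,q,k,l$ and the fixed data $\Sigma$, $g_\Sigma$. (Equivalently, one may apply the Sobolev embedding on $\Sigma\times\mathbb{R}$ and localise; uniformity of the constants in the length of the circle is standard for manifolds with bounded geometry.) With this uniform $C$, the estimates (\ref{eq:sobolev1}) and (\ref{eq:sobolev2}) follow.
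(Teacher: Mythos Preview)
Your proposal is correct and follows essentially the same approach as the paper: the paper indicates (in the paragraph immediately preceding the statement) that one rescales $\xi$ to $\bar\xi$ on $\Sigma\times S^1_{\varepsilon^{-1}}$ to obtain $\|\xi\|_{k,p,\varepsilon}=\varepsilon^{1/p}\|\bar\xi\|_{W^{k,p}}$, and then invokes the standard Sobolev embedding (Theorem~B2 in \cite{MR2030823}), citing the analogous Lemma~4.1 of \cite{MR1283871}. Your write-up is more explicit than the paper's about the uniformity of the Sobolev constant on the stretched circle, but the argument is the same.
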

%
%%%%%%%%%%%%%%%%%%%%%%%

\section{Elliptic estimates}\label{chapter:ellest}

%%%%%%%%%%%%%%%%%%%%%%%%%%%%%%%%
%%%%%%%%%%%%%%%%the estimate

The aim of this chapter is to estimate (theorem \ref{lemma:evaluate2}) the $\|\cdot\|_{2,p,\varepsilon}$-norm of a 1-form $\xi=\alpha+\psi\, dt$ using the $L^p$-norm of the operator $\mathcal D^\varepsilon(\Xi)$ when $\Xi=A+\Psi dt$ represents a perturbed closed geodesic on $\mathcal M^g(P)$. We recall that we assume the Jacobi operator to be invertible for every perturbed geodesic. Hong in \cite{MR1715156} proved a weaker estimate which, in our setting, can be identified with
\begin{equation*}
\begin{split}
\|\alpha+\psi \,dt&-\pi_A(\alpha)\|_{1,2,\varepsilon}+\varepsilon \|\pi_A(\alpha)\|_{1,2,\varepsilon }\\
\leq & c\varepsilon^2 \|\mathcal D^\varepsilon(\Xi+\alpha_0^\varepsilon)(\alpha,\psi)\|_{0,2,\varepsilon}+c\varepsilon \|\pi_A\mathcal D^\varepsilon(\Xi+\alpha_0^\varepsilon)(\alpha,\psi)\|_{L^2}
\end{split}
\end{equation*}
where $\alpha_0^\varepsilon\in \textrm{im } d_A^*$ is the unique solution of
$$d_{A}^*d_A\alpha_0^\varepsilon=\varepsilon^2\nabla_t(\partial_tA-d_A\Psi)+*X_t(A^0);$$
in addition, he estended the last estimate to
\begin{equation*}
\|\alpha+\psi \,dt\|_{k,2,\varepsilon}\leq c \|\mathcal D^\varepsilon(\Xi+\alpha_0^\varepsilon)(\alpha,\psi)\|_{k-1,2,\varepsilon}
\end{equation*}
and with this inequality he proved the existence of a map from the perturbed geodesics $\mathrm{Crit}_{E^H}^b$ to the perturbed Yang-Mills connections $\mathrm{Crit}_{\mathcal{YM}^{\varepsilon,H } }^b$, but he did not show its uniqueness and its surjectivity. With the last two estimates is not possible to obtain the uniqueness statement of the theorem \ref{thm:localuniqueness} even for $p=2$ and, as we have already discussed, the surjectivity could not be established using his rescaling of the metric, in particular because you can not expect that the norms of the curvature $\partial_tA-d_A\Psi$ have a uniforme bound for all the Yang-Mills connections below a given energy level.\\

For this chapter we choose a regular value $b$ of the energy $E^H$, we fix a perturbed closed geodesic $\Xi=A+\Psi\,dt\in \mathrm{Crit}^b_{E^H}$ and 
we define every operator and every norm using this connection. Since the perturbed geodesic $\Xi$ is smooth, there is a positive constant $c_0$ which bounds the $L^\infty$-norm of the velocity and its derivatives, in particular
\begin{equation}\label{crit:boundlin}
\left\|\partial_tA-d_A\Psi\right\|_{L^\infty}+\left\|\nabla_t\left(\partial_tA-d_A\Psi\right)\right\|_{L^\infty}\leq c_0.
\end{equation}

In general, we denote a constant, which is needed to fulfill an estimate, by $c$; it can therefore indicate different constants also in a single computation.

\begin{theorem}\label{lemma:evaluate}
We choose a constant $p\geq2$. If $p=2$ we set $j=0$ otherwise $j=1$. There exist two constants $\varepsilon_0>0$ and $c>0$ 
such that
\begin{equation}\label{eq:lemma:evaluate1}
\|\xi\|_{2,p,\varepsilon}
\leq c\left( \varepsilon\|\mathcal D^\varepsilon(\xi)
\|_{0,p,\varepsilon}+\|\pi_A(\alpha)\|_{L^p}\right),
\end{equation}
\begin{equation}\label{eq:lemma:evaluate2}
\|(1-\pi_A)\xi\|_{2,p,\varepsilon}
\leq\, c \left( \varepsilon^2\|\mathcal D^\varepsilon(\xi)
\|_{0,p,\varepsilon}+\varepsilon\|\pi_A(\alpha)\|_{L^p}+j\varepsilon^2\|\nabla_t^2\pi_A(\alpha)\|_{L^p}\right),
\end{equation}
\begin{equation}\label{eq:lemma:evaluate3}
\begin{split}
\|\alpha-\pi_A(\alpha)\|_{2,p,\varepsilon}
\leq& c \varepsilon^2\left( \|\mathcal D_1^\varepsilon(\xi)
\|_{L^p}+\varepsilon^2\|\mathcal D_2^\varepsilon(\xi)
\|_{L^p}\right)\\
&+c \varepsilon^2\left(\|\pi_A(\alpha)\|_{L^p}+ \|\nabla_t^{j+1}\pi_A(\alpha)\|_{L^p}\right),
\end{split}
\end{equation}
for every $\xi=\alpha+\psi\,dt\in {\mathrm W}^{2,p}$ and 
$0<\varepsilon<\varepsilon_0$.
\end{theorem}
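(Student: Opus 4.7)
The strategy combines the Hodge decomposition of $\Omega^1(\Sigma,\mathfrak g_P)$ at each $t\in S^1$ with respect to the flat connection $A(t)$, a uniform spectral gap for the associated Hodge Laplacian, and the assumed invertibility of the Jacobi operator $\mathcal D^0$ of the geodesic $\Xi$. Writing $\alpha = \pi_A\alpha + \alpha^\perp$ with $\alpha^\perp := (1-\pi_A)\alpha$, and recalling that $F_A = 0$, the leading term of $\mathcal D_1^\varepsilon$ is $\varepsilon^{-2}(d_A^*d_A + d_Ad_A^*)\alpha$, which annihilates $\pi_A\alpha$, while the leading term of $\mathcal D_2^\varepsilon$ is $\varepsilon^{-2}d_A^*d_A\psi$. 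The three estimates of the theorem are obtained by bootstrapping fibrewise elliptic bounds on $\Sigma$ against a finite-dimensional estimate coming from $\mathcal D^0$.

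First I would establish fibrewise elliptic estimates. Since $A(t)$ is a smooth loop of flat connections and the harmonic projection $\pi_{A(t)}$ varies smoothly with $t$, the Laplacian $\Delta_A = d_A^*d_A + d_Ad_A^*$ is uniformly invertible on the $L^p$-closure of $(1-\pi_A)\Omega^1(\Sigma,\mathfrak g_P)$, and $d_A^*d_A$ is uniformly invertible on $\Omega^0(\Sigma,\mathfrak g_P)$ by the remark in Section~\ref{chapter:intro}. Calder\'on--Zygmund theory then gives, uniformly in $t$,
\begin{equation*}
\|\alpha^\perp\|_{W^{2,p}(\Sigma)} \leq c\|\Delta_A\alpha^\perp\|_{L^p(\Sigma)},\qquad \|\psi\|_{W^{2,p}(\Sigma)} \leq c\|d_A^*d_A\psi\|_{L^p(\Sigma)}.
\end{equation*}
Multiplying $\mathcal D^\varepsilon\xi$ by $\varepsilon^2$, integrating these pointwise bounds in $t$, and controlling the $\nabla_t$-derivatives by commuting $\nabla_t$ with $d_A$ and $d_A^*$ (the commutators are bounded by the a priori constant $c_0$ of (\ref{crit:boundlin})), yields an estimate of the form
\begin{equation*}
\|\alpha^\perp\|_{2,p,\varepsilon} + \varepsilon\|\psi\|_{2,p,\varepsilon} \leq c\varepsilon^2\|\mathcal D^\varepsilon\xi\|_{0,p,\varepsilon} + c\varepsilon^2\bigl(\|\pi_A\alpha\|_{L^p} + j\varepsilon\|\nabla_t^2\pi_A\alpha\|_{L^p}\bigr).
\end{equation*}
The error terms come from the pieces $d\!*\!X_t(A)\alpha$, $\nabla_t^2\alpha$ and $[\psi,\partial_tA - d_A\Psi]$ of $\mathcal D^\varepsilon$ restricted to the harmonic part $\pi_A\alpha$, whose $t$-derivatives are not directly controlled by $\Delta_A$ and must be absorbed through the Jacobi estimate below.

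For the harmonic part $\pi_A\alpha$ I would apply $\pi_A$ to the equation $\mathcal D_1^\varepsilon(\xi) = f_1$; since $\pi_A\Delta_A = 0$ this collapses to
\begin{equation*}
\pi_A\bigl(-\nabla_t\nabla_t(\pi_A\alpha) - d\!*\!X_t(A)\pi_A\alpha - 2[\psi,\partial_tA - d_A\Psi]\bigr) = \pi_A f_1 + \mathrm{error}(\alpha^\perp).
\end{equation*}
The second equation $\mathcal D_2^\varepsilon(\xi) = f_2$ reads $d_A^*d_A\psi = -2*[\pi_A\alpha \wedge *(\partial_tA - d_A\Psi)] + \varepsilon^2\nabla_t^2\psi + \varepsilon^2 f_2 + \mathrm{error}(\alpha^\perp)$, which is the Jacobi constraint (\ref{jacoperator2}) up to an $\varepsilon^2$-small correction. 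Inverting this $d_A^*d_A$ and substituting the resulting $\psi$ back into the previous identity reproduces the full Jacobi operator $\mathcal D^0(\Xi)$ acting on $\pi_A\alpha$, modulo errors controlled by $\|\alpha^\perp\|_{2,p,\varepsilon}$, $\varepsilon^2\|f_2\|_{0,p,\varepsilon}$ and $\|f_1\|_{0,p,\varepsilon}$. Invertibility of $\mathcal D^0$ then bounds $\pi_A\alpha$ (together with its top $t$-derivative for $p>2$), and feeding this bound back into the previous inequality closes the bootstrap and yields (\ref{eq:lemma:evaluate1})--(\ref{eq:lemma:evaluate3}). The sharper $\varepsilon^4$-factor in front of $\mathcal D_2^\varepsilon$ in (\ref{eq:lemma:evaluate3}) arises because $\mathcal D_2^\varepsilon$ enters only through the $\psi$-substitution, where it appears multiplied by $\varepsilon^2$, whereas $\mathcal D_1^\varepsilon$ appears once.

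The main obstacle is tracking the precise power of $\varepsilon$ through the coupled estimates: each of the three bounds has a different optimal $\varepsilon$-scaling, and the bootstrap between the ellipticity of $\Delta_A$ on $\alpha^\perp$ and the invertibility of $\mathcal D^0$ on $\pi_A\alpha$ must be engineered so as not to lose a factor. The indicator $j$ reflects a related subtlety: for $p=2$ the Jacobi operator $\mathcal D^0$ is $L^2$-coercive on $\pi_A\alpha$ directly, whereas for $p>2$ one cannot bound $\|\pi_A\alpha\|_{L^p}$ from $\|\mathcal D^0\pi_A\alpha\|_{L^p}$ without additionally controlling $\|\nabla_t^2\pi_A\alpha\|_{L^p}$, since $L^p$-elliptic regularity for second-order ODEs in $t$ requires the top derivative on the right-hand side.
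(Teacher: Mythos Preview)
Your proposal misreads what Theorem~\ref{lemma:evaluate} asserts and therefore attacks the wrong problem. In all three estimates the quantities $\|\pi_A\alpha\|_{L^p}$ and $\|\nabla_t^{j+1}\pi_A\alpha\|_{L^p}$ sit on the \emph{right-hand side} as data: this theorem is a pure elliptic estimate, with the harmonic part treated as given. The invertibility of the Jacobi operator $\mathcal D^0$ plays no role here; that hypothesis enters only in the subsequent Theorem~\ref{lemma:evaluate2}, where (\ref{crit:linesteg1}) is used to absorb $\pi_A\alpha$ into $\|\pi_A(\mathcal D_1^\varepsilon\xi+*[\alpha\wedge*\omega])\|_{L^p}$. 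Consequently the entire second half of your argument (projecting, solving for $\psi$, feeding back through $\mathcal D^0$) is irrelevant for the statement at hand, and the paper's proof never touches $\mathcal D^0$.

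There is also a genuine gap in your treatment of the non-harmonic part. Fibrewise Calder\'on--Zygmund on $\Sigma$ gives $\|\alpha^\perp(t)\|_{W^{2,p}(\Sigma)}\le c\|\Delta_A\alpha^\perp(t)\|_{L^p(\Sigma)}$, but the norm $\|\cdot\|_{2,p,\varepsilon}$ also contains $\varepsilon\|\nabla_t\alpha\|_{L^p}$ and $\varepsilon^2\|\nabla_t^2\alpha\|_{L^p}$, and commuting $\nabla_t$ with $d_A,d_A^*$ does not produce these from $\Delta_A$-control. The paper instead treats $d_Ad_A^*+d_A^*d_A-\varepsilon^2\nabla_t^2$ as a single elliptic operator on $\Sigma\times\mathbb R$: after the rescaling $t\mapsto\varepsilon t$ this becomes the standard Laplacian, and Calder\'on--Zygmund yields $\|\alpha\|_{2,p,\varepsilon}\le c(\|(\Delta_A-\varepsilon^2\nabla_t^2)\alpha\|_{L^p}+\|\alpha\|_{1,p,\varepsilon})$ (Theorem~\ref{lemma:evaluate0}). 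The lower-order term $\|\alpha\|_{1,p,\varepsilon}$ is then absorbed via Lemma~\ref{flow:lemma:lpl2} (trading a small multiple of $\|d_A\alpha\|_{L^p}+\|d_A^*\alpha\|_{L^p}$ for $\int_{S^1}\|\alpha\|_{L^2(\Sigma)}^p\,dt$) and Lemma~\ref{flow:lemma:lplppia11} (bounding the latter by the operator plus $\pi_A\alpha$). Estimates (\ref{eq:lemma:evaluate2}) and (\ref{eq:lemma:evaluate3}) follow by applying (\ref{eq:lemma:evaluate1}) to $(1-\pi_A)\xi$ and $(1-\pi_A)\alpha$ and computing how $\mathcal D^\varepsilon$ acts on the discarded harmonic piece; this is where the factors of $\varepsilon$ in front of $\|\pi_A\alpha\|_{L^p}$ and the $\|\nabla_t^2\pi_A\alpha\|_{L^p}$ term arise (the $\varepsilon^{-2}$ term $2*[\pi_A\alpha\wedge*(\partial_tA-d_A\Psi)]$ in $\mathcal D_2^\varepsilon$ is responsible for the $\varepsilon^1$ rather than $\varepsilon^2$ in (\ref{eq:lemma:evaluate2}), which your displayed inequality gets wrong).
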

We want also to remark that the estimates for $p=2$ are enough to prove the bijection between the critical connections, but for the identification between the flows between the critical points, which is discussed in \cite{remyj3}, we need the theorem also for $p>2$.
We recall that by the lemma \ref{geodesics} for perturbed geodesic $\Xi=A+\Psi dt$ we can associate a two form $\omega$ defined as the unique solution of
$$d_Ad_A^*\omega=\left[\left(\partial_tA-d_A\Psi\right)\wedge\left(\partial_tA-d_A\Psi\right)\right]$$
which is equivalent to
 $$\omega=d_A(d_A^*d_A)^{-1}(\nabla_t(\partial_tA-d_A\Psi)+*X_t(A)).$$
\begin{theorem}\label{lemma:evaluate2}
We choose $p\geq 2$ and we assume that there is a constant $c_0$ such that
\begin{equation}\label{crit:linesteg1}
|\langle\mathcal D^0\left(\bar\alpha\right), \bar\alpha \rangle|\geq c_0\left( \|\bar\alpha\|_{L^2}+\|\nabla_t\bar\alpha\|_{L^2}\right)^2
\end{equation}
for every $\bar\alpha \in \mathrm W^{2,p}$. 
Then there are two constants $c>0$ and $\varepsilon_0>0$ such that
\begin{equation}\label{eq:lemma:evaluate2222}
\begin{split}
\|\pi_A(\alpha)\|_{L^p}&+\|\nabla_t\pi_A(\alpha)\|_{L^p}+\|\nabla_t^2\pi_A(\alpha)\|_{L^p}\\
\leq&c\big(\varepsilon\|\mathcal D^\varepsilon\left(\alpha,\psi\right) dt\|_{0,p,\varepsilon}+\|\pi_A\left(\mathcal D^\varepsilon_1\left(\alpha,\psi\right)+*[\alpha\wedge*\omega]\right)\|_{L^p}
\big),
\end{split}
\end{equation}
\begin{equation}\label{eq:lemma:evaluate222}
\|\alpha+\psi\, dt\|_{2,p,\varepsilon}
\leq c\big( \varepsilon \,\|\mathcal D^\varepsilon(\alpha,\psi)\|_{0,p,\varepsilon}
+\|\pi_A\left(\mathcal D^\varepsilon_1(\alpha,\psi)
+*[\alpha\wedge *\omega]\right)\|_{L^p}\big),
\end{equation}
\begin{equation}\label{eq:lemma:evaluate22}
\begin{split}
\|\alpha+\psi \,dt&-\pi_A(\alpha)\|_{2,p,\varepsilon}\\
\leq& c \left( \varepsilon^2 \|\mathcal D^\varepsilon(\alpha,\psi)\|_{0,p,\varepsilon}+\varepsilon \|\pi_A\left(\mathcal D^\varepsilon_1(\alpha,\psi)
+*[\alpha\wedge *\omega]\right)\|_{L^p}\right),
\end{split}
\end{equation}
\begin{equation}\label{eq:lemma:evaluate2277}
\begin{split}
\|\alpha-\pi_A(\alpha)\|_{2,p,\varepsilon}
\leq& c \varepsilon^2  \|\mathcal D_1^\varepsilon(\alpha,\psi)\|_{L^p}+c\varepsilon^4\|\mathcal D_2^\varepsilon(\alpha,\psi)\|_{L^p}\\
&+c\varepsilon^2\|\pi_A\left(\mathcal D^\varepsilon_1(\alpha,\psi)
+*[\alpha\wedge *\omega]\right)\|_{L^p}
\end{split}
\end{equation}
for every $\alpha+\psi \,dt \in  {W}^{2,p}$ and 
$0<\varepsilon< \varepsilon_0$.
\end{theorem}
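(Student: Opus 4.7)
The overall plan is to use the coercivity hypothesis (\ref{crit:linesteg1}) for $\mathcal D^0$ to control the harmonic projection $\pi_A(\alpha)$, and then substitute the resulting bound back into Theorem \ref{lemma:evaluate} to obtain the four inequalities. The first key ingredient is an algebraic identity comparing $\pi_A(\mathcal D_1^\varepsilon + *[\alpha\wedge*\omega])$ with the Jacobi operator. Decomposing $\alpha = \pi_A(\alpha) + \alpha^\perp$ along the Hodge splitting (\ref{split}) and using that $A(t)$ is flat (so $F_A=0$ and $d_A^*d_A + d_Ad_A^*$ has harmonic kernel), the singular $\varepsilon^{-2}$-block of $\mathcal D_1^\varepsilon$ is annihilated by $\pi_A$. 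Matching the surviving terms with (\ref{jacoperator1}) (observing that the 2-form appearing there coincides up to sign with $\omega$) and introducing the auxiliary $\psi_0$ determined from $\pi_A(\alpha)$ by (\ref{jacoperator2}), one obtains an identity of the form
\[
\pi_A\bigl(\mathcal D_1^\varepsilon(\xi) + *[\alpha\wedge*\omega]\bigr) = -\mathcal D^0(\pi_A(\alpha)) + R(\alpha^\perp,\psi-\psi_0),
\]
in which $R$ collects contributions of $\alpha^\perp$ and its covariant $t$-derivatives up to order two, of $\psi-\psi_0$, and of the commutators between $\nabla_t$ and the $t$-dependent projector $\pi_{A(t)}$.

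The second ingredient is the conversion of $L^2$-coercivity into $W^{2,p}$-bounds. Restricted to harmonic 1-forms, $\mathcal D^0$ acts fibrewise on the finite-rank bundle $H^1_{A(t)}(\Sigma,\mathfrak g_P)\to S^1$ with principal part $\pi_A\nabla_t^2$, so it is a second-order linear ODE on the circle with smooth coefficients. Assumption (\ref{crit:linesteg1}) yields $L^2$-invertibility, which for such an ODE upgrades by an elementary bootstrap to an isomorphism $W^{2,p}_t\to L^p_t$ for every $p\geq 2$, giving
\[
\|\pi_A(\alpha)\|_{L^p} + \|\nabla_t\pi_A(\alpha)\|_{L^p} + \|\nabla_t^2\pi_A(\alpha)\|_{L^p} \leq c\,\|\mathcal D^0(\pi_A(\alpha))\|_{L^p}.
\]
Substituting the identity of the first step and bounding each summand of $R$ by (\ref{eq:lemma:evaluate2})--(\ref{eq:lemma:evaluate3}) of Theorem \ref{lemma:evaluate}, together with (\ref{crit:boundlin}) for the geodesic data, the remainder is estimated by $c\varepsilon^2\|\mathcal D^\varepsilon(\xi)\|_{0,p,\varepsilon} + c\varepsilon\|\pi_A(\alpha)\|_{L^p} + c\varepsilon^2\|\nabla_t^2\pi_A(\alpha)\|_{L^p}$; the last two terms can be absorbed on the left for $\varepsilon_0$ small, which yields (\ref{eq:lemma:evaluate2222}). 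Inequalities (\ref{eq:lemma:evaluate222}), (\ref{eq:lemma:evaluate22}), (\ref{eq:lemma:evaluate2277}) then follow by plugging (\ref{eq:lemma:evaluate2222}) into (\ref{eq:lemma:evaluate1}), (\ref{eq:lemma:evaluate2}), (\ref{eq:lemma:evaluate3}) respectively.

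The hardest part will be making the identity of the first step precise and verifying that every component of $R$ carries enough $\varepsilon$-weights to be either controlled by $\varepsilon^2\|\mathcal D^\varepsilon(\xi)\|_{0,p,\varepsilon}$ or absorbed on the left. The three delicate points are: (i) the projector $\pi_{A(t)}$ varies with $t$, so $\nabla_t$ and $\pi_A$ fail to commute and generate zeroth-order-in-$\alpha$ corrections that must be handled using the smoothness bound (\ref{crit:boundlin}); (ii) the free variable $\psi$ in $\mathcal D_1^\varepsilon$ must be compared with the algebraically determined $\psi_0$ of $\mathcal D^0$, which requires solving (\ref{jacoperator2}) and expressing $\psi-\psi_0$ in terms of $\alpha^\perp$ and the non-harmonic part of $\xi$; (iii) the bookkeeping of error terms must be sharp so that, after multiplying by the norm of $(\mathcal D^0)^{-1}$, absorption succeeds with exactly the stated powers of $\varepsilon$.
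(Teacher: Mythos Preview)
Your proposal is correct and follows the same overall architecture as the paper: control $\pi_A(\alpha)$ via the Jacobi operator $\mathcal D^0$, then feed the result back into Theorem~\ref{lemma:evaluate}. The execution differs in two places.

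First, the paper does not derive the comparison identity between $\pi_A(\mathcal D_1^\varepsilon+*[\alpha\wedge*\omega])$ and $\mathcal D^0(\pi_A(\alpha))$ from scratch; it quotes the $L^2$ estimate (\ref{eq:lemma:nonono}) from Hong's work and its consequence (\ref{eq:pia}) directly, so the remainder $R$ you describe and the absorption argument are hidden in that citation. Your explicit route via the auxiliary $\psi_0$ from (\ref{jacoperator2}) is a cleaner, self-contained way to reach the same inequality. Second, for $p>2$ the paper does not invoke $L^p$-invertibility of $\mathcal D^0$ as an ODE fact. Instead it exploits that $\pi_A(\alpha)$ lives in a finite-rank bundle over $S^1$ and applies Sobolev embedding (Theorem~\ref{lemma:sobolev} with $\varepsilon=1$) to bound $\|\pi_A(\alpha)\|_{L^p}+\|\nabla_t\pi_A(\alpha)\|_{L^p}$ by the $W^{2,2}$-norm already obtained, then recovers $\|\nabla_t^2\pi_A(\alpha)\|_{L^p}$ separately via (\ref{crit:eq:pia2}); this is done in two stages ($2<p<6$ via $L^2$, then $p\geq 6$ via $L^3$). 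Your direct $W^{2,p}\to L^p$ invertibility is equivalent and arguably tidier, since the Sobolev bootstrap is precisely how one proves such invertibility for a smooth-coefficient ODE on $S^1$; the paper's staging into $p$-ranges is an artifact of quoting the three-dimensional Sobolev theorem rather than the one-dimensional version appropriate to the harmonic bundle.
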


\begin{remark}
 The condition (\ref{crit:linesteg1}) is always satisfied whenever the Jacobi operator $\mathcal D^0$ is invertible because there is a positive constant $c$ sucht that $\left\|\bar\alpha\right\|_{L^2}^2
\leq c \left\langle \mathcal D^0(\bar\alpha),\bar\alpha\right\rangle_{L^2}$ and
\begin{align*}
\left\|\nabla_t\bar\alpha\right\|_{L^2}^2=& \left|\left\langle \pi_A\left( \nabla_t\nabla_t\bar\alpha\right),\bar\alpha\right\rangle_{L^2}\right|
\leq c \left|\left\langle\mathcal D^0(\bar\alpha),\bar \alpha\right\rangle_{L^2}\right|+c\|\bar\alpha\|_{L^2}^2
\end{align*}

where the last estimate follows from the definition of $\mathcal D^0(\bar\alpha)$ and (\ref{crit:boundlin}).
\end{remark}

We first prove the theorem \ref{lemma:evaluate2} using the theorem \ref{lemma:evaluate} which will be discussed later.

\begin{proof}[Proof of theorem \ref{lemma:evaluate2}]
In order to prove the theorem we start with the estimates proved by Hong (cf. \cite{MR1715156}) and discussed in \cite{remyj2}
\begin{equation}\label{eq:lemma:nonono}
\begin{split}
 \|\pi_A(\alpha) \|_{L^2}&+\|\nabla_t\pi_A(\alpha)\|_{L^2}\\
\leq&c \|\pi_A\big(\mathcal D^{\varepsilon_\nu}_1(\Xi^0)(\alpha,\psi)+*[\alpha\wedge *\omega]\big)\|_{L^2}+c \|(1-\pi_A)(\alpha)\|_{L^2}\\
&+c\|\nabla_t(1-\pi_A)(\alpha)\|_{L^2}+c\varepsilon^2 \|\nabla_t\psi\|_{L^2}+\varepsilon^2\|\psi\|_{L^2}\\
&+c\varepsilon_\nu^2\|\mathcal{D}_2^{\varepsilon_\nu}(\Xi^0)(\alpha,\psi)\|_{L^2},
\end{split}
\end{equation}

\begin{equation}\label{eq:pia}
\begin{split}
\|\pi_A(\alpha)\|_{L^2}&+\|\nabla_t\pi_A(\alpha)\|_{L^2}\\
\leq &c \left(\varepsilon\|\mathcal D^\varepsilon\left(\alpha,\psi\right)\|_{0,2,\varepsilon}+\|\pi_A\left(\mathcal D^\varepsilon_1\left(\alpha,\psi\right)+*[\alpha\wedge*\omega]\right)\|_{L^2}\right)
\end{split}
\end{equation}
and in addition for $q\geq2$ we have that
\begin{equation}\label{crit:eq:pia2}
\begin{split}
\left\|\nabla_t\nabla_t\pi_A(\alpha)\right\|_{L^q}
\leq& c\left\|(d_A+d_A^*)\nabla_t\nabla_t\pi_A(\alpha)\right\|_{L^q}+\left\|\pi_A\nabla_t\nabla_t\pi_A(\alpha)\right\|_{L^q}\\
\leq& c\left\|\nabla_t\pi_A(\alpha)\right\|_{L^q}+c\left\|\pi_A(\alpha)\right\|_{L^q}\\
&+\left\|\pi_A\left(\mathcal D_1^\varepsilon(\alpha+\psi dt)-*[\alpha,*\omega]\right)\right\|_{L^q}\\
&+c\|\alpha\|_{L^q}+c\|\psi\|_{L^q}+c\|\nabla_t(1-\pi_A)\alpha\|_{L^q}\\
\leq &c \left(\varepsilon\|\mathcal D^\varepsilon\left(\alpha,\psi\right)\|_{0,2,\varepsilon}+\|\pi_A\left(\mathcal D^\varepsilon_1\left(\alpha,\psi\right)+*[\alpha\wedge*\omega]\right)\|_{L^2}\right)
\end{split}
\end{equation}
where the second inequality follows from the commutation formulas, the definition of $\mathcal D_1^\varepsilon$ and the triangular inequality and the third by the theorem \ref{lemma:evaluate} and (\ref{eq:pia}). Finally, in the case $p=2$, the theorem  \ref{lemma:evaluate2} follows from the theorem \ref{lemma:evaluate} and from the inequalities (\ref{eq:pia}) and (\ref{crit:eq:pia2}) for $q=2$. For $2<p<6$ we use the Sobolev's theorem \ref{lemma:sobolev} for $\varepsilon=1$:
\begin{equation}\label{eq:lemma:evaluate222ss2}
\begin{split}
\|\pi_A(\alpha)\|_{L^p}&+\|\nabla_t\pi_A(\alpha)\|_{L^p}\\
\leq &c\left(\|\pi_A(\alpha)\|_{L^2}+\|\nabla_t\pi_A(\alpha)\|_{L^2}+\|\nabla_t^2\pi_A(\alpha)\|_{L^2}\right)\\
\leq&c\big(\varepsilon\|\mathcal D^\varepsilon\left(\alpha,\psi\right) dt\|_{0,2,\varepsilon}+\|\pi_A\left(\mathcal D^\varepsilon_1\left(\alpha,\psi\right)+*[\alpha\wedge*\omega]\right)\|_{L^2}
\big),\\
\leq&c\big(\varepsilon\|\mathcal D^\varepsilon\left(\alpha,\psi\right) dt\|_{0,p,\varepsilon}+\|\pi_A\left(\mathcal D^\varepsilon_1\left(\alpha,\psi\right)+*[\alpha\wedge*\omega]\right)\|_{L^p}
\big),
\end{split}
\end{equation}
where the third step follows from the H\"older identity. (\ref{crit:eq:pia2}), (\ref{eq:lemma:evaluate222ss2}) and the theorem \ref{lemma:evaluate} yield now to the estimates (\ref{eq:lemma:evaluate222}), (\ref{eq:lemma:evaluate22}) and (\ref{eq:lemma:evaluate2277}). The estimate (\ref{eq:lemma:evaluate2222}) follows then from (\ref{crit:eq:pia2}) with $q=p$,  (\ref{eq:lemma:evaluate222}) and (\ref{eq:lemma:evaluate22}). In order to prove the estimates for $p\geq6$ we proceed in the same way.
By the Sobolev's theorem \ref{lemma:sobolev} for $\varepsilon=1$ and the H\"older inequality: 
\begin{equation}\label{eq:lemma:evaluate222ss22}
\begin{split}
\|\pi_A(\alpha)\|_{L^p}&+\|\nabla_t\pi_A(\alpha)\|_{L^p}\\
\leq &c\left(\|\pi_A(\alpha)\|_{L^3}+\|\nabla_t\pi_A(\alpha)\|_{L^3}+\|\nabla_t^2\pi_A(\alpha)\|_{L^3}\right)\\
\leq&c\big(\varepsilon\|\mathcal D^\varepsilon\left(\alpha,\psi\right) dt\|_{0,3,\varepsilon}+\|\pi_A\left(\mathcal D^\varepsilon_1\left(\alpha,\psi\right)+*[\alpha\wedge*\omega]\right)\|_{L^3}
\big),\\
\leq&c\big(\varepsilon\|\mathcal D^\varepsilon\left(\alpha,\psi\right) dt\|_{0,p,\varepsilon}+\|\pi_A\left(\mathcal D^\varepsilon_1\left(\alpha,\psi\right)+*[\alpha\wedge*\omega]\right)\|_{L^p}
\big).
\end{split}
\end{equation}
The estimates (\ref{eq:lemma:evaluate222}), (\ref{eq:lemma:evaluate22}) and (\ref{eq:lemma:evaluate2277}) are a consequense of (\ref{eq:lemma:evaluate222ss22}) and the theorem (\ref{lemma:evaluate}); (\ref{eq:lemma:evaluate2222}) follows then from (\ref{crit:eq:pia2}) with $q=p$,  (\ref{eq:lemma:evaluate222}) and (\ref{eq:lemma:evaluate22}).
\end{proof}

\begin{lemma}
We have the following two commutation formulas:
\begin{equation}\label{commform}
[d_A,\nabla_t]=-[(\partial_t A-d_A\Psi)\wedge\cdot\,];
\end{equation}
\begin{equation}\label{commform2}
[d_A^*,\nabla_t]=*[(\partial_t A-d_A\Psi)\wedge*\,\cdot\,].
\end{equation}
\end{lemma}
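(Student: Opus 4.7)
The plan is to prove both identities by direct computation, the first by expanding definitions and invoking the graded Jacobi identity, and the second by reducing to the first via Hodge duality.

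For the first identity I would write out both sides applied to a $k$-form $\eta \in \Omega^k(\Sigma,\mathfrak g_P)$, using the definitions $d_A = d + [A\wedge\,\cdot\,]$ and $\nabla_t = \partial_t + [\Psi,\,\cdot\,]$. Expanding
\begin{equation*}
d_A\nabla_t\eta - \nabla_t d_A\eta = d[\Psi,\eta] + [A\wedge[\Psi,\eta]] - [\partial_t A\wedge\eta] - [\Psi,d\eta] - [\Psi,[A\wedge\eta]],
\end{equation*}
the Leibniz rule $d[\Psi,\eta] = [d\Psi\wedge\eta] + [\Psi,d\eta]$ kills the $[\Psi,d\eta]$ term, while the graded Jacobi identity for the super-bracket gives $[A\wedge[\Psi,\eta]] - [\Psi,[A\wedge\eta]] = [[A,\Psi]\wedge\eta]$. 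Collecting terms yields $[d_A\Psi\wedge\eta] - [\partial_t A\wedge\eta] = -[(\partial_t A - d_A\Psi)\wedge\eta]$, which is the claim. The only thing to check carefully here is the sign bookkeeping in the graded Jacobi identity, since the bracket $[\,\cdot\wedge\,\cdot\,]$ is the super-bracket defined earlier in the preliminaries.

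For the second identity I would exploit the fact that the base metric on $\Sigma\times S^1$ is a product and the Hodge $*$ acts only in the $\Sigma$-directions, so it is $t$-independent and commutes with $\nabla_t = \partial_t + [\Psi,\,\cdot\,]$ (since $\Psi$ is a $0$-form and $[\Psi,*\omega] = *[\Psi,\omega]$). On forms over the surface $\Sigma$, $d_A^* = -*d_A*$ (with the sign appropriate for the relevant degrees). Therefore
\begin{equation*}
[d_A^*,\nabla_t]\eta = -[*d_A*,\nabla_t]\eta = -*[d_A,\nabla_t]*\eta = **[(\partial_t A - d_A\Psi)\wedge *\eta],
\end{equation*}
which, after absorbing the $**$ into the identity on the appropriate form degree, gives $*[(\partial_t A - d_A\Psi)\wedge *\,\cdot\,]$, as desired.

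The main obstacle I expect is purely bookkeeping: tracking signs in the graded Leibniz and Jacobi identities for forms of different degree, and verifying that the $**$ sign at the end of the second computation works out uniformly over the degrees that appear in the paper (namely $k=0,1,2$ on $\Sigma$). There is no analytic content beyond the algebraic manipulation; once the signs are nailed down, both identities drop out immediately.
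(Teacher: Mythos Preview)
Your proposal is correct and follows essentially the same approach as the paper: the paper's proof is a single sentence stating that the lemma follows from the definitions of the operators together with the Jacobi identity for the super Lie bracket, which is exactly the computation you carry out in detail.
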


\begin{proof} The lemma follows from the definitions of the operators using the Jacoby identity for the super Lie bracket operator.\end{proof}

In the following pages we prepare the proof of the theorem \ref{lemma:evaluate} and in order to do this we start showing the next result.

\begin{theorem}\label{lemma:evaluate0}
For $1< p<\infty$ there exist two constants $\varepsilon_0>0$ and $c>0$  such that
\begin{equation}\label{crit:linest.sak2}
\|\psi \|_{2,p,\varepsilon}
\leq c\left( \|(d_A^*d_A-\varepsilon^2\nabla_t\nabla_t)\psi\|_{L^p}+\|\psi\|_{1,p,\varepsilon}\right)
\end{equation}
\begin{equation}\label{crit:linest.sak}
\|\alpha \|_{2,p,\varepsilon}
\leq c\left( \|(d_Ad_A^*+d_A^*d_A-\varepsilon^2\nabla_t\nabla_t)\alpha\|_{L^p}+\|\alpha\|_{1,p,\varepsilon}\right)
\end{equation}
for every $1$-form $\alpha\in \mathrm W^{2,p}$ and every $0$-form $\psi \in \mathrm W^{2,p}$, $0<\varepsilon<\varepsilon_0$.
\end{theorem}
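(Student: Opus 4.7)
The plan is to rescale time to obtain a standard uniformly elliptic problem on a compact $3$-manifold, and then to apply classical $L^p$-Calderón--Zygmund regularity on unit-size patches whose overlap count is uniformly bounded. Set $s:=t/\varepsilon$ on the circle $\mathbb{R}/(\varepsilon^{-1}\mathbb{Z})$ and write $\bar\psi(s):=\psi(\varepsilon s)$, $\bar A(s):=A(\varepsilon s)$, $\bar\Psi(s):=\Psi(\varepsilon s)$. As recalled in the paper, $\|\psi\|_{k,p,\varepsilon}=\varepsilon^{1/p}\|\bar\psi\|_{W^{k,p}}$ with respect to the product metric $g_\Sigma\oplus ds^2$ on $\Sigma\times\mathbb{R}/(\varepsilon^{-1}\mathbb{Z})$. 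Since $\varepsilon\nabla_t=\partial_s+\varepsilon[\bar\Psi,\cdot]$, I would rewrite
\[
d_A^*d_A-\varepsilon^2\nabla_t\nabla_t=d_{\bar A}^*d_{\bar A}-\partial_s^2-\varepsilon R,\qquad R:=\{\partial_s,[\bar\Psi,\cdot]\}+\varepsilon[\bar\Psi,[\bar\Psi,\cdot]],
\]
where $R$ is a first-order operator whose coefficients are $C^\infty$-bounded \emph{uniformly in} $\varepsilon$, because $\partial_s^j\bar A=\varepsilon^j(\partial_t^j A)|_{\varepsilon s}$ and the perturbed geodesic $\Xi$ is smooth with bounds as in (\ref{crit:boundlin}). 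The leading term $d_{\bar A}^*d_{\bar A}-\partial_s^2$ is a uniformly elliptic self-adjoint second-order operator on the $3$-manifold $\Sigma\times\mathbb{R}/(\varepsilon^{-1}\mathbb{Z})$.

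Next I would cover the rescaled circle by intervals $I_k=(s_k-1,s_k+1)$ with uniformly bounded overlap, and take a subordinate partition of unity $\{\chi_k\}$ whose $C^\ell$-norms are independent of $\varepsilon$. On each $I_k$ freeze $A_k:=\bar A(s_k)$; since $\|\partial_s\bar A\|_\infty=O(\varepsilon)$, the operator difference $d_{\bar A}^*d_{\bar A}-d_{A_k}^*d_{A_k}$ is a first-order differential operator of norm $O(\varepsilon)$ on $I_k$. The classical $L^p$-Calderón--Zygmund estimate applied to the uniformly elliptic operator $d_{A_k}^*d_{A_k}-\partial_s^2$ on the fixed compact manifold $\Sigma\times(s_k-2,s_k+2)$ yields
\[
\|\chi_k\bar\psi\|_{W^{2,p}}\le c\Bigl(\|(d_{A_k}^*d_{A_k}-\partial_s^2)(\chi_k\bar\psi)\|_{L^p}+\|\chi_k\bar\psi\|_{L^p}\Bigr),
\]
with $c$ depending only on $\Sigma$ and a uniform bound on $A_k$, and therefore independent of $k$ and $\varepsilon$. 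Commuting $\chi_k$ through the operator produces first-order error terms supported in $I_k$, replacing $A_k$ by $\bar A$ costs $O(\varepsilon)\|\bar\psi\|_{W^{1,p}(I_k)}$, and adding back $\varepsilon R$ costs the same; all of these are absorbed into $\|\bar\psi\|_{W^{1,p}}$ on a slightly enlarged patch.

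Finally I would raise to the $p$-th power, sum over $k$ using the bounded overlap, and convert back to the original variables via $\|\psi\|_{k,p,\varepsilon}=\varepsilon^{1/p}\|\bar\psi\|_{W^{k,p}}$; this gives (\ref{crit:linest.sak2}). The $1$-form estimate (\ref{crit:linest.sak}) is proved identically, applying the same Calderón--Zygmund bound to the full Hodge Laplacian $d_{A_k}d_{A_k}^*+d_{A_k}^*d_{A_k}-\partial_s^2$ on $\Omega^1(\Sigma\times I_k,\mathfrak g_P)$, which remains uniformly elliptic and self-adjoint. The main obstacle is the $s$-dependence of the coefficients and the non-compactness of the rescaled circle; both are handled by the key observation that $\partial_s$-derivatives of $\bar A,\bar\Psi$ carry strictly positive powers of $\varepsilon$, so freezing the coefficients on unit patches introduces only lower-order errors, and the number of patches (though proportional to $\varepsilon^{-1}$) is compensated exactly by the factor $\varepsilon^{1/p}$ from the norm identification. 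No hypothesis on the Jacobi operator is needed at this stage, because the estimate is purely an elliptic regularity statement with a $\|\bar\psi\|_{W^{1,p}}$ lower-order term on the right.
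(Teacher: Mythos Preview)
Your argument is correct and follows the same strategy as the paper: rescale time to remove the $\varepsilon$-dependence, apply the standard Calder\'on--Zygmund inequality locally, and patch together with a partition of unity. The paper organizes the steps in a different order --- it first localizes on a \emph{fixed finite} atlas of $\Sigma\times S^1$, in each chart compares to a constant reference connection, and only then rescales time inside the chart (Steps~1--4) --- whereas you rescale globally first and then cover the long circle $\mathbb R/(\varepsilon^{-1}\mathbb Z)$ by $O(\varepsilon^{-1})$ unit intervals on which the connection is frozen. Your version has the advantage that the ``freezing'' error is genuinely $O(\varepsilon)$ on each unit patch, so it is absorbed by the leading term rather than merely by the lower-order remainder $\|\cdot\|_{1,p,\varepsilon}$; the paper's version has the advantage of working with a fixed number of charts and never leaving the compact manifold.

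One remark: your closing sentence about the number of patches being ``compensated exactly by the factor $\varepsilon^{1/p}$'' is a red herring. The factor $\varepsilon^{1/p}$ appears identically on both sides via the norm identification $\|\psi\|_{k,p,\varepsilon}=\varepsilon^{1/p}\|\bar\psi\|_{W^{k,p}}$ and simply cancels. What actually makes the summation work, despite the $O(\varepsilon^{-1})$ patches, is that at each point only boundedly many $\chi_k$ are nonzero, so $\sum_k\|\chi_k\bar\psi\|_{W^{2,p}}^p$ and $\sum_k\|L\bar\psi\|_{L^p(\mathrm{supp}\,\chi_k)}^p$ are each comparable to the corresponding global $p$-th power, independently of the total number of patches.
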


\begin{proof}
We prove the theorem in four steps and in the first three we work in local coordinates and hence we consider the following setting. We choose a metric $g=g_{\mathbb R^2}\oplus dt^2$ on $U\times \mathbb R\subset \mathbb R^2\times \mathbb R$ with $U$ open and contained in a compact set, a constant connection $\Xi_c=A_c+\Psi_c dt\in \Omega^1(U\times \mathbb R, \mathfrak g)$ of the trivial bundle $U\times \mathbb R\times \mathrm{\bf SO}(3)\to U\times \mathbb R$ which satisfies $F_{A_c}=0$ and a positive constant $c_0$. Furthermore we pick a connection $\tilde\Xi=\tilde A+\tilde\Psi dt\in \Omega^1(U\times \mathbb R, \mathfrak g)$ which satisfies
\begin{equation}\label{crit:lienest:sssa}
\begin{split}
\|(\tilde A-A)+(\tilde \Psi-\Psi)\,dt\|_{\infty,\varepsilon}
+\|d_A^*(\tilde A-A)\|_{L^\infty}\leq c_0,\\
 \|d_A(\tilde A-A)+d_A(\tilde \Psi-\Psi)\,dt\|_{\infty,\varepsilon}
\leq c_0,\\
\varepsilon \|\nabla_t(\tilde A-A)+\nabla_t(\tilde \Psi-\Psi)\,dt\|_{\infty,\varepsilon}\leq c_0.
\end{split}
\end{equation}

{\bf Step 1.} For $1<p<\infty$ there exists a constant $c$, such that
\begin{equation}\label{eq:lemma:estuM1}
\|\psi\|_{W^{2,p}}\leq c\left(\|d^*d\psi\|_{L^{p}}+\|\psi\|_{W^{1,p}}\right)
\end{equation}
\begin{equation}\label{eq:lemma:estuM2}
\|\alpha\|_{W^{2,p}}\leq c\left(\|(d^*d+dd^*)\alpha\|_{L^{p}}+\|\alpha\|_{W^{1,p}}\right)
\end{equation}
holds for every $0$-form $\psi\in W^{2,p}_c(U\times \mathbb R,\mathfrak g)$ and every $1$-form $\alpha \in W^{2,p}_c(U\times \mathbb R,T^*(U\times \mathbb R)\times\mathfrak g)$ with compact support in $U\times \mathbb R$.

\begin{proof}[Proof of step 1]
The first step follows directly from the Calderon-Zygmund inequality, i.e. 
$$\|u\|_{W^{2,p}}\leq c\left(\|\Delta_{g}u\|_{L^{p}}+\|u\|_{W^{1,p}}\right)$$
for every $u\in W^{2,p}_c(U\times \mathbb R)$ with compact support in $U\times \mathbb R$. We refer to the chapter 2 and 3 of \cite{MR2030823} for the details.
\end{proof}

{\bf Step 2.} For $1<p<\infty$ there exists a constant $c$, such that
\begin{equation}\label{eq:lemma:estuM3}
\|\psi\|_{\Xi_c,2,p,\varepsilon}\leq c\left(\left\|d_{A_c}^*d_{A_c}\psi-\varepsilon^2\nabla_t^{\Psi_c}\nabla_t^{\Psi_c}\psi\right\|_{L^{p}}+\|\psi\|_{\Xi_c,1,p,\varepsilon}\right)
\end{equation}
\begin{equation}\label{eq:lemma:estuM32}
\|\alpha\|_{\Xi_c,2,p,\varepsilon}\leq c\left(\left\|\left(d_{A_c}^*d_{A_c}+d_{A_c}d_{A_c}^*-\varepsilon^2\nabla_t^{\Psi_c}\nabla_t^{\Psi_c}\right)\alpha\right\|_{L^{p}}+\|\alpha\|_{\Xi_c,1,p,\varepsilon}\right)
\end{equation}
holds for every $0$-form $\psi\in W^{2,p}_c(U\times \mathbb R,\mathfrak g)$ and every $1$-form $\alpha \in W^{2,p}_c(U\times \mathbb R,T^*(U\times \mathbb R)\times\mathfrak g)$ with compact support in $U\times \mathbb R$.

\begin{proof}[Proof of step 2]
First, since the norms $\|\cdot\|_{W^{i,p}}$ and $\|\cdot\|_{\Xi_c,i,p,1}$ are equivalent
\begin{equation*}
\begin{split}
\|\psi\|_{\Xi_c,2,p,1}\leq& \|\psi\|_{W^{2,p}}+c\|\Xi_c\|_{C^1}\|\psi\|_{W^{1,p}}\\
\leq &c\left(\|(d^*d)\psi\|_{L^{p}}+c\|\psi\|_{\Xi_c,1,p,1}+c\|\Xi_c\|_{L^\infty}\|\psi\|_{L^p}\right)\\
\leq & c\left(\left\|d_{A_c}^*d_{A_c}\psi-\nabla_t^{\Psi_c}\nabla_t^{\Psi_c}\psi\right\|_{L^{p}}
+(1+\|\Xi_c\|_{C^1} )\|\psi\|_{\Xi_c,1,p,1}\right)\\
\leq & c\left(\left\|d_{A_c}^*d_{A_c}\psi-\nabla_t^{\Psi_c}\nabla_t^{\Psi_c}\psi\right\|_{L^{p}}
+\|\psi\|_{\Xi_c,1,p,1}\right)
\end{split}
\end{equation*}
and analogously
\begin{equation*}
\|\alpha\|_{\Xi_c,2,p,1}\leq  c\left(\left\|(d_{A_c}^*d_{A_c}+d_{A_c}d_{A_c}^*-\nabla_t^{\Psi_c}\nabla_t^{\Psi_c})\alpha\right\|_{L^{p}}+\|\alpha\|_{\Xi_c,1,p,1}\right).
\end{equation*}
Next, we define a $0$-form $\bar\psi:=\psi(x,\varepsilon t)$, a $1$-form $\bar\alpha:=\alpha(x,\varepsilon t)$ and the connection $\bar A(x,t)+\bar\Psi(x,t) dt= A(x,\varepsilon t)+\varepsilon \Psi(x,\varepsilon t) dt$, then
\begin{equation*}
\begin{split}
\|\psi\|_{\Xi_c,2,p,\varepsilon}=& \varepsilon^{\frac1p} \|\bar\psi\|_{\bar A+\bar \Psi dt,2,p,1}\\
\leq & c\varepsilon^{\frac1p}\left(\left\|d_{\bar A}^*d_{\bar A}\bar \psi-\nabla_t^{\bar \Psi}\nabla_t^{\bar \Psi}\bar\psi\right\|_{L^{p}}
+\|\psi\|_{\bar A+\bar \Psi dt,1,p,1}\right)\\
= & c\left(\left\|d_{A_c}^*d_{A_c}\psi-\varepsilon^2\nabla_t^{\Psi_c}\nabla_t^{\Psi_c}\psi\right\|_{L^{p}}
+\|\psi\|_{\Xi_c, 1,p,\varepsilon}\right)
\end{split}
\end{equation*}
and analogously,
\begin{equation*}
\begin{split}
\|\alpha\|_{\Xi_c,2,p,\varepsilon}=& \varepsilon^{\frac1p} \|\bar\alpha\|_{\bar A+\bar \Psi dt,2,p,1}\\
\leq & c\varepsilon^{\frac1p}\left(\left\|(d_{\bar A}^*d_{\bar A}+d_{\bar A}d_{\bar A}^*-\nabla_t^{\bar \Psi}\nabla_t^{\bar \Psi})\bar\alpha\right\|_{L^{p}}
+\|\alpha\|_{\bar A+\bar \Psi dt,1,p,1}\right)\\
= & c\left(\left\|(d_{A_c}^*d_{A_c}+d_{A_c}d_{A_c}^*-\varepsilon^2\nabla_t^{\Psi_c}\nabla_t^{\Psi_c})\alpha\right\|_{L^{p}}
+\|\alpha\|_{\Xi_c, 1,p,\varepsilon}\right).
\end{split}
\end{equation*}
\end{proof}

{\bf Step 3.} For $1<p<\infty$ there exists a constant $c$, such that
\begin{equation}\label{eq:lemma:estuM3s3}
\|\psi\|_{\tilde \Xi, 2,p,\varepsilon}\leq c\left(\left\|d_{\tilde A}^*d_{\tilde A}\psi-\varepsilon^2\nabla_t^{\tilde\Psi}\nabla_t^{\tilde\Psi}\psi\right\|_{L^{p}}+\|\psi\|_{\tilde \Xi, 1,p,\varepsilon}\right)
\end{equation}
\begin{equation}\label{eq:lemma:estuM3s32}
\|\alpha\|_{\tilde \Xi, 2,p,\varepsilon}\leq c\left(\left\|\left(d_{\tilde A}^*d_{\tilde A}+d_{\tilde A}d_{\tilde A}^*-\varepsilon^2\nabla_t^{\tilde\Psi}\nabla_t^{\tilde\Psi}\right)\alpha\right\|_{L^{p}}+\|\alpha\|_{\tilde \Xi, 1,p,\varepsilon}\right)
\end{equation}
holds for every $0$-form $\psi\in W^{2,p}_c(U\times \mathbb R,\mathfrak g)$ and every $1$-form $\alpha \in W^{2,p}_c(U\times \mathbb R,T^*(U\times \mathbb R)\times\mathfrak g)$ with compact support in $U\times \mathbb R$.

\begin{proof}[Proof of step 3] The third step follows from the second step and the assumption (\ref{crit:lienest:sssa}).
\end{proof}

{\bf Step 4.} We prove the theorem.

\begin{proof}[Proof of step 4]
We choose a finite atlas $\{V_i,\varphi_i:V_i\to \Sigma\times S^1\}_{i\in I}$ of our 3-manifold $\Sigma\times S^1$. 
Furthermore, we fix a partition of the unity 
$\{\rho_i\}_{i\in I}\subset C^\infty(\Sigma\times S^1,[0,1])$, $\sum_{i\in I}\rho_i(x)=1$ 
for every $x\in \Sigma\times S^1$ and $\mathrm{supp}(\rho_i)\subset \varphi_i(V_i)$ for any $i\in I$. Furthermore, we denote by $\Xi_i=A_i+\Psi_i dt\in \Omega(V_i, \mathfrak g)$ the local representations of the connection $A+\Psi dt$ on $V_i$ and by $\alpha_i$ the local representations of $\alpha$. We choose the atlas in order that each $\Xi_i$ satisfies the condition (\ref{crit:lienest:sssa}) for constant connections $\Xi_i^c$. Then by the last step
\begin{equation*}
\begin{split}
\|(\rho_i\circ\varphi_i) \alpha_i\|_{\Xi_i,2,p,\varepsilon}
\leq & c(\Xi_i)
\left\|\left(d_{A_i}d^{*}_{A_i}
+d^{}_{A_i}d_{A_i}-\nabla_t^{\Psi_i}\nabla_t^{\Psi_i}\right)((\rho_i\circ\varphi_i) \alpha_i)\right\|_{L^p(U_i)}\\
&+ c(\Xi_i)\|(\rho_i\circ\varphi_i) \alpha_i\|_{\Xi_i,1,p,\varepsilon},
\end{split}
\end{equation*}
If we sum up all the estimates we obtain
\begin{equation*}
\begin{split}
\|\alpha&\|_{A+\Psi dt,2,p,\varepsilon}
\leq c\|\alpha\|_{1,p,\varepsilon}+\sum_{i\in I}\|(\rho_i\circ\varphi_i) \alpha_i\|_{\Xi_i, 2,p,\varepsilon}\\
\leq& \sum_{i\in I}  c(\Xi_i)
\left\|\left(d_{A_i}d^{*}_{A_i}
+d^{*}_{A_i}d_{A_i}-\varepsilon^2\nabla_t^{\Psi_i}\nabla_t^{\Psi_i}\right)((\rho_i\circ\varphi_i)\alpha_i)\right\|_{L^p(U_i)}\\
&+\sum_{i\in I}  c(\Xi_i)\|(\rho_i\circ\varphi_i)\alpha_i\|_{\Xi_i,1,p,\varepsilon} +c\|\alpha\|_{1,p,\varepsilon}\\
\leq & c(\Xi)\Big(
\left\|(d_{A}d^{*}_{A}
+d^{*}_{A}d_{A}-\varepsilon^2\nabla_t\nabla_t)\alpha\right\|_{L^p}+ \|\alpha\|_{A+\Psi dt,1,p,\varepsilon}\Big).
\end{split}
\end{equation*}
In the same way we can prove (\ref{crit:linest.sak2}).
\end{proof}
\end{proof}

The next lemma allows us to estimate the non-harmonic part of a 1-form using its harmonic term and the elliptic operator $d_Ad_A^*+d_A^*d_A-\varepsilon^2\nabla_t^2$.

\begin{lemma} \label{flow:lemma:lplppia11}
There are two positive constants $c$ and $\varepsilon_0$ such that the following holds. For any $i$-form $\xi\in W^{2,p}$, $i=0,1$ and $0<\varepsilon<\varepsilon_0$
\begin{equation}
\int_{S^1} \|\xi\|^p_{L^2(\Sigma)}  \,dt\leq c \int_{S^1}\|-\varepsilon^2\nabla_t^2\xi+\Delta_A\xi \|^p_{L^2(\Sigma)}dt+c\int_{S^1}\|\pi_A(\xi)\|^p_{L^2(\Sigma)}dt.
\end{equation}
where $\Delta_A=d_Ad_A^*+d_A^*d_A$.
\end{lemma}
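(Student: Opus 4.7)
My plan is to decompose $\xi(t) = \pi_A\xi(t) + \eta(t)$ with $\eta := (1-\pi_A)\xi$; the contribution $\int\|\pi_A\xi\|^p_{L^2(\Sigma)}\,dt$ is already on the right-hand side, so only $\eta$ needs to be controlled. The key algebraic fact is that harmonic $i$-forms sit in $\ker d_A \cap \ker d_A^*$, whence $d_A\xi = d_A\eta$ and $d_A^*\xi = d_A^*\eta$; combined with the pointwise-in-$t$ spectral gap
$$\|d_A\eta\|^2_{L^2(\Sigma)} + \|d_A^*\eta\|^2_{L^2(\Sigma)} \geq \lambda_0\,\|\eta\|^2_{L^2(\Sigma)},$$
in which $\lambda_0>0$ is uniform in $t$ by compactness of the moduli space $\mathcal M^g(P)$ and of the image of the fixed loop $A(\cdot)$, this will be the source of all subsequent estimates.

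First I would handle $p=2$ by pairing $L\xi := \Delta_A\xi - \varepsilon^2\nabla_t^2\xi$ against $\xi$ in $L^2(\Sigma\times S^1)$ and integrating by parts in $t$, which gives
$$\int_0^1\langle L\xi,\xi\rangle_{L^2(\Sigma)}\,dt = \int_0^1\bigl(\|d_A\xi\|^2_{L^2(\Sigma)}+\|d_A^*\xi\|^2_{L^2(\Sigma)}+\varepsilon^2\|\nabla_t\xi\|^2_{L^2(\Sigma)}\bigr)\,dt \geq \lambda_0\int_0^1\|\eta\|^2_{L^2(\Sigma)}\,dt.$$
Bounding the left side by $\|L\xi\|\,(\|\eta\|+\|\pi_A\xi\|)$ via Cauchy--Schwarz and absorbing the $\|\eta\|^2$ term with Young's inequality then yields the $p=2$ bound.

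For general $p\geq 2$, I would first promote the pointwise spectral gap to $\|\eta(t)\|_{L^2(\Sigma)}\leq (1/\lambda_0)\|\Delta_A\xi(t)\|_{L^2(\Sigma)}$, so that $\int\|\eta\|^p\leq c\int\|\Delta_A\xi\|^p$, whereupon it suffices to control $\int\|\Delta_A\xi\|^p_{L^2(\Sigma)}\,dt$ by $\int\|L\xi\|^p + \int\|\pi_A\xi\|^p$ modulo absorbable errors. This I would achieve by testing $L\xi$ against the weighted object $\|\Delta_A\xi(t)\|^{p-2}_{L^2(\Sigma)}\,\Delta_A\xi$ in $L^2(\Sigma\times S^1)$, integrating the $\varepsilon^2\nabla_t^2\xi$ contribution by parts in $t$, and controlling the resulting commutator via $\|[\nabla_t,\Delta_A]\xi\|_{L^2(\Sigma)}\leq c\|\xi\|_{L^2(\Sigma)}$, which stems from the uniform $L^\infty$-bound (\ref{crit:boundlin}) on $\partial_tA-d_A\Psi$ along the fixed perturbed geodesic; residual $\varepsilon^p\|\nabla_t\xi\|^p$ errors can then be swallowed by taking $\varepsilon$ small and reusing the $p=2$ bound.

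The hardest step will be the passage from $p=2$ to $p>2$: the $p=2$ argument rests on the Hodge orthogonality between the harmonic and non-harmonic parts, but the scalar weight $\|\Delta_A\xi\|^{p-2}_{L^2(\Sigma)}$ breaks this orthogonality and spawns cross terms through the $t$-integration by parts; dominating these by the leading terms with constants independent of $\varepsilon\in(0,\varepsilon_0)$ hinges crucially on the commutator bound for $[\nabla_t,\Delta_A]$ supplied by (\ref{crit:boundlin}), and one must be careful that no constant blows up as $\varepsilon\to 0$.
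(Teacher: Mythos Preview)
Your $p=2$ argument matches the paper's. For general $p$, however, the paper takes a simpler route: it pairs $L\xi$ against $\|\xi\|_{L^2(\Sigma)}^{p-2}\xi$ rather than against $\|\Delta_A\xi\|_{L^2(\Sigma)}^{p-2}\Delta_A\xi$. Because the weight $\|\xi(t)\|_{L^2(\Sigma)}^{p-2}$ is a scalar function of $t$ only, integration by parts in $t$ gives
\[
\int_{S^1}\|\xi\|^{p-2}\langle\xi,-\varepsilon^2\partial_t^2\xi\rangle\,dt
=\varepsilon^2\int_{S^1}\|\xi\|^{p-2}\|\partial_t\xi\|^2\,dt
+(p-2)\varepsilon^2\int_{S^1}\|\xi\|^{p-4}\langle\xi,\partial_t\xi\rangle^2\,dt\ \geq\ 0,
\]
with \emph{no commutator term whatsoever}. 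Combining this with the pointwise inequality $\|\xi\|^2_{L^2(\Sigma)}\leq c\bigl(\|d_A\xi\|^2+\|d_A^*\xi\|^2+\|\pi_A\xi\|^2\bigr)$ and H\"older in $t$, one obtains
\[
\int_{S^1}\|\xi\|^p\,dt\leq c\Bigl(\int_{S^1}\|\xi\|^p\,dt\Bigr)^{\frac{p-1}{p}}
\left[\Bigl(\int_{S^1}\|L\xi\|^p\,dt\Bigr)^{\frac1p}+\Bigl(\int_{S^1}\|\pi_A\xi\|^p\,dt\Bigr)^{\frac1p}\right],
\]
and the lemma follows for all $p\geq2$ in one stroke.

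Your route may well be salvageable, but note that your commutator estimate is misstated: by (\ref{commform})--(\ref{commform2}) the operator $[\nabla_t,\Delta_A]$ is \emph{first}-order on $\Sigma$, so one only has
$\|[\nabla_t,\Delta_A]\xi\|_{L^2(\Sigma)}\leq c\bigl(\|\xi\|+\|d_A\xi\|+\|d_A^*\xi\|\bigr)_{L^2(\Sigma)}$, not $c\|\xi\|_{L^2(\Sigma)}$. This makes the absorption of cross terms more delicate than you indicate, and the ``residual $\varepsilon^p\|\nabla_t\xi\|^p$'' terms do not reduce to the $p=2$ bound without a further interpolation step. The paper's choice of test function bypasses the entire difficulty you flag as the hardest step.
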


\begin{proof}
In this proof we denote the norm $\|\cdot\|_{L^2(\Sigma) }$ by $\|\cdot\|$. If we consider only the Laplace part of the operator, we obtain that
\begin{equation*}
\begin{split}
\int_{S^1}\|\xi\|^{p-2}\langle\xi,-\varepsilon^2\partial^2_t\xi+\Delta_A\xi\rangle dt=&\int_{S^1}\|\xi\|^{p-2}\left(\varepsilon^2\|\partial_t\xi\|^2+\|d_A\xi\|^2+\|d_A^*\xi\|^2\right)dt\\
&+\int_{S^1}(p-2)\|\xi\|^{p-4}\langle\xi,\partial_t\xi\rangle^2dt
\end{split}
\end{equation*}
and thus
\begin{equation}\label{flow:dghhga}
\begin{split}
\int_{S^1}\|\xi\|^{p-2}&\left(\varepsilon^2\|\partial_t\xi\|^2+\|d_A\xi\|^2+\|d_A^*\xi\|^2\right)dt\\
\leq &\int_{S^1}\|\xi\|^{p-2}\langle\xi,-\varepsilon^2\partial^2_t\xi+\Delta_A\xi\rangle dt\\
\leq&\int_{S^1}\|\xi\|^{p-1}\|\varepsilon^2\partial_s\xi-\varepsilon^2\partial^2_t\xi+\Delta_A\xi\|\,dt\\
\leq &\left(\int_{S^1}\|\xi\|^p dt\right)^{\frac{p-1}p}\left(\int_{S^1}\|\varepsilon^2\partial_s\xi-\varepsilon^2\partial^2_t\xi+\Delta_A\xi\|^p dt\right)^{\frac{1}p}
\end{split}
\end{equation}
where the second step follows from the Cauchy-Schwarz inequality and the fourth from the H\"older inequality. Therefore, by lemma \ref{flow:lemma:lpl2}
\begin{align*}
\int_{S^1}\|\xi\|^p dt\leq &\int_{S^1}\|\xi\|^{p-2}\left(\|d_A\xi\|^2+\|d_A^*\xi\|^2+\|\pi_A(\xi)\|^2\right)dt\\
\intertext{and by (\ref{flow:dghhga}) we have that}
\leq& \left(\int_{S^1}\|\xi\|^p dt\right)^{\frac{p-1}p}\left(\int_{S^1}\|-\varepsilon^2\partial^2_t\xi+\Delta_A\xi\|^p dt\right)^{\frac{1}p}\\
&+\int_{S^1}\|\xi\|^{p-1}\|\pi_A(\xi)\|\,dt\\
\intertext{in addition by the H\"older inequality}
\leq& \left(\int_{S^1}\|\xi\|^p dt\right)^{\frac{p-1}p}\left(\int_{S^1}\|-\varepsilon^2\partial^2_t\xi+\Delta_A\xi\|^p dt\right)^{\frac{1}p}\\
&+\left(\int_{S^1}\|\xi\|^p dt\right)^{\frac{p-1}p}\left(\int_{S^1}\|\pi_A(\xi)\|^p dt\right)^{\frac{1}p};
\end{align*}
thus, we can conclude that
\begin{equation*}
\int_{S^1}\|\xi\|^p dt\leq c\int_{S^1}\left( \|-\varepsilon^2\partial^2_t\xi+\Delta_A\xi\|^p+\|\pi_A(\xi)\|^p\right)dt.
\end{equation*}
and hence we finished the proof of the lemma using that $\|\Psi\|_{L^\infty}+\|\partial_t\Psi\|_{L^\infty}$ is bounded by a constant.
\end{proof}

\begin{proof}[Proof of theorem \ref{lemma:evaluate}]
By lemma \ref{flow:lemma:lpl2}, for any $\delta>0$ there is a $c_0$ such that
\begin{equation*}
\begin{split}
\|\alpha\|_{L^p}^p\leq &\delta\left(\|d_A\alpha\|_{L^p}^p+\|d_A*\alpha\|_{L^p}^p\right)+ c_0\int_{S^1}\|\alpha\|_{L^2}^p dt \\
\leq &\delta\left(\|d_A\alpha\|_{L^p}^p+\|d_A*\alpha\|_{L^p}^p\right)+ c_0c_1\int_{S^1 }\|\pi_A(\alpha)\|_{L^2}^p dt\\
&+ c_0c_1\int_{S^1}\|-\varepsilon^2\nabla_t^2\alpha+\Delta_A\alpha\|^p_{L^2}dt\\
\leq &\delta\left(\|d_A\alpha\|_{L^p}^p+\|d_A*\alpha\|_{L^p}^p\right)+ c_0c_1c_2\|\pi_A(\alpha)\|_{L^p}^p\\
&+ c_0c_1c_2\|-\varepsilon^2\nabla_t^2\alpha+\Delta_A\alpha \|^p_{L^p}\\
\leq &\delta\left(\|d_A\alpha\|_{L^p}^p+\|d_A*\alpha\|_{L^p}^p\right)+ c_0c_1c_2\|\pi_A(\alpha)\|_{L^p}^p+c_4\varepsilon^{2p}\|\alpha\|_{L^p}^p\\
&+ c_0c_1c_2\varepsilon^{2p}\|\mathcal D_1^\varepsilon(\xi) \|^p_{L^p}+c_4\varepsilon^{2p}\|\psi\|_{L^p}^p
\end{split}
\end{equation*}
where the second step follows form the lemma \ref{flow:lemma:lplppia11} and the third by the H\"older's inequality with $c_2:=\left(\int_{\Sigma}\mathrm{dvol}_{\Sigma}\right)^{\frac{p-2}p}$. If we choose therefore $\delta$ and $\varepsilon$ small enough we can improve the estimate of the theorem \ref{lemma:evaluate0} using the last estimate and we obtain (\ref{eq:lemma:evaluate1}), i.e.
\begin{equation*}
\|\xi\|_{2,p,\varepsilon}\leq c\left(\varepsilon^2\|\mathcal D^\varepsilon(\xi)\|_{0,p,\varepsilon}+\|\pi_A(\alpha)\|_{L^p}\right);
\end{equation*}
furthermore (\ref{eq:lemma:evaluate2}) can be proved by
\begin{equation*}
\begin{split}
\|(1-\pi_A)\xi\|_{2,p,\varepsilon}\leq& c\varepsilon^2\|\mathcal D^\varepsilon((1-\pi_A)\xi)\|_{0,p,\varepsilon}\\
\leq& c\varepsilon^2\left(\|\mathcal D^\varepsilon(\xi)\|_{0,p,\varepsilon}+\left\|-\nabla_t\nabla_t\pi_A(\alpha)-d*X_t(A)\pi_A(\alpha)\right\|_{L^p}\right)\\
&+c\varepsilon^3\left\|\frac 2{\varepsilon^2}*\left[\pi_A(\alpha)\wedge *\left(\partial_tA-d_A\Psi\right)\right]\,dt\right\|_{L^p}\\
\leq& c\left(\varepsilon^2\|\mathcal D^\varepsilon(\xi)\|_{0,p,\varepsilon}+\varepsilon^2\left\|\nabla_t\nabla_t\pi_A(\alpha)\right\|_{L^p}+\varepsilon\left\|\pi_A(\alpha)\right\|_{L^p}\right).
\end{split}
\end{equation*}
(\ref{eq:lemma:evaluate3}) follows from
\begin{equation*}
\begin{split}
\|(1-\pi_A)\alpha\|_{2,p,\varepsilon}\leq& c\varepsilon^2\|\mathcal D^\varepsilon((1-\pi_A)\alpha)\|_{0,p,\varepsilon}\\
\leq& c\varepsilon^2\|\mathcal D_1^\varepsilon((1-\pi_A)\alpha)\|_{L^p}\\
&+c\varepsilon\left\|2*\left[(1-\pi_A)\alpha \wedge *\left(\partial_tA-d_A\Psi\right)\right]\right\|_{L^p}\\
\leq& c\varepsilon^2\|\mathcal D^\varepsilon_1(\xi)\|_{0,p,\varepsilon}+c\varepsilon\left\|(1-\pi_A)\alpha \right\|_{L^p}\\
&+\varepsilon^2\left\|-\nabla_t\nabla_t\pi_A(\alpha)-d*X_t(A)\pi_A(\alpha)\right\|_{L^p}\\
&+\varepsilon^2\left\|2\left[\psi,\left(\partial_tA-d_A\Psi\right)\right]\right\|_{L^p}\\
\leq& c\varepsilon^2\left(\|\mathcal D_1^\varepsilon(\xi)\|_{0,p,\varepsilon}+\left\|\nabla_t\nabla_t\pi_A(\alpha)\right\|_{L^p}+\left\|\pi_A(\alpha)\right\|_{L^p}\right)\\
&+c\varepsilon\left\|(1-\pi_A)\alpha \right\|_{L^p}+c\varepsilon^2\|\psi\|_{L^p},
\end{split}
\end{equation*}
indeed, if we choose $\varepsilon$ small enough and we use (\ref{eq:lemma:evaluate2}) to estimate $c\varepsilon^2\|\psi\|_{L^p}$ we conclude
\begin{equation*}
\begin{split}
\|(1-\pi_A)\alpha\|_{2,p,\varepsilon}\leq&c\varepsilon^2\left(\|\mathcal D_1^\varepsilon(\xi)\|_{L^p}+\varepsilon^2\|\mathcal D_2^\varepsilon(\xi)\|_{L^p}\right)\\
&+c\varepsilon^2\left(\left\|\nabla_t\nabla_t\pi_A(\alpha)\right\|_{L^p}+\left\|\pi_A(\alpha)\right\|_{L^p}\right).
\end{split}
\end{equation*}

\end{proof}

\section{Quadratic estimates}\label{chapter:qe}

In the next chapter we will prove the existence and the uniqueness of a map $\mathcal T^{\varepsilon,b}$ between the perturbed geodesics and the perturbed Yang-Mills connection provided that $\varepsilon$ is small enough; in order to do this we need the following quadratic estimates.

\begin{lemma}\label{lemma:diffoperator}
For any two constants $p\geq 2$ and $c_0>0$ there are two positive constants $c$ and $\varepsilon_0$ such that for any two connections $A+\Psi dt, \tilde A+\tilde \Psi dt \in \mathcal A^{1,p}(P\times S^1)$
\begin{equation}\label{eq:easy}
\begin{split}
\big\|\big(\mathcal D^\varepsilon(A+\Psi dt)
&-\mathcal D^\varepsilon(\tilde A+\tilde \Psi dt)\big)(\alpha,\psi)\big\|_{0,p,\varepsilon}\\
\leq&\frac c{\varepsilon^2}\|A-\tilde A+(\Psi-\tilde \Psi)\, dt\|_{\infty,\varepsilon}
\|\alpha+\psi dt\|_{1,p,\varepsilon}\\
&+\frac c{\varepsilon^2}\|\alpha+\psi dt\|_{\infty,\varepsilon}
\|A-\tilde A+(\Psi-\tilde \Psi)\, dt\|_{1,p,\varepsilon}
\end{split}
\end{equation}
\begin{equation}\label{eq:easyww}
\begin{split}
\big\|\big(\mathcal D^\varepsilon(A+\Psi dt)
&-\mathcal D^\varepsilon(\tilde A+\tilde \Psi dt)\big)(\alpha,\psi)\big\|_{0,p,\varepsilon}\\
\leq&\frac c{\varepsilon^2}\|\tilde\alpha+\tilde\psi dt\|_{\infty,\varepsilon}
\|\alpha+\psi dt\|_{1,p,\varepsilon}\\
&+\frac c{\varepsilon^2}\left(\|d_{A}\tilde \alpha\|_{L^\infty}+\|d_{A}^*\tilde \alpha\|_{L^\infty}+\varepsilon\|\nabla_t\tilde \alpha\|_{L^\infty}\right) \|\alpha+\psi dt\|_{0,p,\varepsilon}\\
&+\frac c{\varepsilon^2}\left(\varepsilon\|d_{A}\tilde \psi\|_{L^\infty}+\varepsilon^2\|\nabla_t\tilde \psi\|_{L^\infty}\right) \|\alpha+\psi dt\|_{0,p,\varepsilon}
\end{split}
\end{equation}
holds for every $\alpha+\psi\,dt \in W^{1,p}$ and  
$\tilde A+\tilde \Psi\, dt=A+\Psi\,dt+\tilde\alpha+\tilde\psi\, dt$ with $\|\tilde\alpha+\tilde\psi\, dt\|_{\infty,\varepsilon}\leq c_0$ and any $0<\varepsilon<\varepsilon_0$.
\end{lemma}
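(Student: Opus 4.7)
The plan is to expand $\mathcal D^\varepsilon(\tilde A+\tilde\Psi\,dt)(\alpha,\psi)-\mathcal D^\varepsilon(A+\Psi\,dt)(\alpha,\psi)$ term by term and then bound each resulting product by H\"older's inequality. Set $\tilde a:=\tilde A-A$ and $\tilde\phi:=\tilde\Psi-\Psi$, so that the hypothesis $\|\tilde\alpha+\tilde\psi\,dt\|_{\infty,\varepsilon}\le c_0$ reads $\|\tilde a\|_{L^\infty}+\varepsilon\|\tilde\phi\|_{L^\infty}\le c_0$. Using the standard identities
\begin{equation*}
d_{\tilde A}\beta=d_A\beta+[\tilde a\wedge\beta],\qquad d_{\tilde A}^*\beta=d_A^*\beta\mp *[\tilde a\wedge *\beta],
\end{equation*}
\begin{equation*}
\nabla_t^{\tilde\Psi}\beta=\nabla_t^\Psi\beta+[\tilde\phi,\beta],\qquad F_{\tilde A}=F_A+d_A\tilde a+\tfrac12[\tilde a\wedge\tilde a],
\end{equation*}
the difference becomes a finite sum of pointwise multilinear expressions. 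Each summand is either bilinear, of the form (factor drawn from $\{\tilde a,\tilde\phi$ and their first derivatives $d_A\tilde a,d_A^*\tilde a,\nabla_t\tilde a,d_A\tilde\phi,\nabla_t\tilde\phi\}$) times (factor drawn from $\{\alpha,\psi$ and their first derivatives $d_A\alpha,d_A^*\alpha,\nabla_t\alpha,d_A\psi,\nabla_t\psi\}$), or trilinear with one extra factor from $\{\tilde a,\tilde\phi\}$. The a priori bound $\|\tilde a\|_{L^\infty}+\varepsilon\|\tilde\phi\|_{L^\infty}\le c_0$ reduces the trilinear terms to the bilinear case at the price of a constant.

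For (\ref{eq:easy}) I apply H\"older's inequality in the following dichotomy: whenever the $(\tilde a,\tilde\phi)$-factor carries no derivative, I put it in $L^\infty$ and the remaining factor (possibly with a derivative) in $L^p$, contributing to the first line of the right-hand side; whenever the $(\tilde a,\tilde\phi)$-factor carries a derivative, I swap the roles and put the $(\alpha,\psi)$-factor in $L^\infty$, contributing to the second line. The prefactor $\varepsilon^{-2}$ absorbs both the explicit $\varepsilon^{-2}$ in front of the second-order spatial terms and the commutator $\frac 2{\varepsilon^2}*[\alpha\wedge *(\partial_tA-d_A\Psi)]$ appearing in $\mathcal D^\varepsilon_1,\mathcal D^\varepsilon_2$, and the weight mismatches $\|\nabla_t\alpha\|_{L^p}\le\varepsilon^{-1}\|\alpha+\psi\,dt\|_{1,p,\varepsilon}$, $\|\psi\|_{L^p}\le\varepsilon^{-1}\|\alpha+\psi\,dt\|_{1,p,\varepsilon}$, $\|\tilde\phi\|_{L^\infty}\le\varepsilon^{-1}\|\tilde\alpha+\tilde\psi\,dt\|_{\infty,\varepsilon}$, and the analogous ones on the $\tilde\alpha+\tilde\psi\,dt$ side. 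Tracking all combinations, no term needs a power more negative than $\varepsilon^{-2}$.

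For (\ref{eq:easyww}) the pairing is rearranged so that $\alpha$ and $\psi$ remain in $L^p$ with no derivative. For the terms in which the derivative originally falls on $\alpha$ or $\psi$, such as $d_A^*[\tilde a\wedge\alpha]$ or $\nabla_t[\tilde\phi,\alpha]$ coming from $d_{\tilde A}^*d_{\tilde A}\alpha$ and $\nabla_t^{\tilde\Psi}\nabla_t^{\tilde\Psi}\alpha$, I invoke the Leibniz identities (e.g.\ $\nabla_t[\tilde\phi,\beta]=[\nabla_t\tilde\phi,\beta]+[\tilde\phi,\nabla_t\beta]$ and the corresponding one for $d_A^*$ of a bracket, obtained from $d_A^*=\mp *d_A *$) to split each term into a part with the derivative transferred onto $\tilde a$ or $\tilde\phi$, which feeds the second and third lines of (\ref{eq:easyww}) via $L^\infty$ on $d_A\tilde a,d_A^*\tilde a,\varepsilon\nabla_t\tilde a,\varepsilon d_A\tilde\phi,\varepsilon^2\nabla_t\tilde\phi$, and a part with the derivative still on $\alpha,\psi$, which is then estimated by the first line exactly as in (\ref{eq:easy}). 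The $\varepsilon$ and $\varepsilon^2$ weights attached to the derivatives of $\tilde\phi$ in the third line of (\ref{eq:easyww}) match the $\varepsilon$-weight that $\tilde\phi$ itself carries in $\|\cdot\|_{\infty,\varepsilon}$.

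The main obstacle is not conceptual but combinatorial. The argument reduces to H\"older together with Leibniz, but one must verify that after each expansion the $\varepsilon$-powers and the $L^p/L^\infty$ weights fit into exactly the scheme prescribed by the right-hand sides of (\ref{eq:easy}) and (\ref{eq:easyww}). The asymmetry between the weights carried by $\alpha$ versus $\psi$ (and by $\tilde a$ versus $\tilde\phi$) in $\|\cdot\|_{1,p,\varepsilon}$, $\|\cdot\|_{0,p,\varepsilon}$, and $\|\cdot\|_{\infty,\varepsilon}$ makes this verification tedious, especially for the cross-terms from the brackets $[\psi,\partial_tA-d_A\Psi]$ and $*[\alpha\wedge *(\partial_tA-d_A\Psi)]$ in $\mathcal D^\varepsilon_1,\mathcal D^\varepsilon_2$; everything else is mechanical.
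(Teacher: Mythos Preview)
Your proposal is correct and follows essentially the same approach as the paper: the paper writes out the explicit identities for $\mathcal D^\varepsilon_1(A+\Psi dt)-\mathcal D^\varepsilon_1(\tilde A+\tilde\Psi dt)$ and $\mathcal D^\varepsilon_2(A+\Psi dt)-\mathcal D^\varepsilon_2(\tilde A+\tilde\Psi dt)$ and then says the lemma follows by estimating term by term. Your description of the H\"older/Leibniz bookkeeping is in fact more detailed than what the paper spells out, but the underlying argument is the same.
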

\begin{proof}
On the one side, the difference between the two first 
components can be written as
\begin{equation}\label{eq:easy1}
\begin{split}
\Big(\mathcal D^\varepsilon_1(A+\Psi dt)
&-\mathcal D^\varepsilon_1(\tilde A+\tilde \Psi dt)\Big)(\alpha,\psi)\\
=&
-\frac1{\varepsilon^2}*
\left[\alpha\wedge*\left(d_{\tilde A}(A-\tilde A)
+\frac12[(A-\tilde A)\wedge(A-\tilde A)]\right)\right]\\
&-\frac1{\varepsilon^2}*[(A-\tilde A)\wedge *[(A-\tilde A)\wedge\alpha]]\\
&+\frac1{\varepsilon^2}d_{\tilde A}^*[(A-\tilde A)\wedge\alpha]
-\frac 1{\varepsilon^2}*[(A-\tilde A)\wedge*d_{\tilde A}\alpha]\\
&-2\left[\psi,\left(\nabla_t(A-\tilde A)
-d_{\tilde A}(\Psi-\tilde \Psi)+[(\Psi-\tilde \Psi),(A-\tilde A)]\right)\right]\\
&-\left[(\Psi-\tilde \Psi),\left(\nabla_t\alpha+[(\Psi-\tilde \Psi),\alpha]\right)\right]
-\nabla_t[(\Psi-\tilde \Psi),\alpha]\\
&+\frac1{\varepsilon^2}\left[(A-\tilde A)\wedge\left(d_{\tilde A}^*\alpha
-*[(A-\tilde A)\wedge*\alpha]\right)\right]\\
&-\frac1{\varepsilon^2}d_{\tilde A}*[(A-\tilde A)\wedge*\alpha]+d*X_t(\tilde A)\alpha-d*X_t(A)\alpha
\end{split}
\end{equation}
and on the other side,
\begin{equation}\label{eq:easy2}
\begin{split}
\Big(\mathcal D^\varepsilon_2(A+\Psi dt)
&-\mathcal D^\varepsilon_2(\tilde A+\tilde \Psi dt)\Big)(\alpha,\psi)\\
=&
\frac2{\varepsilon^2}*
\left[\alpha\wedge*\left(\nabla_t(A-\tilde A)-d_{\tilde A}(\Psi-\tilde \Psi)
-[(A-\tilde A),(\Psi-\tilde \Psi)]\right)\right]\\
&-\frac1{\varepsilon^2}*\left[(A-\tilde A)\wedge *\left([(A-\tilde A),\psi]
+d_{\tilde A}\psi\right)\right]\\
&+\frac1{\varepsilon^2}d_{\tilde A}^*[(A-\tilde A)\wedge\psi]
-\left[(\Psi-\tilde \Psi),\left([(\Psi-\tilde \Psi),\psi]
+\nabla_t\psi\right)\right]\\
&-\nabla_t[(\Psi-\tilde \Psi),\psi].
\end{split}
\end{equation}
The lemma follows estimating term by term the last two identities.
\end{proof}
Next, we consider the expansions, for a connection $A+\Psi dt\in \mathcal A^{2,p}(P\times S^1)$ and a $1$-form $\alpha+\psi dt \in W^{2,p}$,
\begin{equation*}
\mathcal F^\varepsilon_1(A+\alpha,\Psi+\psi)=\mathcal F^\varepsilon_1(A,\Psi)
+\mathcal D_1^\varepsilon(A,\Psi)(\alpha,\psi)+C_1(A,\Psi)(\alpha,\psi)
\end{equation*}
\begin{equation*}
\mathcal F^\varepsilon_2(A+\alpha,\Psi+\psi)=\mathcal F^\varepsilon_2(A,\Psi)
+\mathcal D_2^\varepsilon(A,\Psi)(\alpha,\psi) dt+C_2(A,\Psi)(\alpha,\psi) dt
\end{equation*}
and we prove the following estimates for the non linear terms $C_1(A,\Psi)(\alpha,\psi)$ and $C_2(A,\Psi)(\alpha,\psi)$.

\begin{lemma}\label{lemma:estimate:c}
For any constants $c_0>0$, $p\geq2$ and any reference connection $A_0+\Psi_0  dt\in \mathcal A^{2,p}(P\times S^1)$, there are two positive constants $c$ and $\varepsilon_0$ such that for $A+\Psi dt\in \mathcal A^{2,p}(P\times S^1)$
\begin{equation}\label{eq:c1term}
\begin{split}
\|C_1(A,\Psi)(\alpha,\psi)&+C_2(A,\Psi)(\alpha,\psi) dt\|_{0,p,\varepsilon}\\
\leq& \frac 1{\varepsilon^2}c\,\|\alpha+\psi dt\|_{\infty,\varepsilon}
\|\alpha+\psi dt\|_{1,p,\varepsilon}\\
&+\frac 1{\varepsilon^2}c\,\|\alpha+\psi dt\|_{\infty,\varepsilon}
\|A-A_0+(\Psi-\Psi_0) dt\|_{1,p,\varepsilon},
\end{split}
\end{equation}
\begin{equation}\label{eq:c1term22}
 \begin{split}
  \big\|\pi_{A_0}&\left(C_1(A,\Psi)(\alpha,\psi)\right)\big\|_{L^p}\leq\frac c{\varepsilon^2} \|\alpha+\psi dt\|_{\infty,\varepsilon}\|(1-\pi_{A_0})\alpha+\psi dt\|_{1,p,\varepsilon}\\ &+c\|\alpha\|_{L^\infty}\|\alpha\|_{L^p}+\|\psi\|_{L^\infty}\|\nabla_t\pi_{A_0}(\alpha)\|_{L^p}\\
&+\frac c{\varepsilon^2}\|\alpha\|_{L^\infty}^2\left(\|\alpha\|_{L^p}+\|A-A_0\|_{L^p}\right)\\
&+\frac c{\varepsilon^2}\|\alpha\|_{L^\infty}\left( \left\|d_A^*(A-A_0)\right\|_{L^p}+\|(\Psi-\Psi_0) dt\|_{1,p,\varepsilon}\right)\\
&+\frac c{\varepsilon^2}\|A-A_0\|^2_{L^\infty}\|\alpha\|_{L^p}+c\|\psi\|^2_{L^\infty}\|A-A_0\|_{L^\infty}\|\Psi-\Psi_0\|_{L^p}
 \end{split}
\end{equation}
for every $\alpha+\psi\,dt\in {\mathrm W}^{1,p}$ with norm 
$\|\alpha+\psi\,dt\|_{\infty,\varepsilon}<c_0$ and every 
$0<\varepsilon<\varepsilon_0$.
\end{lemma}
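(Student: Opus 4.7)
The plan is to derive explicit formulas for $C_1$ and $C_2$ by substituting $A\mapsto A+\alpha$ and $\Psi\mapsto\Psi+\psi$ in $\mathcal F^\varepsilon_1$ and $\mathcal F^\varepsilon_2$ and using the standard identities $F_{A+\alpha}=F_A+d_A\alpha+\tfrac12[\alpha\wedge\alpha]$, $d_{A+\alpha}\beta=d_A\beta+[\alpha\wedge\beta]$, $d_{A+\alpha}^*\beta=d_A^*\beta-*[\alpha\wedge*\beta]$, and $\nabla_t^{\Psi+\psi}=\nabla_t+[\psi,\cdot]$. After subtracting the linearization $\mathcal D_i^\varepsilon(A,\Psi)(\alpha,\psi)$, which by the remark preceding this lemma omits the derivatives of $d_A$, $d_A^*$ and $\nabla_t$ arising from the two gauge-fixing terms, the remainder $C_i$ is a sum of purely quadratic expressions in $(\alpha,\psi)$ (such as $\tfrac1{\varepsilon^2}*[\alpha\wedge *d_A\alpha]$, $\tfrac1{\varepsilon^2}*[\alpha\wedge*[\alpha\wedge \alpha]]$, $[\psi,\nabla_t\alpha]$, $[\psi,[\psi,\alpha]]$) plus cross terms between $(\alpha,\psi)$ and $(A-A_0,\Psi-\Psi_0)$ originating from the gauge-fixing contributions $\tfrac1{\varepsilon^2}d_Ad_A^*(A-A_0)-d_A\nabla_t(\Psi-\Psi_0)$ in $\mathcal F^\varepsilon_1$ and their analogue in $\mathcal F^\varepsilon_2$.

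The first estimate (\ref{eq:c1term}) then follows by applying H\"older's inequality termwise: each summand is bounded by an $L^\infty$-factor times a factor carrying either a derivative of $\alpha$, $\psi$, $A-A_0$ or $\Psi-\Psi_0$ in $L^p$. Collecting the terms, the purely quadratic contributions are bounded by $\tfrac c{\varepsilon^2}\|\alpha+\psi dt\|_{\infty,\varepsilon}\|\alpha+\psi dt\|_{1,p,\varepsilon}$, while the mixed contributions produce the second summand involving $\|A-A_0+(\Psi-\Psi_0)dt\|_{1,p,\varepsilon}$. The assumption $\|\alpha+\psi dt\|_{\infty,\varepsilon}\leq c_0$ is used to absorb triple products such as $[\alpha,[\alpha,\alpha]]$ and $[\psi,[\psi,\alpha]]$ into quadratic bounds by placing two factors in $L^\infty$ at the cost of a bounded constant.

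For the harmonic projection estimate (\ref{eq:c1term22}) the central input is the orthogonality $\pi_{A_0}d_{A_0}=0=\pi_{A_0}d_{A_0}^*$. I would rewrite each occurrence of $d_A$ and $d_A^*$ in $C_1$ as $d_{A_0}(\cdot)+[(A-A_0)\wedge\cdot]$ respectively $d_{A_0}^*(\cdot)-*[(A-A_0)\wedge *\cdot]$; after applying $\pi_{A_0}$ only the $[(A-A_0)\wedge\cdot]$ pieces survive, so a derivative is effectively traded for a zero-order factor of $A-A_0$. I would next split $\alpha=\pi_{A_0}(\alpha)+(1-\pi_{A_0})\alpha$; since $d_{A_0}\pi_{A_0}(\alpha)=d_{A_0}^*\pi_{A_0}(\alpha)=0$, any derivative of $\alpha$ becomes either a derivative of $(1-\pi_{A_0})\alpha$ (producing the factor $\|(1-\pi_{A_0})\alpha+\psi dt\|_{1,p,\varepsilon}$) or a bracket with $A-A_0$ acting on $\pi_{A_0}(\alpha)$. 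The term $\|\psi\|_{L^\infty}\|\nabla_t\pi_{A_0}(\alpha)\|_{L^p}$ arises from $[\psi,\nabla_t\alpha]$ after this splitting (the time derivative is the only one $\pi_{A_0}$ fails to annihilate), while the last summands in (\ref{eq:c1term22}) come from the gauge-fixing cross terms, with $\|d_A^*(A-A_0)\|_{L^p}$ controlling $\|d_{A_0}^*(A-A_0)\|_{L^p}$ up to terms quadratic in $A-A_0$.

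The main obstacle is the bookkeeping: one must match every individual summand in the explicit formula for $\pi_{A_0}C_1$ to the correct term on the right-hand side of (\ref{eq:c1term22}) with the correct power of $\varepsilon$. In particular one has to check that no $\tfrac1{\varepsilon^2}$ remains attached to $\pi_{A_0}(\alpha)$ alone; this is precisely what the identities $\pi_{A_0}d_{A_0}=0=\pi_{A_0}d_{A_0}^*$ ensure, but each term of $C_1$ has to be verified individually, and one must be careful with the bracket identities for the triple commutators $[\alpha,[\alpha,\alpha]]$, $[A-A_0,[A-A_0,\alpha]]$ and $[\psi,[\psi,A-A_0]]$, which all need to be split using $\pi_{A_0}+(1-\pi_{A_0})$ before being bounded by H\"older.
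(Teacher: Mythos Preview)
Your proposal is correct and follows essentially the same approach as the paper: the paper writes out $C_1$ and $C_2$ explicitly (as sums of brackets exactly of the type you describe), obtains (\ref{eq:c1term}) by a term-by-term H\"older estimate, and then rewrites $\pi_{A_0}C_1$ using $\pi_{A_0}d_{A_0}=0=\pi_{A_0}d_{A_0}^*$ (for instance $\tfrac1{2\varepsilon^2}d_A^*[\alpha\wedge\alpha]$ becomes $-\tfrac1{2\varepsilon^2}*[(A-A_0),*[\alpha\wedge\alpha]]$ after projection) before estimating each summand. Your explanation of the mechanism---trading derivatives for zero-order factors of $A-A_0$ via the harmonic projection, and splitting $\alpha$ along $\pi_{A_0}\oplus(1-\pi_{A_0})$ so that only $\nabla_t\pi_{A_0}(\alpha)$ survives among the derivatives of the harmonic part---is exactly what underlies the paper's explicit formula for $\pi_{A_0}C_1$.
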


\begin{proof}
By definition, $C_1$ and $C_2$ are
\begin{equation}\label{eq:c1term2}
\begin{split}
C_1(A,\Psi)(\alpha,\psi)=&X_t(A+\alpha)-*X_t(A)-d*X(A)\alpha\\
&+\frac1{2\varepsilon^2}d_A^*[\alpha\wedge\alpha]
+\frac 1{\varepsilon^2}*[\alpha\wedge *(d_A\alpha+[\alpha\wedge\alpha])]\\
&+\nabla_t[\psi,\alpha]
-[\psi,[\psi,\alpha]]+\frac 1{\varepsilon^2}[\alpha,d_A^*(A-A_0)+d_A^*\alpha]\\
&+[\psi,(\nabla_t\alpha-d_A\psi)]+\frac 1{\varepsilon^2}[\alpha,*[\alpha\wedge*(A-A_0)]]\\
&-\frac 1{\varepsilon^2}d_A*[\alpha\wedge*(A-A_0)]\\
&-[\alpha\wedge (\nabla_t(\Psi-\Psi_0)+\nabla_t\psi+[\psi,((\Psi-\Psi_0)+\psi)])]\\
&-d_A[\psi,((\Psi-\Psi_0)+\psi)],
\end{split}
\end{equation}
\begin{equation}\label{eq:c1term3}
\begin{split}
C_2(A,\Psi)(\alpha,\psi)=
&\frac1{\varepsilon^2}*[\alpha\wedge*(\nabla_t\alpha-d_A\psi-[\alpha,\psi])-\frac1{\varepsilon^2}d_A^*[\psi,\alpha]\\
&+\frac 1{\varepsilon^2}[\psi,(d_A^*(A-A_0+\alpha)-*[\alpha\wedge *(A-A_0)])]\\
&+\frac 1{\varepsilon^2}\nabla_t*[\alpha\wedge*(A-A_0)]\\
&-[\psi,(\nabla_t(\Psi-\Psi_0+\psi)+[\psi,(\Psi-\Psi_0)])]-\nabla_t[\psi,(\Psi-\Psi_0)]
\end{split}
\end{equation}
and if we estimate term by term, we have
\begin{equation*}
\begin{split}
\|C_1(A,\Psi)(\alpha,\psi)&+C_2(A,\Psi)(\alpha,\psi) dt\|_{0,p,\varepsilon}\\
\leq& \frac 1{\varepsilon^2}c\,\|\alpha+\psi dt\|_{\infty,\varepsilon}
\|\alpha+\psi dt\|_{1,p,\varepsilon}\\
&+\frac 1{\varepsilon^2}c\,\|\alpha+\psi dt\|_{\infty,\varepsilon}
\|A-A_0+(\Psi-\Psi_0) dt\|_{1,p,\varepsilon}.
\end{split}
\end{equation*}
Next, we consider
\begin{equation}\label{eq:c1term2har}
\begin{split}
\pi_{A_0}C_1(A,\Psi)(\alpha,\psi)
=&\pi_{A_0}\left(X_t(A+\alpha)-*X_t(A)-d*X(A)\alpha\right)\\
&+\pi_{A_0}\left(\frac 1{\varepsilon^2}*[\alpha\wedge *(d_A\alpha+[\alpha\wedge\alpha])]\right)\\
&-\pi_{A_0}\left(\frac 1{2\varepsilon^2}*[(A-A_0),*[\alpha\wedge\alpha]]+2[\psi,\nabla_t\pi_{A_0}(\alpha)]\right)
\\
&+\pi_{A_0}\left([\nabla_t\psi,\alpha]+2[\psi,\nabla_t(1-\pi_{A_0})\alpha]
-[\psi,[\psi,\alpha]]\right)\\
&+\pi_{A_0}\left(\frac 1{\varepsilon^2}[\alpha,d_A^*(A-A_0)+d_A^*\alpha]\right)\\
&+\pi_{A_0}\left(-[\psi,d_A\psi]+\frac 1{\varepsilon^2}[\alpha,*[\alpha\wedge*(A-A_0)]]\right)\\
&-\pi_{A_0}\left(\frac 1{\varepsilon^2}\left[(A-A_0),*[\alpha\wedge*(A-A_0)]\right]\right)\\
&+\pi_{A_0}\left(-[\alpha\wedge (\nabla_t(\Psi-\Psi_0)+\nabla_t\psi)]\right)\\
&+\pi_{A_0}\left(-[\alpha\wedge[\psi,((\Psi-\Psi_0)+\psi)]]\right)\\
&-\pi_{A_0}\left(\left[(A-A_0),[\psi,((\Psi-\Psi_0)+\psi)]\right]\right),
\end{split}
\end{equation}
thus if we estimate all the summands we obtain (\ref{eq:c1term22}).

\end{proof}

\section{The map $\mathcal T^{\varepsilon,b}$ between the critical connections} \label{section:construction}

In this section we will defined the map $\mathcal T^{\varepsilon,b}$ which relates the perturbed closed geodesics to the perturbed Yang-Mills connections and for this purpose we assume that the Jacobi operator is invertible for every geodesic. The definition will be based on the following two theorems.

\begin{theorem}[Existence]\label{thm:existence}
We choose a regular energy level $b$ of $E^H$ and $p\geq2$. There are costants $\varepsilon_0,\,c>0$ such that the following holds. If $\Xi^0=A^0+\Psi^0 dt\in \mathrm{Crit}^b_{E^H}$ is a perturbed closed geodesic and 
$$\alpha_0^\varepsilon(t)\in \textrm{im } \left(d_{A^0(t)}^*:\Omega^2(\Sigma,\mathfrak g_P)\to \Omega^1(\Sigma,\mathfrak g_P)\right)$$
is the unique solution of
\begin{equation}\label{eq:firststep}
d_{A^0}^*d_{A^0}\alpha_0^\varepsilon
=\varepsilon^2\nabla_t(\partial_tA^0-d_{A^0}\Psi^0)+\varepsilon^2
*X_t(A^0),
\end{equation}
then, for any positive $\varepsilon<\varepsilon_0$, there is a perturbed Yang-Mills connection $\Xi^\varepsilon\in\mathrm{Crit}^b_{\mathcal{YM}^{\varepsilon,H}}$ which satisfies 
\begin{equation}\label{eq:cbb}
d_{\Xi^0}^{*_\varepsilon}\left(\Xi^\varepsilon-\Xi^0\right)=0,\quad
 \left\| \Xi^\varepsilon-\Xi^0
\right\|_{2,p,\varepsilon}\leq c\varepsilon^2
\end{equation}
and, for $\alpha+\psi dt:=\Xi^\varepsilon-\Xi^0$,
\begin{equation}\label{eq:cbb2}
\left\| (1-\pi_{A^0})(\alpha-\alpha_0^\varepsilon)
\right\|_{2,p,\varepsilon}+\varepsilon\left\|\psi dt\right\|_{2,p,\varepsilon} \leq c\varepsilon^{4},
\end{equation}
\begin{equation}\label{eq:cbb2cxy}
\left\|\pi_{A^0}(\alpha)
\right\|_{2,p,1}+\varepsilon\left\|\pi_{A^0}(\alpha)\right\|_{L^\infty} \leq c\varepsilon^{2}.
\end{equation}
\end{theorem}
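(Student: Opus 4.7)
The plan is to realize $\Xi^\varepsilon$ as a perturbation of the improved approximate solution $\Xi_\star := \Xi^0 + \alpha_0^\varepsilon$ by running a Newton/Banach fixed point argument based on the zero equation $\mathcal F^\varepsilon(\Xi_\star + \xi)=0$, where $\mathcal F^\varepsilon$ is the map defined in (\ref{eq:fe1})--(\ref{eq:fe2}) whose zeros are precisely the perturbed Yang-Mills connections satisfying the gauge condition $d_{\Xi^0}^{*_\varepsilon}(\Xi-\Xi^0)=0$. The starting point $\alpha_0^\varepsilon$ is chosen to kill the leading term of $\mathcal F^\varepsilon(\Xi^0)$: since $F_{A^0}=0$ and since the geodesic equation (\ref{eq:thm:geod:eq1}) reads $-\nabla_t(\partial_tA^0-d_{A^0}\Psi^0)-*X_t(A^0)=d_{A^0}^*\omega$, one has $\mathcal F^\varepsilon_1(\Xi^0)=d_{A^0}^*\omega$; equation (\ref{eq:firststep}) together with $\frac1{\varepsilon^2}d_{A^0}^*d_{A^0}\alpha_0^\varepsilon=d_{A^0}^*\omega$ (via $\mathcal F_1^\varepsilon$ evaluated at $\Xi^0+\alpha_0^\varepsilon$) makes this cancellation explicit. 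Since $\alpha_0^\varepsilon\in\textrm{im}\,d_{A^0}^*$ and $d_{A^0}^*d_{A^0}$ is invertible on its image with inverse of order one, elliptic regularity for the flat Laplacian gives $\|\alpha_0^\varepsilon\|_{2,p,\varepsilon}\le c\varepsilon^2$.

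Next I would expand $\mathcal F^\varepsilon(\Xi_\star)$. By construction the $O(\varepsilon^{-2})$ and $O(1)$ terms cancel, so the remainder consists of the quadratic contributions in $\alpha_0^\varepsilon$ coming from $\frac{1}{2\varepsilon^2}d_{A^0}^*[\alpha_0^\varepsilon\wedge\alpha_0^\varepsilon]$ and from $\nabla_t(\partial_t\alpha_0^\varepsilon)$, $d*X_t(A^0)\alpha_0^\varepsilon$, etc. The quadratic estimate (\ref{eq:c1term}) applied to $(A^0,\Psi^0)$ and $(\alpha_0^\varepsilon,0)$, combined with $\|\alpha_0^\varepsilon\|_{\infty,\varepsilon}\le c\varepsilon^{2-1/p}$ from (\ref{eq:sobolev2}), yields $\|\mathcal F^\varepsilon(\Xi_\star)\|_{0,p,\varepsilon}\le c\varepsilon^{2}$. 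Moreover, projecting onto harmonic forms using (\ref{eq:c1term22}) shows that the harmonic component $\pi_{A^0}(\mathcal F_1^\varepsilon(\Xi_\star))$ is already $O(\varepsilon^4/\varepsilon^2)=O(\varepsilon^2)$ while the non-harmonic component is of the same order $O(\varepsilon^2)$; here it will be important that $d_{A^0}^*(A_\star-A^0)=d_{A^0}^*\alpha_0^\varepsilon=0$, which eliminates a potentially dangerous term.

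With the initial bound on $\mathcal F^\varepsilon(\Xi_\star)$ in hand, I would solve for $\xi$ by a standard contraction. Write the equation as $\mathcal D^\varepsilon(\Xi^0)\xi=-\mathcal F^\varepsilon(\Xi_\star)-R(\xi)$, where $R(\xi)=\big(\mathcal D^\varepsilon(\Xi_\star)-\mathcal D^\varepsilon(\Xi^0)\big)\xi+\big(\mathcal F^\varepsilon(\Xi_\star+\xi)-\mathcal F^\varepsilon(\Xi_\star)-\mathcal D^\varepsilon(\Xi_\star)\xi\big)$. Theorem \ref{lemma:evaluate2} provides a right inverse $Q$ of $\mathcal D^\varepsilon(\Xi^0)$ that maps $0$-forms into $\mathrm W^{2,p}$ with control $\varepsilon\,\|\cdot\|_{0,p,\varepsilon}+\|\pi_{A^0}(\cdot)\|_{L^p}$ on the output norm (and the sharper estimates (\ref{eq:lemma:evaluate22})--(\ref{eq:lemma:evaluate2277}) on each component). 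The difference term $(\mathcal D^\varepsilon(\Xi_\star)-\mathcal D^\varepsilon(\Xi^0))\xi$ is estimated by (\ref{eq:easy}) with $\|\alpha_0^\varepsilon\|_{\infty,\varepsilon}=O(\varepsilon^{2-1/p})$, and the purely nonlinear remainder by Lemma \ref{lemma:estimate:c}. Choosing $\varepsilon$ small, the map $\xi\mapsto -Q\,(\mathcal F^\varepsilon(\Xi_\star)+R(\xi))$ becomes a contraction on a ball of radius $c\varepsilon^2$ in $\mathrm W^{2,p}$ with the norm $\|\cdot\|_{2,p,\varepsilon}$, producing a unique fixed point $\xi$ with $\|\xi\|_{2,p,\varepsilon}\le c\varepsilon^2$. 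This yields $\Xi^\varepsilon=\Xi_\star+\xi$ and the total bound (\ref{eq:cbb}).

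The final and most delicate step is bootstrapping to (\ref{eq:cbb2}) and (\ref{eq:cbb2cxy}). For the non-harmonic and $\psi$-part I would re-insert the fixed point $\xi$ into the refined linear estimate (\ref{eq:lemma:evaluate22}) and the separate estimate for $(1-\pi_{A^0})\alpha$ in (\ref{eq:lemma:evaluate2277}); using that $\pi_{A^0}(\mathcal F^\varepsilon_1(\Xi_\star+\xi))$ is itself quadratic in $\xi$ by (\ref{eq:c1term22}), these estimates gain an extra factor $\varepsilon^2$, producing the $O(\varepsilon^4)$ bound for $(1-\pi_{A^0})(\alpha-\alpha_0^\varepsilon)+\varepsilon\,\psi\,dt$. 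For the harmonic component $\pi_{A^0}(\alpha)$, which is governed by the linearized Jacobi-type equation on $H^1_{A^0}(\Sigma,\mathfrak g_P)$, I would plug the bounds just obtained into (\ref{eq:lemma:evaluate2222}) and exploit the invertibility hypothesis on the Jacobi operator $\mathcal D^0$ to solve for $\pi_{A^0}(\alpha)$ in $\mathrm W^{2,p}$ with a bound $c\varepsilon^2$; the $L^\infty$ part of (\ref{eq:cbb2cxy}) then follows from the Sobolev embedding (\ref{eq:sobolev2}) with an extra factor $\varepsilon^{-1/p}$ absorbed by the reserve. The main obstacle I expect is precisely this last bootstrapping: controlling the harmonic component demands that one carefully accounts for the coupling $*[\alpha\wedge *\omega]$ inherent in the Jacobi operator (\ref{jacoperator1}), and the factor of $\varepsilon$ improvement in (\ref{eq:cbb2cxy}) requires that the projection of the nonlinear error onto harmonic forms gains strictly better than generic regularity, which is why the refined harmonic estimate (\ref{eq:c1term22}) and the use of $d_{A^0}^*\alpha_0^\varepsilon=0$ are indispensable.
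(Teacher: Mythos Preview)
Your outline is essentially correct and matches the approach the paper sets up, though the paper itself does not give a self-contained proof of this theorem: it defers to Hong \cite{MR1715156} and to \cite{remyj}, and instead records the auxiliary Lemmas~\ref{lemma:step1xi1} and~\ref{lemma:evaluate3} that package exactly the ingredients you describe. In particular, Lemma~\ref{lemma:evaluate3} already states the linear estimates at the shifted connection $\Xi_1^\varepsilon=\Xi^0+\alpha_0^\varepsilon$, so one linearises directly there rather than at $\Xi^0$ and then absorbing $(\mathcal D^\varepsilon(\Xi_\star)-\mathcal D^\varepsilon(\Xi^0))\xi$ into $R(\xi)$; this is only a cosmetic difference from your write-up.

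One small correction: you assert $\|\mathcal F^\varepsilon(\Xi_\star)\|_{0,p,\varepsilon}\le c\varepsilon^2$, but Lemma~\ref{lemma:step1xi1} gives $\|\mathcal F_2^\varepsilon(\Xi_1^\varepsilon)\|_{L^p}\le c$ only (not $c\varepsilon^2$), so with the $\varepsilon$-weight the combined norm is $O(\varepsilon)$. This does not hurt the iteration, since the linear estimate (\ref{eq:lemma:evaluate32}) carries an extra $\varepsilon^2$ in front of $\|\mathcal D^\varepsilon\|_{0,p,\varepsilon}$, and for the sharper bound (\ref{eq:cbb2}) on $(1-\pi_{A^0})(\alpha-\alpha_0^\varepsilon)$ one uses (\ref{eq:lemma:evaluate3287}), which keeps $\|\mathcal D_1^\varepsilon\|_{L^p}$ and $\varepsilon^2\|\mathcal D_2^\varepsilon\|_{L^p}$ separate. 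With that adjustment your scheme goes through as written.
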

\begin{remark}
As we already mentioned, a similar version of the theorem \ref{thm:existence} was proved by Hong in \cite{MR1715156} for $p=2$ and we refer to \cite{remyj} for a complete proof in our setting; the proof for a general $p$ follows in the same way.
\end{remark}

\begin{remark}
The operator $d_{\Xi^0}^{*_\varepsilon}$ is defined using the $L^2$-inner product as we explained in the section \ref{sec:pre} and thus, it does not depend on the choice of $p$.
\end{remark}

\begin{theorem}[Local uniqueness]\label{thm:localuniqueness}
For any perturbed geodesic $\Xi^0\in \mathrm{Crit}^b_{E^H}$  and any $c>0$ there are an $\varepsilon_0>0$ and a $\delta>0$ such that the following holds for any positive $\varepsilon<\varepsilon_0$. If $\Xi^\varepsilon$, $\bar\Xi^\varepsilon$ are two perturbed Yang-Mills connections that satisfy the condition
$$d_{\Xi^0}^{*_\varepsilon}\left(\Xi^\varepsilon-\Xi^0\right)=d_{\Xi^0}^{*_\varepsilon}\left(\bar\Xi^\varepsilon-\Xi^0\right)=0$$ and the estimates
$$\varepsilon\left\| \Xi^\varepsilon-\Xi^0
\right\|_{2,p,\varepsilon} + \left\| (1-\pi_{A^0})(\Xi^\varepsilon-\Xi^0-\alpha_0^\varepsilon)
\right\|_{1,p,\varepsilon}\leq c\varepsilon^3$$
with $\alpha_0^\varepsilon$ defined uniquely as in (\ref{eq:firststep}) and
\begin{equation}\label{eq:themap22}
\left\|\bar \Xi^\varepsilon-\Xi^0
\right\|_{1,p,\varepsilon}+\left\|\bar \Xi^\varepsilon-\Xi^0
\right\|_{\infty,\varepsilon}\leq \delta\varepsilon,
\end{equation}
then $\bar \Xi^\varepsilon=\Xi^\varepsilon$.
\end{theorem}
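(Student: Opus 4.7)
The plan is a Banach contraction argument for $\eta := \bar\Xi^\varepsilon - \Xi^\varepsilon = \bar\alpha + \bar\psi\,dt$. Subtracting the two gauge conditions gives $d_{\Xi^0}^{*_\varepsilon}\eta = 0$, and subtracting $\mathcal F^\varepsilon(\bar\Xi^\varepsilon)=\mathcal F^\varepsilon(\Xi^\varepsilon)=0$ while Taylor-expanding around $\Xi^\varepsilon$ produces
\begin{equation*}
\mathcal D^\varepsilon(\Xi^\varepsilon)\eta + C(\Xi^\varepsilon)(\eta) = 0,
\end{equation*}
with $C = C_1 + C_2\,dt$ the quadratic remainder of Section \ref{chapter:qe}. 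Shifting the operator to the reference connection, this can be rewritten as
\begin{equation*}
\mathcal D^\varepsilon(\Xi^0)\eta = \bigl(\mathcal D^\varepsilon(\Xi^0)-\mathcal D^\varepsilon(\Xi^\varepsilon)\bigr)\eta - C(\Xi^\varepsilon)(\eta),
\end{equation*}
so that the elliptic estimates of Theorem \ref{lemma:evaluate2} apply to the left-hand side.

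Next I would estimate the right-hand side. The hypothesis on $\Xi^\varepsilon$ together with the Sobolev estimate (\ref{eq:sobolev2}) of Theorem \ref{lemma:sobolev} gives $\|\Xi^\varepsilon-\Xi^0\|_{1,p,\varepsilon}\le c\varepsilon^2$ and $\|\Xi^\varepsilon-\Xi^0\|_{\infty,\varepsilon}\le c\varepsilon^{2-1/p}$; combined with (\ref{eq:themap22}) this yields the uniform bound $\|\eta\|_{\infty,\varepsilon}\le 2\delta\varepsilon$ for $\varepsilon$ small. Lemma \ref{lemma:diffoperator} then controls $\|(\mathcal D^\varepsilon(\Xi^0)-\mathcal D^\varepsilon(\Xi^\varepsilon))\eta\|_{0,p,\varepsilon}$ by $c\varepsilon^{-1/p}\|\eta\|_{2,p,\varepsilon}$, while the quadratic bound (\ref{eq:c1term}) of Lemma \ref{lemma:estimate:c}, evaluated with the uniform bound on $\|\eta\|_{\infty,\varepsilon}$ against $\|\eta\|_{1,p,\varepsilon}$ and the Sobolev bound against $\|\Xi^\varepsilon-\Xi^0\|_{1,p,\varepsilon}$, gives
\begin{equation*}
\|C(\Xi^\varepsilon)(\eta)\|_{0,p,\varepsilon}\le \Bigl(\tfrac{c\delta}{\varepsilon}+c\varepsilon^{-1/p}\Bigr)\|\eta\|_{2,p,\varepsilon}.
\end{equation*}
Multiplying by $\varepsilon$ as required by (\ref{eq:lemma:evaluate222}) of Theorem \ref{lemma:evaluate2}, the first summand of the elliptic estimate contributes at most $(c\delta+c\varepsilon^{1-1/p})\|\eta\|_{2,p,\varepsilon}$.

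The delicate harmonic-projection term $c\|\pi_{A^0}(\mathcal D_1^\varepsilon(\Xi^0)\eta+*[\bar\alpha\wedge *\omega])\|_{L^p}$ I would handle by replacing $\mathcal D_1^\varepsilon(\Xi^0)\eta$ by $(\mathcal D_1^\varepsilon(\Xi^0)-\mathcal D_1^\varepsilon(\Xi^\varepsilon))\eta-C_1(\Xi^\varepsilon)(\eta)$ and invoking the sharper estimate (\ref{eq:c1term22}) of Lemma \ref{lemma:estimate:c}, which bounds $\|\pi_{A^0}C_1(\Xi^\varepsilon)(\eta)\|_{L^p}$ in terms of $\|(1-\pi_{A^0})\bar\alpha+\bar\psi\,dt\|_{1,p,\varepsilon}$ rather than the full $\|\eta\|_{1,p,\varepsilon}$. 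The non-harmonic part is controlled by the companion inequality (\ref{eq:lemma:evaluate22}), whose $\varepsilon^2$ prefactor exactly compensates the $\varepsilon^{-2}$ from (\ref{eq:c1term22}); the residual $*[\bar\alpha\wedge *\omega]$-term is absorbed via invertibility of the Jacobi operator $\mathcal D^0$, encoded in (\ref{crit:linesteg1}) and built into (\ref{eq:lemma:evaluate2222}). The harmonic contribution therefore also yields a prefactor of the form $c\delta+c\varepsilon^{\gamma}$ with $\gamma>0$.

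Putting the two contributions together produces
\begin{equation*}
\|\eta\|_{2,p,\varepsilon}\le (c\delta+c\varepsilon^{\gamma'})\|\eta\|_{2,p,\varepsilon}
\end{equation*}
for some $\gamma'>0$; choosing first $\delta$ small enough and then $\varepsilon_0$ small enough depending on $\delta$ so that the prefactor is strictly less than one forces $\eta=0$, i.e.\ $\bar\Xi^\varepsilon=\Xi^\varepsilon$. The main obstacle throughout is the ubiquitous factor $\varepsilon^{-2}$ in Lemma \ref{lemma:estimate:c}: a naive application of (\ref{eq:c1term}) alone leaves a prefactor of order $c\delta/\varepsilon$ that cannot be absorbed. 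What rescues the argument is the matching $\varepsilon$ in the elliptic estimate (\ref{eq:lemma:evaluate222}) together with the refined projection bound (\ref{eq:c1term22}), the $\varepsilon^2$ prefactor of (\ref{eq:lemma:evaluate22}), and the Jacobi-invertibility hypothesis, which between them convert the dangerous $\varepsilon^{-1}$ into a harmless $\delta$.
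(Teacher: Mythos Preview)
Your contraction strategy is close to the paper's, but the execution diverges in two places, one of which contains a real gap.

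The paper linearises not at $\Xi^0$ but at the auxiliary connection $\Xi_1^\varepsilon=\Xi^0+\alpha_0^\varepsilon$ and uses Lemma~\ref{lemma:evaluate3}, whose whole point is that the awkward $*[\alpha\wedge*\omega]$ term has already been absorbed (see (\ref{eq:lemma125}), where $\omega=\frac1{\varepsilon^2}d_{A^0}\alpha_0^\varepsilon$). The proof then proceeds in two passes: first $\mathcal D^\varepsilon(\Xi_1^\varepsilon)(\bar\Xi^\varepsilon-\Xi_1^\varepsilon)=-C(\Xi_1^\varepsilon)(\bar\Xi^\varepsilon-\Xi_1^\varepsilon)-\mathcal F^\varepsilon(\Xi_1^\varepsilon)$ upgrades the crude bound $\|\bar\Xi^\varepsilon-\Xi^0\|\le\delta\varepsilon$ to $\|\bar\Xi^\varepsilon-\Xi_1^\varepsilon\|_{2,p,\varepsilon}\le c\varepsilon^2$; only then is the difference $\bar\Xi^\varepsilon-\Xi^\varepsilon$ fed back through Lemma~\ref{lemma:evaluate3} to get the contraction factor $c\varepsilon^{1-1/p}$. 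Your single-pass variant, pivoting at $\Xi^0$ and pulling out a factor $c\delta$ rather than $c\varepsilon^{1-1/p}$, is a legitimate alternative in principle.

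The gap is in your handling of $*[\bar\alpha\wedge*\omega]$. You write that this term ``is absorbed via invertibility of the Jacobi operator~$\mathcal D^0$, encoded in (\ref{crit:linesteg1}) and built into (\ref{eq:lemma:evaluate2222})''. That is not what happens: in (\ref{eq:lemma:evaluate2222}) the quantity $\|\pi_{A^0}(\mathcal D_1^\varepsilon\eta+*[\bar\alpha\wedge*\omega])\|_{L^p}$ appears on the \emph{right-hand side}, so it must be bounded, not absorbed. The correct mechanism is a cancellation: in $\pi_{A^0}\bigl((\mathcal D_1^\varepsilon(\Xi^0)-\mathcal D_1^\varepsilon(\Xi^\varepsilon))\eta\bigr)$ the curvature term from (\ref{eq:easy1}) contributes $-\frac1{\varepsilon^2}\pi_{A^0}(*[\bar\alpha\wedge*F_{A^\varepsilon}])$, and because $F_{A^\varepsilon}=d_{A^0}\alpha_0^\varepsilon+O(\varepsilon^4)=\varepsilon^2\omega+O(\varepsilon^4)$ by the existence theorem, this cancels $+*[\bar\alpha\wedge*\omega]$ up to a genuinely small remainder. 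Without making this cancellation explicit your bound on the harmonic projection reads $\|\pi_{A^0}(*[\bar\alpha\wedge*\omega])\|_{L^p}\le c\|\bar\alpha\|_{L^p}$ with no smallness factor, and the contraction fails. This is exactly the computation that the paper packages into Lemma~\ref{lemma:evaluate3} via the shift to $\Xi_1^\varepsilon$; if you insist on linearising at $\Xi^0$ you must reproduce (\ref{eq:lemma125}) by hand with $\Xi^\varepsilon$ in place of $\Xi_1^\varepsilon$.
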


If a connection $\tilde\Xi^\varepsilon\in \mathcal A(P\times S^1)$ satisfies $
\left\|\tilde \Xi^\varepsilon-\Xi^0
\right\|_{2,p,\varepsilon}\leq \delta' \varepsilon^{1+\frac 1p},
$ then it follows from the Sobolev embedding theorem \ref{lemma:sobolev}, that $\tilde\Xi^\varepsilon$ satisfies (\ref{eq:themap22}) with $\delta=(1+c_s)\delta'$, where $c_s$ ist the constant of theorem \ref{lemma:sobolev}. Therefore the inequality $
\left\|\tilde \Xi^\varepsilon-\Xi^0
\right\|_{2,p,\varepsilon}\leq c\varepsilon^{2}
$ implies (\ref{eq:themap22}) whenever $\varepsilon<\varepsilon_1$ and $\varepsilon_1$ is sufficiently small, i.e. if $$\varepsilon_1\leq \min \left\{\varepsilon_0,\left(\frac{\delta}{2c_sc}\right)^{\frac 1{1-\frac1p}}\right\}$$ where $\varepsilon_0$ is given in theorem \ref{thm:localuniqueness}. Thus, if we choose in the theorem \ref{thm:localuniqueness} $\varepsilon_0$ satisfying $c\varepsilon_0+c_sc\varepsilon_0^{1-\frac 1p} <\delta$ we have that, for each $0<\varepsilon<\varepsilon_0$, in the ball $B_{c\varepsilon^2}\left(\Xi^0, \|\cdot \|_{2,p,\varepsilon}\right)$ there is a unique perturbed Yang-Mills connection $\Xi^\varepsilon$ which satisfies the condition $d_{\Xi^0}^{*_\varepsilon}(\Xi^\varepsilon-\Xi^0)=0$. 

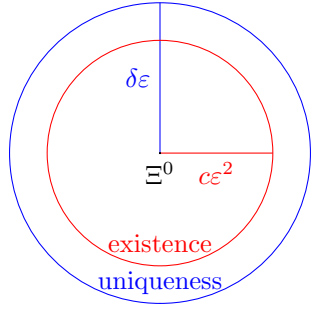
\begin{figure}[ht]
\begin{center}
\begin{tikzpicture}[scale=2] 
\draw[blue] (0,0) circle (1cm); 
\draw[blue] (0,0)--node[left]{$\delta\varepsilon$}(0,1);
\draw[red] (0,0) circle (0.75cm); 
\draw[red] (0,0)--node[below]{$c\varepsilon^2$}(0.75,0);

\draw (0,0) circle (0.1pt) node[below]{$\Xi^0$};

\draw[red] (0,-0.72) circle (0pt) node[above]  {existence} ; 
\draw[blue] (0,-1) circle (0.01pt) node[above]  {uniqueness} ; 
\end{tikzpicture}
\caption{Existence and uniqueness.}
\end{center}
\end{figure}

\begin{definition}\label{thm:defT}
For every regular value $b>0$ of the energy $E^H$  there are three positive constants $\varepsilon_0$, $\delta$ and $c$ such that the assertions of the theorems \ref{thm:existence} and \ref{thm:localuniqueness} hold with these constants. Shrink $\varepsilon_0$ such that $c\varepsilon_0+cc_s\varepsilon_0^{1-\frac 1p}<\delta$, where $c_s$ is the constant of Sobolev theorem \ref{lemma:sobolev}.  Theorems \ref{thm:existence} and \ref{thm:localuniqueness} assert that, for every $\Xi^0\in\mathrm{Crit}_{E^H}^b$ and every $\varepsilon$ with $0<\varepsilon<\varepsilon_0$, there is a unique perturbed Yang-Mills connection $\Xi^\varepsilon\in\mathrm{Crit}_{\mathcal{YM}^{\varepsilon,H}}^b$ satisfying
\begin{equation}\label{eq:themap2348}
\left\|\Xi^\varepsilon-\Xi^0\right\|_{2,p,\varepsilon}\leq c\varepsilon^2,\quad d_{\Xi^0}^{*_\varepsilon}(\Xi^\varepsilon-\Xi^0)=0.
\end{equation}
We define the map $\mathcal T^{\varepsilon, b}: \mathrm{Crit}_{E^H}^{b} \to \mathrm{Crit}_{\mathcal{YM}^{\varepsilon,H}}^b$ by $\mathcal{T}^{\varepsilon,b}(\Xi^0):=\Xi^\varepsilon$ where $\Xi^\varepsilon\in\mathrm{Crit}_{\mathcal{YM}^{\varepsilon,H}}^b$ is the unique Yang-Mills connection satisfying (\ref{eq:themap2348}).
\end{definition}

 The map $\mathcal{T}^{\varepsilon,b }$ is gauge equivariant because the construction of the perturbed Yang-Mills connection in the proof of theorem \ref{thm:existence} is gauge equivariant, since the map $\mathcal F^\varepsilon$ and the operator $\mathcal D^\varepsilon$ are so. Furthermore, since $\mathcal G_0(P)$ acts free on $\mathcal A(P)$, the gauge group $\mathcal G_0(P\times S^1)$ acts freely on $\mathcal A(P\times S^1)$ and on the set  $\mathrm{Crit}_{E^H}^b$ and thus $\mathcal T^{\varepsilon,b}$ defines a unique map
 \begin{equation}\label{creogmapeiwo}
\bar {\mathcal T}^{\varepsilon,b}:\mathrm{Crit}_{E^H}^b/\mathcal G_0(P\times S^1)\to 
 \mathrm{Crit}_{\mathcal {YM}^{\varepsilon,b}}^b/\mathcal G_0(P\times S^1).
 \end{equation}
 In addition, there is a $\gamma>0$ which bounds from below the distance between any two different perturbed geodesics on $\mathcal M^g(P)$. Therefore the map $\mathcal{T}^{\varepsilon,b }$ is injective if we choose $\varepsilon<\varepsilon_1$ such that $2c\varepsilon_1^2<\gamma$ and $\varepsilon_1<\varepsilon_0$, where $c$ and $\varepsilon_0$ are the constants in the last definition.\\

Next, we state two useful lemmas concerning the $1$-form $\alpha_0^\varepsilon$; the first one follows from the regularity properties of the geodesics (cf. \cite{MR1715156} or \cite{remyj}).
\begin{lemma}\label{lemma:step1xi1}
For any perturbed geodesic $\Xi^0=A^0+\Psi^0 dt\in \mathrm{Crit}^b_{E^H}$ there is a unique 1-form $\alpha_0^\varepsilon$, $\alpha_0(t)\in\Omega^1(\Sigma,\mathfrak g_P)$, which satisfies
\begin{equation}
d_{A^0}^*d_{A^0}\alpha_0^\varepsilon
=\varepsilon^2\nabla_t(\partial_tA^0-d_{A^0}\Psi^0)+\varepsilon^2
*X_t(A^0),\quad \alpha_0^\varepsilon\in\textrm{im } d_{A^0}^*.
\end{equation}
In addition there is a constant $c>0$ such that
\begin{equation}\label{eq:esa0}
\|\alpha_0^\varepsilon\|_{2,p,1}+\|\alpha_0^\varepsilon \|_{L^\infty}+\|d_{A^0}\alpha_0^\varepsilon \|_{L^\infty}+\|\nabla_t\alpha_0^\varepsilon \|_{L^\infty}\leq c\varepsilon^2
\end{equation}
for any $varepsilon$ and for $\Xi^\varepsilon_1:=\Xi^0+\alpha_0^\varepsilon\in \mathcal A(P\times S^1)$
\begin{equation}\label{eq:curvxi22}
\left\|\mathcal F_1^\varepsilon\left(\Xi^\varepsilon_1\right)\right\|_{L^p}
\leq c\varepsilon^{2},\,
\left\|\mathcal F_2^\varepsilon\left(\Xi_1^\varepsilon\right)\right\|_{L^p}\leq  c.
\end{equation}
\end{lemma}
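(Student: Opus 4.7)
\medskip
\noindent\emph{Proof plan for Lemma \ref{lemma:step1xi1}.} The plan is to first produce $\alpha_0^\varepsilon$ by inverting a $t$-parametrized elliptic operator on the flat surface $\Sigma$, then to bootstrap the pointwise bounds, and finally to exploit the defining equation for $\alpha_0^\varepsilon$ together with the geodesic equations to get the $\mathcal F_i^\varepsilon$ estimates. Existence and uniqueness of $\alpha_0^\varepsilon(t)$ follow from the Hodge decomposition (\ref{split}): for $\alpha\in\textrm{im } d_{A^0(t)}^*$ one has $d_{A^0}^*\alpha=0$ automatically (since $(d_{A^0}^*)^2=0$), hence $d_{A^0}^*d_{A^0}\alpha=\Delta_{A^0}\alpha$, and the Hodge Laplacian is an isomorphism on $\textrm{im } d_{A^0}^*\subset (H_{A^0}^1)^\perp$ that preserves this subspace. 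The right-hand side $\varepsilon^2\nabla_t(\partial_tA^0-d_{A^0}\Psi^0)+\varepsilon^2 *X_t(A^0)$ does lie in $\textrm{im } d_{A^0}^*$ because Theorem \ref{geodesics} rewrites it as $-\varepsilon^2 d_{A^0}^*\omega^0$, where $\omega^0$ is the $2$-form given by (\ref{eq:thm:geod:dasdsgf}). Inverting $\Delta_{A^0(t)}$ pointwise in $t$ produces a unique smooth family $\alpha_0^\varepsilon(t)$.

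For the estimate (\ref{eq:esa0}), I would apply elliptic regularity to $\Delta_{A^0(t)}\alpha_0^\varepsilon=\varepsilon^2(\nabla_t(\partial_tA^0-d_{A^0}\Psi^0)+*X_t(A^0))$ on each slice $\Sigma\times\{t\}$; since $\Xi^0$ is smooth the right-hand side is bounded in every $W^{k,p}(\Sigma)$ uniformly in $t$, giving $\|\alpha_0^\varepsilon(t)\|_{W^{k,p}(\Sigma)}\leq c\varepsilon^2$ and hence $\|\alpha_0^\varepsilon\|_{L^\infty}+\|d_{A^0}\alpha_0^\varepsilon\|_{L^\infty}\leq c\varepsilon^2$ via Sobolev embedding on $\Sigma$. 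For $\nabla_t\alpha_0^\varepsilon$ I would differentiate the defining equation in $t$: the commutators $[\nabla_t,d_{A^0}]$ and $[\nabla_t,d_{A^0}^*]$ are first order of order $O(1)$ by (\ref{commform})--(\ref{commform2}), so $\nabla_t\alpha_0^\varepsilon$ again solves a Poisson equation with $\varepsilon^2$-sized right-hand side, and the bound follows.

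It remains to estimate $\mathcal F_1^\varepsilon(\Xi_1^\varepsilon)$ and $\mathcal F_2^\varepsilon(\Xi_1^\varepsilon)$. Setting $A_1=A^0+\alpha_0^\varepsilon$ and noting $F_{A_1}=d_{A^0}\alpha_0^\varepsilon+\tfrac12[\alpha_0^\varepsilon\wedge\alpha_0^\varepsilon]$, $\partial_tA_1-d_{A_1}\Psi^0=(\partial_tA^0-d_{A^0}\Psi^0)+\nabla_t\alpha_0^\varepsilon$, the defining equation of $\alpha_0^\varepsilon$ implies the exact cancellation
\[
\frac1{\varepsilon^2}d_{A^0}^*d_{A^0}\alpha_0^\varepsilon-\nabla_t(\partial_tA^0-d_{A^0}\Psi^0)-*X_t(A^0)=0,
\]
so in $\mathcal F_1^\varepsilon(\Xi_1^\varepsilon)$ only lower-order remainders survive: the difference $\tfrac1{\varepsilon^2}(d_{A_1}^*d_{A_1}-d_{A^0}^*d_{A^0})\alpha_0^\varepsilon$, the quadratic curvature term $\tfrac{1}{2\varepsilon^2}d_{A_1}^*[\alpha_0^\varepsilon\wedge\alpha_0^\varepsilon]$, the additional term $\tfrac1{\varepsilon^2}d_{A_1}d_{A_1}^*\alpha_0^\varepsilon$ (which vanishes to leading order since $d_{A^0}^*\alpha_0^\varepsilon=0$), the Hamiltonian variation $*X_t(A_1)-*X_t(A^0)$, and $-\nabla_t^2\alpha_0^\varepsilon$. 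Each is bounded by (\ref{eq:esa0}) by a multiple of $\varepsilon^2$. For $\mathcal F_2^\varepsilon(\Xi_1^\varepsilon)$ I would use the geodesic gauge (\ref{eq:thm:geod:eq2}) $d_{A^0}^*(\partial_tA^0-d_{A^0}\Psi^0)=0$ and the commutator identity (\ref{commform2}) to write
\[
\mathcal F_2^\varepsilon(\Xi_1^\varepsilon)=\tfrac1{\varepsilon^2}\bigl(d_{A^0}^*-d_{A_1}^*\bigr)(\partial_tA^0-d_{A^0}\Psi^0)-\tfrac1{\varepsilon^2}[d_{A_1}^*,\nabla_t]\alpha_0^\varepsilon,
\]
each summand of which is $O(1)$ in $L^p$ because the factor $\varepsilon^{-2}$ meets a term of size $\varepsilon^2$ coming from $\alpha_0^\varepsilon$. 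The main obstacle will be keeping track of these remainders cleanly; in particular the $\tfrac1{\varepsilon^2}d_{A_1}^*[\alpha_0^\varepsilon\wedge\alpha_0^\varepsilon]$ and $\tfrac1{\varepsilon^2}d_{A_1}d_{A_1}^*\alpha_0^\varepsilon$ contributions need to be expanded carefully so that the cancellation $d_{A^0}^*\alpha_0^\varepsilon=0$ is visible before any $\varepsilon^{-2}$ factors are estimated, otherwise a spurious blow-up would appear.
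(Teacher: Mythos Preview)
Your proposal is correct and follows what is essentially the only natural route: invoke Theorem~\ref{geodesics} to see that the right-hand side equals $-\varepsilon^2 d_{A^0}^*\omega\in\textrm{im } d_{A^0}^*$, invert $d_{A^0}^*d_{A^0}$ on that subspace slice by slice, bootstrap the estimates by differentiating in $t$, and then expand $\mathcal F_i^\varepsilon(\Xi_1^\varepsilon)$ to see the leading-order cancellation. The paper itself does not give a proof here but simply refers to Hong \cite{MR1715156} and \cite{remyj}; your sketch is exactly the argument those references carry out, and the care you flag about exposing $d_{A^0}^*\alpha_0^\varepsilon=0$ before estimating the $\varepsilon^{-2}$-weighted terms is the right point of attention.
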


\begin{lemma}\label{lemma:evaluate3}
For any perturbed geodesic $\Xi^0=A^0+\Psi^0dt$ and for $\Xi_1^\varepsilon$ defined as in lemma \ref{lemma:step1xi1} the following holds. There exist two constants $c>0$ and $\varepsilon_0>0$ such that
\begin{equation}\label{eq:lemma:evaluate31}
\begin{split}
\|\pi_{A^0}(\alpha)\|_{L^p}&+\|\nabla_t\pi_{A^0}(\alpha)\|_{L^p}+\|\nabla_t^2\pi_{A^0}(\alpha)\|_{L^p}\\
\leq& c\varepsilon 
\left\|\mathcal D^\varepsilon(\Xi_1^\varepsilon)(\alpha,\psi)\right\|_{0,p,\varepsilon}+c\left\|\pi_{A^0}\mathcal D_1^\varepsilon(\Xi_1^\varepsilon)(\alpha,\psi)\right\|_{0,p,\varepsilon},
\end{split}
\end{equation}
\begin{equation}\label{eq:lemma:evaluate32}
\begin{split}
\|\alpha-&\pi_{A^0}(\alpha)+\psi\, dt\|_{2,p,\varepsilon}\\
\leq &c\varepsilon^2 
\left\|\mathcal D^\varepsilon(\Xi_1^\varepsilon)(\alpha,\psi)
\right\|_{0,p,\varepsilon}+c\varepsilon\left\|\pi_{A^0}\mathcal D_1^\varepsilon(\Xi_1^\varepsilon)(\alpha,\psi)\right\|_{0,p,\varepsilon},
\end{split}
\end{equation}
\begin{equation}\label{eq:lemma:evaluate3287}
\begin{split}
\|\alpha-\pi_{A^0}(\alpha)\|_{2,p,\varepsilon}
\leq& c\varepsilon^2 
\left\|\mathcal D^\varepsilon_1(\Xi_1^\varepsilon)(\alpha,\psi)
\right\|_{L^p}+c\varepsilon^4 
\left\|\mathcal D^\varepsilon_2(\Xi_1^\varepsilon)(\alpha,\psi)
\right\|_{L^p},
\end{split}
\end{equation}
for every $\alpha+\psi\,dt\in {\mathrm W}^{2,p}$ and any positive $\varepsilon<\varepsilon_0$.
\end{lemma}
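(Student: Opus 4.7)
The overall strategy is to start from the four estimates of Theorem \ref{lemma:evaluate2}, stated with $\Xi^0$ as reference, and transport each operator appearing on the right-hand side from $\Xi^0$ to the shifted reference $\Xi_1^\varepsilon = \Xi^0 + \alpha_0^\varepsilon$. The bounds on $\alpha_0^\varepsilon$ in Lemma \ref{lemma:step1xi1} (in particular (\ref{eq:esa0})) together with the identity $d_{A^0}^*\alpha_0^\varepsilon = 0$ (a consequence of $\alpha_0^\varepsilon \in \mathrm{im}\, d_{A^0}^*$ and $(d_{A^0}^*)^2 = 0$, using $F_{A^0}=0$) make this transport cheap enough that the resulting errors can be absorbed on the left-hand side once $\varepsilon$ is small.

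The key algebraic observation is that $d_{A^0}\alpha_0^\varepsilon$ is exactly proportional to $\varepsilon^2\omega$, with the sign chosen so that the $*[\alpha\wedge*\omega]$ term in Theorem \ref{lemma:evaluate2} is cancelled by the leading contribution of the curvature term in $\mathcal D_1^\varepsilon(\Xi_1^\varepsilon)$. This identity follows by comparing the defining equation (\ref{eq:firststep}) of $\alpha_0^\varepsilon$ with the geodesic relation expressing $d_{A^0}^*\omega$ as a multiple of $\nabla_t(\partial_t A^0 - d_{A^0}\Psi^0) + *X_t(A^0)$: both $d_{A^0}\alpha_0^\varepsilon$ and $\varepsilon^2\omega$ belong to $d_{A^0}\Omega^1(\Sigma,\mathfrak g_P)$, while $d_{A^0}\Omega^1 \cap \ker d_{A^0}^* = \{0\}$. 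Consequently $F_{A^0+\alpha_0^\varepsilon} = \pm\varepsilon^2\omega + \tfrac12[\alpha_0^\varepsilon\wedge\alpha_0^\varepsilon]$ and the zeroth-order curvature piece $\varepsilon^{-2}*[\alpha\wedge*F_{A^0+\alpha_0^\varepsilon}]$ of $\mathcal D_1^\varepsilon(\Xi_1^\varepsilon)$ produces exactly $\pm*[\alpha\wedge*\omega]$ up to an error pointwise bounded by $c\varepsilon^2|\alpha|$.

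Two transport estimates then follow from this analysis. Lemma \ref{lemma:diffoperator}, specifically (\ref{eq:easyww}), combined with (\ref{eq:esa0}) and $d_{A^0}^*\alpha_0^\varepsilon = 0$, yields
$\|(\mathcal D^\varepsilon(\Xi^0) - \mathcal D^\varepsilon(\Xi_1^\varepsilon))(\alpha,\psi)\|_{0,p,\varepsilon} \leq c\|\alpha+\psi\,dt\|_{1,p,\varepsilon},$
while a direct term-by-term inspection of $\mathcal D_1^\varepsilon(\Xi_1^\varepsilon) - \mathcal D_1^\varepsilon(\Xi^0)$, guided by the curvature identity above, delivers
$\bigl\|\pi_{A^0}\bigl(\mathcal D_1^\varepsilon(\Xi^0)(\alpha,\psi) + *[\alpha\wedge*\omega]\bigr) - \pi_{A^0}\mathcal D_1^\varepsilon(\Xi_1^\varepsilon)(\alpha,\psi)\bigr\|_{L^p} \leq c\varepsilon^2\|\alpha+\psi\,dt\|_{1,p,\varepsilon}.$

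I would then prove the three bounds in the order (\ref{eq:lemma:evaluate31}), (\ref{eq:lemma:evaluate32}), (\ref{eq:lemma:evaluate3287}). Substituting the two transport estimates into (\ref{eq:lemma:evaluate2222}) delivers (\ref{eq:lemma:evaluate31}) up to an error of size $c\varepsilon\|\alpha+\psi\,dt\|_{1,p,\varepsilon}$, which I would close by first applying Theorem \ref{lemma:evaluate} to bound $\|\alpha+\psi\,dt\|_{1,p,\varepsilon}$ in terms of $\|\mathcal D^\varepsilon(\Xi^0)(\alpha,\psi)\|_{0,p,\varepsilon}$ and $\|\pi_{A^0}(\alpha)\|_{L^p}$, transporting $\mathcal D^\varepsilon$ once more, and absorbing the small $\varepsilon$-multiple of $\|\pi_{A^0}(\alpha)\|_{L^p}$ that then appears on the right. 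The bounds (\ref{eq:lemma:evaluate32}) and (\ref{eq:lemma:evaluate3287}) follow by the same recipe applied respectively to (\ref{eq:lemma:evaluate222})/(\ref{eq:lemma:evaluate22}) and to (\ref{eq:lemma:evaluate2277}); for the latter one additionally dominates the $\pi_{A^0}$-projected term by the unprojected one and invokes (\ref{eq:lemma:evaluate31}) to absorb a residual $c\varepsilon^2\|\alpha\|_{L^p}$ coming from the curvature error. The principal bookkeeping obstacle is ensuring that every transport error carries at least one extra factor of $\varepsilon$ compared with the target estimate, which is supplied by the $\varepsilon^2$-smallness of $\alpha_0^\varepsilon$ from (\ref{eq:esa0}) together with the structural identity linking $d_{A^0}\alpha_0^\varepsilon$ to $\omega$.
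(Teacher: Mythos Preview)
Your overall strategy is correct and matches the paper's: identify $d_{A^0}\alpha_0^\varepsilon=\varepsilon^2\omega$, transport $\mathcal D^\varepsilon$ from $\Xi^0$ to $\Xi_1^\varepsilon$ via Lemma~\ref{lemma:diffoperator}, and feed the resulting errors back through Theorem~\ref{lemma:evaluate2} (or Theorem~\ref{lemma:evaluate}) for absorption.

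There is one overstated intermediate estimate. Your claimed harmonic transport bound
\[
\bigl\|\pi_{A^0}\bigl(\mathcal D_1^\varepsilon(\Xi^0)(\alpha,\psi)+*[\alpha\wedge*\omega]\bigr)-\pi_{A^0}\mathcal D_1^\varepsilon(\Xi_1^\varepsilon)(\alpha,\psi)\bigr\|_{L^p}\leq c\varepsilon^2\|\alpha+\psi\,dt\|_{1,p,\varepsilon}
\]
is too optimistic. The difference $\mathcal D_1^\varepsilon(\Xi_1^\varepsilon)-\mathcal D_1^\varepsilon(\Xi^0)$ contains, beyond the curvature piece you isolated, terms such as $\varepsilon^{-2}*[\alpha_0^\varepsilon\wedge*d_{A^0}\alpha]$ and $\varepsilon^{-2}[\alpha_0^\varepsilon, d_{A^0}^*\alpha]$, whose $\pi_{A^0}$-projections do not vanish and carry only a factor $\varepsilon^{-2}\|\alpha_0^\varepsilon\|_{L^\infty}\sim 1$ in front of $d_{A^0}\alpha$, $d_{A^0}^*\alpha$. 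The correct bound (this is (\ref{eq:lemma125}) in the paper) is
\[
c\varepsilon^2\|\pi_{A^0}(\alpha)\|_{L^p}+c\,\|(1-\pi_{A^0})\alpha+\psi\,dt\|_{1,p,\varepsilon},
\]
with no extra $\varepsilon$ on the non-harmonic piece. This does not break your argument, but it changes the absorption step: the error you must handle after substituting into (\ref{eq:lemma:evaluate2222}) is $c\|(1-\pi_{A^0})\alpha+\psi\,dt\|_{1,p,\varepsilon}$ rather than $c\varepsilon\|\alpha+\psi\,dt\|_{1,p,\varepsilon}$, so you need estimate (\ref{eq:lemma:evaluate2}) of Theorem~\ref{lemma:evaluate} (not (\ref{eq:lemma:evaluate1})) to close. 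The paper handles this by treating (\ref{eq:lemma:evaluate31}) and (\ref{eq:lemma:evaluate32}) simultaneously---adding $\varepsilon$ times the harmonic estimate to the non-harmonic one so that both errors appear with an extra $\varepsilon$ and can be absorbed together---which is cleaner than your sequential scheme but equivalent in substance.
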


\begin{proof}[Proof of lemma \ref{lemma:evaluate3}]
On the one side by the quadratic estimate (\ref{eq:easyww}) 
\begin{equation}\label{eq:lemma123}
\begin{split}
\|\mathcal D^\varepsilon(\Xi^\varepsilon_1)(\alpha,\psi)&-
\mathcal D^\varepsilon(\Xi^0)(\alpha,\psi)\|_{0,p,\varepsilon}\\
\leq& c\varepsilon^{-2} \left(\|\alpha_0^\varepsilon\|_{L^\infty}+\|d_{A^0}\alpha_0^\varepsilon\|_{L^\infty}+\varepsilon\|\nabla_t\alpha_0^\varepsilon\|_{L^\infty}\right)\|\alpha+\psi\, dt\|_{1,p,\varepsilon}\\
\leq& c \|\alpha+\psi\, dt\|_{1,p,\varepsilon}.
\end{split}
\end{equation}
where the last estimate follows from (\ref{eq:esa0}). On the other side, we remark that the $\omega$ defined by ({eq:thm:geod:dasdsgf}) is exactly $\frac 1{\varepsilon^2}d_{A^0}\alpha_0^\varepsilon$ and thus for the harmonic part we obtain
\begin{equation*}
\begin{split}
\pi_{A^0}&\left(\mathcal D_1^\varepsilon(\Xi^\varepsilon_1)(\alpha,\psi)-
\left(\mathcal D_1^\varepsilon(\Xi^0)(\alpha,\psi)
-\frac1{\varepsilon^2}*[\alpha\wedge *d_{A^0}\alpha_0^\varepsilon]\right)\right)\\
=&\pi_{A^0}\Big(-\frac1{\varepsilon^2}*
\left[\alpha\wedge*\frac12[\alpha_0^\varepsilon\wedge\alpha_0^\varepsilon]\right]
-\frac1{\varepsilon^2}*[\alpha_0^\varepsilon\wedge *[\alpha_0^\varepsilon\wedge\alpha]]-2\left[\psi,\nabla_t\alpha_0^\varepsilon\right]\\
&-\frac 1{\varepsilon^2}*[\alpha_0^\varepsilon\wedge*d_{A^0}\alpha]
+\frac1{\varepsilon^2}\left[\alpha_0^\varepsilon\wedge\left(d_{A^0}^*\alpha-*[\alpha_0^\varepsilon\wedge*\alpha]\right)\right]\Big)
\end{split}
\end{equation*}
and hence
\begin{equation}\label{eq:lemma125}
\begin{split}
\Big|\Big|\pi_{A^0}\Big(\mathcal D_1^\varepsilon(&\Xi_1^\varepsilon)(\alpha,\psi)-
\mathcal D_1^\varepsilon(\Xi^0)(\alpha,\psi)
+\frac1{\varepsilon^2}*[\alpha\wedge *d_{A^0}\alpha_0]\Big)\Big|\Big|_{0,p,\varepsilon}\\
\leq &\frac c{\varepsilon^2}\|\alpha_0^\varepsilon\|^2_{L^\infty}\|\alpha\|_{L^p}\\
&+\frac c{\varepsilon^2}\left(\|\alpha_0^\varepsilon\|_{L^\infty}+\varepsilon\|\nabla_t\alpha_0^\varepsilon\|_{L^\infty}\right)\|(1-\pi_{A^0})\alpha+\psi\, dt\|_{1,p,\varepsilon}\\
\leq&c\varepsilon^2 \|\pi_{A^0}(\alpha)\|_{L^p}
+c \|(1-\pi_{A^0})\alpha+\psi\,dt\|_{1,p,\varepsilon}.
\end{split}
\end{equation}
By the lemma \ref{lemma:evaluate2} we have
\begin{equation*}
\begin{split}
\big\|\left(1-\pi_{A^0}\right)\alpha&+\psi dt\big\|_{2,p,\varepsilon}+\varepsilon\left\|\pi_{A^0}(\alpha)\right\|_{2,p,1}\\
\leq&c\varepsilon^2
\|\mathcal D^\varepsilon(\Xi^0)(\alpha,\psi)\|_{0,p,\varepsilon}+c\varepsilon \|\pi_{A^0}\left(\mathcal D_1^\varepsilon(\Xi^0)(\alpha,\psi)+*[\alpha\wedge*\omega]\right)\|_{L^p}\\
\leq&c\varepsilon^2
\|\mathcal D^\varepsilon(\Xi^\varepsilon_1)(\alpha,\psi)\|_{0,p,\varepsilon}+c\varepsilon\|\pi_{A^0}\mathcal D_1^\varepsilon(\Xi_1^\varepsilon)(\alpha,\psi)\|_{0,p,\varepsilon}\\
&+c\varepsilon\|\alpha-\pi_{A^0}\alpha+\psi\,dt\|_{1,p,\varepsilon}
+c\varepsilon^{2} \|\pi_{A^0}\alpha\|_{1,p,\varepsilon}.
\end{split}
\end{equation*}
where the second inequality follows from (\ref{eq:lemma123}) and (\ref{eq:lemma125}). Therefore (\ref{eq:lemma126}) implies the first and the second estimate of the lemma choosing $\varepsilon$ sufficiently small. The third estimates follows combining (\ref{eq:lemma:evaluate2277}), (\ref{eq:lemma123}), (\ref{eq:lemma125}) with the first two inequality of the lemma:

\begin{equation}\label{eq:lemma126}
\begin{split}
\big\|\left(1-\pi_{A^0}\right)\alpha\big\|_{2,p,\varepsilon}\leq&c\varepsilon^2
\|\mathcal D^\varepsilon(\Xi^0)(\alpha,\psi)\|_{L^p}+c\varepsilon^4
\|\mathcal D^\varepsilon(\Xi^0)(\alpha,\psi)\|_{L^p}\\
&+c\varepsilon^2 \|\pi_{A^0}\left(\mathcal D_1^\varepsilon(\Xi^0)(\alpha,\psi)+*[\alpha\wedge*\omega]\right)\|_{L^p}\\
\leq&c\varepsilon^2
\|\mathcal D^\varepsilon_1(\Xi^\varepsilon_1)(\alpha,\psi)\|_{L^p}+c\varepsilon^4
\|\mathcal D^\varepsilon_2(\Xi^\varepsilon_1)(\alpha,\psi)\|_{L^p}\\
&+c\varepsilon^2\|\alpha+\psi\,dt\|_{1,p,\varepsilon}\\
\leq&c\varepsilon^2
\|\mathcal D^\varepsilon_1(\Xi^\varepsilon_1)(\alpha,\psi)\|_{L^p}+c\varepsilon^4
\|\mathcal D^\varepsilon_2(\Xi^\varepsilon_1)(\alpha,\psi)\|_{L^p}
\end{split}
\end{equation}
\end{proof}

\begin{proof}[Proof of theorem \ref{thm:localuniqueness}]
 Since $\Xi^0$ is a geodesic, by lemma \ref{lemma:step1xi1} we can define a connection $\Xi_1^\varepsilon=\Xi^0+\alpha_0^\varepsilon$ such that  $\|\alpha_0^\varepsilon\|_{2,p,1}+\|d_{A_0}\alpha_0^\varepsilon \|_{L^\infty}+\varepsilon\|\nabla_t\alpha_0^\varepsilon \|_{L^\infty}+\|\alpha_0^\varepsilon \|_{L^\infty}\leq c\varepsilon^2$ and
\begin{equation}\label{eq:locuniq667}
\|\mathcal F_1^\varepsilon(\Xi_1^\varepsilon)\|_{L^p}
\leq c\varepsilon^{2},\quad
\|\mathcal F_2^\varepsilon(\Xi_1^\varepsilon)\|_{L^p}\leq c.
\end{equation}
Therefore we have, for $\bar\Xi^\varepsilon-\Xi_1^\varepsilon=:\alpha^\varepsilon+\psi^\varepsilon\, dt$ and $c\varepsilon<\delta$,
\begin{equation}\label{eq:convradius2}
\|\bar\Xi^\varepsilon-\Xi_1^\varepsilon\|_{1,p,\varepsilon}+\|\bar\Xi^\varepsilon-\Xi_1^\varepsilon\|_{\infty,\varepsilon}
\leq 2\delta \varepsilon
\end{equation}
and for $i=1,2$, since $\bar \Xi^\varepsilon$ is a Yang-Mills connection which satisfies $d_{\Xi^0}^{*_\varepsilon}\left(\bar\Xi^\varepsilon-\Xi^0\right)=0$, and thus $\mathcal F^\varepsilon(\bar\Xi^\varepsilon)=0$,
\begin{equation}\label{eq:fruttidibosco}
\mathcal D_i^\varepsilon(\Xi_1^\varepsilon)(\bar\Xi^\varepsilon-\Xi_1^\varepsilon)=-C^\varepsilon_i(\Xi_1^\varepsilon)(\bar\Xi^\varepsilon-\Xi_1^\varepsilon)-\mathcal F_i^\varepsilon(\Xi_1^\varepsilon).
\end{equation}
By lemma \ref{lemma:evaluate3} we get
\begin{equation*}
\begin{split}
\|(1-\pi_{A^0})\alpha^\varepsilon&+\psi^\varepsilon\,dt\|_{2,p,\varepsilon}+\varepsilon\|\pi_{A^0}(\alpha^\varepsilon)\|_{L^p}+\varepsilon\|\nabla_t\pi_{A^0}(\alpha^\varepsilon)\|_{L^p}\\
\leq&\, c\left(\varepsilon^2\|\mathcal D^\varepsilon(\Xi_1^\varepsilon)(\bar\Xi^\varepsilon-\Xi_1^\varepsilon)\|_{0,p,\varepsilon}+\varepsilon \|\pi_{A^0}(\mathcal D^\varepsilon(\Xi_1^\varepsilon)(\bar\Xi^\varepsilon-\Xi_1^\varepsilon))\|_{0,p,\varepsilon}\right)\\
\leq& c\varepsilon^{2}\|C^\varepsilon(\Xi_1^\varepsilon(\bar\Xi^\varepsilon-\Xi_1^\varepsilon)\|_{0,p,\varepsilon}
+c\varepsilon^{2}\|\mathcal F^\varepsilon(\Xi_1^\varepsilon)\|_{0,p,\varepsilon}\\
& +c\varepsilon\|\pi_{A^0}(C_1^\varepsilon(\Xi_1^\varepsilon)(\bar\Xi^\varepsilon-\Xi_1^\varepsilon))\|_{0,p,\varepsilon}
+c\varepsilon\|\pi_{A^0}(\mathcal F^\varepsilon_1(\Xi_1^\varepsilon))\|_{0,p,\varepsilon}\\
\leq&c\varepsilon^3+c\delta\|(1-\pi_{A^0})\alpha^\varepsilon+\psi^\varepsilon\,dt\|_{1,p,\varepsilon}\\
&+c\delta\left(\varepsilon\|\pi_{A^0}(\alpha^\varepsilon)\|_{L^p}+\varepsilon\|\pi_{A^0}(\alpha^\varepsilon)\|_{L^p}\right)
\end{split}
\end{equation*}
where in the second step we use (\ref{eq:fruttidibosco}) and the third step follows from lemma \ref{lemma:estimate:c} and the estimate of the curvatures (\ref{eq:locuniq667}). Thus we proved the estimates  $\|\bar\Xi^\varepsilon-\Xi_1^\varepsilon\|_{2,p,\varepsilon}\leq c\varepsilon^2$ and hence $\|\bar\Xi^\varepsilon-\Xi^\varepsilon\|_{2,p,\varepsilon}\leq c\varepsilon^2$. Since $\Xi^\varepsilon$ satisfies $\mathcal F^\varepsilon(\Xi^\varepsilon)=0$ by the assumptions, we can write
\begin{equation*}
\begin{split}
\mathcal D_i^\varepsilon(\Xi_1^\varepsilon)(\bar\Xi^\varepsilon-\Xi^\varepsilon)
=&\left(\mathcal D_i^\varepsilon(\Xi^\varepsilon)
+\left(\mathcal D_i^\varepsilon(\Xi_1^\varepsilon)-\mathcal D_i^\varepsilon(\Xi^\varepsilon)\right)\right)
(\bar\Xi^\varepsilon-\Xi^\varepsilon)\\
=&-C_i^\varepsilon(\Xi^\varepsilon)(\bar\Xi^\varepsilon-\Xi^\varepsilon)
+\left(\mathcal D_i^\varepsilon(\Xi_1^\varepsilon)
-\mathcal D_i^\varepsilon(\Xi^\varepsilon)\right)(\bar\Xi^\varepsilon-\Xi^\varepsilon)
\end{split}
\end{equation*}
and by the quadratic estimates of the chapter \ref{chapter:qe}
\begin{equation*}
\begin{split}
 \varepsilon^{2}\|C^\varepsilon(\Xi_1^\varepsilon)&(\Xi^\varepsilon-\Xi_1^\varepsilon)\|_{0,p,\varepsilon}
 +c\varepsilon\|\pi_{A^0}(C_1^\varepsilon(\Xi_1^\varepsilon)(\bar\Xi^\varepsilon-\Xi_1^\varepsilon))\|_{0,p,\varepsilon}\\
\leq&c \varepsilon^{1-\frac 1p} \|(\bar\Xi^\varepsilon-\Xi^\varepsilon)-\pi_{A^0}(\bar\Xi^\varepsilon-\Xi^\varepsilon)\|_{1,p,\varepsilon}\\
&+c\varepsilon^{2-\frac 1p}\|\nabla_t\pi_{A^0}(\bar\Xi^\varepsilon-\Xi^\varepsilon)\|_{0,p,\varepsilon}+c\varepsilon^{1-\frac 1p}\|\pi_{A^0}(\bar\Xi^\varepsilon-\Xi^\varepsilon)\|_{0,p,\varepsilon},
\end{split}
\end{equation*}
\begin{equation*}
\begin{split}
 \varepsilon^{2}\|(\mathcal D_i^\varepsilon(\Xi_1^\varepsilon)&
-\mathcal D_i^\varepsilon(\Xi^\varepsilon))(\bar\Xi^\varepsilon-\Xi^\varepsilon)\|_{0,p,\varepsilon}\\
& +c\varepsilon\|\pi_{A^0}(\left(\mathcal D_i^\varepsilon(\Xi_1^\varepsilon)
-\mathcal D_i^\varepsilon(\Xi^\varepsilon)\right)(\bar\Xi^\varepsilon-\Xi^\varepsilon))\|_{0,p,\varepsilon}\\
\leq&c \varepsilon^{1-\frac 1p} \|(\bar\Xi^\varepsilon-\Xi^\varepsilon)-\pi_{A^0}(\bar\Xi^\varepsilon-\Xi^\varepsilon)\|_{1,p,\varepsilon}\\
&+c\varepsilon^{1-\frac 1p}\|\nabla_t\pi_{A^0}(\bar\Xi^\varepsilon-\Xi^\varepsilon)\wedge dt\|_{0,p,\varepsilon}+c\varepsilon^{2-\frac 1p}\|\pi_{A^0}(\bar\Xi^\varepsilon-\Xi^\varepsilon)\|_{0,p,\varepsilon},
\end{split}
\end{equation*}
we obtain by the lemma \ref{lemma:evaluate3}
\begin{equation*}
\begin{split}
\|(1-\pi_{A^0})(\bar\Xi^\varepsilon&-\Xi^\varepsilon) \|_{2,p,\varepsilon}+\varepsilon\|\pi_{A^0}(\bar\Xi^\varepsilon-\Xi^\varepsilon) \|_{L^p}\\
&+\varepsilon\|\nabla_t\pi_{A^0}(\bar\Xi^\varepsilon-\Xi^\varepsilon) \|_{2,p,1}\\
\leq&
c\varepsilon^2 \|\mathcal D^\varepsilon(\Xi_1)(\bar\Xi^\varepsilon-\Xi^\varepsilon)\|_{0,p,\varepsilon}
+c\varepsilon \|\pi_{A^0}\mathcal D_1^\varepsilon(\Xi_1)(\bar\Xi^\varepsilon-\Xi^\varepsilon)\|_{0,p,\varepsilon}\\
\leq&c\varepsilon^{1-\frac 1p}\|\bar\Xi^\varepsilon-\Xi^\varepsilon-\pi_{A^0}(\bar\Xi^\varepsilon-\Xi^\varepsilon) \|_{2,p,\varepsilon}\\
&+c\varepsilon^{2-\frac 1p}\left(\|\pi_{A^0}(\bar\Xi^\varepsilon-\Xi^\varepsilon) \|_{L^p}+\|\nabla_t\pi_{A^0}(\bar\Xi^\varepsilon-\Xi^\varepsilon) \|_{L^p}\right)
\end{split}
\end{equation*}
and thus, $\|\bar\Xi^\varepsilon-\Xi^\varepsilon\|_{2,p,\varepsilon}=0$ and hence $\bar\Xi^\varepsilon=\Xi^\varepsilon$ in  for $\varepsilon$ small enough.
\end{proof}

\textbf{Local uniquess modulo gauge.} The following theorem states a uniqueness property. The result is interesting, but it will not be used in the next chapters and in particular it will not enter in the proof of the surjectivity of $\mathcal{T}^{\varepsilon,b }$ on the contrary to what one might expect.
\begin{theorem}[Uniqueness]\label{thm:uniqueness}
We choose $p> 3$. For every perturbed geodesic $\Xi^0\in \mathrm{Crit}^b_{E^H}$ there are constants $\varepsilon_0$, $\delta_1>0$ such that the following holds. If $0<\varepsilon<\varepsilon_0$ and $\tilde \Xi^\varepsilon\in\mathrm{Crit}^b_{\mathcal{YM}^{\varepsilon,H }}$ is a perturbed Yang-Mills connection satisfying 
\begin{equation}\label{eq:uniqcond}
\left\| \tilde \Xi^\varepsilon-\Xi^0
\right\|_{1,p,\varepsilon}\leq \delta_1\varepsilon^{1+1/p},
\end{equation}
then there is a $g\in\mathcal G_0^{2,p}(P\times S^1)$ such that $g^* \tilde \Xi^\varepsilon=\mathcal T^{\varepsilon,b}(\Xi^0)$.
\end{theorem}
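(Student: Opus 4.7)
The strategy is to put $\tilde\Xi^\varepsilon$ into Coulomb gauge with respect to $\Xi^0$ and then identify the gauge-transformed connection with $\mathcal T^{\varepsilon,b}(\Xi^0)$ via Theorem \ref{thm:localuniqueness}. Concretely, the first step is to construct $g\in\mathcal G_0^{2,p}(P\times S^1)$ close to the identity satisfying
\begin{equation*}
d_{\Xi^0}^{*_\varepsilon}\bigl(g^*\tilde\Xi^\varepsilon-\Xi^0\bigr)=0,\qquad \|g^*\tilde\Xi^\varepsilon-\Xi^0\|_{1,p,\varepsilon}\leq C\,\|\tilde\Xi^\varepsilon-\Xi^0\|_{1,p,\varepsilon},
\end{equation*}
with $C$ depending only on $\Xi^0$. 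Since $\mathcal{YM}^{\varepsilon,H}$ is gauge invariant, $g^*\tilde\Xi^\varepsilon\in\mathrm{Crit}^b_{\mathcal{YM}^{\varepsilon,H}}$, and I can then apply Theorem \ref{thm:localuniqueness} with $\Xi^\varepsilon:=\mathcal T^{\varepsilon,b}(\Xi^0)$ (whose reference bound is supplied by Theorem \ref{thm:existence} through (\ref{eq:cbb}), (\ref{eq:cbb2})) and $\bar\Xi^\varepsilon:=g^*\tilde\Xi^\varepsilon$, to conclude $g^*\tilde\Xi^\varepsilon=\mathcal T^{\varepsilon,b}(\Xi^0)$.

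The Coulomb gauge step is a Newton / contraction argument applied to the map $\xi\mapsto d_{\Xi^0}^{*_\varepsilon}(\exp(\xi)^*\tilde\Xi^\varepsilon-\Xi^0)$ on $\Omega^0(\Sigma\times S^1,\mathfrak g_{P\times S^1})$. Its linearization at $\xi=0$ is, up to a sign, $d_{\Xi^0}^{*_\varepsilon}d_{\tilde\Xi^\varepsilon}$, which for $\tilde\Xi^\varepsilon$ close to $\Xi^0$ is a small perturbation of the positive operator $d_{\Xi^0}^{*_\varepsilon}d_{\Xi^0}$. The latter satisfies
\begin{equation*}
\langle d_{\Xi^0}^{*_\varepsilon}d_{\Xi^0}\phi,\phi\rangle=\tfrac1{\varepsilon^2}\int_0^1\|d_{A^0}\phi\|_{L^2(\Sigma)}^2\,dt+\int_0^1\|\nabla_t\phi\|_{L^2(\Sigma)}^2\,dt,
\end{equation*}
which vanishes only for $\phi=0$ because $d_{A^0(t)}$ is injective on $\Omega^0(\Sigma,\mathfrak g_P)$ by the freeness of $\mathcal G_0(P)$ on $\mathcal A_0(P)$. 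A standard $W^{2,p}$ elliptic estimate for this operator, in the spirit of (\ref{crit:linest.sak2}), provides a bounded inverse in the appropriate $\varepsilon$-weighted Sobolev norm. Since $p>3$, Theorem \ref{lemma:sobolev} gives $\|\tilde\Xi^\varepsilon-\Xi^0\|_{\infty,\varepsilon}\leq c_s\varepsilon^{-1/p}\|\tilde\Xi^\varepsilon-\Xi^0\|_{1,p,\varepsilon}\leq c_s\delta_1\varepsilon$, so the quadratic terms in the Newton iteration are small and the iteration converges to a unique $\xi$ of size $O(\delta_1\varepsilon^{1+1/p})$ in the relevant $W^{2,p}$-norm. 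Setting $g=\exp(\xi)\in\mathcal G_0^{2,p}(P\times S^1)$, gauge-equivariance of the construction gives the announced bound on $\|g^*\tilde\Xi^\varepsilon-\Xi^0\|_{1,p,\varepsilon}$.

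Applying Sobolev once more, $\|g^*\tilde\Xi^\varepsilon-\Xi^0\|_{\infty,\varepsilon}\leq c_sC\delta_1\varepsilon$, so the left-hand side of (\ref{eq:themap22}) is bounded by $(1+c_s)C\delta_1\varepsilon$. Choosing $\delta_1$ so that $(1+c_s)C\delta_1<\delta$, with $\delta$ the constant from Theorem \ref{thm:localuniqueness}, that theorem yields $g^*\tilde\Xi^\varepsilon=\mathcal T^{\varepsilon,b}(\Xi^0)$ as claimed. I expect the main obstacle to be the quantitative Coulomb slice construction with $\varepsilon$-dependent constants; the cleanest way to handle this is to rescale $t\mapsto\varepsilon^{-1}t$ to the cylinder $\Sigma\times\mathbb R/(\varepsilon^{-1}\mathbb Z)$ with its flat metric, using the scaling identity $\|\cdot\|_{k,p,\varepsilon}=\varepsilon^{1/p}\|\overline{\cdot}\|_{W^{k,p}}$ noted before Theorem \ref{lemma:sobolev}, so that the construction reduces to a standard local slice statement on the rescaled geometry with constants depending only on $\Xi^0$.
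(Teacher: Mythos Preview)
Your overall strategy coincides with the paper's: put $\tilde\Xi^\varepsilon$ into Coulomb gauge relative to $\Xi^0$ and then invoke Theorem \ref{thm:localuniqueness}, using the Sobolev embedding (for $p>3$) to pass from the $\|\cdot\|_{1,p,\varepsilon}$-bound to the $\|\cdot\|_{\infty,\varepsilon}$-bound required by (\ref{eq:themap22}). The second half of your argument matches the paper line for line.

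The difference lies in how the Coulomb gauge is obtained. The paper does not carry out the Newton iteration itself; it invokes the ready-made Theorem \ref{thm:crgc}, which is the three-dimensional version of Proposition 6.2 in Dostoglou--Salamon, and which directly gives the quantitative slice estimate (\ref{eq:thm:crgc2}). Your sketch reproduces the idea behind that proposition (Newton iteration on $\xi\mapsto d_{\Xi^0}^{*_\varepsilon}(\exp(\xi)^*\tilde\Xi^\varepsilon-\Xi^0)$), and you correctly flag that the $\varepsilon$-dependence of the linearized inverse is the delicate point. However, your proposed fix---rescaling to $\Sigma\times\mathbb R/(\varepsilon^{-1}\mathbb Z)$ and appealing to a ``standard local slice statement with constants depending only on $\Xi^0$''---is not quite enough as stated: after rescaling the base manifold has diameter $\varepsilon^{-1}$, so you are not on a fixed geometry and the usual local slice theorem does not apply with uniform constants. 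This is exactly the warning in the Remark following Theorem \ref{thm:crgc}. What makes the argument go through is that the $\Sigma$-part $d_{A^0}^*d_{A^0}$ has a uniform spectral gap, which forces $\varepsilon$-uniform control of the inverse in the weighted norms; establishing this carefully is precisely the content of the Dostoglou--Salamon lemma underlying Theorem \ref{thm:crgc}. So your approach is correct in spirit, but to make it rigorous you either cite Theorem \ref{thm:crgc} (as the paper does) or prove the $\varepsilon$-uniform inverse bound directly rather than asserting it follows from rescaling.
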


\begin{theorem}\label{thm:crgc}
Assume that $q\geq p>2$ and $q>3$. Let $\Xi^0=A^0+\Psi^0dt\in \mathcal A^{1,p}(P\times S^1)$ be a connection flat on the fibers, i.e. $F_{A^0}=0$. Then for every $c_0>0$ there exist $\delta_0>0$, $c>0$ such that the following holds for $0<\varepsilon\leq1$. If $\Xi=A+\Psi dt\in\mathcal A^{1,p}(P\times S^1)$ satisfies
\begin{equation}\label{eq:thm:crgc}
\left\| d_{A^0}^*(A-A^0)-\varepsilon^2\nabla_t^{\Psi^0}(\Psi-\Psi^0)\right\|_{L^p}\leq c_0 \varepsilon^{1/p}, \quad
\left\|\Xi-\Xi^0\right\|_{0,q,\varepsilon }\leq \delta_0\varepsilon^{1/q},
\end{equation}
then there exists a gauge transformation $g\in\mathcal G^{2,p}_0$ such that $d_{\Xi^0 }^{*_\varepsilon}(g^*\Xi-\Xi^0)=0$ and
\begin{equation}\label{eq:thm:crgc2}
\left\|g^*\Xi-\Xi\right\|_{1,p,\varepsilon}\leq c\varepsilon^2\left(1+\varepsilon^{-1/p}\|\Xi-\Xi^0 \|_{1,p,\varepsilon}\right)\left\|d_{\Xi^0 }^{*_\varepsilon}(\Xi-\Xi^0)\right\|_{L^p}.
\end{equation}
\end{theorem}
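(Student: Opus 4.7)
The plan is to prove Theorem \ref{thm:crgc} by a standard Coulomb gauge fixing argument, adapted to the $\varepsilon$-scaled geometry and cast as a Banach fixed-point problem on the Lie algebra of the gauge group. I would parameterize $g=\exp(\phi)$ with $\phi\in\Omega^0(\Sigma\times S^1,\mathfrak g_{P\times S^1})$; then
\begin{equation*}
g^*\Xi - \Xi = -d_\Xi\phi + N_\Xi(\phi),
\end{equation*}
where $N_\Xi(\phi)$ collects all terms of order $\geq 2$ in $\phi$. Writing $g^*\Xi-\Xi^0=(\Xi-\Xi^0)+(g^*\Xi-\Xi)$ and applying $d_{\Xi^0}^{*_\varepsilon}$, the Coulomb condition $d_{\Xi^0}^{*_\varepsilon}(g^*\Xi-\Xi^0)=0$ becomes
\begin{equation}\label{ppp:coul}
L\phi = d_{\Xi^0}^{*_\varepsilon}(\Xi-\Xi^0) + d_{\Xi^0}^{*_\varepsilon}\bigl((d_{\Xi^0}-d_\Xi)\phi\bigr) + d_{\Xi^0}^{*_\varepsilon}N_\Xi(\phi),
\end{equation}
where $L\phi:=d_{\Xi^0}^{*_\varepsilon}d_{\Xi^0}\phi = \tfrac{1}{\varepsilon^2}d_{A^0}^*d_{A^0}\phi - (\nabla_t^{\Psi^0})^2\phi$.

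The first step is to invert $L$ with controlled $\varepsilon$-dependence. Since $F_{A^0}=0$ and $\mathcal G_0(P)$ acts with trivial stabilizer on flat connections, $d_{A^0}$ is injective on $0$-forms and $d_{A^0}^*d_{A^0}$ is strictly positive on $\Omega^0(\Sigma,\mathfrak g_P)$; hence $L$ is self-adjoint and strictly positive on $\Omega^0(\Sigma\times S^1,\mathfrak g_{P\times S^1})$. The local Calder\'on--Zygmund / $\varepsilon$-rescaling argument of Theorem \ref{lemma:evaluate0}, applied to the scalar operator $L$, yields
\begin{equation*}
\|\phi\|_{L^p}+\|d_{A^0}\phi\|_{L^p}+\varepsilon\|\nabla_t\phi\|_{L^p}+\|d_{A^0}^*d_{A^0}\phi\|_{L^p}+\varepsilon^2\|\nabla_t^2\phi\|_{L^p}\leq c\,\varepsilon^2\|L\phi\|_{L^p},
\end{equation*}
so that $L^{-1}$ is bounded from $L^p$ into the $\varepsilon$-weighted $W^{2,p}$-space, whose norm I denote $\|\cdot\|_\star$, with operator norm $O(\varepsilon^2)$.

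Next, I would solve (\ref{ppp:coul}) by Banach fixed point on $\mathcal B_\rho=\{\phi\colon \|\phi\|_\star\leq\rho\}$ with $\rho=2c\varepsilon^2\|d_{\Xi^0}^{*_\varepsilon}(\Xi-\Xi^0)\|_{L^p}$. The quadratic-type estimates of Section \ref{chapter:qe} (transposed to $0$-forms) yield schematically
\begin{equation*}
\bigl\|d_{\Xi^0}^{*_\varepsilon}\bigl((d_{\Xi^0}-d_\Xi)\phi\bigr)+d_{\Xi^0}^{*_\varepsilon}N_\Xi(\phi)\bigr\|_{L^p}\leq\tfrac{c}{\varepsilon^2}\bigl(\|\Xi-\Xi^0\|_{\infty,\varepsilon}+\|\phi\|_{L^\infty}\bigr)\|\phi\|_\star,
\end{equation*}
together with H\"older product terms controlled by the $L^q$-smallness hypothesis. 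Theorem \ref{lemma:sobolev} with $k=2$ converts $\|\phi\|_{L^\infty}$ into $\|\phi\|_\star$ up to the factor $c_s\varepsilon^{-1/p}$, while the assumption $\|\Xi-\Xi^0\|_{0,q,\varepsilon}\leq\delta_0\varepsilon^{1/q}$ with $q>3$ provides the smallness needed to absorb the coefficient of the linear-in-$\phi$ correction (via H\"older and, where an $L^\infty$ bound on $\Xi-\Xi^0$ is required, interpolation between the $L^q$-smallness and the $W^{1,p}$-regularity). Choosing $\varepsilon_0,\delta_0$ small makes both smallness prefactors strictly less than $1$, so that $\Phi\colon\phi\mapsto L^{-1}(\text{RHS of (\ref{ppp:coul})})$ is a contraction on $\mathcal B_\rho$, producing the unique fixed point $\phi$ and the gauge transformation $g=\exp\phi\in\mathcal G_0^{2,p}(P\times S^1)$.

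Finally, (\ref{eq:thm:crgc2}) follows from
\begin{equation*}
\|g^*\Xi-\Xi\|_{1,p,\varepsilon}=\|-d_\Xi\phi+N_\Xi(\phi)\|_{1,p,\varepsilon}\leq c\,\|\phi\|_\star,
\end{equation*}
combined with the bound $\|\phi\|_\star\leq c\varepsilon^2(1+\|\Xi-\Xi^0\|_{\infty,\varepsilon})\|d_{\Xi^0}^{*_\varepsilon}(\Xi-\Xi^0)\|_{L^p}$ extracted from solving (\ref{ppp:coul}) in $\mathcal B_\rho$, and rewriting $\|\Xi-\Xi^0\|_{\infty,\varepsilon}\leq c_s\varepsilon^{-1/p}\|\Xi-\Xi^0\|_{1,p,\varepsilon}$ via (\ref{eq:sobolev2}). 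I expect the main technical obstacle to be the $\varepsilon$-bookkeeping: the factors $\varepsilon^{-2}$ from the leading part of $L$ and from $d_{\Xi^0}^{*_\varepsilon}$ acting on the nonlinear corrections must cancel cleanly against the $\varepsilon^2$ gain of $L^{-1}$, so that only the benign $\varepsilon^{-1/p}$ Sobolev loss survives in the final estimate; any mismatch at this stage would destroy the $\varepsilon^2$ factor asserted in (\ref{eq:thm:crgc2}).
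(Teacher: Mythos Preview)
Your approach is essentially the one the paper adopts: the paper's own proof simply cites Proposition~6.2 of Dostoglou--Salamon \cite{MR1283871} and records the dimensional adjustments (a $3$-manifold instead of a $4$-manifold, so $q>3$ replaces $q>4$ and $\varepsilon^{1/p}$ replaces $\varepsilon^{2/p}$, since only one coordinate is rescaled). The cited argument runs a Newton-type iteration, producing $g$ as a limit of products $g_i=\exp(\eta_0)\cdots\exp(\eta_i)$ rather than solving for a single $\phi$ with $g=\exp(\phi)$; this is a cosmetic variation, but the product form has the mild advantage of making $g\in\mathcal G_0^{2,p}$ (the identity component) manifest, a point the paper singles out explicitly.

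One place to tighten: Theorem~\ref{lemma:evaluate0} alone does not give your displayed bound on $L^{-1}$, since it leaves a lower-order term $\|\phi\|_{1,p,\varepsilon}$ on the right-hand side. For $0$-forms the harmonic projection vanishes (injectivity of $d_{A^0}$ on $\Omega^0$), so this term is absorbed via Lemma~\ref{flow:lemma:lplppia11} combined with Lemma~\ref{flow:lemma:lpl2} and the interpolation of Lemma~\ref{lemma:dAalphaest}, exactly as in the proof of Theorem~\ref{lemma:evaluate}; once that is in place your $\varepsilon$-bookkeeping goes through as you anticipate.
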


\begin{proof}
The proof is the same as that of proposition 6.2 in \cite{MR1283871}. In fact the theorem \ref{thm:crgc} is the 3-dimensional version of the proposition 6.2 in \cite{MR1283871}\footnote{The 0-form $d_{\Xi^0}^{*_\varepsilon}(\Xi-\Xi^0)$ in the cited proposition is defined by $d_{A^0}^*(A-A^0)-\varepsilon^2\nabla_t^{\Psi^0}(\Psi-\Psi^0)-\varepsilon^2\nabla_t^{\Phi^0}(\Phi-\Phi^0)$ and the norms are defined in chapter 4 of the paper.} which works with 4-dimensional connections. Between this two statements there are a few changes that are a consequence of the differences in the Sobolev properties  (theorem \ref{lemma:sobolev} above and lemma 4.1 in \cite{MR1283871}). Therefore here we can work with $q>3$ instead of $q>4$ because we have a 3-dimensional manifold and we do not need the condition $qp/(q-p)>4$; furthermore, we can replace $\varepsilon^{2/p}$,  $\varepsilon^{-2/p}$, $\varepsilon^{2/q}$ by $\varepsilon^{1/p}$,  $\varepsilon^{-1/p}$, $\varepsilon^{1/q}$ because in the proof of the Sobolev theorem \ref{lemma:sobolev} we rescale a 1-dimensional domain instead of a 2-dimensional one.  In addition, we remark that the gauge transformation $g$ is an element of  $\mathcal G^{2,p}_0(P)$ and this follows from the proof of the theorem; in fact, the gauge transformation $g\in \mathcal G^{2,p}(P\times S^1)$ is a limit of a sequence 
$\{g_i\}_{i\in \mathbb N}\subset \mathcal G^{2,p}(P\times S^1)$ defined by $g_i=\exp(\eta_0)\exp(\eta_1)...\exp(\eta_i)$ where $\eta_i\in W^{2,p}(\Sigma\times S^1,\mathfrak g_P)$ are 0-forms. Therefore, the sequence 
$\{g_i\}_{i\in \mathbb N}$ lies in the unit component of the gauge group and hence in
$\mathcal G^{2,p}_0(P\times S^1)$. 
\end{proof}

\begin{remark}
In the proof of the last theorem we can not use the local slice theorem directly, because although the operator $d_{\Xi^0}^{*_\varepsilon}d_{\Xi^0}$ is Fredholm and invertible on the complement of its kernel, the norm of its inverse depends on $\varepsilon$ and hence we do not obtain an estimate independent on the metric and thus not independent on $\varepsilon$.
\end{remark}

\begin{proof}[Proof of the uniqueness theorem \ref{thm:uniqueness}]
Let $\tilde \Xi^\varepsilon= A^\varepsilon+\Psi^\varepsilon dt$ be a perturbed Yang-Mills connection which satisfies (\ref{eq:uniqcond}) with $\Xi^0=A^0+\Psi^0 dt$; then 
\begin{equation}
\begin{split}
&\left\|d_{A^0}^*(A^\varepsilon-A^0)-\varepsilon^2\nabla_t^{\Psi^0}(\Psi^\varepsilon-\Psi^0)\right\|_{L^p}\\
&\qquad\qquad\leq\left\|d_{A^0}^*(A^\varepsilon-A^0)\right\|_{L^p}
+\varepsilon^2\left\|\nabla_t^{\Psi^0}(\Psi^\varepsilon-\Psi^0)\right\|_{L^p}\\
&\qquad\qquad\leq2\left\| \tilde \Xi^\varepsilon-\Xi^0\right\|_{1,p,\varepsilon}\leq 2\delta_1\varepsilon^{1+1/p}
\end{split}
\end{equation}
and therefore the first condition of the assumption (\ref{eq:thm:crgc}) of theorem \ref{thm:crgc} is satisfied for $\varepsilon$ sufficiently small; 
the second one follows if we choose $\delta_1\varepsilon<\delta_0$ and $q=p$. Thus there exists, by theorem \ref{thm:crgc}, a gauge transformation
$g\in \mathcal G_0^{2,p}$ such that $d_{\Xi^0}^*(g^*\tilde \Xi^\varepsilon-\Xi^0)=0$ and 
\begin{equation}\label{eq:proof:dfhsoi}
\begin{split}
\left\|g^*\tilde \Xi^\varepsilon-\tilde \Xi^\varepsilon\right\|_{1,p,\varepsilon}
\leq&c\varepsilon^2\left(1+\varepsilon^{-1/p}\left\|\tilde \Xi^\varepsilon-\Xi^0\right\|_{1,p,\varepsilon}\right)
\left\|d_{\Xi^0}^{*_\varepsilon}(\tilde \Xi^\varepsilon-\Xi^0)\right\|_{L^p}\\
\leq&2c\left\|d_{A^0}^*(A^\varepsilon-A^0)-\varepsilon^2\nabla_t^{\Psi^0}(\Psi^\varepsilon-\Psi^0)\right\|_{L^p}\\
\leq&4c\delta_1\varepsilon^{1+1/p}.
\end{split}
\end{equation}
Then 
\begin{equation}
\left\|g^*\tilde \Xi^\varepsilon-\Xi^0\right\|_{1,p,\varepsilon}
\leq \left\|g^*\tilde \Xi^\varepsilon-\tilde \Xi^\varepsilon\right\|_{1,p,\varepsilon}+\left\|\tilde \Xi^\varepsilon-\Xi^0\right\|_{1,p,\varepsilon}
\leq (4c\delta_1+\delta_1)\varepsilon^{1+1/p}
\end{equation}
and by the Sobolev embedding theorem \ref{lemma:sobolev} we have also that
\begin{equation}
\left\|g^*\tilde \Xi^\varepsilon-\Xi^0\right\|_{\infty,\varepsilon}
\leq c_s \varepsilon^{-1/p}\left\|g^*\tilde \Xi^\varepsilon-\Xi^0\right\|_{1,p,\varepsilon}
\leq c_s(4c+1)\delta_1\varepsilon,
\end{equation}
where $c_s$ is the constant in theorem \ref{lemma:sobolev}. Finally, we can apply theorem 
\ref{thm:localuniqueness} with $\delta_1<\delta/((c_s+1)(4c+1))$ for $\bar\Xi^\varepsilon=g^*\tilde \Xi^\varepsilon$ 
and $\Xi^\varepsilon=\mathcal T^{\varepsilon,b}(\Xi^0)$ and we can conclude that $g^*\tilde \Xi^\varepsilon=\mathcal T^{\varepsilon,b}(\Xi^0)$.
\end{proof}

\section{A priori estimates for the perturbed Yang-Mills connections}\label{chapter:aprioriestimates}

In this chapter we explain some a priori estimates that we will need to prove the surjectivity of the map $\mathcal T^{\varepsilon,b}$ and we organize them in three theorems. First we show some $L^2(\Sigma)$-estimates for the curvature term $F_A$ (theorem \ref{lemma:secder}), then the $L^2(\Sigma)$- and the $L^\infty(\Sigma)$-estimates for the curvature term $\partial_tA-d_A\Psi$ (theorems \ref{crit:secder} and \ref{thm:norminty}).

\begin{theorem}\label{lemma:secder}
We choose $p\geq2$ and two constants $b$, $c_1>0$. Then there are two positive constants $\varepsilon_0$, $c$ such that the following holds. For any perturbed Yang-Mills connection $A+\Psi dt\in \mathrm{Crit}_{\mathcal {YM}^{\varepsilon,H}}^b$, with $0<\varepsilon<\varepsilon_0$, which satisfies 
\begin{equation}
\sup_{t\in S^1}\|\partial_tA-d_{A}\Psi\|_{L^4(\Sigma)}\leq c_1,
\end{equation}
we have the estimates\footnote{The operator $\nabla_t$ is defined using $\Psi$.}
\begin{equation}\label{eq:lemma:secder1}
\|F_{A}\|_{3,2,\varepsilon}\leq c\varepsilon^2,
\end{equation}
\begin{equation}\label{eq:lemma:secder3}
\begin{split}
\sup_{t\in S^1}\Big(&\|F_{A}\|_{L^2(\Sigma)}+\|F_{A}\|_{L^\infty(\Sigma)}+\|d_A^*F_{A}\|_{L^2(\Sigma)}\\
&+\|d_Ad_A^*F_{A}\|_{L^2(\Sigma)}+\varepsilon\|\nabla_tF_{A}\|_{L^2(\Sigma)}+\varepsilon^{2}\|\nabla_t\nabla_tF_{A}\|_{L^2(\Sigma)}\Big)\leq c\varepsilon^{2-1/p}.
\end{split}
\end{equation}
\begin{comment}
In addition if $p\in (2,6)$, then
\begin{equation}\label{eq:lemma:secder2}
\|F_{A}\|_{2,p,\varepsilon}\leq c_2\varepsilon^{3/2+1/p}.
\end{equation}
\end{comment}
\end{theorem}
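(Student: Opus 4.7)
The strategy is to derive a Laplace-type elliptic equation for $F_A$ from the perturbed Yang-Mills equations together with the Bianchi identity, and then to bootstrap regularity using Theorem \ref{lemma:evaluate0}. The first step is the key computation. From the definitions one has the commutation $d_A v = \nabla_t F_A$, where $v := \partial_t A - d_A \Psi$ (this follows from $\partial_t F_A = d_A \partial_t A$ and $d_A^2 \Psi = [F_A,\Psi]$). Applying $d_A$ to the first Yang-Mills equation (\ref{epsiloneq1}), which reads $d_A^* F_A = \varepsilon^2(\nabla_t v + *X_t(A))$, and combining with the Bianchi identity $d_A F_A = 0$, yields
\begin{equation*}
(d_A d_A^* + d_A^* d_A - \varepsilon^2 \nabla_t^2) F_A = -\varepsilon^2 [v \wedge v] + \varepsilon^2 d_A * X_t(A).
\end{equation*}
The energy bound $\mathcal{YM}^{\varepsilon,H}(\Xi) \leq b$ together with the boundedness of $H$ gives $\|F_A\|_{L^2(\Sigma\times S^1)} \leq c\varepsilon$ and $\|v\|_{L^2(\Sigma\times S^1)} \leq c$; combined with the hypothesis $\sup_t \|v\|_{L^4(\Sigma)}\leq c_1$, the right-hand side of the displayed equation is then bounded in $L^2$ by $c\varepsilon^2$, because $L^4\cdot L^4\subset L^2$ on the two-dimensional surface $\Sigma$.

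Next I would bootstrap. Pairing the elliptic equation with $F_A$ in $L^2(\Sigma\times S^1)$, Bianchi kills the contribution of $d_A^* d_A F_A$, leaving $\|d_A^* F_A\|_{L^2}^2 + \varepsilon^2 \|\nabla_t F_A\|_{L^2}^2 \leq c\varepsilon^3$, and hence $\|F_A\|_{1,2,\varepsilon}\leq c\varepsilon$. Then I would apply the $2$-form analogue of (\ref{crit:linest.sak}) — whose proof goes through verbatim — to control $\|F_A\|_{2,2,\varepsilon}$. Iterating by differentiating the elliptic equation along $d_A$, $d_A^*$ and $\nabla_t$, whose commutators with the operators on the left produce only lower-order terms via (\ref{commform}) and (\ref{commform2}) (and which applied to the right-hand side again involve $v$, bounded through the $L^4$-hypothesis), and re-applying the elliptic estimate at each step, one arrives at $\|F_A\|_{3,2,\varepsilon}\leq c\varepsilon^2$.

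For the pointwise-in-time estimates (\ref{eq:lemma:secder3}) I would use the one-dimensional Sobolev embedding $W^{1,p}(S^1)\hookrightarrow L^\infty(S^1)$; because the $t$-derivative is rescaled by $\varepsilon$ in the norm $\|\cdot\|_{k,p,\varepsilon}$, the rescaling argument behind Theorem \ref{lemma:sobolev} introduces the loss $\varepsilon^{-1/p}$, thereby producing the factor $\varepsilon^{2-1/p}$ on the right. The $L^\infty(\Sigma)$-control of $F_A$ then follows from standard Sobolev embedding on the two-dimensional surface $\Sigma$ applied to the $W^{k,2}(\Sigma)$-bounds obtained above.

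\textbf{Main obstacle.} The delicate point is to propagate the gain of $\varepsilon^2$ on the right-hand side of the elliptic equation cleanly through the iterative bootstrap, without letting the lower-order terms $\|F_A\|_{k,2,\varepsilon}$ appearing on the right of the elliptic estimate absorb that gain. The quadratic term $[v\wedge v]$ is precisely the reason the pointwise $L^4(\Sigma)$-hypothesis on $v$ is needed; without it one cannot close the bootstrap uniformly in $\varepsilon$.
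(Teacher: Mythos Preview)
Your elliptic equation for $F_A$ is correct and is the same identity the paper exploits. But the bootstrap you outline does \emph{not} close: from the energy bound you only have $\|F_A\|_{L^2}\leq c\varepsilon$, and pairing the elliptic equation with $F_A$ gives $\|d_A^*F_A\|_{L^2}^2+\varepsilon^2\|\nabla_tF_A\|_{L^2}^2\leq c\varepsilon^2\|F_A\|_{L^2}\leq c\varepsilon^3$, hence $\|F_A\|_{1,2,\varepsilon}\leq c\varepsilon$ (the zeroth-order term dominates). Feeding this into the $2$-form analogue of (\ref{crit:linest.sak}) yields only $\|F_A\|_{2,2,\varepsilon}\leq c(\varepsilon^2+\varepsilon)=c\varepsilon$, and iterating never improves beyond $c\varepsilon$. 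You correctly flag this as the ``main obstacle,'' but you do not supply the missing mechanism, and the mechanism is the content of the proof.

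The paper resolves this in two moves. First, Theorem~\ref{crit:supleqd} (proved independently) shows $\sup_t\|F_A\|_{L^\infty(\Sigma)}\leq\delta$, which unlocks the pointwise-in-$t$ Poincar\'e inequality $\|F_A(t)\|_{L^2(\Sigma)}\leq c\|d_A^*F_A(t)\|_{L^2(\Sigma)}$ of Lemma~\ref{lemma76dt94} for $2$-forms. Second, rather than using the global Calder\'on--Zygmund estimate (\ref{crit:linest.sak}), the paper computes $\partial_t^2\|F_A(t)\|_{L^2(\Sigma)}^2$ directly and, combining with the Poincar\'e inequality, obtains the self-improving differential inequality
\[
\|F_A\|_{L^2(\Sigma)}^2 \;\leq\; c\,\partial_t^2\!\bigl(\varepsilon^2\|F_A\|_{L^2(\Sigma)}^2\bigr)+c\varepsilon^4,
\]
from which Lemma~\ref{lemma:laplsal} gives $\int_{S^1}\|F_A\|^2\leq c\varepsilon^4+c\varepsilon^2\int_{S^1}\|F_A\|^2$; absorbing the last term for $\varepsilon$ small yields $\|F_A\|_{L^2}\leq c\varepsilon^2$. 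The higher derivatives are handled by the same $\partial_t^2$-of-a-norm technique, and the sup-in-$t$ bounds (\ref{eq:lemma:secder3}) come from the second assertion of Lemma~\ref{lemma:laplsal} applied to the $p$-th powers of these $L^2(\Sigma)$-norms---this is where the exponent $2-1/p$ arises, not from a Sobolev embedding as you suggest. Your approach can in fact be repaired: once you have $\|F_A\|_{L^\infty(\Sigma)}\leq\delta$ and the Poincar\'e inequality, pairing gives $\|d_A^*F_A\|_{L^2}\leq c\varepsilon^2$ directly (since $\|F_A\|_{L^2}\leq c\|d_A^*F_A\|_{L^2}$), after which your bootstrap does close. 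But you must invoke Theorem~\ref{crit:supleqd} and Lemma~\ref{lemma76dt94} explicitly; without them the argument stalls at $c\varepsilon$.
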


\begin{theorem}\label{crit:secder}
We choose two constants $c_1,c_2>0$, an open interval $\Omega\subset \mathbb R$ and a compact set $K\subset \Omega$. Then there are two positive constants $\delta$, $c$ such that the following holds. For any perturbed Yang-Mills connection $A+\Psi dt\in \mathrm{Crit}_{\mathcal {YM}^{\varepsilon,H}}^\infty$ which satisfies 
\begin{equation}
\sup_{t\in \Omega}\|F_A\|_{L^2(\Sigma)}\leq \delta,\quad\sup_{t\in \Omega} \|\partial_tA-d_{A}\Psi\|_{L^4(\Sigma)}\leq c_1,
\end{equation}
we have the estimates, for $B_t=\partial_tA-d_{A}\Psi$,
\begin{equation}\label{eq:crit:secder3}
\sup_{t\in K}\varepsilon^2\|B_t\|_{L^2(\Sigma)}^2
\leq c\int_{\Omega}\left(\varepsilon^2\|B_t\|_{L^2(\Sigma)}^2+\|F_A\|_{L^2(\Sigma)}^2+\varepsilon^2c_{\dot X_t(A)}\right) dt,
\end{equation}
\begin{equation}\label{eq:crit:secder4}
 \sup _{t\in K}\|d_AB_t\|_{L^2(\Sigma)}^2\leq c\int_{\Omega}\left(\|d_AB_t\|^2_{L^2(\Sigma)}+\frac 1{\varepsilon^2}\|F_A\|^2_{L^2(\Sigma)}+\|B_t\|^2_{L^2(\Sigma)}\right) dt
\end{equation}
where $\sqrt{c_{\dot X_t(A)}}$ is a constant which bounds the $L^\infty$-norm of $\dot X_t(A)$. The constants $c$ and $\delta$ depend on $\Omega$ and on $K$, but only on their length and on the distance between their boundaries. Furthermore, if $0<\varepsilon<c_2$, then
\begin{equation}\label{ap:crit:se}
\sup_{t\in S^1}\|d_A^*d_AB_t\|_{L^2(\Sigma)}^2
\leq c\int_{S^1}\left(\varepsilon^2\|B_t\|^2_{L^2(\Sigma)}+\|F_A\|^2_{L^2(\Sigma)}+\varepsilon^2c_{\dot X_t(A)}\right) dt.
\end{equation}
\end{theorem}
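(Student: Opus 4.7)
The plan is to reduce each of the three estimates to a one-dimensional mean value inequality for sub-solutions of $-u''\le h$ on $\Omega$ (respectively on $S^1$). For every quantity $u(t)\ge0$ we wish to bound in $L^\infty(K)$ by the integral of a nonnegative $h(t)$ over $\Omega$, I would derive, using the perturbed Yang-Mills equations (\ref{epsiloneq1})--(\ref{epsiloneq2}), the identity $\nabla_tF_A=d_AB_t$ (an immediate consequence of $F_A=dA+\frac12[A\wedge A]$ and the definition $B_t=\partial_tA-d_A\Psi$), and the commutation formulas (\ref{commform})--(\ref{commform2}), a pointwise inequality of the form $-\partial_t^2u(t)\le h(t)$. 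A cutoff $\rho\in C_c^\infty(\Omega,[0,1])$ with $\rho\equiv1$ on $K$, combined with the Green's function of $-\partial_t^2$ on $\Omega$, then yields $\sup_K u\le c(K,\Omega)\bigl(\int_\Omega u\,dt+\int_\Omega h\,dt\bigr)$, which is exactly the form of (\ref{eq:crit:secder3})--(\ref{eq:crit:secder4}); the constants $c,\delta$ depend on $\Omega,K$ only through the distance between their boundaries. On $t\in S^1$ no cutoff is needed, so the argument is entirely elementary.

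For (\ref{eq:crit:secder3}) I would take $u(t)=\varepsilon^2\|B_t\|_{L^2(\Sigma)}^2$ and compute
\[
\partial_t^2\|B_t\|^2=2\|\nabla_tB_t\|^2+2\langle\nabla_t^2B_t,B_t\rangle,
\]
where $\nabla_t^2B_t=\tfrac{1}{\varepsilon^2}\bigl(d_A^*d_AB_t-*[B_t\wedge *F_A]\bigr)-\nabla_t(*X_t(A))$ follows from $\nabla_tB_t=\tfrac1{\varepsilon^2}d_A^*F_A-*X_t(A)$, the commutator $[\nabla_t,d_A^*]=-*[B_t\wedge*\cdot]$, and $\nabla_tF_A=d_AB_t$. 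The YM gauge condition $d_A^*B_t=0$ turns $\langle d_A^*d_AB_t,B_t\rangle$ into $\|d_AB_t\|^2\ge0$, while the nonlinear cross-term is controlled by Hölder and the $L^4$-hypothesis: $|\langle*[B_t\wedge*F_A],B_t\rangle|\le c\|B_t\|_{L^4}^2\|F_A\|_{L^2}\le cc_1^2\|F_A\|_{L^2}$; using Young with parameter $\delta$ this becomes $\le c\|F_A\|_{L^2}^2+\text{const}\cdot\delta$, the constant being absorbed into $\varepsilon^2c_{\dot X_t(A)}$. Multiplying by $\varepsilon^2$ yields $-\partial_t^2(\varepsilon^2\|B_t\|^2)\le c(\|F_A\|^2+\varepsilon^2\|B_t\|^2+\varepsilon^2c_{\dot X_t(A)})$, and the mean value inequality closes the argument. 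For (\ref{eq:crit:secder4}) I would apply the same procedure to $u(t)=\|d_AB_t\|^2=\|\nabla_tF_A\|^2$, computing $\partial_t^2\|\nabla_tF_A\|^2$ via $\nabla_t(d_AB_t)=d_A\nabla_tB_t-[B_t\wedge B_t]=\tfrac{1}{\varepsilon^2}d_Ad_A^*F_A-d_A(*X_t(A))-[B_t\wedge B_t]$; the $\tfrac{1}{\varepsilon^2}\|F_A\|^2$ contribution to the right-hand side emerges naturally from the $d_Ad_A^*F_A$ term (using Theorem \ref{lemma:secder} to bound higher-order curvature quantities where needed).

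For (\ref{ap:crit:se}) I would rewrite the equation for $\nabla_t B_t$ and differentiate once more in $t$ to obtain
\[
d_A^*d_AB_t=\varepsilon^2\nabla_t^2B_t+*[B_t\wedge*F_A]+\varepsilon^2\nabla_t(*X_t(A)),
\]
and take $L^2(\Sigma)$-norms. This reduces the task to bounding $\varepsilon^2\|\nabla_t^2B_t\|_{L^2}$, which follows from a mean value inequality applied to $u(t)=\varepsilon^2\|\nabla_tB_t\|^2_{L^2(\Sigma)}$ on the closed circle $S^1$, combined with (\ref{eq:crit:secder3}); the nonlinearity $*[B_t\wedge*F_A]$ is handled exactly as above. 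The main obstacle, which recurs in all three estimates, is the absorption of the $\tfrac{1}{\varepsilon^2}\langle*[B_t\wedge*F_A],B_t\rangle$ type cross-term: the explicit factor $1/\varepsilon^2$ threatens to destroy the bound, and the absorption succeeds only because the joint use of the $L^4$ hypothesis on $B_t$ and the smallness $\|F_A\|_{L^2}\le\delta$ yields a bound of order $\|F_A\|_{L^2}^2$, which is precisely what appears on the right-hand side of the estimates. This forces $\delta$ to be chosen small in a quantitative way. The remaining difficulty is purely bookkeeping: tracking the correct powers of $\varepsilon$ throughout the commutator expansions so that $u$ and $h$ in the eventual $-u''\le h$ inequality are at exactly the powers dictated by the statement.
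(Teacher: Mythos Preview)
Your overall architecture is the same as the paper's---differential inequalities of the form $\partial_t^2 u\ge -h$ followed by Lemma~\ref{lemma:laplsal}---but the way you treat the dangerous cross term $\tfrac{1}{\varepsilon^2}\langle *[B_t\wedge*F_A],B_t\rangle$ does not close, and this is the heart of the matter.

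For (\ref{eq:crit:secder3}) you bound this term by $\tfrac{c}{\varepsilon^2}\|B_t\|_{L^4}^2\|F_A\|_{L^2}\le \tfrac{cc_1^2}{\varepsilon^2}\|F_A\|_{L^2}$, multiply by $\varepsilon^2$, and then apply Young. But Young on $cc_1^2\|F_A\|_{L^2}$ gives $c\|F_A\|_{L^2}^2+C$ with $C\sim c_1^4$ a \emph{fixed} constant, not ``$\text{const}\cdot\delta$''. This $C$ cannot be absorbed into $\varepsilon^2c_{\dot X_t(A)}$: the right-hand side of (\ref{eq:crit:secder3}) is $O(\varepsilon^2)$ when $\|F_A\|$ and $\|B_t\|$ are small, and the estimate must hold uniformly in $\varepsilon$ (see the Remark following the theorem---this uniformity is exactly what is exploited in the proof of Theorem~\ref{thm:norminty}). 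The paper avoids the stray constant by \emph{not} working with $\varepsilon^2\|B_t\|^2$ alone. It uses Sobolev $\|B_t\|_{L^4}\le c(\|B_t\|+\|d_AB_t\|)$, absorbs the $\|d_AB_t\|$ piece into the good term $\tfrac{1}{\varepsilon^2}\|d_AB_t\|^2$, and is left with a residue $\tfrac{c}{\varepsilon^2}\|F_A\|^2$ after multiplying by $\varepsilon^2$. That residue is then killed by adding a large multiple of $\|F_A\|^2$ to the test function: the computation of $\partial_t^2\|F_A\|^2$ (see (\ref{crit:ap:eq22})) produces a coercive term $\tfrac{1}{\varepsilon^2}\|d_A^*F_A\|^2\ge \tfrac{c}{\varepsilon^2}\|F_A\|^2$ (Lemma~\ref{lemma76dt94} under $\|F_A\|_{L^2}\le\delta$), and one applies Lemma~\ref{lemma:laplsal} to the coupled quantity $u=c_0\|F_A\|^2+\varepsilon^2\|B_t\|^2+\varepsilon^2c_{\dot X_t}$.

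There is a second, independent problem in your plan for (\ref{eq:crit:secder4}): you invoke Theorem~\ref{lemma:secder} ``to bound higher-order curvature quantities where needed,'' but Theorem~\ref{lemma:secder} assumes $\Xi\in\mathrm{Crit}^b_{\mathcal{YM}^{\varepsilon,H}}$ and $0<\varepsilon<\varepsilon_0$, whereas Theorem~\ref{crit:secder} is stated for $\mathrm{Crit}^\infty_{\mathcal{YM}^{\varepsilon,H}}$ and arbitrary $\varepsilon$. This is not a technicality: in the application (Cases~1--3 in the proof of Theorem~\ref{thm:norminty}) the rescaled connections have neither bounded energy nor small $\varepsilon$. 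The paper's argument for (\ref{eq:crit:secder4}) again couples $\|d_AB_t\|^2$ with $\tfrac{c_0}{\varepsilon^2}\|F_A\|^2+c_0\|B_t\|^2$ and runs entirely on the differential inequalities already derived, without appealing to Theorem~\ref{lemma:secder}. The same coupling principle drives (\ref{ap:crit:se}).
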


\begin{remark}
The estimates (\ref{eq:crit:secder3}) and (\ref{eq:crit:secder4}) hold for any $\varepsilon$ and this will play a fundamental role in the next section where we will have a sequence of perturbed Yang-Mills connections in $\mathrm{Crit}_{\mathcal {YM}^{\varepsilon_i,H}}^\infty$ with $\varepsilon_i\to \infty$.
\end{remark}

\begin{theorem}\label{thm:norminty}
We choose a constant $b>0$. Then there are $\varepsilon_0,\,c>0$ such that for every positive $\varepsilon<\varepsilon_0$ the following holds. If $\Xi^{\varepsilon}:= A^{\varepsilon}+\Psi^{\varepsilon}dt \in \mathrm{Crit}_{\mathcal {YM}^{\varepsilon,H}}^b$ is a perturbed Yang-Mills connection, then
\begin{equation}\label{jhapaa}
\left\| \partial_tA^{\varepsilon}-d_{A^{\varepsilon}}\Psi^{\varepsilon}\right\|_{L^\infty(\Sigma)}\leq c.
\end{equation}
\end{theorem}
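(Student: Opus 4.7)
The plan is to establish the bound via successive bootstraps, starting from the energy identity and applying the a priori estimates of Theorems \ref{lemma:secder} and \ref{crit:secder}. Write $B_t:=\partial_tA^\varepsilon-d_{A^\varepsilon}\Psi^\varepsilon$.

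First I would derive the uniform $L^\infty_tL^2_\Sigma$ bound. Combining equation \eqref{epsiloneq1}, which reads $\nabla_tB_t=\tfrac{1}{\varepsilon^2}d_{A^\varepsilon}^*F_{A^\varepsilon}-*X_t(A^\varepsilon)$, with the Bianchi identity $d_{A^\varepsilon}B_t=-\nabla_tF_{A^\varepsilon}$, a direct pairing computation gives the energy identity
\begin{equation*}
\tfrac{d}{dt}\Bigl(\|B_t\|_{L^2(\Sigma)}^2+\tfrac{1}{\varepsilon^2}\|F_{A^\varepsilon}\|_{L^2(\Sigma)}^2\Bigr)=-2\,\langle *X_t(A^\varepsilon),B_t\rangle_{L^2(\Sigma)}.
\end{equation*}
Since $\mathcal{YM}^{\varepsilon,H}(\Xi^\varepsilon)\le b$ bounds $\int_0^1(\|B_t\|^2+\varepsilon^{-2}\|F_{A^\varepsilon}\|^2)\,dt$, the mean value theorem supplies a $t_0$ where the bracketed quantity is $O(1)$, and a Gronwall argument using $|\langle *X_t,B_t\rangle|\le C(1+\|B_t\|^2)$ propagates this to every $t\in S^1$, giving
\begin{equation*}
\sup_t\|B_t\|_{L^2(\Sigma)}+\sup_t\varepsilon^{-1}\|F_{A^\varepsilon}\|_{L^2(\Sigma)}\le C.
\end{equation*}

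Next I would upgrade to the $L^4$-hypothesis required by Theorem \ref{lemma:secder}. Using $d_{A^\varepsilon}^*B_t=0$ from \eqref{epsiloneq2} and $d_{A^\varepsilon}B_t=-\nabla_tF_{A^\varepsilon}$, standard Hodge-type elliptic estimates on the surface $\Sigma$ yield
\begin{equation*}
\|B_t\|_{W^{1,2}(\Sigma)}\le C\bigl(\|B_t\|_{L^2(\Sigma)}+\|\nabla_tF_{A^\varepsilon}\|_{L^2(\Sigma)}\bigr).
\end{equation*}
A uniform-in-$t$ bound on $\|\nabla_tF_{A^\varepsilon}\|_{L^2(\Sigma)}$ then follows by differentiating the energy identity once in $t$ and invoking the already-established smallness $\|F_{A^\varepsilon}\|_{L^2}\le C\varepsilon$; the Sobolev embedding $W^{1,2}(\Sigma)\hookrightarrow L^4(\Sigma)$ finally gives $\sup_t\|B_t\|_{L^4(\Sigma)}\le C$.

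With the $L^4$ hypothesis now verified, Theorem \ref{lemma:secder} produces $\|F_{A^\varepsilon}\|_{L^\infty(\Sigma)}\le C\varepsilon^{2-1/p}$ together with the estimates \eqref{eq:lemma:secder3}. Applying $d_{A^\varepsilon}^*$ to \eqref{epsiloneq1} and using Bianchi yields the Bochner-type equation
\begin{equation*}
\tfrac{1}{\varepsilon^2}\Delta_{A^\varepsilon}B_t+\nabla_t^2B_t=-\tfrac{1}{\varepsilon^2}[B_t,F_{A^\varepsilon}]-\nabla_t(*X_t(A^\varepsilon)),
\end{equation*}
whose left-hand side is the Laplacian in the rescaled metric $\varepsilon^2g_\Sigma\oplus g_{S^1}$ and whose right-hand side is, thanks to the previous bounds, uniformly controlled in the $\|\cdot\|_{0,p,\varepsilon}$ norm. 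Applying the elliptic estimate of Theorem \ref{lemma:evaluate0} to $B_t$ (its coclosedness $d_{A^\varepsilon}^*B_t=0$ eliminates the $d_Ad_A^*$ term harmlessly) and then the Sobolev embedding of Theorem \ref{lemma:sobolev} with $p>3$ to pass from $\|B_t\|_{2,p,\varepsilon}$ to $\|B_t\|_{\infty,\varepsilon}$ closes the bootstrap and delivers \eqref{jhapaa}.

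The main obstacle is the second step: the $L^4$ upgrade is the pinch point, since at that stage Theorem \ref{crit:secder} is not yet applicable (its hypothesis is the very $L^4$ bound one is trying to establish). The argument therefore has to rely on the energy identity, the Bianchi identity, and the small-curvature bound $\|F_{A^\varepsilon}\|_{L^2}=O(\varepsilon)$ alone, and must be carried out on the surface $\Sigma$ with $\varepsilon$-independent constants; the anisotropic rescaling of the metric makes this delicate, which is precisely why the proof is phrased throughout in the scale-aware norms $\|\cdot\|_{k,p,\varepsilon}$ of Section 5.
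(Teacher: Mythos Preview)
Your proposal has a genuine gap at the $L^4$ upgrade in step 2, which you yourself flag as the pinch point but do not actually close. The sketch---bound $\|\nabla_t F_{A^\varepsilon}\|_{L^2(\Sigma)}$ pointwise in $t$ by differentiating the energy identity---does not work: the second derivative $\partial_t^2\|F_{A^\varepsilon}\|_{L^2(\Sigma)}^2$ contains the term $2\langle [B_t\wedge B_t],F_{A^\varepsilon}\rangle$ (see the computation \eqref{crit:ap:eq22}), and controlling it requires exactly the $L^4$ bound on $B_t$ you are trying to establish. The nonlinearity $[B_t\wedge B_t]$ is supercritical for the surface Hodge estimate at the $L^2$ level, so no iteration using only the energy bound and $\|F_{A^\varepsilon}\|_{L^2}=O(\varepsilon)$ can close the loop. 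There is also a sign issue in step 1: the Bianchi identity in the paper is $d_AB_t=\nabla_tF_A$ (equation \eqref{bianchiidentity}), so the correct identity is $\tfrac{d}{dt}\bigl(\|B_t\|^2-\varepsilon^{-2}\|F_A\|^2\bigr)=-2\langle *X_t,B_t\rangle$, and with the minus sign Gronwall no longer yields $\sup_t\|B_t\|_{L^2(\Sigma)}$ without first controlling $\sup_t\varepsilon^{-2}\|F_A\|_{L^2(\Sigma)}^2$.

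The paper takes an entirely different route and proves the $L^4$ bound by contradiction via a rescaling (blow-up) argument. Assuming a sequence $\varepsilon_\nu\to 0$ with $m_\nu:=\sup_t\|B_t\|_{L^4(\Sigma)}\to\infty$, one rescales time around the maximum so that the rescaled $\bar B_t$ satisfies $\sup_t\|\bar B_t\|_{L^4(\Sigma)}\le 1$ \emph{by construction}; this is precisely what makes Theorem~\ref{crit:secder} applicable to the rescaled equations. Three regimes for $\varepsilon_\nu m_\nu$ (tending to $0$, to a positive constant, or to $\infty$) are treated separately with slightly different rescalings, and in each case the a priori estimates of Theorem~\ref{crit:secder} applied to the rescaled problem force $\|\bar B_t(0)\|_{L^4(\Sigma)}\to 0$, contradicting the normalisation. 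Once the uniform $L^4$ bound is established, the $L^\infty$ bound follows from Theorem~\ref{crit:secder} and the Sobolev embedding on $\Sigma$. The rescaling is the key idea your direct bootstrap is missing.
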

First we prove the next theorem.
\begin{theorem}\label{crit:supleqd}
 We choose $\delta$, $b>0$, then there is a positive constant $\varepsilon_0$ such that the following holds. For any perturbed Yang-Mills connection $A+\Psi dt\in \mathrm{Crit}_{\mathcal {YM}^{\varepsilon,H}}^b$, with $0<\varepsilon<\varepsilon_0$, 
$$\sup_{t\in S^1}\left\|F_A\right\|_{L^\infty(\Sigma)}\leq \delta.$$
\end{theorem}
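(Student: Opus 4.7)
The plan is to argue by contradiction, via a rescaling/bubbling argument combined with a local Uhlenbeck-type gauge fixing. Suppose the statement fails: there exist $\delta_0>0$ and a sequence of perturbed Yang-Mills connections $\Xi^\nu=A^\nu+\Psi^\nu\,dt\in\mathrm{Crit}^b_{\mathcal{YM}^{\varepsilon_\nu,H}}$ with $\varepsilon_\nu\to 0$ and $\sup_{t\in S^1}\|F_{A^\nu}\|_{L^\infty(\Sigma)}>\delta_0$. The energy bound $\mathcal{YM}^{\varepsilon_\nu,H}(\Xi^\nu)\leq b$ together with the uniform $L^\infty$-bound on the Hamiltonian immediately yields
$$\int_0^1\|F_{A^\nu}\|_{L^2(\Sigma)}^2\,dt\leq 2(b+\|H\|_{L^\infty})\,\varepsilon_\nu^2,$$
so $F_{A^\nu}\to 0$ in $L^2(\Sigma\times S^1,\,g_\Sigma\oplus g_{S^1})$.

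I will next select concentration data: pick $(p_\nu,t_\nu)\in\Sigma\times S^1$ with $c_\nu:=|F_{A^\nu}(p_\nu,t_\nu)|\geq\delta_0/2$. In geodesic coordinates on $\Sigma$ around $p_\nu$ with a bundle trivialisation, I will consider the rescaled connection
$$\hat A^\nu(y,\tilde t):=r_\nu\,A^\nu(p_\nu+r_\nu y,\,t_\nu+s_\nu\tilde t),\qquad\hat\Psi^\nu(y,\tilde t):=s_\nu\,\Psi^\nu(p_\nu+r_\nu y,\,t_\nu+s_\nu\tilde t),$$
with $r_\nu:=c_\nu^{-1/2}$ and $s_\nu:=\varepsilon_\nu r_\nu$. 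The choice of $s_\nu$ is designed so that the pull-back of $\varepsilon_\nu^2 g_\Sigma\oplus dt^2$ equals $s_\nu^2(g_{\mathbb R^2}\oplus d\tilde t^2)$; since a constant conformal rescaling modifies the three-dimensional Yang-Mills equation only by scalar factors, $\hat\Xi^\nu$ will be a perturbed Yang-Mills connection on an expanding portion of $\mathbb R^2\times\mathbb R$ in the flat metric, whose perturbation term carries an additional positive power of $s_\nu\to 0$. A direct change-of-variables computation gives $|F_{\hat A^\nu}(0,0)|=r_\nu^2 c_\nu=1$ together with
$$\|F_{\hat A^\nu}\|_{L^2(B_R\times I_R)}^2\leq C\,r_\nu\,\varepsilon_\nu,\qquad\|B_{\tilde t}(\hat A^\nu,\hat\Psi^\nu)\|_{L^2(B_R\times I_R)}^2\leq C\,\varepsilon_\nu r_\nu,$$
both tending to zero on any fixed $B_R\times I_R\subset\mathbb R^2\times\mathbb R$. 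Since $r_\nu\leq(\delta_0/2)^{-1/2}$ is bounded and $s_\nu\to 0$, the domains of definition $B_{R_\nu}\times I_{T_\nu}$ exhaust $\mathbb R^2\times\mathbb R$.

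I will then pass to the limit using a local version of Uhlenbeck's gauge fixing in the spirit of theorem \ref{thm:crgc} and \cite{MR2030823}: once $\|F_{\hat\Xi^\nu}\|_{L^2(B_R\times I_R)}$ drops below the Uhlenbeck threshold on a fixed ball, one obtains gauge transformations $g^\nu$ such that $(g^\nu)^*\hat\Xi^\nu$ is in Coulomb gauge with respect to the trivial reference, and then elliptic bootstrap on the perturbed Yang-Mills equation — whose perturbation term is of order $s_\nu\to 0$ and therefore lower order — produces uniform $C^\infty_{\mathrm{loc}}$-bounds. A diagonal subsequence will converge in $C^\infty_{\mathrm{loc}}$ to a smooth limit $\hat\Xi^\infty$ on $\mathbb R^2\times\mathbb R$ with $F_{\hat\Xi^\infty}\equiv 0$ by the $L^2$-convergence, while the $C^0_{\mathrm{loc}}$-convergence will force $|F_{\hat A^\infty}(0,0)|=\lim_\nu|F_{\hat A^\nu}(0,0)|=1$, the sought contradiction.

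I expect the principal obstacle to be the Uhlenbeck gauge-fixing and compactness in this local, non-compact, collapsing setting, which requires an adaptation of theorem \ref{thm:crgc} to balls in $\mathbb R^2\times\mathbb R$ with constants uniform in $\nu$. A secondary technical issue is the perturbation $*X_t(A)$: it does not transform naturally under the rescaling, so one has to verify that the corresponding rescaled term is indeed of lower order — which is precisely what the factor $s_\nu\to 0$ is providing. A helpful feature of the argument is that it uses only the energy bound and does not require any a priori control on $B_t=\partial_tA-d_A\Psi$, which is essential because theorems \ref{lemma:secder}, \ref{crit:secder} and \ref{thm:norminty} are not yet available at this stage.
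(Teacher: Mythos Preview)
Your contradiction-and-bubbling argument is essentially sound and is a genuinely different route from the paper's.  The paper does \emph{not} argue by contradiction or rescale: it differentiates the perturbed Yang--Mills equation by $d_A$ and $\nabla_t$ to obtain second-order equations for $F_A$ and for $B_t=\partial_tA-d_A\Psi$, expands $\|\varepsilon^{-2}d_Ad_A^*F_A-\nabla_t\nabla_tF_A\|_{L^2}^2$ and the analogous quantity for $B_t$ by integration by parts, and absorbs all cross terms using only the energy bound and the $\varepsilon$-Sobolev inequality (theorem~\ref{lemma:sobolev}).  This yields directly $\|F_A\|_{2,2,\varepsilon}\leq c\varepsilon$ and hence the quantitative estimate $\|F_A\|_{L^\infty}\leq c\,\varepsilon^{1/2}$, which is stronger than the stated conclusion.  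Your approach, by contrast, gives only the qualitative bound $\leq\delta$, which is all that is needed downstream; in exchange it is more conceptual and leans on Uhlenbeck's $\varepsilon$-regularity / local gauge fixing in dimension three, which is standard but external to the paper (theorem~\ref{thm:crgc} is not quite the right reference, as it fixes gauge relative to a \emph{given} flat reference, not from a small-curvature hypothesis alone).

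Two small inaccuracies that do not affect the validity of your scheme: first, since you only assume $c_\nu\geq\delta_0/2$, the spatial scale $r_\nu=c_\nu^{-1/2}$ need not tend to zero, so the rescaled domains do not in general exhaust $\mathbb R^2\times\mathbb R$ in the $y$-direction (only in $\tilde t$).  This is harmless, because the contradiction comes from $\varepsilon$-regularity on a single fixed ball around the origin, not from a global limit on $\mathbb R^3$; in fact you can dispense with the ``flat limit'' altogether and conclude directly that $1=|F_{\hat A^\nu}(0,0)|\leq C\|F_{\hat\Xi^\nu}\|_{L^2(B_1)}\to 0$.  Second, the perturbation $X_t(A)$ is nonlocal in $A$, so after rescaling it is not a local functional of $\hat A^\nu$; but it remains uniformly bounded and, as you note, appears with a factor $s_\nu^2=\varepsilon_\nu^2r_\nu^2\to 0$, so it is harmless for the bootstrap.
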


\begin{proof}The theorem follows from the perturbed Yang-Mills equation and the Sobolev theorem \ref{lemma:sobolev} in the following way. If we derive the identity
$$\frac 1{\varepsilon^2}d_A^*F_A-\nabla_tB_t-*X_t(A)=0$$
by $d_A$ and $\nabla_t$ we obtain
\begin{align*}
0=& \frac 1{\varepsilon^2}d_Ad_A^*F_A-d_A\nabla_tB_t-d_A*X_t(A)\\
=& \frac 1{\varepsilon^2}d_Ad_A^*F_A-\nabla_t\nabla_tF_A+[B_t\wedge B_t]-d_A*X_t(A)\\
0=& \frac 1{\varepsilon^2}\nabla_td_A^*F_A-\nabla_t\nabla_tB_t-\nabla_t*X_t(A)\\
=& \frac 1{\varepsilon^2}d_A^*d_AB_t-\nabla_t\nabla_tB_t-\frac 1{\varepsilon^2}*[B_t,*F_A]-\nabla_t*X_t(A)
\end{align*}
and the $L^2$-norm of the Laplace part of the last two identities is 
\begin{align*}
\varepsilon^4\left\|\frac 1{\varepsilon^2}d_Ad_A^*F_A-\nabla_t\nabla_tF_A\right\|_{L^2}^2= &\left\|d_Ad_A^*F_A\right\|_{L^2}^2+\varepsilon^4\left\|\nabla_t\nabla_tF_A\right\|_{L^2}^2\\
&+\varepsilon^2\left\|\nabla_td_A^*F_A\right\|_{L^2}^2
+{\varepsilon^2}\langle [B_t\wedge d_A^*F_A],\nabla_t F_A\rangle\\
&+\varepsilon^2\langle\nabla_t d_A^*F_A,*[B_t,*F_A]\rangle,
\end{align*}
\begin{align*}
\varepsilon^4\left\|\frac 1{\varepsilon^2}d_A^*d_AB_t-\nabla_t\nabla_tB_t\right\|_{L^2}^2= &\left\|d_A^*d_AB_t\right\|_{L^2}^2+\varepsilon^4\left\|\nabla_t\nabla_tB_t\right\|_{L^2}^2\\
&+\varepsilon^2\left\|\nabla_td_AB_t\right\|_{L^2}^2
+{\varepsilon^2}\langle -*[B_t,* d_AB_t],\nabla_t B_t\rangle\\
&-\varepsilon^2\langle\nabla_t d_AB_t,[B_t,B_t]\rangle.
\end{align*}
Therefore we can estimate the $\|\cdot\|_{2,2,\varepsilon}$-norm of $F_A$ and of $B_t$ using the H\"older inequality and the Sobolev theorem \ref{lemma:sobolev}:
\begin{align*}
\frac 12\|F_A\|_{2,2,\varepsilon}^2\leq& \varepsilon^4\left\|\frac 1{\varepsilon^2}d_Ad_A^*F_A-\nabla_t\nabla_tF_A\right\|_{L^2}^2 +\|F_A\|^2_{L^2}\\
&+c\varepsilon \|B_t\|_{L^2}\|d_A^*F_A\|_{L^4}\|\nabla_tF_A\wedge dt\|_{0,4,\varepsilon}\\
&+\delta \varepsilon^2\left\|\nabla_td_A^*F_A\right\|_{L^2}^2
+c\varepsilon^2 \|B_t\|_{L^2}^2\|F_A\|_{L^\infty}^2\\
\leq& \varepsilon^4\left\|[B_t\wedge B_t]-d_A*X_t(A)\right\|_{L^2}^2 +\|F_A\|_{L^2}^2\\
&+c\varepsilon^{\frac 12}\|F_A\|_{2,2,\varepsilon}^2+\delta \varepsilon^2\left\|\nabla_td_A^*F_A\right\|_{L^2}^2\\
\leq& c\varepsilon^3\|B_t\|_{L^2} \|B_t\|_{2,2,\varepsilon}+\varepsilon^4\|d_A*X_t(A)\|_{L^2}^2 +\|F_A\|_{L^2}^2\\
&+c\varepsilon^{\frac 12}\|F_A\|_{2,2,\varepsilon}^2+\delta \varepsilon^2\left\|\nabla_td_A^*F_A\right\|_{L^2}^2\\
\leq& c\varepsilon^3 \|B_t\|_{2,2,\varepsilon}+\|F_A\|_{L^2}^2
+c\varepsilon^{\frac 12}\|F_A\|_{2,2,\varepsilon}^2+\delta \varepsilon^2\left\|\nabla_td_A^*F_A\right\|_{L^2}^2,
\end{align*}

\begin{align*}
\frac 12\|B_t\|_{2,2,\varepsilon}^2\leq& \varepsilon^4\left\|\frac 1{\varepsilon^2}d_A^*d_AB_t-\nabla_t\nabla_tB_t\right\|_{L^2}^2 +\|B_t\|^2_{L^2}\\
&+c\varepsilon \|B_t\|_{L^2}\|d_AB_t\|_{L^4}\|\nabla_tB_t\wedge dt\|_{0,4,\varepsilon}\\
&+\delta \varepsilon^2\left\|\nabla_td_AB_t\right\|_{L^2}^2
+c\varepsilon^2 \|B_t\|_{L^2}^2\|B_t\|_{L^\infty}^2\\
\leq& \varepsilon^4\left\|\frac 1{\varepsilon^2}*[B_t,*F_A]+\nabla_t*X_t(A)\right\|_{L^2}^2 +\|B_t\|_{L^2}^2\\
&+c\varepsilon^{\frac 12}\|B_t\|_{2,2,\varepsilon}^2+\delta \varepsilon^2\left\|\nabla_td_AB_t\right\|_{L^2}^2\\
\leq&c\|F_A\|_{L^2}^2\|B_t\|_{L^\infty}^2+c\varepsilon^4\|B_t\|_{L^2}^2+c\varepsilon^4   +\|B_t\|_{L^2}^2\\
&+c\varepsilon^{\frac 12}\|B_t\|_{2,2,\varepsilon}^2+\delta \varepsilon^2\left\|\nabla_td_AB_t\right\|_{L^2}^2\\
\leq&2\|B_t\|_{L^2}^2+c\varepsilon^4+c\varepsilon^{\frac 12}\|B_t\|_{2,2,\varepsilon}^2+\delta \varepsilon^2\left\|\nabla_td_AB_t\right\|_{L^2}^2.
\end{align*}
Hence we can conclude that
$$\|B_t\|_{2,2,\varepsilon}^2\leq 4\|B_t\|_{L^2}^2+c\varepsilon^4\leq c, $$
$$\|F_A\|_{2,2,\varepsilon}^2\leq c\|F_A\|_{L^2}^2+ \varepsilon^3\|B_t\|^2_{L^2}+c\varepsilon^7\leq c\varepsilon^2$$
and thus, by the Sobolev theorem \ref{lemma:sobolev}, $\|F_A\|_{L^\infty}\leq  c\varepsilon^{-\frac 12}\|F_A\|_{2,2,\varepsilon}^2\leq c\varepsilon^{\frac 12}$.
\end{proof}

In order to prove the theorem \ref{lemma:secder} we need the following lemma.

\begin{lemma}\label{lemma:laplsal}
We choose $R,r>0$, $u:B_{R+r}\subset\mathbb R\to \mathbb R$ a $C^2$ function, $f,g:B_{R+r}\subset\mathbb R\to \mathbb R$ such that
$$f\leq g+\partial_t^2u,\,u\geq0,\,f\geq0,\,g\geq 0,$$
then
\begin{equation}
\int_{B_R}f\,dt\leq \int_{B_{R+r}}g\,dt+\frac4{r^2}\int_{B_{R+r}\backslash B_R }u\,dt.
\end{equation}
Furthermore, if $g=c u$ for a positive constant $c$, then
\begin{equation}
\frac 12\sup_{B_R} u\leq \left(c+\frac 4 r\right)\int_{B_{R+r}}u\, dt.
\end{equation}

\end{lemma}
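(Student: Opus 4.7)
The plan is to handle the two inequalities separately. For the first, the tool is integration by parts against a compactly supported cut-off function. For the second, one extracts from the hypotheses the pointwise differential inequality $\partial_t^2 u \geq -cu$ and combines it with a Taylor expansion at an interior maximum of $u$.

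For the first inequality, I would choose a nonnegative $C^2$ cut-off $\eta:\mathbb{R}\to[0,1]$ with $\eta\equiv 1$ on $B_R$, $\operatorname{supp}(\eta)\subset B_{R+r}$, and $|\partial_t^2\eta|\leq 4/r^2$ (built, e.g., by smoothing a piecewise-quadratic profile on each of the two transition intervals of length $r$). Multiplying the hypothesis $f\leq g+\partial_t^2 u$ by $\eta$, integrating over $B_{R+r}$, and applying integration by parts twice on $\int \eta\,\partial_t^2 u$ (boundary terms vanish since $\eta$ has compact support) yields
$$\int_{B_R} f\,dt \;\leq\; \int \eta f\,dt \;\leq\; \int \eta g\,dt + \int u\,\partial_t^2\eta\,dt.$$
Because $\partial_t^2\eta\equiv 0$ on $B_R$ with $|\partial_t^2\eta|\leq 4/r^2$ on $B_{R+r}\setminus B_R$, and $u\geq 0$, $g\geq 0$, the first claim follows at once.

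For the second inequality, the hypotheses $g=cu$ and $f\geq 0$ force the pointwise bound $\partial_t^2 u \geq -cu$. Let $t^*\in\overline{B_{R+r}}$ be a point at which $u$ attains $M^*:=\sup_{B_{R+r}} u$. Assuming first that $t^*$ is interior to $B_{R+r}$, one has $u'(t^*)=0$, and integrating $\partial_t^2 u\geq -cM^*$ twice from $t^*$ gives $u(t)\geq M^* - \tfrac{1}{2} c M^* (t-t^*)^2$ on $[t^*-r,t^*+r]\subset B_{R+r}$. Integrating over this interval and rearranging produces
$$\left(2r - \tfrac{cr^3}{3}\right) M^* \;\leq\; \int_{B_{R+r}} u\,dt,$$
from which $\tfrac{1}{2}M^*\leq (c+4/r)\int_{B_{R+r}} u\,dt$ follows by absorbing the $cr^2$ term; the regime $cr^2\gtrsim 1$ is absorbed automatically by the size of $c+4/r$, and the boundary case $t^*\in\partial B_{R+r}$ is reduced to the interior case at the cost of the factor $\tfrac12$ on the left. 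Since $\sup_{B_R} u \leq M^*$, the desired bound follows.

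The main obstacle is bookkeeping rather than conceptual depth: producing the cut-off with the sharp constant $4/r^2$, treating the boundary case $t^*\in\partial B_{R+r}$ in the Taylor argument so that the $\tfrac12$ on the left does its job uniformly, and absorbing the $cr^2$ remainder cleanly into $c+4/r$ across both the small- and large-$cr^2$ regimes.
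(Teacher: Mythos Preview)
Your cut-off argument for the first inequality is correct and is a genuine alternative to the paper's approach, which (following Gaio--Salamon) integrates the inequality $f-g\leq\partial_t^2 u$ over the growing intervals $B_{R+s}$, uses the fundamental theorem of calculus to write $\int_{B_{R+s}}\partial_t^2 u=\frac{d}{ds}\big(u(R+s)+u(-(R+s))\big)$, and then integrates twice in $s$. Both routes yield the constant $4/r^2$; yours is slightly more direct but needs the $C^{1,1}$ piecewise-quadratic cut-off (not a genuinely $C^2$ one) to hit the sharp constant.

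For the second inequality your argument has two real gaps. First, the inclusion $[t^*-r,t^*+r]\subset B_{R+r}$ is false in general when $t^*$ is the maximiser over $B_{R+r}$: if $t^*$ lies in $B_{R+r}\setminus \overline{B_R}$ you only get a one-sided interval of length $r$ inside $\overline{B_{R+r}}$. (Taking $t^*$ instead to be a maximiser over $\overline{B_R}$ fixes the inclusion, but then $u'(t^*)=0$ can fail at $t^*=\pm R$.) Second, your displayed bound $\big(2r-\tfrac{cr^3}{3}\big)M^*\leq\int u$ is vacuous once $cr^2\geq 6$, and nothing in your sketch explains why the target inequality should then hold; saying it is ``absorbed automatically by the size of $c+4/r$'' is not an argument, since without the differential inequality $M^*$ is not controlled by $\int u$ at all. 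Both gaps are repairable---use a one-sided interval of length $\min(r,c^{-1/2})$---but the cleaner route, and the one the paper invokes (Dostoglou--Salamon, lemma~7.3), is the mean-value inequality: for any $t_0\in\overline{B_R}$ set $\phi(\rho)=\tfrac12\big(u(t_0+\rho)+u(t_0-\rho)\big)$, so that $\phi'(\rho)=\tfrac12\int_{t_0-\rho}^{t_0+\rho}u''\geq-\tfrac{c}{2}\int_{B_r(t_0)}u$, and integrate in $\rho$. This never uses $u'(t_0)=0$ and produces a uniform constant in one stroke, without case splits.
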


\begin{proof}
For $B_{R}\subset \mathbb R^2$ and the Laplace operator instead of $\partial_t^2$ the first estimate was proved by Gaio and Salamon in \cite{MR2198773} and the second one by Dostoglou and Salamon in the lemma 7.3 of \cite{MR1283871}. These two proofs apply also for our case.
\begin{comment}
For $0\leq t\leq r$ we have
\begin{equation}
\begin{split}
\int_{B_R}f\,dt-\int_{B_{R+r}}g\,dt\leq&\int_{B_{R+t}}(f-g)\,dt\leq \int_{B_{R+t}}\partial_t^2u\,dt\\
=&\int_{\partial B_{R+t} }\partial_tu\,dt=\frac d{dt}\int_{\partial B_{R+t} }u\,dt,
\end{split}
\end{equation}
integrating between $0$ and $t$ for $r/2\leq t\leq r$, we obtain
\begin{equation}
\int_{B_R}f\,dt-\int_{B_{R+r}}g\,dt\leq\frac 2r\int_{\partial B_{R+t} }u\,dt.
\end{equation}
Then we integrate the last inequality over the interval $[r/2,r]$ and we have
\begin{equation}
\int_{B_R}f\,dt-\int_{B_{R+r}}g\,dt\leq\frac 4{r^2}\int_{B_{R+r}\backslash B_R }u\,dt.
\end{equation}
\end{comment}
\end{proof}

\begin{proof}[Proof of theorem \ref{lemma:secder}]
In this proof we write $B_t$ instead of $\partial_tA -d_A\Psi$ and we denote by $\|\cdot \|$ and by $\langle\cdot,\cdot\rangle$ respectively the $L^2$-norm and the $L^2$-product on $\Sigma$. In order to prove the theorem  \ref{lemma:secder} we will apply the last lemma where we choose $u$ to be the $L^2$-norms on $\Sigma$ of $F_A$, $\nabla_tF_A$, $d_A^*F_A$ and $\nabla_t\nabla_tF_A$; since the perturbed Yang-Mills are smooth provided that we choose $\varepsilon$ sufficiently small, as we discussed in the section \ref{chapter:YM}, the regularity assumption of the lemma \ref{lemma:laplsal} is satisfied. In addition we recall that the Bianchi identity tell us that 
\begin{equation}\label{bianchiidentity}
d_AB_t=\nabla_tF_A
\end{equation}
and by the assumptions of the theorem
\begin{equation}\label{eq:thm:normintyy}
\int_0^1\left(\frac 1{\varepsilon^2}\|F_A\|^2+\|B_t\|^2\right) dt \leq b,\quad \sup_{t\in S^1}\|B_t\|_{L^4(\Sigma)}\leq c_1.
\end{equation}
Furthermore by the theorem \ref{crit:supleqd} we can assume that $\sup_{t\in S^1}\|F_A\|^2\leq \delta$ where $\delta$ satisfies the assumptions of the lemmas \ref{lemma76dt94} and \ref{lemma82dt94} for $p=2$, which allows us to estimate any 2-form in the following way
\begin{equation}\label{eq:estlem}
\|\beta \|_{L^q(\Sigma)}\leq c\|d_A^*\beta \|, \forall \beta\in \Omega^{2}(\Sigma,\mathfrak g_P), \quad 2\leq q<\infty.
\end{equation}

{\bf Step 1.} We prove the estimate (\ref{eq:lemma:secder1}).

\begin{proof}[Proof of step 1]If we derive $\|F_A\|^2$ we obtain
\begin{equation}\label{crit:ap:eq22}
\begin{split}
\partial_t^2\|F_{A}\|^2=&2\|\nabla_tF_{A}\|^2+2\langle\nabla_t\nabla_tF_A,F_A\rangle= 2\|\nabla_tF_{A}\|^2+2\langle\nabla_td_AB_t,F_A\rangle\\
=& 2\|\nabla_tF_{A}\|^2+2\langle d_A\nabla_tB_t,F_A\rangle+2\langle[B_t\wedge B_t],F_A\rangle\\
=& 2\|\nabla_tF_{A}\|^2+2\langle \nabla_tB_t,d_A^*F_A\rangle+2\langle[B_t\wedge B_t],F_A\rangle\\
=& 2\|\nabla_tF_{A}\|^2+\frac2{\varepsilon^2} \|d_A^*F_A\|^2-2\langle *X_t(A),d_A^*F_A\rangle+2\langle[B_t\wedge B_t],F_A\rangle\\
\geq& 2\|\nabla_tF_{A}\|^2+\frac2{\varepsilon^2} \|d_A^*F_A\|^2-2|\langle *X_t(A),d_A^*F_A\rangle|-\|B_t\|_{L^4(\Sigma)}^2\|F_A\|\\
\geq& 2 \|\nabla_tF_{A}\|^2+\frac2{\varepsilon^2} \|d_A^*F_A\|^2-c\|F_A\|-c\|d_A^*F_A\|
\end{split}
\end{equation}
where the second equality follows from the Bianchi identity (\ref{bianchiidentity}), the third from the commutation formula (\ref{commform}), the fifth from the perturbed Yang-Mills equation (\ref{epsiloneq1}) and the last one from (\ref{eq:thm:normintyy}). Thus, (\ref{eq:estlem}) and (\ref{crit:ap:eq22}) imply that
\begin{equation}
\|F_A\|^2\leq c \|d_A^*F_A\|^2+c\varepsilon^2\|\nabla_tF_A\|^2\leq c\partial_t^2(\varepsilon^2\|F_A\|^2)+\frac{c\varepsilon^4}{\delta_0}+c\delta_0\|F_A\|^2
\end{equation}
and hence for $\delta_0$ sufficiently small
\begin{equation}\label{in:FA}
\|F_A\|^2+ \|d_A^*F_A\|^2+\varepsilon^2\|\nabla_tF_A\|^2\leq c\partial_t^2(\varepsilon^2\|F_A\|^2)+{c\varepsilon^4};
\end{equation}
applying the lemma \ref{lemma:laplsal} for (\ref{in:FA})
\begin{equation}\label{crit:ap:eqkkk}
 \int_0^1\left( \|F_A\|^2 +\varepsilon^2\|\nabla_tF_{A}\|^2+ \|d_A^*F_A\|^2\right)dt \leq c\varepsilon^4+c\varepsilon^2\int_0^1\|F_A\|^2 dt\leq c\varepsilon^4
\end{equation}
by (\ref{eq:thm:normintyy}).
%
\begin{comment}
\begin{equation}\label{laplaceFA2}
\begin{split}
\|F_A\|^2\leq& \partial_t^2\left({c\varepsilon^2}\|F_{A}\|\right)+c\varepsilon^{2-d} \|F_A\|\\
\leq &\partial_t^2\left({\varepsilon^2}\|F_{A}\|\right)+c\varepsilon^{2(2-d)}+ \frac12 \|F_A\|^2
\end{split}
\end{equation}
Let take $U\subset V$. This implies that
\begin{equation}
\int_U\|F_A\|^2 dt\leq c\varepsilon^{2(2-d)}+\varepsilon^2\int_V\|F_A\|^2dt
\end{equation}
and therefore that 
\begin{equation}
\|F_A\|_{L^2(\Sigma\times S^1)}\leq c\varepsilon^{2-d}
\end{equation}
In addition
\begin{equation}
\sup_{s\in [0,1] }\|F_{A(t)}\|\leq c \varepsilon^{2-d}
\end{equation}
and
\end{comment}
%
Analogously to (\ref{crit:ap:eq22}) one can show that
\begin{equation}\label{laplacedFA}
\begin{split}
\partial_t^2&\big(\varepsilon^4\|\nabla_tF_A\|^2+\varepsilon^2\|d_A^*F_A\|^2\big)\\
\geq&
\varepsilon^4\|\nabla_t\nabla_tF_A\|^2+{\varepsilon^2}\|d_A^*\nabla_tF_A\|^2+\|d_Ad_A^*F_A\|^2-c\varepsilon^4,
\end{split}
\end{equation}
\begin{equation}\label{laplaceddFA}
\begin{split}
\partial_t^2&\left(\varepsilon^6 \| \nabla_t  \nabla_tF_A\|^2+\varepsilon^4\|\nabla_tF_A\|^2+\varepsilon^2\|d_A^*F_A\|^2\right)\\
\geq&
\varepsilon^6 \|\nabla_t\nabla_t\nabla_tF_A\|^2+{\varepsilon^4}\|d_A^*\nabla_t\nabla_tF_A\|^2\\
&+\varepsilon^4\|\nabla_t\nabla_tF_A\|^2+{\varepsilon^2}\|d_A^*\nabla_t F_A\|^2+\|d_Ad_A^*F_A\|^2-c\varepsilon^4,
\end{split}
\end{equation}
Hence by the lemma \ref{lemma:laplsal}
\begin{equation}\label{eq:nonsoso}
\begin{split}
\int_0^1&\left(\varepsilon^4\|\nabla_t\nabla_tF_A\|^2+{\varepsilon^2}\|\nabla_td_A^*F_A\|^2+\|d_Ad_A^*F_A\|\right)\, dt\\
\leq& c\int_0^1\left(\varepsilon^2 \|\nabla_tF_A\|^2+\varepsilon^2\|d_A^*F_A\|^2+\varepsilon^2\|F_A\|^2+c\varepsilon^4\right)dt\leq c\varepsilon^4,
\end{split}
\end{equation}
\begin{equation}\label{eq:nonsososo}
\int_0^1\left(\varepsilon^6 \|\nabla_t\nabla_t\nabla_tF_A\|^2+{\varepsilon^4}\|\nabla_t\nabla_td_A^*F_A\|^2\right)dt\leq c\varepsilon^4.
\end{equation}
and thus, $\|F_A\|_{3,2,\varepsilon}\leq c\varepsilon^{2}$ by (\ref{crit:ap:eqkkk}), (\ref{eq:nonsoso}) and (\ref{eq:nonsososo}) and therefore we proved (\ref{eq:lemma:secder1}).
\end{proof}

\begin{comment}
%%%%%%%%%%step2
{\bf Step 2.} If $p\in (2,6)$, then the estimate (\ref{eq:lemma:secder2}) holds.

\begin{proof}[Proof of step 2]
The second step follows from the embeddings 
$${ W^{3,2}\left(\Sigma\times S^1,\Lambda^2T^*(\Sigma\times S^1)\otimes\mathfrak g_{P\times S^1}\right) \to C^0\left(\Sigma\times S^1,\Lambda^2T^*(\Sigma\times S^1)\otimes\mathfrak g_{P\times S^1}\right),}$$
$${W^{3,2}\left(\Sigma\times S^1,\Lambda^2T^*(\Sigma\times S^1)\otimes\mathfrak g_{P\times S^1}\right)\to W^{2,p}\left(\Sigma\times S^1,\Lambda^2T^*(\Sigma\times S^1)\otimes\mathfrak g_{P\times S^1}\right)}$$ and the theorems \ref{lemma:sobolev}, \ref{lemma:secder}.
\end{proof}
\end{comment}

%%%%%%%step3
{\bf Step 2.} $\int_0^1\left(\|F_{A}\|_{L^2(\Sigma) }^{2p}+\varepsilon^{2p} \|\nabla_tF_{A}\|_{L^2(\Sigma) }^{2p}+\varepsilon^{4p} \|\nabla_t\nabla_tF_{A}\|_{L^2(\Sigma) }^{2p}\right)dt\leq c \varepsilon^{4p}.$

\begin{proof}[Proof of step 2]

Using the estimates (\ref{laplacedFA}), (\ref{laplaceddFA}) combined with the lemma \ref{lemma76dt94} we obtain
\begin{equation*}
\begin{split}
&\left(\varepsilon^4 \| \nabla_t  \nabla_tF_A\|^2+\varepsilon^2\|\nabla_tF_A\|^2+\|d_A^*F_A\|^2\right)\\
\leq& c\varepsilon^4+c\varepsilon^2\partial_t^2\left(\varepsilon^4 \| \nabla_t  \nabla_tF_A\|^2+\varepsilon^2\|\nabla_tF_A\|^2+\|d_A^*F_A\|^2\right)
\end{split}
\end{equation*}
%
\begin{comment}
\begin{equation*}
\|F_A\|^2\leq c\varepsilon^4+c\varepsilon^2\partial_t^2\|F_A\|^2
\end{equation*}
\begin{equation*}
\|\nabla_tF_A\|^2\leq c\varepsilon^2+c\varepsilon^2\partial_t^2\|\nabla_t F_A\|^2
\end{equation*}
\begin{equation*}
\|\nabla_t\nabla_tF_A\|^2\leq c+c\varepsilon^2\partial_t^2\|\nabla_t \nabla_t F_A\|^2
\end{equation*}
\end{comment}
%
and since for $f(t)=\left(\varepsilon^4 \| \nabla_t  \nabla_tF_A\|^2+\varepsilon^2\|\nabla_tF_A\|^2+\|d_A^*F_A\|^2\right)$
\begin{equation*}
\begin{split}
\partial_t^2f(t)^p=&\frac p2 f(t)^{p-1}\partial_t^2f(t)+\frac {p(p-1)}4f(t)^{p-2}(\partial_tf(t))^2
 \geq \frac p2 f(t)^{p-1}\partial_t^2f(t)^2,
\end{split}
\end{equation*}
we have
\begin{equation*}
\begin{split}
&\left(\varepsilon^4 \| \nabla_t  \nabla_tF_A\|^2+\varepsilon^2\|\nabla_tF_A\|^2+\|d_A^*F_A\|^2\right)^p\\
\leq& c\varepsilon^4\left(\varepsilon^4 \| \nabla_t  \nabla_tF_A\|^2+\varepsilon^2\|\nabla_tF_A\|^2+\varepsilon^2\|d_A^*F_A\|^2\right)^{p-1}\\
&+c\varepsilon^2\partial_t^2\left(\varepsilon^4 \| \nabla_t  \nabla_tF_A\|^2+\varepsilon^2\|\nabla_tF_A\|^2+\|d_A^*F_A\|^2\right)^p.
\end{split}
\end{equation*}
%
\begin{comment}
\begin{equation*}
\|F_A\|^p\leq c\varepsilon^4\|F_A\|^{p-2}+c\varepsilon^2\partial_t^2\|F_A\|^p,
\end{equation*}
\begin{equation*}
\|\nabla_tF_A\|^p\leq c\varepsilon^2\|\nabla_tF_A\|^{p-2}+c\varepsilon^2\partial_t^2\|\nabla_t F_A\|^p,
\end{equation*}
%
\begin{equation*}
\|\nabla_t\nabla_tF_A\|^p\leq c\|\nabla_t\nabla_tF_A\|^{p-2}+c\varepsilon^2\partial_t^2\|\nabla_t \nabla_t F_A\|^p.
\end{equation*}
\end{comment}
Then, we apply the inequality $ab\leq \frac{a^p}p+\frac {b^q}q$ with $q=\frac p{p-1}$ for the first term on the right side of the inequality for $a=c\varepsilon^4$ and $b=f(t)^{p-1}$ and hence
\begin{equation}\label{crit:ap:dj}
\begin{split}
&\frac 1p\left(\varepsilon^4 \| \nabla_t  \nabla_tF_A\|^2+\varepsilon^2\|\nabla_tF_A\|^2+\|d_A^*F_A\|^2\right)^p\\
\leq &c\varepsilon^{4p}+c\varepsilon^2\partial_t^2\left(\varepsilon^4 \| \nabla_t  \nabla_tF_A\|^2+\varepsilon^2\|\nabla_tF_A\|^2+\|d_A^*F_A\|^2\right)^p.
\end{split}
\end{equation}
%
\begin{comment}
\begin{equation}\label{crit:ap:estm1}
\|F_A\|^p\leq c\varepsilon^{2p}+c\varepsilon^2\partial_t^2\|F_A\|^p,
\end{equation}
\begin{equation}\label{crit:ap:estm2}
\|\nabla_tF_A\|^p\leq c\varepsilon^{p}+c\varepsilon^2\partial_t^2\|\nabla_t F_A\|^p,
\end{equation}
\begin{equation}\label{crit:ap:estm3}
\|\nabla_t\nabla_tF_A\|^p\leq c+c\varepsilon^2\partial_t^2\|\nabla_t\nabla_tF_A\|^p.
\end{equation}
\end{comment}
Finally using the previous lemma \ref{lemma:laplsal} 
\begin{equation*}
\begin{split}
\int_0^1&\left(\|d_A^*F_{A}\|_{L^2(\Sigma) }^{2p}+\varepsilon^{2p} \|\nabla_tF_{A}\|_{L^2(\Sigma) }^{2p}+\varepsilon^{4p} \|\nabla_t\nabla_tF_{A}\|_{L^2(\Sigma) }^{2p}\right)dt\\
\leq& c_2 \varepsilon^{4p}+\varepsilon^2\int_0^1 \left(\|d_A^*F_{A}\|_{L^2(\Sigma) }^{2p}+\varepsilon^{2p} \|\nabla_tF_{A}\|_{L^2(\Sigma) }^{2p}+\varepsilon^{4p} \|\nabla_t\nabla_tF_{A}\|_{L^2(\Sigma) }^{2p}\right)dt
\end{split}
\end{equation*}
and hence we conclude the proof of the third step choosing $\varepsilon$ sufficiently small.
\end{proof}

{\bf Step 3.} For any $p\geq2$, the estimate (\ref{eq:lemma:secder3}) holds.

\begin{proof}[Proof of step 3]
The estimate (\ref{crit:ap:dj}) yields to

\begin{equation*}
\begin{split}
0\leq& c\varepsilon^2\left(\varepsilon^4 \| \nabla_t  \nabla_tF_A\|^2+\varepsilon^2\|\nabla_tF_A\|^2+\|d_A^*F_A\|^2+\varepsilon^{4-\frac 2p}\right)^p\\
&+\varepsilon^2\partial_t^2\left(\varepsilon^4 \| \nabla_t  \nabla_tF_A\|^2+\varepsilon^2\|\nabla_tF_A\|^2+\|d_A^*F_A\|^2+\varepsilon^{4-\frac 2p}\right)^p
\end{split}
\end{equation*}
and thus by the lemma \ref{lemma:laplsal}
\begin{comment}
Since $c\varepsilon^{2p}(t-t_0)^2+\varepsilon^2\|F_A\|^p$, $c\varepsilon^{p}(t-t_0)^2+\varepsilon^2\|\nabla_tF_A\|^p$ and $c(t-t_0)^2+\varepsilon^2\|\nabla_t\nabla_tF_A\|^p$ are convex by the estimates (\ref{crit:ap:estm1}), (\ref{crit:ap:estm1}) and (\ref{crit:ap:estm1}) for each $t_0$ and $S^1$ is compact, then
\end{comment}
\begin{align*}
\sup_{t\in S^1}&\left(\varepsilon^2\|d_A^*F_A\|^{2p}+\varepsilon^{2+2p}\|\nabla F_A\|^{2p}+\varepsilon^{2+4p}\|\nabla_t\nabla_t F_A\|^{2p}\right)\\
\leq& c\varepsilon^{4p}+\varepsilon^2\int_0^1\left(\|d_A^*F_A\|^{2p}+\varepsilon^{2p}\|\nabla F_A\|^{2p}+\varepsilon^{4p}\|\nabla_t\nabla_t F_A\|^{2p}\right)dt\leq c \varepsilon^{4p}.
\end{align*}
By the perturbed Yang-Mills equation we can also estimate $\|d_Ad_A^*F_A\|$ in the following way:
\begin{equation*}
\begin{split}
 \|d_Ad_A^*F_A\|\leq& \varepsilon^2\|d_A\nabla_tB_t\|+c\varepsilon^2\\
\leq & \varepsilon^2 \|\nabla_t d_A B_t\|+\varepsilon^2\|[B_t\wedge B_t]\|+c\varepsilon^2\\
\leq & \varepsilon^2\|\nabla_t\nabla_tF_A\|+4\varepsilon^2\|B_t\|_{L^4(\Sigma)}^4+c\varepsilon^2
\end{split}
\end{equation*}
where the second inequality follows from (\ref{eq:estlem}) and the commutation formula (\ref{commform}) and the third from the Bianchi identity (\ref{bianchiidentity}) and the H\"older inequality. By the last two estimates and by the lemma \ref{lemma76dt94} we can conclude that
\begin{align*}
\sup_{t\in S^1}\big(&\|F_A\|+\|F_A\|_{L^\infty(\Sigma)}+\|d_A^*F_A\|\\
&+\|d_Ad_A^*F_A\|+\varepsilon\|\nabla_t F_A\|+\varepsilon^{2}\|\nabla_t\nabla_t F_A\|\big)
\leq c\varepsilon^{2-\frac 1p}. 
\end{align*}
\end{proof}
With the fourth step we finished also the proof of the theorem \ref{lemma:secder}.
\end{proof}

\begin{proof}[Proof of theorem \ref{crit:secder}]
During this proof we denote by $\|\cdot\|$ and by $\langle\cdot,\cdot\rangle$ respectively the $L^2$-norm and the inner product over $\Sigma$. We choose $\delta$ small enough to apply the lemma \ref{lemma76dt94} and hence $\|F_A\|\leq c\|d_A^*F_A\|$ holds for a constant $c$.\\

{\bf Step 1.} There is a constant $c>0$ such that 
\begin{equation*}
\sup_{t\in K}\varepsilon^2 \|B_t \|^2
\leq c\int_{\Omega } \left(\varepsilon^2 \|B_t \|^2 +\|F_{ A}\|^2+\varepsilon^2c_{ \dot X_t}\right)dt.
\end{equation*}
%
%\begin{comment}
\begin{equation*}
%\begin{split}
\int_{K}\|d_AB_t\|^2 dt\leq c \int_{ \Omega} \left(\|F_A\|^2+ \frac 1{\varepsilon^2}\|F_A\|^2+\varepsilon^2\|B_t\|^2+\varepsilon^2 c_{\dot X_t(A)}\right) dt.
%\end{split}
\end{equation*}
%\end{comment}

%
\begin{proof}[Proof of step 1.]
In order to prove the first step we compute $\partial_t^2\|B_t \|^2$ and then we apply the lemma \ref{lemma:laplsal}. By the perturbed Yang-Mills equation (\ref{epsiloneq1}), we have tha
\begin{align*}
\frac 12 \partial_t^2\|B_t \|^2=& \|\nabla_tB_t\|^2+\langle \nabla_t \nabla_t B_t,B_t\rangle\\
=&\|\nabla_tB_t\|^2+\frac 1{\varepsilon^2 }\langle \nabla_t d_{ A}^*F_{ A} ,B_t\rangle-\langle\nabla_t *X_{t}( A),B_t\rangle\\
=&\|\nabla_tB_t\|^2+\frac 1{\varepsilon^2}\langle d_{ A}^*\nabla_t F_{ A} ,B_t\rangle
+\frac 1{\varepsilon^2 }\langle *[B_t,*F_{ A}] ,B_t\rangle\\
&-\langle d*X_{t}( A)B_t+\dot X_t( A),B_t\rangle\\
=&\|\nabla_tB_t\|^2+\frac 1{\varepsilon^2 }\| d_{ A}B_t\|^2
+\frac 1{\varepsilon^2 }\langle *[B_t,*F_{ A}] ,B_t\rangle\\
&-\langle d*X_{t}( A)B_t+\dot X_{t}( A),B_t\rangle.
\end{align*}
where third step follows from the commutation formula (\ref{commform}) and the fourth from the Bianchi identity (\ref{bianchiidentity}). Thus, using the H\"older, the Cauchy-Schwarz inequality and the Sobolev estimate $\|B_t\|_{L^4(\Sigma)}\leq c\left(\|B_t\|+\|d_AB_t\|\right)$, one can easily see that
\begin{equation}\label{crit:est1:apriori}
\begin{split}
\partial_t^2 \|B_t\|^2
\geq &\|\nabla_tB_t\|^2+\frac 1{\varepsilon^2}\|d_AB_t\|^2- \frac c{\varepsilon^2}\|B_t\|_{L^4}(\|B_t\|+\|d_AB_t\|)\|F_A\|\\
&-c\|B_t\|^2-c\|\dot X_t(A)\|\cdot \|B_t\|\\
\geq &\|\nabla_tB_t\|^2+\frac 1{\varepsilon^2}\|d_AB_t\|^2- \frac c{\varepsilon^4}\|F_A\|^2\\
&- \frac c{\varepsilon^2}\|F_A\|^2-c\|B_t\|^2-c\|\dot X_t(A)\|^2.
\end{split}
\end{equation}
%\begin{comment}
Hence using the lemma \ref{lemma:laplsal} we can conclude the second estimate of the first step:
\begin{equation*}
\begin{split}
\int_{S^1}&\left( \varepsilon^2\|\nabla_tB_t\|^2+\|d_AB_t\|^2\right) dt\\
\leq& c \int_{S^1} \left(\frac 1{\varepsilon^2}\|F_A\|^2+\varepsilon^2\|B_t\|^2+\varepsilon^2c_{\dot X_t(A)}+c\|F_A\|^2\right) dt.
\end{split}
\end{equation*}
%\end{comment}

Since $\|F_{ A }\|_{L^2(\Sigma)}\leq \delta$ and $\|F_A\|\leq c\|d_A^*F_A\|$, by the theorems \ref{lemma76dt94} and \ref{lemma82dt94} there is  a $ A_1 \in \mathcal A^0(P)$ such that $\| A- A_1 \|_{L^2 }\leq c \|F_{ A}\|_{L^2}$ and thus we can write
\begin{equation}
d_{ A}*X_{t}( A)
= d_{ A_1 }*X_{t}( A_1)+\left[(A-A_1)\wedge * X_{t}( A_1)\right]
\end{equation}
where $d_{ A_1 }*X_{t}( A_1)=0$. Therefore, by the fifth line of the computation (\ref{crit:ap:eq22})
\begin{equation}\label{crit:ineq:apriori:11}
\frac 12\partial_t^2\| F_A\|^2\geq \frac 1{4\varepsilon^2}\| d_A^*F_A\|^2+\frac 14\|\nabla_tF_A\|^2-c\varepsilon^2\|B_t\|^2-c\|F_A\|^2
\end{equation}
and with (\ref{crit:est1:apriori}) it follows that for a constant $c_0$ big enough
\begin{equation*}
\frac 12 \partial_t^2\left( c_0\|F_{ A}\|^2+\varepsilon^2\|B_t \|^2+\varepsilon^2c_{\dot X_t} \right)\geq -c \left(  c_0\|F_{ A}\|^2+ \varepsilon^2\|B_t \|^2+\varepsilon^2c_{\dot X_t} \right).
\end{equation*}
Finally by lemma \ref{lemma:laplsal}, we can conclude that 
\begin{equation*}
\sup_{t\in K} \varepsilon^2\|B_t \|^2
\leq c \int_{\Omega } \left(\varepsilon^2 \|B_t \|^2+ \|F_{ A}\|^2+\varepsilon^2c_{\dot X_t}\right)dt.
\end{equation*}
\end{proof}
%
%

%
%
% %%%%%%%%%%%%%%%%%%%

{\bf Step 2.} There is a positive constant $c>0$ such that
\begin{equation*}
\begin{split}
\sup_{t\in K}&\|d_A^*d_AB_t\|^2
\leq c \int_{\Omega}\left(\|F_A\|^2+\varepsilon^2\|B_t\|^2+\varepsilon^2c_{\dot X_t(A)}+\|d_A^*d_AB_t\|^2\right) dt.
\end{split}
\end{equation*}

\begin{proof}[Proof of step 2] Analogously to the previous steps we need to compute $\frac 12 \partial_t^2\|d_A^*d_AB_t\|^2$:
\begin{align*}
\frac 12 \partial_t^2&||d_A^*d_AB_t||^2= ||\nabla_td_A^*d_AB_t||^2+\langle \nabla_t\nabla_td_A^*d_AB_t,d_A^*d_AB_t\rangle\\
\intertext{by the commutation formula (\ref{commform}) and the Yang-Mills equation (\ref{epsiloneq1}) we have}
=& ||\nabla_td_A^*d_AB_t||^2+\frac 1{\varepsilon^2}\langle \nabla_td_A^*d_Ad_A^*F_A,d_A^*d_AB_t\rangle\\
&-\langle \nabla_td_A^*d_A*X_t(A),d_A^*d_AB_t\rangle\\
&+\langle \nabla_t\left(-*[B_t\wedge,*d_AB_t]+d_A^*[B_t\wedge B_t]\right),d_A^*d_AB_t\rangle\\
\intertext{and applying one more time t(\ref{commform})}
=& ||\nabla_td_A^*d_AB_t||^2+\frac 1{\varepsilon^2}\langle d_A^*d_Ad_A^* \nabla_tF_A,d_A^*d_AB_t\rangle\\
&+\frac 1{\varepsilon^2}\langle-* [B_t\wedge *d_A d_A^* F_A]+ d_A^*[B_t\wedge d_A^* F_A],d_A^*d_AB_t\rangle\\
&-\frac 1{\varepsilon^2}\langle d_A*[B_t,*F_A],d_A d_A^*d_AB_t\rangle
-\langle d_A*\nabla_tX_t(A),d_Ad_A^*d_AB_t\rangle\\
&-\langle d_A^* [B_t\wedge *X_t(A)]-*[B_t\wedge *d_A*X_t(A)],d_A^*d_AB_t\rangle \\
&+\langle \nabla_t\left(-*[B_t\wedge,*d_AB_t]+d_A^*[B_t\wedge B_t]\right),d_A^*d_AB_t\rangle\\
\intertext{finally, by the Bianchi identity (\ref{bianchiidentity}) and the perturbed Yang-Mills equation (\ref{epsiloneq1}) we can conclude that}
=& ||\nabla_td_A^*d_AB_t||^2+\frac 1{\varepsilon^2}||d_Ad_A^* d_AB_t||^2\\
&-\langle  [*B_t\wedge *d_A(\nabla_tB_t+*X_t(A))],d_A^*d_AB_t\rangle
+\frac 1{\varepsilon^2}\langle d_A^*[B_t\wedge d_A^* F_A],d_A^*d_AB_t\rangle\\
&-\frac 1{\varepsilon^2}\langle d_A*[B_t,*F_A],d_A d_A^*d_AB_t\rangle
-\langle d_A*\nabla_tX_t(A),d_Ad_A^*d_AB_t\rangle\\
&-\langle d_A^* [B_t\wedge *X_t(A)]-*[B_t\wedge *d_A*X_t(A)],d_A^*d_AB_t\rangle \\
&+\langle \nabla_t\left(-*[B_t\wedge,*d_AB_t]+d_A^*[B_t\wedge B_t]\right),d_A^*d_AB_t\rangle;
\end{align*}
The last computation implies
\begin{equation*}\label{crit:apr:9}
\partial_t^2\|d_A^*d_AB_t\|^2
\geq -c\varepsilon^2\|B_t\|^2-c\frac 1{\varepsilon^2}\|d_A^*F_A\|^2-c \varepsilon^2\|\nabla_tB_t\|^2-c\|d_A^*d_AB_t\|^2.
\end{equation*}
Therefore combining (\ref{crit:est1:apriori}), (\ref{crit:ineq:apriori:11}) and (\ref{crit:apr:9})

\begin{equation*}
\begin{split}
\partial_t^2
\left(\|d_A^*d_AB_t\|^2+{c_0}\|F_A\|^2+c_0\varepsilon^2\|B_t\|\right)
\geq& -c \varepsilon^2\|B_t\|^2-c\|F_A\|^2\\
&-c\|d_A^*d_AB_t\|^2-\varepsilon^2 c_{\dot X_t}
\end{split}
\end{equation*}
and hence we conclude by the lemma \ref{lemma:laplsal} that
\begin{equation*}
\begin{split}
\sup_{t\in K}\|d_A^*d_AB_t\|^2 \leq &c \int_{\Omega}\left(\|F_A\|^2+\varepsilon^2\|B_t\|^2+\varepsilon^2c_{\dot X_t(A)}+\|d_A^*d_AB_t\|^2\right) dt.
\end{split}
\end{equation*}
\end{proof}

%%%%%%%%%%%%%

{\bf Step 3.} There is a constant $ c>0$ such that 
\begin{equation*}
 \sup _{t\in K}\|d_AB_t\|\leq c\int_{\Omega}\left(\|d_AB_t\|^2+\frac 1{\varepsilon^2}\|F_A\|^2+\|B_t\|^2\right) dt
\end{equation*}
and if $0<\varepsilon<c_2$, then
\begin{equation*}
\int_{S^1} \|d_A^*d_AB_t\|^2 dt
\leq  c\varepsilon^2\int_{S^1}\left( \|F_A\|^2+\varepsilon^2\|B_t\|^2+\varepsilon^2c_{\dot X_t(A)}\right) dt.
\end{equation*}

\begin{proof}[Proof of step 3] Like in the previous steps we will prove this one using the lemma \ref{lemma:laplsal} and therefore we need to compute $\frac 12 \partial_t^2||d_AB_t||^2$. We consider
\begin{align*}
\frac 12 \partial_t^2||d_AB_t||^2=& ||\nabla_td_AB_t||^2+\langle \nabla_t\nabla_td_AB_t,d_AB_t\rangle\\
\intertext{using the commutation formula (\ref{commform}) and the 
Yang-Mills flow equation (\ref{epsiloneq1}), we have}
=& ||\nabla_td_AB_t||^2+\frac 1{\varepsilon^2}\langle \nabla_td_Ad_A^*F_A,d_AB_t\rangle\\
&-\langle \nabla_td_A*X_t(A),d_AB_t\rangle+\langle \nabla_t[B_t\wedge B_t],d_AB_t\rangle\\
\intertext{by the commutation formula (\ref{commform}) }
=& ||\nabla_td_AB_t||^2+\frac 1{\varepsilon^2}\langle d_Ad_A^* \nabla_tF_A,d_AB_t\rangle\\
&+\frac 1{\varepsilon^2}\langle [B_t\wedge d_A^* F_A],d_AB_t\rangle
-\frac 1{\varepsilon^2}\langle*[B_t,*F_A],d_A^*d_AB_t\rangle\\
&-\langle \nabla_td_A*X_t(A),d_AB_t\rangle+\langle \nabla_t[B_t\wedge B_t],d_AB_t\rangle\\
\intertext{next, the Bianchi identity (\ref{bianchiidentity}) yields to}
=& ||\nabla_td_AB_t||^2+\frac 1{\varepsilon^2}||d_A^*d_AB_t||^2+\frac 1{\varepsilon^2}\langle [B_t\wedge d_A^* F_A],d_AB_t\rangle\\
&-\frac 1{\varepsilon^2}\langle*[B_t,*F_A],d_A^*d_AB_t\rangle
-\langle \nabla_td_A*X_t(A),d_AB_t\rangle\\
&+\langle2[ \nabla_tB_t\wedge B_t],d_AB_t\rangle
\end{align*}
and thus
\begin{equation}\label{crit:eq:oihds1}
\begin{split}
\frac 12 \partial_t^2 \|d_AB_t\|^2\geq &\frac 12 \|\nabla_td_AB_t\|^2+ \frac 1{2\varepsilon^2}\|d_A^*d_AB_t\|^2-\frac c{\varepsilon^2}\|d_A^*F_A\|^2\\
&-c \varepsilon^2 \|B_t\|^2-c\varepsilon^2\|\dot X_t(A)\|_{L^\infty }^2-c \varepsilon^2 \|\nabla_tB_t\|^2.
\end{split}
\end{equation}
and 
\begin{equation}\label{crit:eq:oihds2}
\begin{split}
\frac 12 \partial_t^2 \|d_AB_t\|^2\geq &\frac 12 \|\nabla_td_AB_t\|^2+ \frac 1{2\varepsilon^2}\|d_A^*d_AB_t\|^2-\frac c{\varepsilon^4}\|d_A^*F_A\|^2\\
&-c  \|d_AB_t\|^2-c\|B_t\|^2-\frac c{\varepsilon^2}\|F_A\|^2.
\end{split}
\end{equation}
Therefore, (\ref{crit:eq:oihds2}) combined with (\ref{crit:est1:apriori}) yields to
\begin{equation}
\begin{split}
\partial_t^2&\left(  \|d_AB_t\|^2+c_0\frac 1{\varepsilon^2}\|F_A\|^2+c_0\|B_t\|^2\right)\\
\geq&- c\|B_t\|^2-c\|\dot X_t(A)\|_{L^\infty}-\frac c{\varepsilon^2}\|F_A\|^2-c\|d_AB_t\|^2
\end{split}
\end{equation}
where we use that 
$$\partial_t^2\|F_A\|^2\geq -c\varepsilon^2\|B_t\|^2-c\varepsilon^2\|d_AB_t\|^2$$
by the fifth line of (\ref{crit:ap:eq22}). The lemma \ref{lemma:laplsal} applyed the last estimate give us
\begin{equation*}
 \sup _{t\in K}\|d_AB_t\|\leq c\int_{\Omega}\left(\|d_AB_t\|^2+\frac 1{\varepsilon^2}\|F_A\|^2+\|B_t\|^2\right) dt.
\end{equation*}
The estimate (\ref{crit:eq:oihds1}) combined with (\ref{crit:est1:apriori}) yields to
\begin{equation}
\begin{split}
\partial_t^2&\left(  \|d_AB_t\|^2+c_0\|F_A\|^2+c_0\varepsilon^2\|B_t\|^2\right)\\
\geq&\|\nabla_td_AB_t\|^2+ \frac 1{\varepsilon^2}\|d_A^*d_AB_t\|^2- c\varepsilon^2\|B_t\|^2-c\varepsilon^2\|\dot X_t(A)\|_{L^\infty}-c\|F_A\|^2
\end{split}
\end{equation}
for a constant $c_0$ big enough.
Hence, if $0<\varepsilon<c_2$, by lemma \ref{lemma:laplsal} we have 
%for and open set $\Omega_1, \$, $K\subset \Omega_1\subset\subset \Omega$,
\begin{equation}\label{crit:eq:apriori:djdk}
\begin{split}
\int_{S^1}& \left(\varepsilon^2\|\nabla_t d_AB_t\|^2+ \|d_A^*d_AB_t\|^2\right) dt\\
\leq & c\varepsilon^2\int_{S^1}\left( \|d_AB_t\|^2+\|F_A\|^2+\varepsilon^2\|B_t\|^2+\varepsilon^2c_{\dot X_t(A)}\right) dt\\
\leq & c\varepsilon^2\int_{S^1}\left( \|d_A^*d_AB_t\|^2+\|F_A\|^2+\varepsilon^2\|B_t\|^2+\varepsilon^2c_{\dot X_t(A)}\right) dt\\
\leq & c\varepsilon^2\int_{S^1}\left( \varepsilon^2\|d_AB_t\|^2+ \|F_A\|^2+\varepsilon^2\|B_t\|^2+\varepsilon^2c_{\dot X_t(A)}\right) dt\\
\leq & c\varepsilon^2\int_{S^1}\left( \|F_A\|^2+\varepsilon^2\|B_t\|^2+\varepsilon^2c_{\dot X_t(A)}\right) dt
\end{split}
\end{equation}
where the second estimate follows from the lemma \ref{lemma76dt94}, the third inequality follows from the first one and the first step implies the last estimate. 
\end{proof}
The estimate (\ref{ap:crit:se}) follows combining the second and the third step; hence, we finished the proof of the theorem \ref{crit:secder}.
\end{proof}

\begin{proof}[Proof of theorem \ref{thm:norminty}]
If we prove that $\left\| \partial_tA^{\varepsilon}-d_{A^{\varepsilon}}\Psi^{\varepsilon}\right\|_{L^4(\Sigma)}$ is uniformly bounded by a constant, then by the theorem \ref{crit:secder} and the Sobolev estimate it follows that
$$\|\partial_tA^\varepsilon-d_{A^\varepsilon}\Psi^\varepsilon\|_{L^\infty(\Sigma)}\leq \|\partial_tA^\varepsilon-d_{A^\varepsilon}\Psi^\varepsilon\|_{L^4(\Sigma)}+\|d_{A^\varepsilon}^*d_{A^\varepsilon}(\partial_tA^\varepsilon-d_{A^\varepsilon}\Psi^\varepsilon)\|_{L^2(\Sigma)}\leq c$$
and hence (\ref{jhapaa}) is satisfied for $\varepsilon$ sufficiently small. We prove the theorem by an indirect argument assuming that there is a sequence $\{\Xi^{\varepsilon_\nu}=A^{\varepsilon_\nu}+\Psi^{\varepsilon_\nu} dt\}_{\nu\in \mathbb N}$, $\varepsilon_\nu\to 0$, of perturbed Yang-Mills connections that satisfies $\mathcal{YM}^{\varepsilon_\nu,H}(\Xi^{\varepsilon_\nu})\leq b$ and $m_\nu:=\sup_{t\in S^1} \left\| \partial_tA^{\varepsilon_\nu}-d_{A^{\varepsilon_\nu}}\Psi^{\varepsilon_\nu}\right\|_{L^4(\Sigma)}\to \infty$. In addition we assume that there is a convergent sequence $t_\nu\to t^\infty$ in $S^1$ such that
\begin{equation}
\left\| \partial_tA^{\varepsilon_\nu}(t_\nu)-d_{A^{\varepsilon_\nu}(t_\nu)}\Psi^{\varepsilon_\nu}(t_\nu)\right\|_{L^4(\Sigma)}=m_\nu.
\end{equation}
We need to check three cases that depend from the behavior of the sequence $\varepsilon_\nu m_\nu$.\\

{\bf Case 1:} $\lim_{\nu\to \infty}\varepsilon_\nu m_\nu=0$. We define a new sequence of connections $\bar \Xi^{\varepsilon_\nu}:=\bar A^{\varepsilon_\nu}+\bar\Psi^{\varepsilon_\nu} dt$ by $\bar A^{\varepsilon_\nu}(t):=A^{\varepsilon_\nu}(t_\nu+t/m_\nu)$, and $\bar \Psi^{\varepsilon_\nu}(t):=\frac 1{m_\nu}\Psi^{\varepsilon_\nu}(t_\nu+t/m_\nu)$. This sequence satisfies the perturbed Yang-Mills equations
\begin{equation*}
\frac 1{\varepsilon_\nu^2m_\nu^2}d_{\bar A^{\varepsilon_\nu}}^*F_{\bar A^{\varepsilon_\nu}}=\bar \nabla_t\left(\partial_t\bar A^{\varepsilon_\nu}-d_{\bar A^{\varepsilon_\nu}}\bar\Psi^{\varepsilon_\nu}\right)+\frac 1{m_\nu^2}*X_{t_\nu+\frac t{m_\nu}}(\bar A^{\varepsilon_\nu}),
\end{equation*}
\begin{equation*}
 d_{\bar A^{\varepsilon_\nu}}^*\left(\partial_t\bar A^{\varepsilon_\nu}-d_{\bar A^{\varepsilon_\nu}}\bar\Psi^{\varepsilon_\nu}\right)=0.
\end{equation*}
In addition, we have the following estimates for the norms for  $\bar\varepsilon_\nu:=\varepsilon_\nu m_\nu$
\begin{equation}\label{crit:lidn}
\sup_{t\in \left[-\frac{m_\nu}2,\frac {m_\nu}2\right]} \left\| \partial_t\bar A^{\varepsilon_\nu}-d_{\bar A^{\varepsilon_\nu}}\bar \Psi^{\varepsilon_\nu}\right\|_{L^4(\Sigma)}=\left\| \partial_t\bar A^{\varepsilon_\nu}(0)-d_{\bar A^{\varepsilon_\nu}(0)}\bar \Psi^{\varepsilon_\nu}(0)\right\|_{L^4(\Sigma)}= 1,
\end{equation}
\begin{equation}
\begin{split}
\frac 1{\bar\varepsilon_\nu^2}\left\|F_{\bar A^{\varepsilon_\nu}}\right\|_{L^2}^2
=&\int_{-\frac {m_\nu}2}^{\frac {m_\nu}2}\frac1{\bar\varepsilon_\nu^2}\left\|F_{\bar A^{\varepsilon_\nu}}\right\|_{L^2(\Sigma)}^2 d t\\
=&\int_{-\frac 12}^{\frac 12}\frac1{m_\nu\varepsilon_\nu^2}\left\|F_{A^{\varepsilon_\nu}}\right\|_{L^2(\Sigma)}^2 d t\leq \frac {b^2}{m_\nu},\end{split}
\end{equation}
\begin{equation}
\begin{split}
\left\| \partial_t\bar A^{\varepsilon_\nu}-d_{\bar A^{\varepsilon_\nu}}\bar\Psi^{\varepsilon_\nu}\right\|_{L^2}^2
=&\int_{-\frac{m_\nu}2}^{-\frac {m_\nu}2}\left\| \partial_t\bar A^{\varepsilon_\nu}-d_{\bar A^{\varepsilon_\nu}}\bar\Psi^{\varepsilon_\nu}\right\|_{L^2(\Sigma)}^2 d t \\
=&\int_{-\frac{1}2}^{-\frac {1}2}\frac 1{m_\nu^2}\left\| \partial_tA^{\varepsilon_\nu}-d_{ A^{\varepsilon_\nu}}\Psi^{\varepsilon_\nu}\right\|_{L^2(\Sigma)}^2 m_\nu d t\leq \frac {b^2}{m_\nu}.
\end{split}
\end{equation}
We denote $\partial_t\bar A^{\varepsilon_\nu}-d_{\bar A^{\varepsilon_\nu}}\bar \Psi^{\varepsilon_\nu}$ by $\bar B_t^\nu$ and we remark that the $L^\infty$-norm of $\frac 1{m_\nu^2}\dot X_{t_\nu+\frac t{m_\nu}}(\bar A)$ can be estimate by $\frac c{m_\nu^3}$ where $c$ is a positive constant; thus, by the Sobolev estimate and the theorem \ref{crit:secder} we can conclude that
\begin{equation*}
\begin{split}
\sup_{t\in \left[-\frac{m_\nu}2,\frac {m_\nu}2\right]} \left\| \bar B_t^\nu\right\|_{L^4(\Sigma)}^2
\leq & c \sup_{t\in \left[-\frac{m_\nu}2,\frac {m_\nu}2\right]} \left(\|\bar B_t^\nu\|_{L^2(\Sigma)}^2+\|d_{\bar A^{\varepsilon_\nu}}^*d_{\bar A^{\varepsilon_\nu}}\bar B_t^\nu\|_{L^2(\Sigma)}^2\right)\\
\leq & c\int_{-\frac{m_\nu}2}^{\frac {m_\nu}2}\left(\|\bar B_t^\nu\|^2_{L^2(\Sigma)}+\frac 1{\varepsilon_\nu^2m_\nu^2}\|F_{\bar A^{\varepsilon_\nu}}\|^2_{L^2(\Sigma)}+\frac 1{m_\nu^3}\right) dt\\
\leq&\frac c{m_\nu}\left(1+\frac 1{m_\nu}+\frac 1{m_\nu^2}\right)
\end{split}
\end{equation*}
which converges to $0$ in contradiction to (\ref{crit:lidn}).\\

{\bf Case 2:} $\lim_{\nu\to \infty}\varepsilon_\nu m_\nu=c_1>0$. This time we choose a different rescaling to define $\bar \Xi^{\varepsilon_\nu}:=\bar A^{\varepsilon_\nu}+\bar\Psi^{\varepsilon_\nu} dt$, i.e.
$$\bar A^{\varepsilon_\nu}(t):=A^{\varepsilon_\nu}(t_\nu+\varepsilon_\nu t),\quad\bar \Psi^{\varepsilon_\nu}(t):=\varepsilon_\nu\Psi^{\varepsilon_\nu}(t_\nu+\varepsilon_\nu t)$$
which satisfies the perturbed Yang-Mills equations
\begin{equation*}
d_{\bar A^{\varepsilon_\nu}}^*F_{\bar A^{\varepsilon_\nu}}=\bar \nabla_t\left(\partial_t\bar A^{\varepsilon_\nu}-d_{\bar A^{\varepsilon_\nu}}\bar\Psi^{\varepsilon_\nu}\right)+\varepsilon_\nu^2*X_{t_\nu+\varepsilon_\nu t}(\bar A^{\varepsilon_\nu}),
\end{equation*}
\begin{equation*}
d_{\bar A^{\varepsilon_\nu}}^*\left(\partial_t\bar A^{\varepsilon_\nu}-d_{\bar A^{\varepsilon_\nu}}\bar\Psi^{\varepsilon_\nu}\right)=0
\end{equation*}
and 
\begin{equation}\label{crit:lidn1}
\sup_{t\in \left[-\frac 1{2\varepsilon_\nu},\frac 1{2\varepsilon_\nu}\right]} \left\| \partial_t\bar A^{\varepsilon_\nu}-d_{\bar A^{\varepsilon_\nu}}\bar \Psi^{\varepsilon_\nu}\right\|_{L^4(\Sigma)}=\left\| \partial_t\bar A^{\varepsilon_\nu}(0)-d_{\bar A^{\varepsilon_\nu}(0)}\bar\Psi^{\varepsilon_\nu}(0)\right\|_{L^4(\Sigma)}\leq 2 c_1
\end{equation}
for $\nu$ sufficiently big. Furthermore, we have the estimates
\begin{equation}
\begin{split}
\left\|F_{\bar A^{\varepsilon_\nu}}\right\|_{L^2}^2
=&\int_{-\frac {1}{2\varepsilon_\nu}}^{\frac {1}{2\varepsilon_\nu}}\left\|F_{\bar A^{\varepsilon_\nu}}\right\|_{L^2(\Sigma)}^2 dt\\
=&\int_{-\frac 12}^{\frac 12}\frac 1{\varepsilon_\nu}\left\|F_{A^{\varepsilon_\nu}}\right\|_{L^2(\Sigma)}^2 d t\leq b\varepsilon_\nu,
\end{split}
\end{equation}
\begin{equation}
\begin{split}
\left\| \partial_t\bar A^{\varepsilon_\nu}-d_{\bar A^{\varepsilon_\nu}}\bar\Psi^{\varepsilon_\nu}\right\|_{L^2}^2
=&\int_{-\frac 1{2\varepsilon_\nu}}^{\frac 1{2\varepsilon_\nu}}\left\| \partial_t\bar A^{\varepsilon_\nu}-d_{\bar A^{\varepsilon_\nu}}\bar\Psi^{\varepsilon_\nu}\right\|_{L^2(\Sigma)}^2d t\\
=&\int_{-\frac 12}^{\frac 12}\varepsilon_\nu^2\left\| \partial_t A^{\varepsilon_\nu}-d_{ A^{\varepsilon_\nu}}\Psi^{\varepsilon_\nu}\right\|_{L^2(\Sigma)}^2\frac 1{\varepsilon_\nu}dt \leq b\varepsilon_\nu.
\end{split}
\end{equation}
If we denote $\partial_t\bar A^{\varepsilon_\nu}-d_{\bar A^{\varepsilon_\nu}}\bar \Psi^{\varepsilon_\nu}$ by $\bar B_t^\nu$ and we consider  $c\varepsilon_\nu^3$ as the bound for the $L^\infty$-norm of $\varepsilon_\nu^2\dot X_{t_\nu+\varepsilon_\nu t}(\bar A)$, then, by the Sobolev estimate and the theorem \ref{crit:secder} we can conclude that
\begin{equation*}
\begin{split}
\sup \left\| \bar B_t^\nu\right\|_{L^4(\Sigma)}^2
\leq & c\sup \left(\|\bar B_t^\nu\|_{L^2(\Sigma)}^2+\|d_{\bar A^{\varepsilon_\nu}}^*d_{\bar A^{\varepsilon_\nu}}\bar B_t^\nu\|_{L^2(\Sigma)}^2\right)\\
\leq & c\int_{ S^1}\left(\|\bar B_t^\nu\|^2_{L^2(\Sigma)}+\|F_{\bar A^{\varepsilon_\nu}}\|^2_{L^2(\Sigma)}+\varepsilon_\nu^3\right) dt\\
\leq& c\varepsilon_\nu\left(1+\varepsilon_\nu+\varepsilon_\nu^2\right)
\end{split}
\end{equation*}
which converges to $0$ in contradiction to (\ref{crit:lidn1}).\\

\begin{comment}
Therefore, $\bar\Xi^{\varepsilon_\nu}$ converges continuously, using the lemma \ref{lemma:laplsal}, to a connection $\Xi=A+\Psi dt$ on $\mathbb R\times P$ 
with $F_A=0$, $\nabla_t\left(\partial_tA-d_A\Psi\right)=0$ and $\left\|\partial_tA(0)-d_{A(0)}\Psi(0)\right\|_{L^\infty}=c_1$, but $\left\|\partial_tA-d_A\Psi\right\|_{L^2}=0$.\\
\end{comment}

{\bf Case 3:} $\lim_{\nu\to \infty}\varepsilon_\nu m_\nu=\infty$. First, we define $\bar \Xi^{\varepsilon_\nu}:=\bar A^{\varepsilon_\nu}+\bar\Psi^{\varepsilon_\nu} dt$ as in the first case, i.e.
$$\bar A^{\varepsilon_\nu}(t):=A^{\varepsilon_\nu}\left(t_\nu+ \frac t{m_\nu}\right),\quad\bar \Psi^{\varepsilon_\nu}(t):=\frac 1{m_\nu}\Psi^{\varepsilon_\nu}\left(t_\nu+\frac t{m_\nu}\right).$$
The new sequence satisfies then
\begin{equation}\label{sjsjs}
d_{\bar A^{\varepsilon_\nu}}^*F_{\bar A^{\varepsilon_\nu}}=\varepsilon_\nu^2m_{\nu}^2\bar \nabla_t\left(\partial_t\bar A^{\varepsilon_\nu}-d_{\bar A^{\varepsilon_\nu}}\bar\Psi^{\varepsilon_\nu}\right)+\varepsilon_\nu^2*X_{t_\nu+t/m_\nu}(\bar A^{\varepsilon_\nu}),
\end{equation}
\begin{equation*}
d_{\bar A^{\varepsilon_\nu}}^*\left(\partial_t\bar A^{\varepsilon_\nu}-d_{\bar A^{\varepsilon_\nu}}\bar\Psi^{\varepsilon_\nu}\right)=0.
\end{equation*}
In addition, we obtain the following estimates for any compact set $K\subset \mathbb R$ that
\begin{equation}\label{crit:lidn2}
\sup_{t\in \left[-\frac 1{2m_\nu},\frac 1{2m_\nu}\right]} \left\| \partial_t\bar A^{\varepsilon_\nu}-d_{\bar A^{\varepsilon_\nu}}\bar \Psi^{\varepsilon_\nu}\right\|_{L^4(\Sigma)}=\left\| \partial_t\bar A^{\varepsilon_\nu}(0)-d_{\bar A^{\varepsilon_\nu}(0)}\bar\Psi^{\varepsilon_\nu}(0)\right\|_{L^4(\Sigma)}= 1,
\end{equation}
\begin{equation*}
\left\|F_{\bar A^{\varepsilon_\nu}}\right\|_{L^2(\Sigma\times K)}^2
=\int_{ K}\left\|F_{\bar A^{\varepsilon_\nu}}\right\|_{L^2(\Sigma)}^2 d t\leq m_\nu \int_{ K}\left\|F_{ A^{\varepsilon_\nu}}\right\|_{L^2(\Sigma)}^2 d t\leq c \varepsilon_\nu^2m_\nu,
\end{equation*}
\begin{equation}\label{dllallalh}
\begin{split}
\left\| \partial_t\bar A^{\varepsilon_\nu}-d_{\bar A^{\varepsilon_\nu}}\bar\Psi^{\varepsilon_\nu}\right\|_{L^2(K)}^2
=&\int_K \left\| \partial_t\bar A^{\varepsilon_\nu}-d_{\bar A^{\varepsilon_\nu}}\bar\Psi^{\varepsilon_\nu}\right\|_{L^2(\Sigma)}^2d t\\
\leq &\int_K \frac 1{m_\nu^2}\left\| \partial_t A^{\varepsilon_\nu}-d_{ A^{\varepsilon_\nu}}\Psi^{\varepsilon_\nu}\right\|_{L^2(\Sigma)}^2m_\nu dt\leq \frac b{m_\nu}.
\end{split}
\end{equation}
Analogously as in the first two cases  we denote $\partial_t\bar A^{\varepsilon_\nu}-d_{\bar A^{\varepsilon_\nu}}\bar \Psi^{\varepsilon_\nu}$ by $\bar B_t^\nu$ and we consider  $\frac 1{m_\nu^3}$ as the bound for the $L^\infty$-norm of $\frac 1{m_\nu^2}\dot X_{t_\nu+\frac t{m_\nu}}(\bar A)$, then, by the Sobolev estimate and the theorem \ref{crit:secder} we can conclude that, for a compact set $K$ and an open set $\Omega$ with $0\in K\subset\Omega $,
\begin{equation*}
\begin{split}
\sup_{t\in K}\left\| \bar B_t^\nu\right\|_{L^4(\Sigma)}^2
\leq & c\sup_{t\in K}\left(\|\bar B_t^\nu\|_{L^2(\Sigma)}^2+\|d_{\bar A^{\varepsilon_\nu}}\bar B_t^\nu\|_{L^2(\Sigma)}^2\right)\\
\leq & c\int_{ \Omega}\left(\|\bar B_t^\nu\|^2_{L^2(\Sigma)}+\frac 1{\varepsilon_\nu^2m_\nu^2}\|F_{\bar A^{\varepsilon_\nu}}\|^2_{L^2(\Sigma)}\right) dt\\
 &+ c\int_{ \Omega}\left(\frac{\varepsilon_\nu^2}{m_\nu}+\|d_{\bar A^{\varepsilon_\nu}}\bar B_t^\nu\|_{L^2(\Sigma)}^2 \right) dt\\
\leq& \frac c{m_\nu}+\frac c{m_\nu}\int_{ S^1}\left \|d_{ A^{\varepsilon_\nu}}\left(\partial_tA^{\varepsilon_\nu}-d_{A^{\varepsilon_\nu}}\Psi^{\varepsilon_\nu}\right)\right\|^2_{L^2(\Sigma)} dt\\
\leq& \frac c{m_\nu}+\frac {c\varepsilon_\nu^\frac 12}{m_\nu}
\end{split}
\end{equation*}
where the last step follows from the next claim. Then the $L^4$-norm of $\bar B_t^\nu$ converges to $0$ by the last estimate in contradiction to (\ref{crit:lidn2}).\\

\begin{comment}

{\bf Claim: }We choose an open set $\Omega_1\subset\subset \Omega$, then for any Yang-Mills connection $\Xi=A+\Psi dt+\Phi ds\in \mathrm{Crit}_{\mathcal {YM}^{\varepsilon,H}}^b$
\begin{align*}\int_{ \Omega}\left \|d_{ A}d_{A}^*B_t\right\|^2_{L^2(\Sigma)} dt
\leq& c\varepsilon^2\left( \|B_t\|^2_{L^2(\Omega_1)}+\|[B_t,*F_A]\|^2_{L^2(\Omega_1)}\right)\\
&+ c\varepsilon^2\left( \|[B_t,B_t]\|^2_{L^2(\Omega_1)}+c\right).
\end{align*}
where we denote $\partial_tA-d_A\Psi$ by $B_t$.

\begin{proof}
In order to estimate the $L^2$-norm of $d_{ A}d_{A}^*B_t$ we derive by $\nabla_t$ the Yang-Mills equation (\ref{sjsjs}) and then
\begin{equation*}
\begin{split}
0=&\frac 1{\varepsilon^2}\nabla_td_{ A}^*F_{ A}-\nabla_t\nabla_tB_t-\nabla_t*X_{t}(A)\\
=&\frac 1{\varepsilon^2}d_{ A}^*d_{ A}B_t-\nabla_t\nabla_tB_t-\frac 1{\varepsilon}*[B_t,*F_{ A}]-*\dot X_{t}( A)- d*X_{t}( A)\left(\partial_tA-d_{A}\Psi\right)
\end{split}
\end{equation*}
where the last identity follows from the commutation formula and the Bianchi identity (\ref{bianchiidentity}). Then the claim follows directly using a cut off function from the estimate
\begin{equation*}
 \begin{split}
  2\left\|d_{A}^*d_{A}B_t-\varepsilon^2\nabla_t\nabla_tB_t\right\|^2_{L^2}
\geq&\left\|d_{ A}^*d_{ A}B_t\right\|^2_{L^2}+
\left\|\varepsilon^2\nabla_t\nabla_tB_t\right\|^2_{L^2}\\
&+\left\|\varepsilon d_{ A}\nabla_tB_t\right\|^2_{L^2}-c\varepsilon^2\left\|\left[B_t\wedge B_t\right]\right\|^2_{L^2}
\end{split}
\end{equation*}
which can be computed using the integration by part.
\end{proof}
\end{comment}

{\bf Claim:} For any perturbed Yang-Mills connection $\Xi=A+\Psi dt$
\begin{equation*}
\|d_AB_t \|_{L^2}\leq c\varepsilon^{\frac 14}
\end{equation*}
where we denote $\partial_tA-d_A\Psi$ by $B_t$.\\

\begin{proof}If we consider the identity
\begin{equation}\label{crit:ap:eqls}
\begin{split}
&
\varepsilon^2\left\|\frac 1{\varepsilon^2}d_A^*F_A-\nabla_t B_t-*X_t(A)\right\|^2_{L^2}
+\|\nabla_t F_A-d_AB_t\|^2_{L^2}\\
=&
\frac 1{\varepsilon^2}\left\|d_A^*F_A\right\|^2_{L^2}+\varepsilon^2\left\|\nabla_t B_t\right\|^2_{L^2}+\varepsilon^2
\left\|X_t(A)\right\|^2_{L^2}\\
&+\left\|\nabla_t F_A\right\|^2_{L^2}+\left\|d_AB_t\right\|^2_{L^2}-2\varepsilon^2\left\langle *X_t(A),\frac 1{\varepsilon^2}d_A^*F_A-\nabla_t B_t\right\rangle\\
&-2 \left\langle d_A^*F_A,\nabla_t B_t\right\rangle -2\left\langle\nabla_t F_A,d_AB_t\right\rangle,
\end{split}
\end{equation}
we can remark that first line vanishes by the perturbed Yang-Mills equation (\ref{epsiloneq1}) and by the Bianchi identity $\nabla_t F_A=d_AB_t$; in addition, the last line can be written as 
\begin{equation*}
-2\left\langle d_A^*F_A,\nabla_t B_t\right\rangle -2\left\langle\nabla_t F_A,d_AB_t\right\rangle
=2\left\langle F_A,[B_t\wedge B_t]\right\rangle
\end{equation*}
by the commutation formula (\ref{commform}). The identity (\ref{crit:ap:eqls}) yields therefore to
\begin{align*}
\|d_AB_t\|^2_{L^2}&+\varepsilon^2\|\nabla_tB_t\|^2_{L^2}\\
\leq& 2\left|\left\langle d_A*X_t(A),F_A\right\rangle\right|+\varepsilon^2|\langle *\nabla_t X_t(A),B_t\rangle| +c\|F_A\|_{L^2}\cdot \|B_t\|^2_{L^4}\\
\leq& c\|F_A\|^2_{L^2}+\varepsilon^2(1+\|B_t\|^2_{L^2}) +c\varepsilon^{-\frac 12}\|F_A\|_{L^2}\cdot \|B_t\|^2_{1,2,\varepsilon}\\
\leq& c\varepsilon^2(1+\|B_t\|^2_{L^2})+c\varepsilon^{\frac 12}\left( \|B_t\|^2_{L^2}+\|d_AB_t\|^2_{L^2}+\varepsilon^2\|\nabla_tB_t\|^2_{L^2}\right)\\
\leq& c\varepsilon^{\frac12}+c\varepsilon^{\frac 12}\left( \|d_AB_t\|^2_{L^2}+\varepsilon^2\|\nabla_tB_t\|^2_{L^2}\right)
\end{align*}
where we use the H\"older inequality and the Sobolev estimate in the second estimate and the assumption $\frac 1{\varepsilon^2}\|F_A\|_{L^2}^2+\|B_t\|_{L^2}^2\leq 2b$ in the last two estimates. Thus choosing $\varepsilon$ small enough the claim holds.
\end{proof}

Since we have found a contradiction for all the tree cases, we can conclude that $\sup_{t\in S^1}\|\partial_tA-d_A\Psi\|_{L^4}$ is uniformly bounded for $\varepsilon$ sufficiently small and thus the proof of the theorem \ref{thm:norminty} is finished.
\end{proof}

\section{Surjectivity of $\mathcal T^{b,\varepsilon}$}\label{chapter:surjectivity}

In the fifth chapter we defined the injective map $\mathcal T^{\varepsilon,b}$ in a unique way, in this one we show that it is also surjective provided that $\varepsilon$ is chosen sufficiently small.

\begin{theorem}\label{thm:surjj}
Let $b>0$ be a regular value of $E^H$. Then there is a constant $\varepsilon_0>0$ such that
\begin{equation*}
\mathcal T^{\varepsilon, b}:\mathrm{Crit}_{E^H}^b\to\mathrm{Crit}_{\mathcal{YM}^{\varepsilon,H}}^b
\end{equation*}
is bijective for $0<\varepsilon<\varepsilon_0$.
\end{theorem}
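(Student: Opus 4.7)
The plan is to argue indirectly, as indicated in the introduction. I assume for contradiction that there exist $\varepsilon_\nu \downarrow 0$ and perturbed Yang-Mills connections $\Xi^{\varepsilon_\nu} = A^{\varepsilon_\nu} + \Psi^{\varepsilon_\nu}\,dt \in \mathrm{Crit}^b_{\mathcal{YM}^{\varepsilon_\nu,H}}$ which lie outside the image of $\mathcal T^{\varepsilon_\nu,b}$ modulo gauge. The goal is to extract a subsequence which, modulo gauge, converges to some perturbed geodesic $\Xi^0 \in \mathrm{Crit}^b_{E^H}$ in a sufficiently strong topology, and then to apply the local uniqueness theorem \ref{thm:localuniqueness} to derive a contradiction.

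First I would collect the a priori estimates of Section \ref{chapter:aprioriestimates}: Theorem \ref{thm:norminty} gives a uniform $L^\infty$-bound on the velocity $B_t^{\varepsilon_\nu} := \partial_t A^{\varepsilon_\nu} - d_{A^{\varepsilon_\nu}}\Psi^{\varepsilon_\nu}$; Theorem \ref{lemma:secder} shows $\|F_{A^{\varepsilon_\nu}}\|_{3,2,\varepsilon_\nu} \leq c\varepsilon_\nu^2$ and provides $L^\infty$-control on $F_{A^{\varepsilon_\nu}}$ and its first two covariant derivatives of order $\varepsilon_\nu^{2-1/p}$; Theorem \ref{crit:secder} upgrades this to $W^{2,2}(\Sigma)$-bounds on $B_t^{\varepsilon_\nu}$ uniform in $t$. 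Using Uhlenbeck-style gauge fixing at each $t$ (possible because $F_{A^{\varepsilon_\nu}}(t)$ converges to zero in $L^\infty$) and gluing into a global transformation in $\mathcal G_0(P \times S^1)$, I extract a subsequence with $u_\nu^*\Xi^{\varepsilon_\nu}$ converging in $W^{1,p}$ (for any $p \geq 2$) to a loop $\Xi^0 = A^0 + \Psi^0\,dt$ of flat connections. Passing to the limit in (\ref{epsiloneq2}) yields $d_{A^0}^*(\partial_t A^0 - d_{A^0}\Psi^0) = 0$, and rewriting (\ref{epsiloneq1}) as $\nabla_t B_t^{\varepsilon_\nu} + *X_t(A^{\varepsilon_\nu}) = \tfrac{1}{\varepsilon_\nu^2} d_{A^{\varepsilon_\nu}}^* F_{A^{\varepsilon_\nu}} \in \textrm{im}\, d_{A^{\varepsilon_\nu}}^*$ and taking the limit places $\nabla_t B_t^0 + *X_t(A^0) \in \textrm{im}\, d_{A^0}^*$. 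Combined with lower semicontinuity of $E^H$, Theorem \ref{geodesics} then identifies $\Xi^0$ as an element of $\mathrm{Crit}^b_{E^H}$.

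To close the argument I would apply the gauge-fixing Theorem \ref{thm:crgc} to find $g_\nu \in \mathcal G_0^{2,p}(P \times S^1)$ with $d_{\Xi^0}^{*_{\varepsilon_\nu}}(g_\nu^* u_\nu^* \Xi^{\varepsilon_\nu} - \Xi^0) = 0$ together with the quantitative estimate (\ref{eq:thm:crgc2}), and then verify that $\tilde\Xi^{\varepsilon_\nu} := g_\nu^* u_\nu^* \Xi^{\varepsilon_\nu}$ satisfies the hypotheses of Theorem \ref{thm:localuniqueness} with reference $\Xi^0$. That uniqueness statement then forces $\tilde\Xi^{\varepsilon_\nu} = \mathcal T^{\varepsilon_\nu,b}(\Xi^0)$, so that gauge equivariance of $\mathcal T^{\varepsilon,b}$ places $\Xi^{\varepsilon_\nu}$ in its image, contradicting the assumption.

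The hard part will be the sharp matching needed to feed into Theorem \ref{thm:localuniqueness}, namely the bound
$$\varepsilon_\nu\|\tilde\Xi^{\varepsilon_\nu} - \Xi^0\|_{2,p,\varepsilon_\nu} + \|(1-\pi_{A^0})(\tilde\Xi^{\varepsilon_\nu} - \Xi^0 - \alpha_0^{\varepsilon_\nu})\|_{1,p,\varepsilon_\nu} \leq c\varepsilon_\nu^3,$$
where $\alpha_0^{\varepsilon_\nu}$ is the explicit first-order ansatz of Lemma \ref{lemma:step1xi1}. This is not immediate from the mere $O(\varepsilon_\nu^2)$-decay of $\alpha_0^{\varepsilon_\nu}$; the plan is to improve the decay by applying Lemma \ref{lemma:evaluate3} from Section \ref{chapter:ellest} to the difference $\tilde\Xi^{\varepsilon_\nu} - (\Xi^0 + \alpha_0^{\varepsilon_\nu})$, controlling the residual $\mathcal F^{\varepsilon_\nu}(\Xi^0 + \alpha_0^{\varepsilon_\nu})$ via (\ref{eq:curvxi22}) and absorbing the nonlinear contributions using the quadratic estimates of Section \ref{chapter:qe}. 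With that in hand the contradiction argument closes immediately.
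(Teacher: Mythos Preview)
Your overall strategy matches the paper's: argue by contradiction, use the a~priori estimates of Section~\ref{chapter:aprioriestimates} to extract a subsequence converging (after gauge) to some $\Xi^0\in\mathrm{Crit}^b_{E^H}$, gauge-fix relative to $\Xi^0$ via Theorem~\ref{thm:crgc}, and invoke local uniqueness. Your convergence step (fibrewise Uhlenbeck gauge plus limits in the equations) is a reasonable variant of the paper's route, which instead projects each $A^{\varepsilon_\nu}(t)$ to a nearby flat connection via Lemma~\ref{lemma82dt94} and then runs a Palais--Smale argument directly on $\mathcal M^g(P)$; either should give $\|\Xi^{\varepsilon_\nu}-\Xi^0\|_{1,p,1}\le c\varepsilon_\nu^{1-1/p}$ after gauge.

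There is, however, a genuine gap in the final step. First, you have misread the hypotheses of Theorem~\ref{thm:localuniqueness}: the $c\varepsilon_\nu^3$ bound you display is the condition on $\Xi^\varepsilon=\mathcal T^{\varepsilon_\nu,b}(\Xi^0)$, and that one is automatic from Theorem~\ref{thm:existence}. What you must verify for $\bar\Xi^\varepsilon=\tilde\Xi^{\varepsilon_\nu}$ is~\eqref{eq:themap22}, i.e.\ $\|\tilde\Xi^{\varepsilon_\nu}-\Xi^0\|_{1,p,\varepsilon_\nu}+\|\tilde\Xi^{\varepsilon_\nu}-\Xi^0\|_{\infty,\varepsilon_\nu}\le\delta\varepsilon_\nu$. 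This still requires upgrading the harmonic part $\pi_{A^0}(A^{\varepsilon_\nu}-A^0)$ from $O(\varepsilon_\nu^{1-1/p})$ to $O(\varepsilon_\nu^{1+\delta_1})$.

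Your plan to achieve this by feeding $\tilde\Xi^{\varepsilon_\nu}-\Xi_1^{\varepsilon_\nu}$ into Lemma~\ref{lemma:evaluate3} and absorbing $C^{\varepsilon_\nu}$ via Section~\ref{chapter:qe} does not close: that is precisely the mechanism inside the \emph{proof} of Theorem~\ref{thm:localuniqueness}, and it needs the smallness $\|\tilde\alpha^\nu\|_{\infty,\varepsilon_\nu}\le\delta\varepsilon_\nu$ to absorb the quadratic term. With only $\|\tilde\alpha^\nu\|_{L^\infty}\le c\varepsilon_\nu^{1-2/p}$ available, the dominant contribution $\tfrac{1}{\varepsilon_\nu^2}\pi_{A^0}\bigl(*[\tilde\alpha^\nu\wedge*(d_{A^0}\tilde\alpha^\nu+\tfrac12[\tilde\alpha^\nu\wedge\tilde\alpha^\nu])]\bigr)$ in $\pi_{A^0}C_1$ is of order $\varepsilon_\nu^{1-4/p}$ at best, and the argument is circular. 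The paper breaks this circularity by interposing, for each $\nu$, the flat connection $A_1^\nu=A^{\varepsilon_\nu}+*d_{A^{\varepsilon_\nu}}\gamma^\nu$ from Lemma~\ref{lemma82dt94} and splitting $\tilde\alpha^\nu=\alpha^\nu+\bar\alpha^\nu-\varepsilon_\nu^2\alpha_0$ with $\bar\alpha^\nu=A_1^\nu-A^0$. Since both $A^0$ and $A_1^\nu$ are flat one has $d_{A^0}\bar\alpha^\nu+\tfrac12[\bar\alpha^\nu\wedge\bar\alpha^\nu]=0$, which kills the leading quadratic term; the residual pieces involve $\alpha^\nu$ and $\varepsilon_\nu^2\alpha_0$, both genuinely $O(\varepsilon_\nu^{2-1/p})$. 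Combined with a separate estimate on $\nabla_t$ of the non-harmonic part (Claims~4 and~5 in the paper's Step~5), this yields $\|\pi_{A^0}(A^{\varepsilon_\nu}-A^0)\|\le c\varepsilon_\nu^{1+\delta_1}$ for $p>6$. You need to incorporate this flat-intermediate decomposition; without it the bootstrap does not start.
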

\begin{proof} The indirect proof will be divided in five steps. First, we assume that there is a decreasing sequence $\varepsilon_\nu$, $\nu\to \infty$, converging to $0$ and a sequence of perturbed Yang-Mills connections $\Xi^{\nu}=A^{\nu}+\Psi^{\nu}dt\in \mathrm{Crit}_{\mathcal {YM}^{\varepsilon_\nu,H}}^b$ that are not in the image of  $\mathcal T^{\varepsilon_\nu,b}$. By the theorems \ref{lemma:secder} and \ref{thm:norminty}  the sequence satisfies
\begin{equation}\label{crit:eq:supest1}
\left\|F_{A^{\nu}}\right\|_{L^\infty(\Sigma)}\leq c\varepsilon^{2-\frac 1p},\quad \left\|\partial_t A^{\nu}-d_{A^{\nu}}\Psi^{\nu}\right\|_{L^\infty(\Sigma)}
\leq c,
\end{equation}
\begin{equation}\label{crit:eq:supest}
\begin{split}
\sup_{s\in S^1} \big(&\left\|F_{A^{\nu}}\right\|_{L^2(\Sigma)}+\left\|d_{A^\nu}^*F_{A^{\nu}}\right\|_{L^2(\Sigma)}+\left\|d_{A^\nu}d_{A^\nu}^*F_{A^{\nu}}\right\|_{L^2(\Sigma)}\\
&+\varepsilon_\nu\left\|\nabla_t^{\Psi^{\nu}}F_{A^{\nu}}\right\|_{L^2(\Sigma)}
+ \varepsilon_\nu^2\left\|\nabla_t^{\Psi^{\nu}}\nabla_t^{\Psi^{\nu}}F_{A^{\nu}}\right\|_{L^2(\Sigma)}\big)
\leq c\varepsilon_\nu^{2-1/p}.
\end{split}
\end{equation}
In the estimate (\ref{crit:eq:supest}) the constant $c$ depends on $p\geq2$ which can be taken as big as we want. In order to conclude the proof we will need to choose $p>6$ as we will see in the proof of the fifth step.\\

In step 1, for each $\Xi^{\nu}$  we will define a connection 
 $\bar\Xi^{\nu}=\bar A^{\nu}
 +\bar\Psi^{\nu}dt$ near $\Xi^{\nu}$, flat on the fibers, which satisfies, for a constant $c>0$, $\left\|\pi_{\bar A^{\nu}}\left(\mathcal F^0\left(\bar\Xi^{\nu}\right)\right)\right\|_{L^p}\leq c\varepsilon_\nu^{{1-1/p}}$. Next, in the second step, we will find a representative $\Xi^0$ of a perturbed geodesic for which $\left\|\Xi^{\nu}-\Xi^0\right\|_{1,p,1}+\left\|\Xi^{\nu}-\Xi^0\right\|_{L^\infty }\leq c\varepsilon_\nu^{{1-1/p}}$ for a subsequence of $\{\Xi^{\nu}\}_{\nu\in \mathbb N}$ (step 3). Then, in step 5, we will improve this last estimate in order to apply the local uniqueness theorem \ref{thm:localuniqueness} which requires that the norms are bounded by $\delta \varepsilon$ for $\delta$ and $\varepsilon$ sufficiently small; in this way we will have a contradiction, because a subsequence of $\{\Xi^\nu\}_{\nu\in\mathbb N }$ will turn out to be in the image of $\mathcal T^{\varepsilon_\nu,b }$. \\

{\bf Step 1.} There are two positive constants $c$ and $\nu_0$ such that the following holds. For every $\Xi^{\nu}$, $\nu>\nu_0$, there is a connection $\bar\Xi^{\nu}=\bar A^{\nu}+\bar\Psi^{\nu}dt$ which satisfies
$$\begin{array}{llll}
i)&F_{\bar A^{\nu}}=0,
&ii)\,\,& d_{\bar A^{\nu}}^*(\partial_t\bar A^{\nu}-d_{\bar A^{\nu}}\bar\Psi^{\nu})=0,\\[5pt]
iii)\,\,&\left\|\Xi^{\nu}-\bar\Xi^{\nu}\right\|_{\bar\Xi^{\nu},1,p,\varepsilon_\nu}\leq c\varepsilon_\nu^{2-\frac 1p},\quad
&iv)&\left\|\pi_{\bar A^{\nu}}\left(\mathcal F^0\left(\bar\Xi^{\nu}\right)\right)\right\|_{L^p}\leq c\varepsilon_\nu^{{1-1/p}}.
\end{array}$$

\vspace{-3pt}
\begin{proof}[Proof of step 1]
Since $\|F_{A^\nu}\|_{L^\infty(\Sigma)}\leq c\varepsilon^{2-\frac 1p}$, by lemma \ref{lemma82dt94} there is a positive constant $c$ such that for any $A^{\nu}$ there is a unique $0$-form $\gamma^{\nu}$ which satisfies $F_{A^{\nu}+*d_{A^{\nu}}\gamma^{\nu}}=0$, $\|d_{A^{\nu}}\gamma^{\nu}\|_{L^\infty(\Sigma)}\leq c\, \|F_{A^{\nu}}\|_{L^\infty(\Sigma) }$. We denote $\bar\Xi^{\nu}:=\bar A^{\nu}+\bar\Psi^{\nu}dt$ where 
$\bar A^{\nu}:=A^{\nu}+*d_{A^{\nu}}\gamma^{\nu}$, 
$\alpha^{\nu}:=*d_{A^{\nu}}\gamma^{\nu}$ and 
$\bar\Psi^{\nu}:=\Psi^{\nu}+\psi^{\nu}$ is the unique $0$-form such that $d_{\bar A^{\nu}}^*(\partial_t\bar A^{\nu}-d_{\bar A^{\nu}}\bar\Psi^{\nu})=0$; $\bar \Psi^\nu$ is well defined because $d_{A}^*d_{A}:\Omega^0(\Sigma, \mathfrak g_P)\to \Omega^0(\Sigma,\mathfrak g_p)$ is bijective for any flat connection $A$. Hence, $\alpha^{\nu}$ satisfies the estimate
\begin{equation}\label{flow:surj:est:df}
\|\alpha^{\nu}\|_{L^\infty(\Sigma)}=\|d_{A^{\nu}}\gamma^{\nu}\|_{L^\infty(\Sigma)}
\leq c\, \|F_{A^{\nu}}\|_{L^\infty(\Sigma) }\leq c\varepsilon_\nu^{2-\frac 1p}.
\end{equation}
Since the $\Xi^{\nu}$ is a perturbed Yang-Mills connection, i.e.
\begin{equation}\label{eq:ym111}
\frac 1{\varepsilon_\nu^2}d_{A^{\nu}}^*F_{A^{\nu}}
=\nabla_t^{\Psi^{\nu}}(\partial_tA^{\nu}
-d_{A^{\nu}}\Psi^{\nu})+*X(A^{\nu}),
\end{equation}
we have that the connections $\bar \Xi^\nu$ satisfy
\begin{equation*}
\begin{split}
\nabla_t^{\bar\Psi^{\nu}}(\partial_t\bar A^{\nu}&-d_{\bar A^{\nu}}\bar\Psi^{\nu})+*X(\bar A^{\nu})\\
=\,\,\,&\nabla_t^{\Psi^{\nu}}(\partial_tA^{\nu}-d_{A^{\nu}}\Psi^{\nu})+*X(A^{\nu})\\
& +[\psi^{\nu},(\partial_tA^{\nu}-d_{A^{\nu}}\Psi^{\nu})] +\nabla_t^{\bar\Psi^{\nu}}(\nabla^{\bar\Psi^{\nu}}_t \alpha^{\nu}-d_{ A^{\nu}}\psi^{\nu})\\
=&\frac 1{\varepsilon_\nu^2}d_{\bar A^{\nu}}^*F_{A^{\nu}}-
\frac 1{\varepsilon_\nu^2}*\left[ \alpha^{\nu}\wedge*F_{A^{\nu}}\right]+2[\psi^{\nu},(\partial_tA^{\nu}-d_{A^{\nu}}\Psi^{\nu})]\\
&+\nabla_t^{\bar\Psi^{\nu}}\nabla^{\bar\Psi^{\nu}}_t \alpha^{\nu}
-d_{ A^{\nu}}\nabla_t^{\Psi^\nu}\psi^{\nu}-[\psi^\nu,d_{A^\nu}\psi^\nu]
\end{split}
\end{equation*}
where in the last equality we used (\ref{eq:ym111}) and the commutation formula (\ref{commform}); thus,
\begin{equation*}
\begin{split}
\pi_{\bar A^{\nu}}\left(\mathcal F^0(\bar A^{\nu},\bar \Psi^{\nu})\right)
=&-\pi_{\bar A^{\nu}}\left(\nabla_t^{\bar\Psi^{\nu}}(\partial_t\bar A^{\nu}
-d_{\bar A^{\nu}}\bar\Psi^{\nu})+*X(\bar A^{\nu})\right)\\
=&
\pi_{\bar A^{\nu}}\left(*\frac 1{\varepsilon_\nu^2}\left[ \alpha^{\nu}\wedge*F_{A^{\nu}}\right]
-2[\psi^{\nu},(\partial_tA^{\nu}-d_{A^{\nu}}\Psi^{\nu})]\right)\\
&-\pi_{\bar A^{\nu}}\left([\psi^\nu,d_{A^\nu}\psi^\nu]
+\nabla_t^{\bar\Psi^{\nu}}\nabla^{\bar\Psi^{\nu}}_t\alpha^{\nu}+[\alpha^\nu,\nabla_t^{\Psi^{\nu}}\phi^\nu]\right).
\end{split}
\end{equation*}
Therefore, by (\ref{flow:surj:est:df}) and the next lemma,
\begin{equation*}
\begin{split}
\Big\|\pi_{\bar A^{\nu}}&\left(\mathcal F^0(\bar A^{\nu},\bar\Psi^{\nu})\right)\Big\|_{L^p}\\
\leq &\left\|
\frac 1{\varepsilon_\nu^2}\pi_{\bar A^{\nu}}\left(*\left[ \alpha^{\nu}\wedge*
F_{A^{\nu}}\right]\right)\right\|_{L^p}
+\left\|\pi_{\bar A^{\nu}}\left(2[\psi^{\nu},(\partial_tA^{\nu}-d_{A^{\nu}}\Psi^{\nu})]\right)\right\|_{L^p}\\
&+\left\|\pi_{\bar A^{\nu}}\left(
\nabla_t^{\bar\Psi^{\nu}}\nabla^{\bar\Psi^{\nu}}_t\alpha^{\nu}+[\alpha^\nu,\nabla_t^{\Psi^{\nu}}\phi^\nu]-[\psi^\nu,d_{A^\nu}\psi^\nu]\right)\right\|_{L^p}\\
\leq&\,c\varepsilon_\nu^{2-\frac2p}+\left\|\pi_{\bar A^{\nu}}
\left(\nabla_t^{\Psi^{\nu}}\nabla^{\bar\Psi^{\nu}}_t\alpha^{\nu}\right)\right\|_{L^p}
\end{split}
\end{equation*}
where 
\begin{equation*}
\begin{split}
\Big\|\pi_{\bar A^{\nu}}&
\left(\nabla_t^{\Psi^{\nu}}\nabla^{\bar\Psi^{\nu}}_t\alpha^{\nu}\right)\Big\|_{L^p}\\
\leq&\left\|\pi_{\bar A^{\nu}}
\left(\nabla_t^{\Psi^{\nu}}[\psi^{\nu},\alpha^{\nu}]+*\nabla_t^{\Psi^{\nu}}[(\partial_tA^{\nu}-d_{A^{\nu}}\Psi^{\nu}),\gamma^{\nu}]\right)\right\|_{L^p}\\
&+\left\|\pi_{\bar A^{\nu}}
\left(d_{A^{\nu}}\nabla_t^{\Psi^{\nu}}\nabla_t^{\Psi^{\nu}}\gamma^{\nu}
+[(\partial_tA^{\nu}-d_{A^{\nu}}\Psi^{\nu}),\nabla_t^{\Psi^{\nu}}\gamma^{\nu}]\right)\right\|_{L^p}\\
\leq&c\varepsilon_\nu^{{1-1/p}}
\end{split}
\end{equation*}
follows from lemma \ref{lemma:var2} and hence
\begin{equation}\label{eq:estimatepif}
\Big\|\pi_{\bar A^{\nu}}\left(\mathcal F^0(A^{\nu},\Psi^{\nu})\right)\Big\|_{0,p,\varepsilon}
\leq c\varepsilon_\nu^{{1-1/p}}.
\end{equation}
The estimate $\left\|\Xi^{\nu}-\bar\Xi^{\nu}\right\|_{\bar\Xi^{\nu},1,p,\varepsilon_\nu}\leq c\varepsilon_\nu^{2-\frac 1p}$ follows from the lemma \ref{lemma:var2}.
\end{proof}

\begin{lemma}\label{lemma:var2}
There are constants $c>0$, $\bar\varepsilon_0>0$ such that
\begin{gather*}
\left\|\psi^{\nu}\right\|_{L^\infty(\Sigma)}+\left\|d_{A^{\nu}}\psi^{\nu}\right\|_{L^{p}(\Sigma) }\leq c\varepsilon_\nu^{2-1/p}, \\
\left\|\nabla^{\Psi^{\nu}}_t\alpha^{\nu}\right\|_{L^p(\Sigma)}
+\left\|\nabla^{\Psi^{\nu}}_t\gamma^{\nu}\right\|_{L^\infty(\Sigma)}
\leq c\varepsilon_\nu^{1-1/p}, \\
\left\|\nabla^{\Psi^{\nu}}_t\psi^{\nu}\right\|_{L^\infty(\Sigma) }+\varepsilon \left\|\nabla^{\Psi^{\nu}}_t\nabla^{\Psi^{\nu}}_t\gamma^{\nu}\right\|_{L^\infty (\Sigma)}
\leq c\varepsilon_\nu^{1-1/p}
\end{gather*}
for any $0<\varepsilon_\nu<\bar\varepsilon_0$.
\end{lemma}

\begin{proof}[Proof of lemma \ref{lemma:var2}]
Since the Yang-Millas connections $\Xi^\nu$ satisfy $$d_{A^\nu}^*\left(\partial_tA^\nu-d_{A^\nu}\Psi^\nu\right)=0,$$ from the definition of $\psi^{\nu}$ we have 
\begin{equation}\label{eq:lemma201}
\begin{split}
0=&d_{\bar A^{\nu}}^*
\left(\partial_t\bar A^{\nu}-d_{\bar A^{\nu}}\bar\Psi^{\nu}\right)\\
=&-*\left[\alpha^{\nu}\wedge*\left(\partial_tA^{\nu}-d_{A^{\nu}}\Psi^{\nu}\right)\right]
+d_{\bar A^{\nu}}^*\nabla_t^{\Psi^{\nu}}\alpha^{\nu}
-d_{\bar A^{\nu}}^*d_{\bar A^{\nu}}\psi^{\nu}
\end{split}
\end{equation}
where
\begin{equation*}
\begin{split}
d_{\bar A^{\nu}}^*\nabla_t^{\Psi^{\nu}}\alpha^{\nu}
=&-*[(\partial_tA^{\nu}-d_{A^{\nu}}\Psi^{\nu})\wedge*\alpha^{\nu}]-*[\alpha^{\nu}\wedge *\nabla_t^{\Psi^{\nu}}\alpha^{\nu}]+\nabla_t^{\Psi^{\nu}}d_{A^{\nu}}^*\alpha^{\nu},\\
\nabla_t^{\Psi^{\nu}}d_{A^{\nu}}^*\alpha^{\nu}=&\nabla_t^{\Psi^{\nu}}d_{A^{\nu}}^*d_{A^{\nu}}^**\gamma^{\nu}
=\nabla_t^{\Psi^{\nu}}*[F_{A^{\nu}}\wedge\gamma^{\nu}]\\
=&*[\nabla_t^{\Psi^{\nu}}F_{A^{\nu}}\wedge\gamma^{\nu}]+*[F_{A^{\nu}}\wedge\nabla_t^{\Psi^{\nu}}\gamma^{\nu}].
\end{split}
\end{equation*}
Since we know that 
$\|\alpha^{\nu}\|_{L^\infty(\Sigma)}
+\|\gamma^{\nu}\|_{L^\infty(\Sigma)}\leq c\varepsilon_\nu^{2-\frac1p}$, the proof of the first two inequalities of the lemma is completed by showing that there exists a constant $c$ such that
\begin{equation}\label{crit:est:surj1}
\|\nabla_t^{\Psi^{\nu}}\alpha^{\nu}\|_{L^p(\Sigma)}
+ \|\nabla_t^{\Psi^{\nu}}\gamma^{\nu}\|_{L^p(\Sigma)}\leq c \varepsilon_\nu^{1-1/p}
\end{equation}
and estimating the norms of $\psi^{\nu}$ and of $d_{\bar A^{\nu}}\psi^{\nu}$ using (\ref{eq:lemma201}):
\begin{align*}
\|\psi^{\nu}\|_{L^\infty(\Sigma)}\leq &c\|d_{\bar A^{\nu}}\psi^{\nu}\|_{L^p(\Sigma)}\leq c\|d_{\bar A^{\nu}}^*d_{\bar A^{\nu}}\psi^{\nu}\|_{L^2(\Sigma)}\\
= &
\|-*\left[\alpha^{\nu}\wedge*\left(\partial_tA^{\nu}-d_{A^{\nu}}\Psi^{\nu}\right)\right]
+d_{\bar A^{\nu}}^*\nabla_t^{\Psi^{\nu}}\alpha^{\nu}\|_{L^2(\Sigma)}\\
\leq&c\|\alpha^\nu\|_{L^2(\Sigma)}+\|\alpha^\nu\|_{L^\infty(\Sigma)}\|\nabla_t^{\Psi^\nu}\alpha^\nu\|_{L^2(\Sigma)}\\
&+\|\gamma^\nu\|_{L^\infty(\Sigma)}\|\nabla_t^{\Psi^\nu}F_{A^\nu}\|_{L^2(\Sigma)}+\|F_{A^\nu}\|_{L^\infty(\Sigma)}\|\nabla_t^{\Psi^\nu}\gamma^\nu\|_{L^2(\Sigma)}\\
\leq & c\varepsilon_\nu^{2-\frac 1p}.
\end{align*}
In order to show (\ref{crit:est:surj1}) we derive 
$$F_{A^{\nu}}+d_{A^{\nu}}*d_{A^{\nu}}\gamma^{\nu}
+\frac12 [d_{A^{\nu}}\gamma^{\nu}\wedge d_{A^{\nu}}\gamma^{\nu}]=0$$ 
by $\nabla_t^{\Psi^{\nu}}$ and we obtain
\begin{equation}\label{crit:surj:hh}
\begin{split}
d_{A^{\nu}}*d_{A^{\nu}}\nabla_t^{\Psi^{\nu}}\gamma^{\nu}=&
-\nabla_t^{\Psi^{\nu}}F_{A^{\nu}}
-[d_{A^{\nu}}\nabla_t^{\Psi^{\nu}}\gamma^{\nu}\wedge d_{A^{\nu}}\gamma^{\nu}]\\
&-[[(\partial_tA^\nu-d_{A^\nu}\Psi^{\nu})\wedge \gamma^{\nu}]\wedge d_{A^{\nu}}\gamma^{\nu}]\\
&-[(\partial_tA^\nu-d_{A^\nu}\Psi^{\nu})\wedge *d_{A^{\nu}}\gamma^{\nu}]
\end{split}
\end{equation}
and hence, by (\ref{crit:eq:supest1}),
\begin{align*}
\|d_{A^{\nu}}*d_{A^{\nu}}&\nabla_t^{\Psi^{\nu}}\gamma^{\nu}\|_{L^2(\Sigma) }\\
\leq&  c\|\nabla_t^{\Psi^{\nu}}F_{A^{\nu}}\|_{L^2(\Sigma)}
+c\|\alpha^\nu\|_{L^\infty(\Sigma)}\|d_{A^{\nu}}*d_{A^{\nu}}\nabla_t^{\Psi^{\nu}}\gamma^{\nu}\|_{L^2(\Sigma) }\\
&+c\left\|\partial_tA^\nu-d_{A^\nu}\Psi^{\nu}\right\|_{L^\infty(\Sigma)}\|\alpha^\nu\|_{L^\infty(\Sigma)}\left(1+\|\gamma^\nu\|_{L^2(\Sigma)}\right)\\
\leq& c\left(\|\nabla_t^{\Psi^{\nu}}F_{A^{\nu}}\|_{L^2(\Sigma)}+\|\alpha^{\nu}\|_{L^\infty(\Sigma)}\right)+c\varepsilon^{\frac 2-\frac 1p}\|d_{A^{\nu}}*d_{A^{\nu}}\nabla_t^{\Psi^{\nu}}\gamma^{\nu}\|_{L^2(\Sigma) }.
\end{align*}
Choosing $\varepsilon$ sufficiently small, we have by (\ref{crit:eq:supest}),
$$\|d_{A^{\nu}}*d_{A^{\nu}}\nabla_t^{\Psi^{\nu}}\gamma^{\nu}\|_{L^2(\Sigma) }
\leq c\varepsilon_\nu^{1-\frac 1p}$$
which yields to
\begin{equation*}
\begin{split}
\|\nabla_t^{\Psi^{\nu}}\gamma^{\nu}\|_{L^\infty(\Sigma) }
\leq&c\|d_{A^{\nu} }*d_{A^{\nu}}\nabla_t^{\Psi^{\nu}}\gamma^{\nu}\|_{L^2(\Sigma) }\\
\leq &c\left(\|\nabla_t^{\Psi^{\nu}}F_{A^{\nu}}\|_{L^2(\Sigma) }
+\|\alpha^{\nu}\|_{L^\infty(\Sigma)}\right)\leq c\varepsilon_\nu^{1-1/p}
\end{split}
\end{equation*}
by lemma \ref{lemma76dt94} and
\begin{equation*}
\begin{split}
\|\nabla_t^{\Psi^{\nu}}&\alpha^{\nu}\|_{L^p(\Sigma) }
=\|\nabla_t^{\Psi^{\nu}}d_{A^{\nu}}\gamma^{\nu}\|_{L^p(\Sigma)}\\
\leq& \|d_{A^{\nu}}\nabla_t^{\Psi^{\nu}}\gamma^{\nu}\|_{L^p(\Sigma)}
+\|[(\partial_tA^{\nu}-d_{A^{\nu}}\Psi^{\nu}),\gamma^{\nu}]\|_{L^p(\Sigma)}\\
\leq&c \|d_A*d_{A^{\nu}}\nabla_t^{\Psi^{\nu}}\gamma^{\nu}\|_{L^2(\Sigma)}
+c\|\gamma^{\nu}\|_{L^p(\Sigma)}
\leq c_2\varepsilon_\nu^{1-1/p}.
\end{split}
\end{equation*}
Analogously, one can obtain the third inequality of the lemma; the starting point is to derive (\ref{eq:lemma201}) and (\ref{crit:surj:hh}) by $\nabla_t^{\Psi^{\nu}}$ and to use the estimate $$\left\|\nabla_t^{\Psi^{\nu}}\nabla_t^{\Psi^{\nu}}F_{A^{\nu}}\right\|_{L^2(\Sigma)}\leq c_2\varepsilon^{-1/p}$$
in order to show
$$\left\|\nabla^{\Psi^{\nu}}_t\psi^{\nu}\right\|_{L^\infty(\Sigma) }+\varepsilon_\nu\left\|\nabla^{\Psi^{\nu}}_t\nabla^{\Psi^{\nu}}_t\gamma^{\nu}\right\|_{L^\infty (\Sigma)}
\leq c\varepsilon_\nu^{1-1/p}.$$

\end{proof}

In the following, by the Nash embedding theorem, we consider $\mathcal M^g(P)$ to be a compact submanifold of $\mathbb R^n$.\\

{\bf Step 2.} The sequence $\left\{u^\nu:=\left[\bar A^{\nu}\right]\right\}_{\nu\in \mathbb N}$ has a subsequence, still denoted by $u^\nu$, which converges to a perturbed geodesic $u^0$ respect to the norm $\|\cdot\|_{W^{1,p}}$ or more precisely
\begin{equation*}
\left\|u^\nu-u^0\right\|_{W^{1,p}}\leq c \varepsilon_\nu^{{1-1/p}}
\end{equation*}
for a constant $c>0$.
\vspace{-3pt}
\begin{proof}[Proof of step 2]

Since $F_{\bar A^{\nu}}=0$, the vector $\partial_t\bar A^{\nu}$ lies on the tangent space $T_{\bar A^{\nu}}\mathcal A_0(P)$ and hence in the kernel of $d_{\bar A^{\nu}}$; thus $d_{\bar A^{\nu}}(\partial_t\bar A^{\nu}-d_{\bar A^{\nu}}\bar\Psi^{\nu})=0$. Every $[\bar A^{\nu}]$ is therefore a curve in the moduli space $\mathcal M^{g}(P)$ with velocity $\partial_t\bar A^{\nu}-d_{\bar A^{\nu}}\bar\Psi^{\nu}$; moreover it approximates a geodesic in the sense of inequality (\ref{eq:estimatepif}). Therefore $\{u^\nu\}_{\nu \in \mathbb N}$ is a bounded Palais-Smale sequence and hence, using next lemma, it has a strong convergent subsequence that converge in the norm $\|\cdot\|_{W^{1,p}}$ to a perturbed geodesic $u^0$ and $\|u^\nu-u^0\|_{W^{1,p}}\leq c\varepsilon_\nu^{1-\frac1p}$.
\end{proof}

\begin{lemma}\label{lemma:struwe}
Let $p\geq2$ and $\mathcal M$ be a compact embedded manifold. We choose the energy
$$E(u)=\frac 12 \int_0^1\left(|\nabla u|^2+H_t(u)\right)dt$$
for any $u\in W^{1,p}(S^1,\mathcal M)$ where $H_t:\mathcal M\to \mathbb R$ is a smooth Hamiltonian. For every bounded sequence $\{u^\nu\}_{\nu\in \mathbb N}\subset W^{1,p}(S^1,\mathcal M)$ which satisfies
$$\|dE(u^\nu)\|_{L^p}\to 0$$
there is a critical curve $u_\infty\in W^{1,p}(S^1,\mathcal M)$ such that for a subsequence $\{u^{\iota_\nu }\}_{\nu\in\mathbb N }\subset \{u^\nu\}_{\nu\in\mathbb N }$ we have
\begin{enumerate}
\item $\|u^{\iota_\nu }-u^0\|_{W^{1,p}}\to 0\quad (k\to\infty)$;

\item The $\{LE(u^{\iota_\nu })\}_{\nu\in\mathbb N}$, where $LE$ denote the linearisation of $dE$, converges in $L^{p}$ to the Jacobi operator of $u^0$;

\item If the Jacobi operator of $u_\infty$ is invertible, then there is a constant $c>0$ such that
$\|u^{\iota_\nu }-u^0\|_{W^{1,p}}\leq c \|dE(u^\nu)\|_{L^p}.$
\end{enumerate}
\end{lemma}

\begin{proof}[Proof of lemma \ref{lemma:struwe}]$\,$
(1) The energy functional $E$ satisfies the Palais-Smale condition for the norm $\|\cdot\|_{W^{1,2}}$: We refer the reader to \cite{MR1736116}, theorem 4.4, for the proof in the case $\mathcal M$ is a surface and $H_t=0$, but the proof applied also for the general case. Therefore, $\{u^\nu\}_{\nu\in \mathbb N}$ has a subsequence, still denoted by $\{u^\nu\}_{\nu\in \mathbb N}$, which converges to a perturbed geodesic $u^0$ in $W^{1,2}(S^1,\mathcal M)$. It remains to prove that the sequence converges to $u^0$ also in $\|\cdot\|_{W^{1,p}}$, in fact
\begin{equation*}
\begin{split}
\|u^\nu-u^0\|_{W^{1,p}}\leq&
\sup_{v\in W^{1,q},\|v\|_{W^{1,q}}=1}
\int_0^1\Big(\langle \nabla(u^\nu-u^0),\nabla v\rangle +\langle u^\nu-u^0,v\rangle \Big)dt\\
=& \sup_{v\in W^{1,q},\|v\|_{W^{1,q}}=1}
\left(-\int_0^1\langle dH(u^\nu)-dH_t(u^0), v\rangle dt\right.\\
&+\left.\int_0^1\langle \Delta(u^\nu)+dH_t(u^\nu), v\rangle dt  +\int_0^1\langle u^\nu-u^0,v\rangle dt\right)
\end{split}
\end{equation*}
converges to 0.\\

(2) The $L^p$ convergence of $dE(u^\nu)$ implies the convergence of 
$\nabla_t\dot u^\nu$ because
\begin{equation*}
||\nabla_t\dot u^\nu-\nabla_t\dot u^0||_{L^p}
\leq ||dE(u^\nu)-dE(u^0)||_{L^p}+||dH(u^\nu)-dH(u^0)||_{L^p}
\end{equation*}
goes to $0$ for $\nu\to\infty$. We denote by $R$ the Riemann tensor of the manifold $\mathcal M$ and by $\Pi$ the projection on its tangent space. Then the linearisation of $dE$ respect to the loops $u^\nu$ is (cf. appendix B in \cite{Weberthesis})
$$LE(u^\nu)X(u^\nu)=-\nabla_{\dot u^\nu}\nabla_{\dot u^\nu}X(u^\nu)-R(X(u^\nu),\dot u^\nu)\dot u^\nu-\nabla_{X(u^\nu)}\nabla H_t(u^\nu)$$
for any vector field $X$ on $\mathcal M$ and the first term can be written as
\begin{equation*}
\begin{split}
\nabla_{\dot u^\nu}\nabla_{\dot u^\nu}X(u^\nu)=
&\,\nabla_{\dot u^\nu}\left(\Pi(u^\nu)dX(u^\nu)\dot u^\nu\right)\\
=&\,\Pi(u^\nu)\left(d\Pi(u^\nu)\dot u^\nu\right)\left(dX(u^\nu)\dot u^\nu\right)\\
&+\Pi(u^\nu)d^2X(u^\nu)\dot u^\nu\dot u^\nu)\\
&+\Pi(u^\nu)dX(u^\nu)\nabla_{\dot u^\nu}\dot u^\nu.
\end{split}
\end{equation*}
Thus, for a constant $c>0$,
\begin{align*}
\left\| LE(u^\nu)-LE(u^0)\right\|_{L^p}\leq & c\left(\left\|u^\nu-u^0\right\|_{L^p}+\left\|\dot u^\nu-\dot u^0\right\|_{L^p}\right)\\
&+c\left\|\nabla_{\dot u^\nu}\dot u^\nu-\nabla_{\dot u^0}\dot u^0\right\|_{L^p}.
\end{align*}
The sequence $\{LE(u^\nu)\}_{k\in\mathbb N}$ converges therefore to the Jacobi operator of $u^0$ in $L^p$.\\

(3) The third conclusion of the theorem can be proved using the following theorem (see Proposition A.3.4. in \cite{MR2045629}). In our case we chose
$$
\begin{array}{lrcl}
f:&W^{2,p}((u^\nu)^*T\mathcal M)&\to& L^{2}((u^\nu)^*T\mathcal M)\\
&x&\mapsto&f(x):=g_{x}(\mathcal F_0(\exp_{u^\nu}(x))
\end{array}
$$
where $g_x:L^p(\exp_{u^\nu}(x)^*T\mathcal M)\to L^p((u^\nu)^*T\mathcal M)$ is the parallel transport along $t\mapsto \exp_{u^\nu}((1-t)x).$
\end{proof}

\begin{theorem}
Let X and Y be Banach spaces, $U\subset X$ be an open set, and $f:U\to Y$ be a continuously differentiable map. Let $x_0\in U$ be such that $D:=df(x_0):X\to Y$ is surjective and has a (bounded linear) right inverse $Q:Y\to X$. Choose positive constants $\delta$ and $c$ such that $\|Q\|\leq c$, $B_\delta(x_0;X)\subset U$, and 
$$\|x-x_0\|<\delta\quad\Rightarrow\quad \|df(x)-D\|\leq\frac1{2c}.$$
Suppose that $x_1\in X$ satisfies
$$\|f(x_1)\|<\frac \delta{4c},\quad \|x_1-x_0\|<\frac\delta8.$$
Then there exists a unique $x\in X$ such that
$$f(x)=0,\quad x-x_1\in\textrm{im } Q,\quad \|x-x_0\|\leq\delta.$$
Moreover, $\|x-x_1\|\leq2c\|f(x_1)\|$.
\end{theorem}

{\bf Step 3.}There is a lift $\Xi^0$ of the closed geodesic $u^0$ and a sequence $g_\nu\subset\mathcal G_0^{2,p}(P\times S^1)$ such that
\begin{equation}
\left\|g_\nu^*\Xi^{\nu}-\Xi^0\right\|_{1,p,1}+\left\|g_\nu^*\Xi^{\nu}-\Xi^0\right\|_{L^\infty}\leq c\varepsilon_\nu^{{1-1/p}},\, \|d_{A^0}(g_\nu^*A^\nu-A^0)\|_{L^p}\leq c\varepsilon_\nu^{2-2/p}.
\end{equation}
and $d_{A^0}^*(g_\nu^*A^\nu-A^0)\|_{L^2}=0$. For expositional reasons we will still denote by $\Xi^\nu$ the sequence $g_\nu^*\Xi^\nu$.

\begin{proof}[Proof of step 3]
We choose now a representative $\Xi^0=A^0+\Psi^0 dt$ of the geodesic $u^0$. Since the sequence of curves on the moduli space converges to a geodesic $[\Xi^0]$ in $W^{1,p}$, i.e. 
\begin{equation}\label{est:epsm}
\left\|\left[\bar\Xi^{\nu}\right]-[\Xi^0]\right\|_{W^{1,p}(S^1,\mathcal M)}\leq c \varepsilon^{{1-1/p}}_\nu,
\end{equation}
by the Sobolev embedding theorem we have that
$$\left\|\left[\bar\Xi^{\nu}\right]-[\Xi^0]\right\|_{L^\infty}\leq c \varepsilon^{{1-1/p}}_\nu.$$
Therefore there is a sequence $g_\nu\subset \mathcal G_0^{2,p}(P\times S^1)$ such that 
\begin{equation}\label{eq:g=0}
d_{A^0}^*\left(g_\nu^*A^{\nu}-A^0\right)=0
\end{equation}
and in order to symplify the exposition we still denote the sequence $g_\nu^*\Xi^\nu$ by $\Xi^\nu$. The condition (\ref{eq:g=0}) means that we choose the closest connection in the orbit of $A_\nu$ to $A_0$ respect to the $L^2(\Sigma)$-norm. The existence of $g_\nu$ is assured by the lemma \ref{lemma:regauge} and by the local slice theorem (see theorem 8.1 in \cite{MR2030823}). Therefore $\left\|\bar\Xi^{\nu}-\Xi^0\right\|_{L^\infty}\leq c \varepsilon^{{1-1/p}}_\nu$ and thus by the first step
\begin{equation}\label{eqkk1}\left\|\Xi^{\nu}-\Xi^0\right\|_{L^\infty}\leq c \varepsilon^{{1-1/p}}_\nu.\end{equation}
Since 
$$d_{A^0}\left(A^{\nu}-A^0\right)=F_{A^{\nu}}-\frac 12 \left[\left(A^{\nu}-A^0\right)\wedge\left(A^{\nu}-A^0\right)\right],$$
we have the estimate 
\begin{equation}\label{eqkk2}
\left\|d_{A^0}\left(A^{\nu}-A^0\right)\right\|_{L^p}
\leq \|F_{A^{\nu}}\|_{L^p}+c\|A^{\nu}-A^0\|_{L^\infty}\|A^{\nu}-A^0\|_{L^p}\leq   c \varepsilon^{2-2/p}_\nu.
\end{equation}
Next, we remark, using $\nabla_t:=\partial_t+[\Psi^0,\cdot]$, that
\begin{equation*}
%\begin{split}
0=\nabla_t d_{A^0}^* \left(A^{\nu}-A^0\right)
=d_{A^0}^* \nabla_t \left(A^{\nu}-A^0\right)+*\left[\left(\partial_tA^0-d_{A^0}\Psi^0\right)\wedge *\left(A^{\nu}-A^0\right)\right],
%\end{split}
\end{equation*}
thus,
\begin{equation}\label{eq:dfkdjfhdjas;}
\begin{split}
d_{A^0}^*d_{A^0}&\left(\Psi^{\nu}-\Psi^0\right)
=d_{A^0}^*\left(\partial_tA^0-d_{A^0}\Psi^0\right)
-d_{A^{\nu}}^*\left(\partial_tA^{\nu}-d_{A^{\nu}}\Psi^{\nu}\right)\\
&+d_{A^0}^* \nabla_t \left(A^{\nu}-A^0\right)
-d_{A^0}^*\left[\left(A^{\nu}-A^0\right)\wedge \left(\Psi^{\nu}-\Psi^0\right)\right]\\
&-*\left[(A^\nu-A^0)\wedge *\left(\partial_tA^\nu-d_{A^\nu }\Psi^\nu\right)\right]\\
=&-*\left[\left(A^{\nu}-A^0\right)\wedge *\left(\partial_tA^{\nu}-d_{A^{\nu}}\Psi^{\nu}\right)\right]\\
&-*\left[\left(A^{\nu}-A^0\right)\wedge *\left(\partial_tA^{0}-d_{A^{0}}\Psi^{0}\right)\right]\\
&-*\left[*(A^\nu-A^0)\wedge d_{A^0}(\Psi^\nu-\Psi^0)\right]
\end{split}
\end{equation}
allows us to compute the estimate using (\ref{crit:eq:supest1})
\begin{equation}\label{eqkk3}
\begin{split}
\left\|d_{A^0}\left(\Psi^{\nu}-\Psi^0\right)\right\|_{L^p}&+\left\|\Psi^{\nu}-\Psi^0 \right\|_{L^p}\leq c\left\|d_{A^0}*d_{A^0}\left(\Psi^{\nu}-\Psi^0\right)\right\|_{L^p}\\
\leq & c\|A^\nu-A^0\|_{L^p}+\|A^\nu-A^0\|_{L^\infty}\|d_{A^0}(A^\nu-A^0)\|_{L^p}\leq c\varepsilon_\nu^{{1-1/p}}.
\end{split}
\end{equation}
Furthermore, since, by (\ref{est:epsm}),
\begin{equation*}
\left\|\left(\partial_tA^{\nu}-d_{A^{\nu}}\Psi^{\nu}\right)-\left(\partial_tA^{0}-d_{A^0}\Psi^{0}\right)\right\|_{L^p}\leq c\varepsilon_\nu^{{1-1/p}},
\end{equation*}
\begin{equation}\label{eqkk4}
\left\|\nabla_t\left(A^{\nu}-A^{0}\right)\right\|_{L^p}\leq c\varepsilon_\nu^{{1-1/p}}.
\end{equation}
On the other side, we have
\begin{equation*}
\begin{split}
d_{A^0}^*d_{A^0}\nabla_t\left(\Psi^{\nu}-\Psi^0\right)=&
\nabla_td_{A^0}^*d_{A^0}\left(\Psi^{\nu}-\Psi^0\right)\\
&-*\left[\left(\partial_tA^0-d_{A^0}\Psi^0\right)\wedge *d_{A^0}\left(\Psi^{\nu}-\Psi^0\right)\right]\\
&+*\left[d_{A^0}*\left(\partial_tA^0-d_{A^0}\Psi^0\right)\wedge \left(\Psi^{\nu}-\Psi^0\right)\right]\\
&-*\left[*\left(\partial_tA^0-d_{A^0}\Psi^0\right)\wedge d_{A^0}\left(\Psi^{\nu}-\Psi^0\right)\right]
\end{split}
\end{equation*}
and deriving (\ref{eq:dfkdjfhdjas;}) by $\nabla_t$ we obtain
\begin{equation}\label{eqkkgg}
\begin{split}
\left\|\nabla_t\left(\Psi^{\nu}-\Psi^{0}\right)\right\|_{L^p}\leq& \left\| d_{A^0}^*d_{A^0}\nabla_t\left(\Psi^{\nu}-\Psi^0\right)\right\|_{L^p}\\
\leq & c \|d_{A^0}\nabla_t(\Psi^\nu-\Psi^0)\|_{L^p} +c \left\|\nabla_t\left(A^{\nu}-A^{0}\right)\right\|_{L^p}\\
&+\left\|\nabla_t\left(A^{\nu}-A^{0}\right)\right\|_{L^{2p}}\left\|d_{A^0}\left(\Psi^{\nu}-\Psi^{0}\right)\right\|_{L^{2p}}\\
&+c\|A^\nu-A^0\|_{L^\infty}\left(1+\frac 1{\varepsilon^2}\left\|d_{A^nu}d_{A^\nu}^*F_{A^\nu}\right\|_{L^2(\Sigma)}\right)\\
&+\|A^\nu-A^0\|_{L^\infty}\left\| d_{A^0}^*d_{A^0}\nabla_t\left(\Psi^{\nu}-\Psi^0\right)\right\|_{L^p}
\end{split}
\end{equation}
where in the second estimate we use that, by the perturbed Yang-Mills equations,
$$\|\nabla_t^{\Psi^\nu}(\partial_tA^\nu-d_{A^\nu}\Psi^\nu)\|_{L^p}\leq c+c\frac 1{\varepsilon^2}\left\|d_{A^\nu}^*F_{A^\nu}\right\|_{L^p(\Sigma)}\leq c+c\frac 1{\varepsilon^2}\left\|d_{A^nu}d_{A^\nu}^*F_{A^\nu}\right\|_{L^2(\Sigma)};$$
thus,
\begin{equation}\label{eqkk5}
\left\|\nabla_t\left(\Psi^{\nu}-\Psi^{0}\right)\right\|_{L^p}\leq c\varepsilon_\nu^{{1-1/p}}.
\end{equation}
Finally by the estimates (\ref{eqkk1}), (\ref{eqkk2}), (\ref{eq:g=0}), (\ref{eqkk3}), (\ref{eqkk4}) and (\ref{eqkk5}) we have 
\begin{equation}\label{eqndtobvfiua}
\left\|\Xi^{\nu}-\Xi^0\right\|_{1,p,1}+\varepsilon_\nu^{1/p}\left\|\Xi^{\nu}-\Xi^0\right\|_{L^\infty}\leq c\varepsilon_\nu^{{1-1/p}}
\end{equation}
wich proves the third step.
\end{proof}

{\bf Step 4. } Let $p>3$. There is sequence $\{g_\nu\}_{\nu\in\mathbb N }$ of gauge transformations $g_\nu\in\mathcal G^{2,p}_0(P\times S^1)$ such that 
\begin{equation}
d_{\Xi^0 }^{*_{\varepsilon_\nu } }(g_\nu^*\Xi^{\nu}-\Xi^0)=0,
\end{equation}
\begin{equation}
\left\|d_{A^0}^*(g_\nu^*A^{\nu }-A^0)\right\|_{L^p}\leq c \varepsilon_\nu^{3-1/p},\quad \left\|d_{A^0}(g_\nu^*A^{\nu }-A^0)\right\|_{L^p}\leq c \varepsilon_\nu^{2-2/p},
\end{equation}
and
\begin{equation}
\left\|g_\nu^*\Xi^{\nu}-\Xi^0\right\|_{1,p,1}+\varepsilon_\nu^{1/p}\left\|g_\nu^*\Xi^{\nu}-\Xi^0\right\|_{L^\infty}\leq c\varepsilon_\nu^{1-1/p}.
\end{equation}

\begin{proof}[Proof of step 4]
By the last step the perturbed Yang-Mills connections $\Xi^\nu$, that satisfy the estimate (\ref{eqndtobvfiua}) and in addition 
\begin{equation*}
 \varepsilon_\nu^{2}\left\|d_{\Xi^0 }^{*_\varepsilon}(\Xi^{\nu }-\Xi^0)\right\|_{L^p}
\leq \left\|d_{A^0}^*(A^{\nu }-A^0)\right\|_{L^p}+\varepsilon^2_\nu\left\|\nabla_t(\Psi^{\nu }-\Psi^0)\right\|_{L^p}\leq c \varepsilon_\nu^{3-1/p},
\end{equation*}
$$\left\|\Xi^{\nu}-\Xi^0 \right\|_{0,p,\varepsilon }\leq \left\|\Xi^{\nu}-\Xi^0\right\|_{1,p,\varepsilon }\leq c\varepsilon_\nu^{1-1/p}\leq \delta_0\varepsilon_\nu^{1/p}$$
for all $0<\varepsilon_\nu\leq\varepsilon_0$, $c\varepsilon_0^{1-2/p}\leq\delta_0$ and the $\delta_0$ given in theorem \ref{thm:crgc}; hence $\Xi^{\nu}$, $\Xi^0$ satisfy the assumption (\ref {eq:thm:crgc}) of theorem \ref{thm:crgc} with $q=p$. Therefore by this last theorem we can find a sequence $g_\nu\in \mathcal G^{2,p}_0(P\times S^1)$ such that 
\begin{equation*}
d_{\Xi^0 }^{*_{\varepsilon_\nu} }(g_\nu^*\Xi^{\nu }-\Xi^0)=0
\end{equation*}
and
\begin{equation}\label{crit:surjlnog}
\left\|g_\nu^*\Xi^{\nu }-\Xi^{\nu }\right\|_{1,p,\varepsilon }\leq c \varepsilon^{2}\left\|d_{\Xi^0 }^{*_\varepsilon}(\Xi^{\nu }-\Xi^0)\right\|_{L^p}\leq c \varepsilon_\nu^{3-1/p}
\end{equation}
and therefore, by the Sobolev theorem \ref{lemma:sobolev},

\begin{equation*}
\begin{split}
\varepsilon_\nu^{1/p} \left\|g_\nu^*\Xi^{\nu }-\Xi^0\right\|_{\infty,\varepsilon}\leq &c \left\|g_\nu^*\Xi^{\nu }-\Xi^0\right\|_{1,p,\varepsilon}\\
\leq &c \left( \left\|g_\nu^*\Xi^{\nu }-\Xi^{\nu } \right\|_{1,p,\varepsilon}+\left\|\Xi^{\nu }-\Xi^0\right\|_{1,p,\varepsilon}\right)\\
\leq &c\varepsilon_\nu^{1-1/p}.
\end{split}
\end{equation*}
The estimates (\ref{eqndtobvfiua}), (\ref{crit:surjlnog}) and the triangular inequality yield also to
$$\left\|d_{A^0}^*(g_\nu^*A^{\nu }-A^0)\right\|_{L^p}\leq \left\|d_{A^0}^*(A^{\nu }-A^0)\right\|_{L^p}+\left\|d_{A^0}^*(g_\nu^*A^{\nu }-A^\nu)\right\|_{L^p}\leq c \varepsilon_\nu^{3-1/p},$$

$$\left\|d_{A^0}(g_\nu^*A^{\nu }-A^0)\right\|_{L^p}\leq \left\|d_{A^0}(A^{\nu }-A^0)\right\|_{L^p}+\left\|d_{A^0}(g_\nu^*A^{\nu }-A^\nu)\right\|_{L^p}\leq c \varepsilon_\nu^{2-2/p},$$

$$\left\|g_\nu^*\Xi^{\nu }-\Xi^0 \right\|_{1,p,1}\leq \left\|\Xi^{\nu }-\Xi^0 \right\|_{1,p,1}+\frac 1{\varepsilon^2}\left\|g_\nu^*\Xi^{\nu }-\Xi^\nu \right\|_{1,p,\varepsilon}\leq c \varepsilon_\nu^{1-1/p}.$$
Thus, we concluded the proof of the fourth step.
\end{proof}
 We still denote the new sequence $g_\nu^*\Xi^\nu$ by $\Xi^\nu$ in order to semplify the notation.
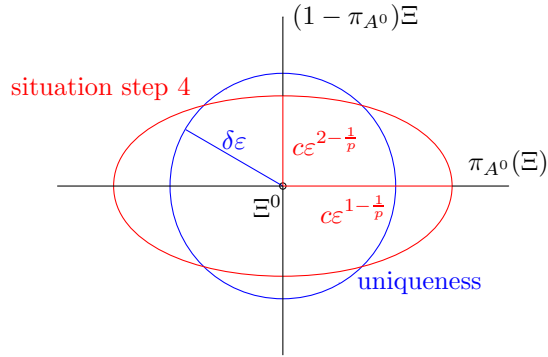
\begin{figure}[ht]
\begin{center}
\begin{tikzpicture}[scale=3] 
\draw[blue] (0,0) circle (0.5cm); 
\draw[blue] (0,0)--node[above]{$\delta\varepsilon$}(-0.43,0.25);
\draw[blue] (0.25,-0.43) circle (0.01pt) node[right]  {$\,\,$uniqueness} ; 

\draw (0,0) circle (0.4pt) node[below]{$\Xi^0\quad\,$};

\draw[red] (0.75,0) .. controls (0.75,0.25) and (0.4,0.4) .. (0,0.4); 
\draw[red] (-0.75,0) .. controls (-0.75,0.25) and (-0.4,0.4) .. (0,0.4); 
\draw[red] (0.75,0) .. controls (0.75,-0.25) and (0.4,-0.4) .. (0,-0.4); 
\draw[red] (-0.75,0) .. controls (-0.75,-0.25) and (-0.4,-0.4) .. (0,-0.4); 
\draw[red] (0,0) -- node[right]{$c\varepsilon^{2-\frac 1p}$} (0,0.4);
\draw[red] (0,-0) -- node[below]{$c\varepsilon^{1-\frac 1p}\quad$} (0.75,-0);
\draw[red] (-0.25,0.43) circle (0.01pt) node[left]  {situation step 4$\quad$} ; 

\draw (-1,0)--(0,0);
\draw (0.75,0)--(1,0);
\draw (1,0) circle (0.01pt) node[above]{$\pi_{A^0}(\Xi)$};

\draw (0,-0.75)--(0,0);
\draw (0,0.4)--(0,0.75);
\draw (0,0.75) circle (0.01pt) node[right]{$(1-\pi_{A^0})\Xi$};

\end{tikzpicture}
\caption{Uniqueness (circle) and the result of step 4 (ellipse).}
\end{center}
\end{figure}

{\bf Step 5. }There are three positive constants $\delta_1, \varepsilon_0, c$ such that for any positive $\varepsilon_\nu<\varepsilon_0$
\begin{equation}
\|\pi_{A^0}(A^\nu-A^0)\|_{L^2}+\|\pi_{A^0}(A^\nu-A^0)\|_{L^\infty }\leq c \varepsilon_\nu^{1+\delta_1 }.
\end{equation}

{\bf End of the proof:} Since our sequence  satisfies the assumptions of the uniqueness theorem \ref{thm:localuniqueness} because by the fourth step $d_{\Xi^0}^{*_\varepsilon}(\Xi^\nu-\Xi^0)=0$ and by the fourth and the last step 
$$\|\Xi^\nu-\Xi^0\|_{1,2,\varepsilon}+\|\Xi^\nu-\Xi^0\|_{\infty,\varepsilon}\leq \delta \varepsilon_\nu,$$
for $\nu$ big enough $\Xi^{\nu}=\mathcal T^{\varepsilon_\nu, b }(\Xi^0)$ which 
is a contradiction.

\begin{proof}[Proof of step 5]
In order to estimate the norms of $\pi_{A^0}(A^\nu-A^0)$ we use the estimate (\ref{eq:lemma:nonono}), i.e.
\begin{equation*}
\begin{split}
 \|\pi_A(A^\nu-A^0) \|_{L^2}&+\|\nabla_t\pi_A(A^\nu-A^0)\|_{L^2}\\
\leq&c \|\pi_A\big(\mathcal D^{\varepsilon_\nu}_1(\Xi^0)(A^\nu-A^0,\Psi^\nu-\Psi^0)+*[(A^\nu-A^0)\wedge *\omega]\big)\|_{L^2}\\
&+c \|A^\nu-A^0-\pi_A(A^\nu-A^0)\|_{L^2}\\
&+c\|\nabla_t(A^\nu-A^0-\pi_A(A^\nu-A^0))\|_{L^2}\\
&+c\varepsilon^2 \|\nabla_t(\Psi^\nu-\Psi^0)\|_{L^2}+\varepsilon^2\|\Psi^\nu-\Psi^0\|_{L^2}\\
&+c\varepsilon_\nu^2\|\mathcal{D}_2^{\varepsilon_\nu}(\Xi^0)(A^\nu-A^0,\Psi^\nu-\Psi^0)\|_{L^2}
\end{split}
\end{equation*}
which, by (\ref{eq:lemma125}), can be written as
\begin{equation}\label{crit:est:kkka}
\begin{split}
 \|\pi_A(A^\nu-A^0) \|_{L^2}&+\|\nabla_t\pi_A(A^\nu-A^0)\|_{L^2}\\
\leq&c \|\pi_A\big(\mathcal D^{\varepsilon_\nu}_1(\Xi^0+\varepsilon_\nu^2\alpha_0)(A^\nu-A^0,\Psi^\nu-\Psi^0)\big)\|_{L^2}\\
&+c \|(A^\nu-A^0)-\pi_A(A^\nu-A^0)\|_{1,2,\varepsilon_\nu }+c\varepsilon_\nu^{3-\frac 1p}\\
&+c\|\nabla_t((A^\nu-A^0)-\pi_{A^0}(A^\nu-A^0))\|_{L^2}\\
&+\varepsilon^2_\nu\|\mathcal{D}_2^{\varepsilon_\nu}(\Xi^0+\varepsilon_\nu^2\alpha_0)(A^\nu-A^0,\Psi^\nu-\Psi^0)\|_{L^2}
\end{split}
\end{equation}
where $\alpha_0\in \textrm{im } d_{A^0}^*$ is defined in lemma \ref{lemma:step1xi1} choosing $\varepsilon=1$ and satisfies
\begin{equation}\label{crit:surj:est:a0}
\|\alpha_0\|_{2,2,1}
+\|\alpha_0\|_{L^\infty}\leq c;
\end{equation}
we denote $\Xi^{1,\nu}=\Xi^0+\varepsilon_\nu^2\alpha_0=A^{1,\nu}+\Psi^0dt$ and we recall also that, always by lemma \ref{lemma:step1xi1},
\begin{equation*}
\|\mathcal F^\varepsilon_1(\Xi^{1,\nu})\|_{L^2}\leq c\varepsilon^2,\quad \|\mathcal F^\varepsilon_2(\Xi^{1,\nu})\|_{L^2}\leq c.
\end{equation*}
 In the following, we will work with the difference $\Xi^\nu-\Xi^{1,\nu}=\tilde\alpha^\nu+\tilde\psi^\nu dt+\tilde \phi^\nu ds$ which by step 4 and (\ref{crit:surj:est:a0}) satisfies
\begin{equation}\label{fgdsklhru}
\|\Xi^\nu-\Xi^{1,\nu}\|_{1,2,1}
+\varepsilon_\nu^{1/p}\|\Xi^\nu-\Xi^{1,\nu}\|_{L^\infty }
\leq c \varepsilon_\nu^{1-1/p}.
%+\|\nabla_t\tilde\alpha^\nu\|_{L^2}+ \varepsilon_\nu \|\nabla_t \tilde\psi^\nu \|_{L^2}
\end{equation}
Furthermore we consider the decomposition 
\begin{equation}
A^\nu-A^{1,\nu}=(A^\nu-A^\nu_1)+(A^\nu_1-A^0)+(A^0-A^{1,\nu})=\alpha^\nu+\bar\alpha^\nu-\varepsilon_\nu^2\alpha_0=\tilde \alpha^\nu
\end{equation}
where $\alpha^\nu=A^\nu-A^\nu_1$ is the 1-form defined in the first step and $\bar\alpha^\nu:=A^\nu_1-A^0$.\\

The idea of the proof is to use the situation described in the picture \ref{picsur1} and in order to compute the norms of $A^\nu-A^0$ we use the properties of the orthogonal splitting $H_{A^0}^1\oplus \textrm{im } d_{A^0}\oplus \textrm{im } d_{A^0}^*$ combined with the facts that $\alpha^\nu\in \textrm{im } d_{A_1^\nu}^*$ and that the norm of $\Pi_{\textrm{im } d_{A^0}^*}(\bar\alpha^\nu)$ can be estimate using the identity $d_{A^0}\bar\alpha^\nu=-\frac 12 [\bar\alpha^\nu\wedge \bar\alpha^\nu]$ which can be deduced from the flat curvatures $F_{A_1^\nu}$ and $F_{A^0}$.

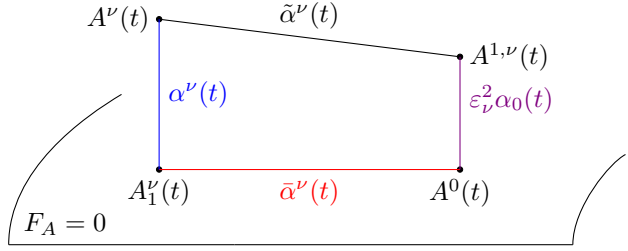
\begin{figure}[ht]
\begin{center}
\begin{tikzpicture} 

\draw (-1,-1) .. controls (-1,0) and (0,0.7) .. (0.5,1); 
\draw (6.5,-1) .. controls (6.5,-0.5) and (7,0.1) .. (7.2,0.2); 
\draw (-1,-1).. controls (0,-1) and (1,-1) .. node[near start,above] {$F_A=0$}(2,-1); \draw (2,-1).. controls (3,-1) and (4,-1) .. (6.5,-1); 

\draw (1,2) .. controls (3,1.75) and (3,1.75) ..node[above] {$\tilde \alpha^\nu(t)$}(5,1.5); 

\filldraw (1,0) circle (1pt) node[below] {$A_1^\nu(t)$}
(5,0) circle (1pt) node[below] {$A^0(t)$}
(5,1.5) circle (1pt) node[above,right] {$A^{1,\nu}(t)$}
(1,2) circle (1pt) node[above, left] {$A^\nu(t)$}; 
\draw[red] (1,0) .. controls (2,0) and (4,0) .. node[below] {$\bar \alpha^\nu(t)$}(5,0); 
\draw[violet] (5,0) .. controls (5,1) and (5,1) .. node[right] {$\varepsilon_\nu^2\alpha_0(t)$}(5,1.5); 
\draw[blue] (1,0) .. controls (1,1) and (1,1) .. node[right] {$\alpha^\nu(t)$}  (1,2); 

\end{tikzpicture} 
\caption{The splitting of the fifth step.}\label{picsur1}
\end{center}
\end{figure}

{\bf Claim 1:} $\|\tilde \alpha^\nu-\pi_{A^0}(\tilde\alpha^\nu)\|_{1,2,\varepsilon_\nu}\leq c\varepsilon_{\nu}^{2-2/p}$.

\begin{proof}[Proof of claim 1]
By the triangular inequality and $d_{A^0}^*\alpha_0=0$ we obtain
\begin{equation}\label{fgdsklhru1}
%\begin{split}
\|d_{A^0}(A^\nu-A^{1,\nu})\|_{L^2}\leq \varepsilon_\nu^2\|d_{A^0}\alpha_0\|_{L^2}
+\|d_{A^0}(A^\nu-A^0)\|_{L^2}
\leq c\varepsilon_{\nu}^{2-3/p},
%\end{split}
\end{equation}
\begin{equation}\label{fgdsklhru2}
\|d_{A^0}^*\tilde\alpha^\nu\|_{L^2}\leq \|d_{A^0}^*(A^\nu-A^0)\|_{L^2}+\varepsilon^2\|d_{A^0}^*\alpha_0\|_{L^2}\leq c\varepsilon_\nu^{3-1/p}.
\end{equation}
and therefore by (\ref{fgdsklhru}), (\ref{fgdsklhru1}) and (\ref{fgdsklhru2})
\begin{equation}
\|\tilde \alpha^\nu-\pi_{A^0}(\tilde\alpha^\nu)\|_{1,2,\varepsilon_\nu}\leq c\varepsilon_{\nu}^{2-2/p}.
\end{equation} 
\end{proof}

{\bf Claim 2:} $\varepsilon_\nu^2\|\mathcal D_2^{\varepsilon_\nu}(\Xi^{1,\nu})
(\tilde\alpha^\nu,\tilde\psi^\nu)\|_{L^2}\leq c \varepsilon^{2-3/p}$.

\begin{proof}[Proof of claim 2]
The estimate follows from
\begin{equation*}
\mathcal D_2^{\varepsilon_\nu}(\Xi^{1,\nu})(\tilde\alpha^\nu,\tilde\psi^\nu)
=-C_2^{\varepsilon_\nu}(\Xi^{1,\nu})(\tilde\alpha^\nu,\tilde\psi^\nu)-
\mathcal F_2^{\varepsilon_\nu}(\Xi^{\nu,1}),
\end{equation*}
where $\| \mathcal F_2^{\varepsilon_\nu}(\Xi^{\nu,1})\|_{L^2}\leq c$ and the quadratic estimates of the lemma \ref{lemma:estimate:c}.
\end{proof}

{\bf Claim 3:} 
\begin{equation}
\begin{split}
\|\pi_{A^0}(\mathcal D_1^{\varepsilon_\nu}(\Xi^{1,\nu})(\tilde\alpha^\nu,\tilde\psi^\nu)\|_{L^2}\leq &c \varepsilon^{2-3/p}+\frac 1{\varepsilon^2}
\|\pi_{A^0}([\tilde\alpha^\nu\wedge *d_{A^0}(\tilde\alpha^\nu-\bar\alpha^\nu)])\|_{L^2}\\
&+\varepsilon_\nu^{1/2-2/p}\|\pi_{A^0}\tilde\alpha^\nu\|_{L^2}.
\end{split}
\end{equation}

\begin{proof}[Proof of the claim 3]
By $\| \mathcal F_1^{\varepsilon_\nu}(\Xi^{\nu,1})\|_{L^2}\leq c\varepsilon_\nu^2$ and by the identity
\begin{equation*}
\mathcal D_1^{\varepsilon_\nu}(\Xi^{1,\nu})(\tilde\alpha^\nu,\tilde\psi^\nu)
=-C_1^{\varepsilon_\nu}(\Xi^{1,\nu})(\tilde\alpha^\nu,\tilde\psi^\nu)-
\mathcal F_1^{\varepsilon_\nu}(\Xi^{\nu,1}),
\end{equation*}
we have
\begin{equation}
\begin{split}
\|\pi_{A^0}(\mathcal D_1^{\varepsilon_\nu}(\Xi^{1,\nu})(\tilde\alpha^\nu,\tilde\psi^\nu)\|_{L^2}
\leq & \|\mathcal F_1^{\varepsilon_\nu}(\Xi^{1,\nu})\|_{L^2}
+\|\pi_{A^0}(C_1^{\varepsilon_\nu}(\Xi^{1,\nu})(\tilde\alpha^\nu,\tilde\psi^\nu))\|_{L^2}\\
\leq& c \varepsilon^{2-3/p}+\frac 1{\varepsilon^2}
\|\pi_{A^0}([\tilde\alpha^\nu\wedge *(d_{A^0}\tilde\alpha^\nu+\frac 12[\tilde\alpha^\nu\wedge \tilde\alpha^\nu])])\|_{L^2}\\
\leq &c \varepsilon^{2-3/p}+\frac 1{\varepsilon^2}
\|\pi_{A^0}([\tilde\alpha^\nu\wedge *d_{A^0}(\tilde\alpha^\nu-\bar\alpha^\nu)])\|_{L^2}\\
&+\varepsilon_\nu^{1/2-2/p}\|\pi_{A^0}\tilde\alpha^\nu\|_{L^2}
\end{split}
\end{equation}
where for the second inequality we estimate every term of $C_1^{\varepsilon_\nu}(\Xi^{1,\nu})(\tilde\alpha^\nu,\tilde\psi^\nu)$ using the formula (\ref{eq:c1term2})  and for the third one we we applied  
\begin{equation*}
0=F_{A^0+\bar\alpha^\nu }=d_{A^0}\bar\alpha^\nu+\frac 12[\bar\alpha^\nu\wedge\bar\alpha^\nu], \quad
\|\alpha^\nu\|_{L^2(\Sigma)}+\|d_{A^\nu }\alpha^\nu \|_{L^2(\Sigma)}\leq c\varepsilon^{2-1/p}
\end{equation*}
and the decomposition of $\tilde\alpha^\nu$.
\begin{comment}
\begin{equation}
\begin{split}
C_1(\Xi^{1,\nu})(\alpha^\varepsilon+\phi^\varepsilon dt)=&X_t(A^{1,\nu}+\alpha^\varepsilon)-*X_t(A^{1,\nu})-d*X(A^{1,\nu})\alpha^\varepsilon\\
&+\frac1{2\varepsilon^2}d_{A^0}^*[\alpha^\varepsilon\wedge\alpha^\varepsilon]
+\frac 1{\varepsilon^2}*[\alpha^\varepsilon\wedge *(d_{A^0}\alpha^\varepsilon+\frac12[\alpha^\varepsilon\wedge\alpha^\varepsilon])]\\
&+\nabla_t[\phi^\varepsilon,\alpha^\varepsilon]
-[\phi^\varepsilon,[\phi^\varepsilon,\alpha^\varepsilon]]+\frac 1{\varepsilon^2}[\alpha^\varepsilon,d_A^*\alpha^\varepsilon]\\
&+[\phi^\varepsilon,(\nabla_t\alpha^\varepsilon-d_{A^0}\phi^\varepsilon)]+\frac 1{\varepsilon^2}[\alpha^\varepsilon,*[\alpha^\varepsilon\wedge*\varepsilon_\nu^2\alpha_0]]\\
&-\frac 1{\varepsilon^2}d_{A^0}*[\alpha^\varepsilon\wedge*\alpha_0]-[\alpha^\varepsilon\wedge \nabla_t\phi^\varepsilon]\\
&-\frac 1{2\varepsilon^2}*[\varepsilon_\nu^2\alpha_0\wedge*[\alpha^\varepsilon\wedge\alpha^\varepsilon]]+\frac 1{\varepsilon^2 }*[\alpha^\varepsilon\wedge*[\varepsilon_\nu^2\alpha_0\wedge\alpha^\varepsilon]]\\
&-\frac 1{\varepsilon^2}[\alpha^\varepsilon,*[\varepsilon_\nu^2\alpha_0\wedge*\alpha^\varepsilon]]-[\phi^\varepsilon,[\varepsilon_\nu^2\alpha_0,\phi^\varepsilon]]\\
&+\frac 1{\varepsilon^2 }[\varepsilon_\nu^2\alpha_0\wedge*[\alpha^\varepsilon\wedge*\varepsilon_\nu^2\alpha_0]].
\end{split}
\end{equation}
\end{comment}

\end{proof}

{\bf Claim 4:} $\|\nabla_t(\bar\alpha^\nu-\pi_{A^0}(\bar\alpha^\nu))\|_{L^2}\leq c\varepsilon_\nu^{2-3/p}$.

\begin{proof}[Proof of claim 4]
We denote by $\Pi_{\textrm{im } d_{A^0}}$ and $\Pi_{\textrm{im } d_{A^0}^*}$ respectively the projections on the linear spaces $\textrm{im } d_{A^0}$ and $\textrm{im } d_{A^0}^*$ using the orthogonal splitting (\ref{split}). For $\bar\alpha^\nu-\pi_{A^0}\bar\alpha^\nu=d_{A^0}\bar\gamma^\nu+d_{A^0}\omega^\nu$, where $\gamma$ is a 0-form and $\omega$ a 2-form, we then have that
\begin{equation*}
\begin{split}
\|\nabla_t(\bar\alpha^\nu-\pi_{A^0}(\bar\alpha^\nu))\|_{L^2}\leq &c\|\bar\alpha^\nu-\pi_{A^0}(\bar\alpha^\nu)\|_{L^2}+\left\|\Pi_{\textrm{im } d_{A^0}}\left( \nabla_td_{A^0}\bar\gamma^\nu\right)\right\|_{L^2}\\
&+\left\| \Pi_{\textrm{im } d_{A^0}^*}\left(\nabla_td_{A^0}^*\omega^\nu \right)\right\|_{L^2}\\
\leq& c\varepsilon_\nu^{2-3/p}
\end{split}
\end{equation*}
where the last estimate follows from the next two:
\begin{equation*}
\begin{split}
\left\| \Pi_{\textrm{im } d_{A^0}^*}\left(\nabla_td_{A^0}^*\omega^\nu\right) \right\|_{L^2}
\leq& \left\| d_{A^0}\nabla_td_{A^0}^*\omega^\nu\right\|_{L^2}\\
\leq& \left\| \nabla_td_{A^0}\bar \alpha^\nu\right\|_{L^2}+\|\partial_tA^0-d_{A^0}\Psi^0\|_{L^\infty} \|\bar\alpha^\nu-\pi_{A^0}\bar\alpha^\nu\|_{L^2}\\
\leq& \left\|\nabla_t[\bar\alpha^\nu\wedge\bar\alpha^\nu]\right\|_{L^2}+c\|\bar\alpha^\nu-\pi_{A^0}\bar\alpha^\nu\|_{L^2}\\
\leq& c\|\bar\alpha^\nu\|_{L^\infty}\|\nabla_t\bar\alpha^\nu\|_{L^2}+c\|\bar\alpha^\nu-\pi_{A^0}\bar\alpha^\nu\|_{L^2}\leq  c\varepsilon_\nu^{2-3/p},
\end{split}
\end{equation*}
\begin{equation*}
\begin{split}
\left\| \Pi_{\textrm{im } d_{A^0}}\left(\nabla_td_{A^0}\bar\gamma^\nu\right)\right\|_{L^2}
\leq &\left\| \Pi_{\textrm{im } d_{A^0}}\left(\nabla_t\left(\Pi_{\textrm{im } d_{A^0}}\left(\tilde\alpha^\nu\right)\right) \right)\right\|_{L^2}\\
&+\left\|\Pi_{\textrm{im } d_{A^0}}\left( \nabla_t\left(\Pi_{\textrm{im } d_{A^0}}\left(\alpha^\nu\right) \right)\right)\right\|_{L^2}\\
\leq & c\left\| d_{A^0}^*\nabla_t\left(\Pi_{\textrm{im } d_{A^0}}\left(\tilde\alpha^\nu\right)\right)\right\|_{L^2}\\
&+\left\| \Pi_{\textrm{im } d_{A^0}}\left(\nabla_t\left(\Pi_{\textrm{im } d_{A^0}}\left(*[\tilde\alpha^\nu,\gamma^\nu]\right)\right)\right) \right\|_{L^2}\\
\leq &  \left\|\nabla_td_{A^0}^*\tilde\alpha^\nu\right\|_{L^2}+\|\partial_tA^0-d_{A^0}\Psi^0\|_{L^\infty}\left\|\Pi_{\textrm{im } d_{A^0}}\left(\tilde\alpha^\nu\right)\right\|\\
&+c\left\|\left[\tilde \alpha^\nu,\gamma^\nu\right]\right\|_{L^2}+\left\|\nabla_t\left[\tilde\alpha^\nu,\gamma^\nu\right]\right\|_{L^2}\\
\leq & c\varepsilon_\nu^{2-2/p}.
\end{split}
\end{equation*}
\end{proof}

{\bf Claim 5:} $\varepsilon_\nu \|\nabla_t(\tilde \alpha^\nu-\pi_{A^0}\alpha^\nu-\bar\alpha^\nu)\|_{0,2,\varepsilon_\nu }+\|d_{A^0}(\tilde\alpha^\nu-\bar\alpha^\nu)\|_{0,2,\varepsilon_\nu }\leq c\varepsilon_\nu^{3-6/p }$.\\

Therefore, using (\ref{crit:est:kkka}) and (\ref{crit:surj:est:a0}), we can estimate the norm of the harmonic part by
\begin{align*}
\|\pi_A(\tilde \alpha^\varepsilon)\|_{L^2}&+\|\nabla_t\pi_A(\tilde \alpha^\varepsilon)\|_{L^2}\\
\leq &
\|\pi_{A^0}(\mathcal D_1^{\varepsilon_\nu}(\Xi^{1,\nu})(\tilde\alpha^\nu,\tilde\psi^\nu)\|_{L^2}+\|\tilde\alpha^\nu-\pi_{A^0}(\tilde\alpha^\nu)\|_{1,2,\varepsilon}\\
&+\|\nabla_t(\tilde\alpha^\nu-\pi_{A^0}(\tilde\alpha^\nu))\|_{L^2}
+\varepsilon_\nu^2 \|\mathcal D_2^{\varepsilon_\nu}(\Xi^{1,\nu})(\tilde\alpha^\nu,\tilde\psi^\nu)\|_{L^2}\\
\intertext{by the first thee claims}
\leq&c \varepsilon_\nu^{2-3/p}+\frac1{\varepsilon_\nu^2}\|\pi_{A^0}([\tilde \alpha^\nu\wedge *d_{A^0}(\tilde\alpha^\nu-\bar\alpha^\nu)]) \|_{L^2}\\
&+\|\nabla_t(\tilde\alpha^\nu-\pi_{A^0}(\tilde\alpha^\nu))\|_{L^2}+\varepsilon_\nu^{1/2-2/p}\|\pi_{A^0}\tilde\alpha^\nu\|_{L^2}\\
\leq& c \varepsilon_\nu^{2-5/p}+\frac1{\varepsilon_\nu^2}\|\tilde \alpha^\nu\|_{L^\infty}\|d_{A^0}(\tilde\alpha^\nu-\bar\alpha^\nu)\|_{L^2}\\
&+\|\nabla_t(\tilde\alpha^\nu-\pi_{A^0}\alpha^\nu-\bar\alpha^\nu)\|_{L^2}\\
&+\|\nabla_t(\bar\alpha^\nu-\pi_{A^0}\bar\alpha^\nu)\|_{L^2}+\varepsilon_\nu^{1/2-2/p}\|\pi_{A^0}\tilde\alpha^\nu\|_{L^2}\\
\intertext{and because of the fourth and of the fifth claim we can conclude}
\leq & c \varepsilon_\nu^{2-6/p}+\varepsilon_\nu^{1/2-2/p}\|\pi_{A^0}\tilde\alpha^\nu\|_{L^2}.
\end{align*}
We finish therefore the proof of the fifth step by choosing $p>6$.
\end{proof}

\begin{proof}[Proof of claim 5]
We choose an operator as follows.
\begin{equation*}
\begin{split}
Q^{\varepsilon_\nu}(\Xi^0)\left(\tilde\alpha^\nu,\tilde\psi^\nu\right):=&\mathcal D^{\varepsilon_\nu}(\Xi^0)\left(\tilde\alpha^\nu,\tilde\psi^{\nu}\right)+\frac1{2{\varepsilon_\nu}^2}d_{A^0}^*[\bar \alpha^\nu\wedge \bar\alpha^\nu]\\
&+*\frac1{\varepsilon_\nu^2 }\left[\tilde\alpha^\nu\wedge(d_{A^0}\tilde\alpha^\nu+\frac12\left[\bar \alpha^\nu\wedge\bar \alpha^\nu]\right)\right]
\end{split}
\end{equation*}
Since $d_{A^0}\bar\alpha^\nu+\frac12[\bar\alpha^\nu\wedge\bar\alpha^\nu]=0$, 
\begin{equation}
d_{A^0}^*d_{A^0}\alpha^\nu+\frac12d_{A^0}^*[\bar \alpha^\nu\wedge\bar \alpha^\nu]
=d_{A^0}^*d_{A^0}(\alpha^\nu-\bar\alpha^\nu) 
\end{equation}
and  $\|d_{A^\nu }^*\bar\alpha^\nu \|_{L^2}\leq c \varepsilon_\nu^{3-1/p}$ by
\begin{equation}
\begin{split}
\|d_{A^\nu }^*\bar\alpha^\nu \|_{L^2}\leq&\|d_{A^\nu }^*(A^\nu-A^0)\|_{L^2}+\|d_{A^\nu }^*(A^\nu-A^0-\bar\alpha^\nu)\|_{L^2}\\
\leq& c \varepsilon_\nu^{3-1/p}+\| d_{A^\nu }^**d_{A^\nu }\gamma^\nu \|_{L^2}\\
\leq&  c \varepsilon_\nu^{3-1/p}+2\| F_{A^\nu }\|_{L^\infty}\|\gamma^\nu \|_{L^2}\leq c \varepsilon_\nu^{3-1/p}
\end{split}
\end{equation}
and hence
\begin{equation}
\|d_{A^0 }^*\bar\alpha^\nu \|_{L^2}\leq\|d_{A^\nu }^*\bar\alpha^\nu\|_{L^2}+c\|\alpha^\nu \|_{L^4 }\|\bar\alpha^\nu \|_{L^4}\leq c\varepsilon_\nu^{3-2/p}
\end{equation}
\begin{equation}
\langle d_{A^0}d_{A^0}^*\tilde\alpha^\nu,\tilde\alpha^\nu-\bar\alpha^\nu\rangle\geq \|d_{A^0}^*(\tilde \alpha^\nu-\bar\alpha^\nu)\|^2_{L^2}-c\|d_{A^0}^*\bar\alpha\|_{L^2}\|d_{A^0}^*(\tilde \alpha^\nu-\bar\alpha^\nu
)\|_{L^2}.
\end{equation}

Then 
\begin{equation}
\begin{split}
\varepsilon_\nu^2\langle&Q^{\varepsilon_\nu}_1(\Xi^0)\left(\tilde\alpha^\nu,\tilde\psi^\nu\right),\tilde\alpha^\nu-\pi_{A^0}\alpha^\nu-\bar\alpha^\nu \rangle
\geq \| d_{A^0}^*(\tilde\alpha^\nu-\bar\alpha^\nu)\|_{L^2}^2\\
&+\| d_{A^0}(\alpha^\varepsilon-\bar\alpha^\varepsilon)\|_{L^2}^2+\frac{\varepsilon_\nu^2 }2\|\nabla_t(\tilde\alpha^\nu-\pi_{A^0}\alpha^\nu-\bar\alpha^\nu)\|_{L^2}^2\\
&-c\varepsilon_\nu^{3-2/p}\|d_{A^0}^*(\tilde\alpha^\nu-\bar\alpha^\nu)\|_{L^2}\\
&-c\varepsilon_\nu^2\|\tilde\alpha^\nu\|_{L^2}\|\alpha^\varepsilon-\pi_{A^0}\alpha^\nu-\bar\alpha^\nu\|_{L^2}-c\varepsilon_\nu^2\|\tilde\alpha^\nu-\pi_{A^0}\alpha^\nu-\bar\alpha^\nu\|_{0,2,\varepsilon_\nu}\|\tilde\psi^\nu\|_{0,2,\varepsilon_\nu}\\
&-c\varepsilon_\nu^2|\langle \nabla_t\pi_{A^0}(\tilde \alpha^\nu),\nabla_t(\tilde\alpha^\nu-\pi_{A_0}\alpha^\nu-\bar\alpha^\nu) \rangle|\\
&-c\varepsilon^2|\langle \nabla_t(\bar\alpha^\nu-\pi_{A^0}(\bar \alpha^\nu)),\nabla_t(\tilde\alpha^\nu-\bar\alpha^\nu-\pi_{A^0}\alpha^\nu) \rangle|\\
&-\|\tilde\alpha^\nu \|^2_{L^\infty }\|d_{A^0}(\tilde\alpha^\nu-\bar\alpha^\nu) \|_{L^2 }\|\tilde\alpha^\nu-\pi_{A^0}\alpha^\nu-\bar\alpha^\nu \|_{L^2 }.
\end{split}
\end{equation}
We can conclude therefore that
\begin{equation}
\begin{split}
\varepsilon_\nu \|\nabla_t(\tilde \alpha^\nu-\pi_{A^0}\alpha^\nu-\bar\alpha^\nu)\|_{0,2,\varepsilon_\nu }&+\|d_{A^0}(\tilde\alpha^\nu-\bar\alpha^\nu)\|_{0,2,\varepsilon_\nu }\\
\leq& \varepsilon_\nu^2 \|Q^{\varepsilon_\nu}(\Xi^0)\left(\tilde \alpha^{\nu},\tilde \psi^{\nu}\right)\|_{0,2,\varepsilon_\nu }+c\varepsilon_\nu^{3-2/p}\\
&+c\varepsilon_\nu^2\|\tilde \alpha^{\nu} \|_{0,2,\varepsilon_\nu }+c\varepsilon_\nu^2\|\tilde\psi^{\nu} \|_{0,2,\varepsilon_\nu }\\
&+c\varepsilon_\nu^2\|\nabla_t\pi_{A^0}(\alpha^{\nu})\|_{0,2,\varepsilon_\nu }\\
&+c\varepsilon_\nu^2\|\pi_{A^0}(\alpha^{\nu})\|_{0,2,\varepsilon_\nu }\leq c\varepsilon_\nu^{3-6/p }
\end{split}
\end{equation}
where the last step follows because
\begin{equation*}
\|Q_1^{\varepsilon_\nu}(\Xi^{1,\nu})(\alpha^\nu,\phi^\nu)-Q_1^{\varepsilon^\nu}(\Xi^0)(\alpha^\nu,\phi^\nu) \|_{L^2}\leq c \|\alpha^\nu \|_{1,2,\varepsilon }+c \|\pi_{A^0}(\alpha^\nu)\|_{L^\infty}\|\pi_{A^0}(\alpha^\nu) \|_{L^2}
\end{equation*}
and
\begin{equation*}
\begin{split}
Q_1^{\varepsilon_\nu}(\Xi^{1,\nu})(\Xi^\nu-\Xi^1)=&-\mathcal F_1^{\varepsilon_\nu}(\Xi^{1,\nu})-C_1(\Xi^{1,\nu})(\Xi^\nu-\Xi^1)\\
&+\frac1{2\varepsilon^2_\nu }d_{A^0}^*[\bar \alpha^\nu\wedge\bar\alpha^\nu)]+*\frac1{\varepsilon_\nu^2 }[\alpha^\nu,*[\bar \alpha^\nu\wedge \bar \alpha^\nu]]
\end{split}
\end{equation*}
whose norm can be bounded by $c\varepsilon_\nu^{1-6/p }$ by the triangular and the H\"older inequalities. 

\end{proof}
\end{proof}

%%%%%%%%%%%%%%%%%%

\section{Proof of the main theorem}\label{section:themaintheorem}

The theorem \ref{thm:mainthm} states the bijectivity of the map $\mathcal T^{\varepsilon,b}$ which follows directly from its definition \ref{thm:defT} and the theorem \ref{thm:surjj}, which prove its surjectivity, and in addition it shows that $\mathcal T^{b,\varepsilon}$ maps perturbed closed geodesics of Morse index $k$ in to perturbed Yang-Mills of the same Morse index.

\begin{theorem}\label{thm:indexym}
We choose a regular value $b>0$ of the energy $E^H$ and an $\varepsilon_0>0$ as in definition \ref{thm:defT}, then there is a constant $c>0$ such that for every $\Xi^0=A^0+\Psi^0 dt \in \mathrm{Crit}_{E^H}^b$ the following holds. Let $\Xi^\varepsilon=A^\varepsilon+\Psi^\varepsilon dt:=\mathcal T^{\varepsilon,b}(\Xi^0)$, $0<\varepsilon<\varepsilon_0$, then 
\begin{enumerate}
\item $\varepsilon^2\langle \alpha+\psi dt, \mathcal D^\varepsilon(\Xi^\varepsilon)(\alpha+\psi dt)\rangle\geq c\|\alpha+\psi dt \|_{1,2,\varepsilon}^2$ for any $1$-form $\alpha(t)+\psi(t) dt\in  d_{A^0}\Omega^0(\Sigma,\mathfrak g_P)\oplus d_{A^0 }^*\Omega^2(\Sigma,\mathfrak g_P)\oplus \Omega^0(\Sigma,\mathfrak g_P)\,dt$;
\item $\mathrm{index}_{E^H}(\Xi^0)=\mathrm{index}_{\mathcal{YM}^{\varepsilon,H}}(\Xi^\varepsilon)$.
\end{enumerate}
\end{theorem}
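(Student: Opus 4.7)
The theorem splits into two parts: an $\varepsilon$-uniform coercivity estimate for $\mathcal D^\varepsilon(\Xi^\varepsilon)$ off the fibrewise harmonic $1$-forms, and an adiabatic-limit identification of the resulting reduced operator with the geodesic Jacobi operator $\mathcal D^0$.

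\textbf{Part (1).} I would pair $\mathcal D^\varepsilon(\Xi^\varepsilon)\xi$ with $\xi=\alpha+\psi\,dt$ in the rescaled inner product $\langle\xi,\xi'\rangle_\varepsilon:=\int_0^1(\langle\alpha,\alpha'\rangle+\varepsilon^2\langle\psi,\psi'\rangle)\,dt$ coming from the metric $\varepsilon^2g_\Sigma\oplus g_{S^1}$, with respect to which $\mathcal D^\varepsilon$ is formally self-adjoint (this is the symmetric form of (\ref{fredholmlemmaeq23})). Integration by parts in (\ref{jacoperator3}) yields
\begin{align*}
\varepsilon^2\langle\xi,\mathcal D^\varepsilon(\Xi^\varepsilon)\xi\rangle_\varepsilon
&=\|d_{A^\varepsilon}\alpha\|^2+\|d_{A^\varepsilon}^*\alpha\|^2+\varepsilon^2\|\nabla_t\alpha\|^2\\
&\quad+\varepsilon^2\|d_{A^\varepsilon}\psi\|^2+\varepsilon^4\|\nabla_t\psi\|^2+\mathcal R_\varepsilon(\xi),
\end{align*}
where $\mathcal R_\varepsilon$ collects the curvature contribution involving $F_{A^\varepsilon}$, the Hamiltonian contribution involving $X_t$, and the cross coupling $4\varepsilon^2\int\langle\psi,*[\alpha\wedge *B_t^\varepsilon]\rangle$ with $B_t^\varepsilon:=\partial_tA^\varepsilon-d_{A^\varepsilon}\Psi^\varepsilon$. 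The a priori bounds $\|F_{A^\varepsilon}\|_{L^\infty}=O(\varepsilon^{2-1/p})$ and $\|B_t^\varepsilon\|_{L^\infty}=O(1)$ of Theorems \ref{lemma:secder} and \ref{thm:norminty}, combined with Young's inequality, make $|\mathcal R_\varepsilon(\xi)|$ absorbable into a small fraction of the main positive quadratic terms plus $O(\varepsilon^{2-1/p})\|\xi\|_{1,2,\varepsilon}^2$. Since $A^\varepsilon$ is $L^\infty$-close to $A^0$ by (\ref{eq:cbb}) and Sobolev embedding, $d_{A^\varepsilon}$ may be replaced by $d_{A^0}$ up to a negligible error. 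Hodge theory for $A^0$ then gives the spectral gap $\|d_{A^0}\alpha\|^2+\|d_{A^0}^*\alpha\|^2\ge c_1\|\alpha\|^2$ on the non-harmonic subspace $d_{A^0}\Omega^0\oplus d_{A^0}^*\Omega^2$ uniformly in $t\in S^1$, while injectivity of $d_{A^0}:\Omega^0\to\Omega^1$ (forced by the free action of $\mathcal G_0(P)$ on $\mathcal A_0(P)$) yields $\|d_{A^0}\psi\|^2\ge c_0\|\psi\|^2$. Collecting these bounds produces a constant $c>0$ uniform in small $\varepsilon$ and gives the coercivity estimate (1).

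\textbf{Part (2).} The Morse index of $\mathcal{YM}^{\varepsilon,H}$ at $\Xi^\varepsilon$ equals the index of the self-adjoint Fredholm operator $\mathcal D^\varepsilon(\Xi^\varepsilon)$ on $\ker d_{\Xi^\varepsilon}^{*_\varepsilon}$. Decompose $\xi=h+\xi^\perp$ with $h(t)\in H_{A^0(t)}^1$ the fibrewise harmonic part and $\xi^\perp$ lying in the subspace covered by (1). Part (1) provides a uniform isomorphism off the harmonic part, so a Lyapunov--Schmidt argument solves $(1-\pi_{A^0})\mathcal D^\varepsilon(\Xi^\varepsilon)(h+\xi^\perp(h))=0$ uniquely for a linear correction map $h\mapsto\xi^\perp(h)$ with $\|\xi^\perp(h)\|_{2,p,\varepsilon}=O(\varepsilon^2\|h\|)$. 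The Morse index of $\mathcal D^\varepsilon(\Xi^\varepsilon)$ then equals the Morse index of the reduced symmetric form
\[
Q_\varepsilon(h):=\bigl\langle h,\,\mathcal D^\varepsilon(\Xi^\varepsilon)(h+\xi^\perp(h))\bigr\rangle
\]
on the finite-rank Hilbert bundle $\{H_{A^0(t)}^1\}_{t\in S^1}$. Using Lemma \ref{lemma:evaluate3} and replaying the computations of Theorem \ref{lemma:evaluate2}, one verifies that $Q_\varepsilon\to\langle\cdot,\mathcal D^0(\Xi^0)\cdot\rangle$ in operator norm as $\varepsilon\to 0$; in particular the cross term $\pi_{A^0}(*[\alpha\wedge *\omega])$ appearing in the formula (\ref{jacoperator1}) for $\mathcal D^0$ is recovered from $\varepsilon^{-2}\pi_{A^0}(*[\alpha\wedge *d_{A^0}\alpha_0^\varepsilon])$ through the identification $\omega=\varepsilon^{-2}d_{A^0}\alpha_0^\varepsilon$ already exploited in Lemma \ref{lemma:evaluate3}.

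\textbf{Conclusion and main obstacle.} Invertibility of $\mathcal D^0(\Xi^0)$ places $0$ strictly away from the spectrum of the limit form, so continuity of the Morse index under operator-norm perturbation of symmetric finite-rank operators gives $\mathrm{index}_{\mathcal{YM}^{\varepsilon,H}}(\Xi^\varepsilon)=\mathrm{index}_{E^H}(\Xi^0)$ for $\varepsilon$ sufficiently small. Combined with Definition \ref{thm:defT} and the surjectivity Theorem \ref{thm:surjj}, this completes the proof of Theorem \ref{thm:mainthm}. The main obstacle is the Lyapunov--Schmidt reduction and the operator-norm convergence $Q_\varepsilon\to\mathcal D^0(\Xi^0)$: one must track the $\varepsilon$-dependent orthogonality conditions carefully, and use the full strength of the elliptic estimates of Section \ref{chapter:ellest} to ensure that all remainder terms appearing in the correction map $h\mapsto\xi^\perp(h)$ and in the expansion of $\mathcal D^\varepsilon(\Xi^\varepsilon)$ about $\mathcal D^0(\Xi^0)$ decay uniformly to zero.
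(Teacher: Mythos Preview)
Your Part~(1) is essentially the paper's argument: integrate by parts, use the Hodge gap for $A^0$, and control the remainder. The paper organizes it slightly differently---it writes $\mathcal D^\varepsilon(\Xi^\varepsilon)=\mathcal D^\varepsilon(\Xi^0)+(\mathcal D^\varepsilon(\Xi^\varepsilon)-\mathcal D^\varepsilon(\Xi^0))$ and applies the quadratic estimate of Lemma~\ref{lemma:diffoperator} together with $\|\Xi^\varepsilon-\Xi^0\|_{2,p,\varepsilon}\le c\varepsilon^2$ from Theorem~\ref{thm:existence}, rather than invoking the a~priori estimates of Section~\ref{chapter:aprioriestimates}---but the content is the same.

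Your Part~(2) takes a genuinely different route. The paper does \emph{not} perform a Lyapunov--Schmidt reduction. Instead, for $\alpha(t)\in H^1_{A^0(t)}$ it simply \emph{chooses} the companion $\psi$ via the Jacobi constraint $d_{A^0}^*d_{A^0}\psi=-2*[\alpha\wedge *(\partial_tA^0-d_{A^0}\Psi^0)]$ (this is precisely (\ref{jacoperator2})), and then computes the quadratic form directly:
\[
\langle \alpha+\psi\,dt,\ \mathcal D^\varepsilon(\Xi^\varepsilon)(\alpha+\psi\,dt)\rangle
=\langle \mathcal D^0(\Xi^0)\alpha,\alpha\rangle+\varepsilon^2\|\nabla_t\psi\|^2+Q,
\]
with $|Q|\le c\varepsilon^{1/2}(\|\alpha\|^2+\|\nabla_t\alpha\|^2)$. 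The $\omega$-term in $\mathcal D^0$ is recovered exactly as you anticipate, from $\varepsilon^{-2}d_{A^0}\alpha_0^\varepsilon=\omega$, and the residual $d_{A^0}(A^\varepsilon-A^0-\alpha_0^\varepsilon)$ is controlled by (\ref{eq:cbb2}). Taking $\alpha$ in the negative (resp.\ positive) eigenspace of $\mathcal D^0$ then gives a negative (resp.\ positive) subspace for $\mathcal D^\varepsilon$ of the correct dimension, and together with Part~(1) this sandwiches the index. No correction map, no operator-norm convergence of a reduced form, is needed.

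Your Lyapunov--Schmidt approach is correct in principle and is the standard adiabatic-limit machinery, but it is heavier: you must invert $(1-\pi_{A^0})\mathcal D^\varepsilon$ on the non-harmonic block (which is not quite what Part~(1) gives---coercivity controls the diagonal block, but you still need the elliptic estimates of Theorems~\ref{lemma:evaluate} and~\ref{lemma:evaluate2} to get $\|\xi^\perp(h)\|_{2,p,\varepsilon}=O(\varepsilon^2\|h\|)$), and then track the convergence $Q_\varepsilon\to\mathcal D^0$. The paper's shortcut is that the ``right'' ansatz $\alpha\mapsto(\alpha,\psi(\alpha))$ is already dictated by the definition of $\mathcal D^0$, so one can skip the implicit-function step and compute by hand.
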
  

\begin{proof}
As we have already mentioned, Weber in \cite{MR1930985} proved that the Morse index of a perturbed geodesic is finite and for a generic Hamiltonian $H_t$ its nullity is zero. We are therefore interested in the behavior of the operator $\mathcal D^\varepsilon(\Xi^\varepsilon)$ respect to $\mathcal D^0(\Xi^0)$ and in order to investigate that we consider the two parts of the orthogonal splitting of the 1-forms
\begin{equation*}
\begin{split}
\Omega^1(\Sigma,\mathfrak g_P)=&\left(d_{A^0}\Omega^0(\Sigma,\mathfrak g_P)\oplus d_{A^0}^*\Omega^2(\Sigma,\mathfrak g_P)\oplus \Omega^0(\Sigma,\mathfrak g_P)\,dt\right)\\
&\oplus H^1_{A^0}(\Sigma,\mathfrak g_P).
\end{split}
\end{equation*}
We also recall that 
$$\|\Xi^\varepsilon-\Xi^0\|_{2,p,\varepsilon}+\varepsilon^{\frac 1p}\|\Xi^\varepsilon-\Xi^0\|_{\infty,\varepsilon}\leq c\varepsilon^{2}, \quad\left\|d_{A^0}(A^\varepsilon-A^0-\alpha_0^\varepsilon)\right\|_{L^2}\leq c\varepsilon^4$$ by the theorem \ref{thm:existence} and the Sobolev estimate (\ref{eq:sobolev1}) provided that $\varepsilon$ is sufficiently small where $\alpha_0^\varepsilon$ is defined in the theorem \ref{thm:existence}. Integrating by parts we obtain for a $\alpha+\psi dt\in  d_{A^0}\Omega^0(S^1,M,\mathfrak g_P)\oplus d_{A^0}^*\Omega^2(S^1,M,\mathfrak g_P)\oplus \Omega^0(S^1,M,\mathfrak g_P)\,dt$:

\begin{equation}\label{crit:mt:eq1}
\begin{split}
\varepsilon^2\langle \alpha&+\psi dt, \mathcal D^\varepsilon(\Xi^\varepsilon)(\alpha+\psi dt)\rangle\\
= &\varepsilon^2\langle \alpha+\psi dt, \mathcal D^\varepsilon(\Xi^0)(\alpha+\psi dt)\rangle\\
&+\varepsilon^2\langle \alpha+\psi dt, \left(\mathcal D^\varepsilon(\Xi^\varepsilon)-\mathcal D^\varepsilon(\Xi^0)\right)(\alpha+\psi dt)\rangle\\
\geq &c\|\alpha+\psi dt \|_{1,2,\varepsilon}^2+\varepsilon^2\langle \alpha+\psi dt, \left(\mathcal D^\varepsilon(\Xi^\varepsilon)-\mathcal D^\varepsilon(\Xi^0)\right)(\alpha+\psi dt)\rangle\\
\geq&c\|\alpha+\psi dt \|_{1,2,\varepsilon}^2- c\varepsilon^{-\frac12} \|\Xi^\varepsilon-\Xi^0 \|_{1,2,\varepsilon}\|\alpha+\psi dt \|_{1,2,\varepsilon}\|\alpha+\psi dt \|_{0,2,\varepsilon }\\
\geq&c\|\alpha+\psi dt \|_{1,2,\varepsilon}^2- c \varepsilon^{3/2} \|\alpha+\psi dt \|_{1,2,\varepsilon}^2\\
\geq&c\|\alpha+\psi dt \|_{1,2,\varepsilon}^2
\end{split}
\end{equation}
where the third step follows by the quadratic estimates of the lemma \ref{lemma:diffoperator} from the Sobolev estimate of lemma \ref{lemma:sobolev} and the last one holds for $\varepsilon$ small enough. We choose now $\alpha(t) \in  H_{A^0}^1(\Sigma,\mathfrak g_P)$ and then we pick $\psi(t) \in \Omega^0(\Sigma,\mathfrak g_P)$, such that 
\begin{equation*}
d_{A^0}^*d_{A^0}\psi=-2*[\alpha\wedge *(\partial_tA^0-d_{A^0}\Psi^0)]
\end{equation*}
Then
\begin{equation*}
\langle \alpha+\psi dt, \mathcal D^\varepsilon(\Xi^\varepsilon)(\alpha+\psi dt)\rangle
= \langle \mathcal D^0(\Xi^0)(\alpha), \alpha\rangle
+\varepsilon^2\|\nabla_t\psi\|_{L^2}^2+Q
\end{equation*}
Where
\begin{equation*}
\begin{split}
Q:=&\frac 1{\varepsilon^2}\left\langle *\left[\alpha\wedge *\left(d_{A^0}(A^\varepsilon-A^0-\alpha_0)+\frac12 \left[(A^\varepsilon-A^0)\wedge(A^\varepsilon-A^0)\right]\right)\right],\alpha\right\rangle\\
&+\frac1{\varepsilon^2}\left\|\left[(A^\varepsilon-A^0)\wedge \alpha\right]\right\|_{L^2}^2
+\frac1{\varepsilon^2}\left\|\left[(A^\varepsilon-A^0)\wedge *\alpha\right]\right|_{L^2}^2\\
&+{\varepsilon^2}\|[(\Psi^\varepsilon-\Psi^0), \psi]\|_{L^2}^2-\langle d*X_t(A^\varepsilon)\alpha-d*X_t(A^0)\alpha,\alpha\rangle\\
&-\left\langle 2 \left[\psi,\left(\nabla_t(A^\varepsilon-A^0)-d_{A^0}(\Psi^\varepsilon-\Psi^0)-\left[(A^\varepsilon-A^0)\wedge (\Psi^\varepsilon-\Psi^0)\right]\right)\right],\alpha\right\rangle\\
&-\left\langle 2 *\left[\alpha\wedge *\left(\nabla_t(A^\varepsilon-A^0)-d_{A^0}(\Psi^\varepsilon-\Psi^0)\right)\right],\psi\right\rangle\\
&+\left\langle 2 *\left[\alpha\wedge *\left[(A^\varepsilon-A^0)\wedge (\Psi^\varepsilon-\Psi^0)\right]\right],\psi\right\rangle\\
&+\|[(A^\varepsilon-A^0),\psi]\|_{L^2}^2+\|[(\Psi^\varepsilon-\Psi^0)\wedge \alpha]\|_{L^2}^2
\end{split}
\end{equation*}
and hence
\begin{equation}\label{crit:mainthm:est:lsdghos}
|Q|\leq c_1 \varepsilon^{1/2}\left(\|\alpha\|_{L^2}^2+\|\nabla_t\alpha\|_{L^2}^2\right)
\end{equation}
for a positive constant $c_1$; in order to compute (\ref{crit:mainthm:est:lsdghos}) we need also to use
$$\|\psi\|_{L^2}\leq \|\alpha\|_{L^2}, \quad \|\alpha\|_{L^4}\leq \|\alpha\|_{L^2}+\|\nabla_t\alpha\|_{L^2}$$
where the first estimate follows from the definition of $\psi$ and the second from the Sobolev inequality. Therefore there is a constant $c>0$ such that if $\alpha$ is an element of the negative eigenspace of $\mathcal D^0(\Xi^0)$, then
\begin{equation}\label{crit:mt:eq2}
\begin{split}
\langle \alpha+\psi dt, &\mathcal D^\varepsilon(\Xi^\varepsilon)(\alpha+\psi dt)\rangle\\
\leq &-c\left(\|\alpha\|_{L^2}+\|\nabla_t\alpha\|_{L^2}\right)^2
+c_1 \varepsilon^{1/2}\left(\|\alpha\|_{L^2}^2+\|\nabla_t\alpha\|_{L^2}^2\right);
\end{split}
\end{equation}
and if $\alpha$ is in the positive eigenspace for $\mathcal D^0(\Xi^0)$, then
\begin{equation}\label{crit:mt:eq3}
\begin{split}
\langle \alpha+\psi dt,& \mathcal D^\varepsilon(\Xi^\varepsilon)(\alpha+\psi dt)\rangle\\
\geq &c\left(\|\alpha\|_{L^2}+\|\nabla_t\alpha\|_{L^2}\right)^2
-c_1 \varepsilon^{1/2}\left(\|\alpha\|_{L^2}^2+\|\nabla_t\alpha\|_{L^2}^2\right).
\end{split}
\end{equation}
Thus, by (\ref{crit:mt:eq1}), (\ref{crit:mt:eq2}) and (\ref{crit:mt:eq3}) the dimensions of the negative eigenspaces of $\mathcal D^0(\Xi^0)$ and $\mathcal D^\varepsilon(\Xi^\varepsilon)$ are equal provided that $\varepsilon$ is small enough and hence we can conclude that the Morse indices are equal.
\end{proof}

\begin{appendix}

\section{Estimates on the surface}

In this section we list some estimates that will be needed all along this exposition. The first two lemmas were proved in \cite{MR1283871} (lemma 7.6 and lemma 8.2) for $p> 2$ and $q=\infty$; the proofs in the case $p=2$ and $2\leq q <\infty$ is similar.

\begin{lemma}\label{lemma76dt94}
We choose $p> 2$ and $q=\infty$ or $p=2$ and $2\leq q <\infty$. Then there exist two positive constants $\delta$ and $c$ such that for every connection 
$A\in\mathcal A(P)$ with
$$\|F_A\|_{L^p(\Sigma) }\leq \delta$$
there are estimates 
$$\|\psi\|_{L^q(\Sigma)}\leq c\|d_A\psi\|_{L^p(\Sigma)},\qquad 
\|d_A\psi\|_{L^q(\Sigma)}\leq c\|d_A*d_A\psi\|_{L^p(\Sigma)},$$
for $\psi\in \Omega^0(\Sigma,\mathfrak g_P)$.
\end{lemma}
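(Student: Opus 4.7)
The plan is to follow the strategy of \cite{MR1283871}, Lemma~7.6, combining elliptic theory on a fixed flat reference connection with a gauge-invariant perturbation argument based on Uhlenbeck's compactness theorem. Both cases of $(p,q)$ in the statement can be handled in parallel; the genuinely new case is $p=2$, $q<\infty$.

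First I would establish the two inequalities when $A=A_0$ is itself flat. Because the SO(3)-bundle $P\to\Sigma$ is non-trivial, $d_{A_0}$ has trivial kernel on $\Omega^0(\Sigma,\mathfrak g_P)$, so $d_{A_0}^*d_{A_0}$ is a positive definite elliptic operator; this yields the Poincar\'e inequality $\|\psi\|_{L^p}\leq c\|d_{A_0}\psi\|_{L^p}$ and the elliptic regularity bound $\|\psi\|_{W^{2,p}}\leq c\|d_{A_0}^*d_{A_0}\psi\|_{L^p}$. In dimension two the Sobolev embeddings $W^{1,p}(\Sigma)\hookrightarrow L^q(\Sigma)$ and $W^{2,p}(\Sigma)\hookrightarrow W^{1,q}(\Sigma)$ are valid for exactly the pairs $(p,q)$ in the statement, so both desired inequalities follow for $A_0$. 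Compactness of the moduli space $\mathcal M^g(P)$ ensures the constants may be chosen uniformly over all flat connections.

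For general $A$ with $\|F_A\|_{L^p}\leq\delta$, Uhlenbeck's theorem in dimension two, together with the local slice theorem, produces a gauge transformation $u\in\mathcal G(P)$ and a flat connection $A_0$ such that $u^*A = A_0+\alpha$, where $d_{A_0}^*\alpha=0$ (Coulomb gauge relative to $A_0$, obtained by choosing $A_0$ as the nearest flat connection to $u^*A$) and $\|\alpha\|_{W^{1,p}}\leq c\|F_A\|_{L^p}\leq c\delta$. Both inequalities are manifestly gauge invariant, so it suffices to prove them for $A=A_0+\alpha$ of this form. For the first inequality, writing $d_A\psi = d_{A_0}\psi+[\alpha,\psi]$ and using Step~1 gives
\begin{equation*}
\|\psi\|_{L^q}\leq c\|d_{A_0}\psi\|_{L^p}\leq c\|d_A\psi\|_{L^p}+c\|\alpha\|_{L^r}\|\psi\|_{L^s},
\end{equation*}
with H\"older exponents $(r,s)$ chosen admissible in the pair $(p,q)$; the smallness of $\alpha$ together with Step~1 applied at exponent $s$ allows the last factor to be absorbed once $\delta$ is small enough.

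For the second inequality, expand $d_A^*d_A\psi=d_{A_0}^*d_{A_0}\psi+L_\alpha\psi$, where
\begin{equation*}
L_\alpha\psi = d_{A_0}^*[\alpha,\psi]-*[\alpha\wedge *d_{A_0}\psi]-*[\alpha\wedge *[\alpha,\psi]].
\end{equation*}
The main obstacle is the term $d_{A_0}^*[\alpha,\psi]$: a naive integration by parts produces the factor $d_{A_0}^*\alpha\cdot\psi$, which cannot be controlled from $\|\alpha\|_{W^{1,p}}$ alone (no $L^\infty$-bound on $\alpha$ is available when $p=2$, and no $L^\infty$-bound on $\psi$ is available at this stage). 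The Coulomb condition $d_{A_0}^*\alpha=0$ is precisely what removes this obstruction, since integration by parts yields
\begin{equation*}
d_{A_0}^*[\alpha,\psi]=*[*\alpha,d_{A_0}\psi]+[d_{A_0}^*\alpha,\psi]=*[*\alpha,d_{A_0}\psi],
\end{equation*}
so that every term in $L_\alpha\psi$ is a pointwise bilinear or trilinear pairing of $\alpha$ with $\psi$ or $d_{A_0}\psi$. H\"older, together with the flat-case bound $\|d_{A_0}\psi\|_{L^r}\leq c\|d_{A_0}^*d_{A_0}\psi\|_{L^p}$ applied at a suitable $r$, then gives $\|L_\alpha\psi\|_{L^p}\leq c\delta\|d_{A_0}^*d_{A_0}\psi\|_{L^p}$; absorption reduces everything to the flat case and finishes the proof.
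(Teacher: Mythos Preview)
Your proposal is correct and follows essentially the same approach as the paper, which simply cites Dostoglou--Salamon \cite{MR1283871}, Lemma~7.6, for the case $p>2$, $q=\infty$ and remarks that the case $p=2$, $2\le q<\infty$ is similar. You have spelled out that similarity in detail, in particular the point that for $p=2$ the Coulomb condition $d_{A_0}^*\alpha=0$ is what makes the term $d_{A_0}^*[\alpha,\psi]$ tractable, since one no longer has $\alpha\in L^\infty$ available to absorb the raw $[d_{A_0}^*\alpha,\psi]$ contribution.
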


\begin{lemma}\label{lemma82dt94}
We choose $p> 2$ and $q=\infty$ or $p=2$ and $2\leq q <\infty$. Then there exist two positive constants $\delta$ and $c$ such that the following holds. 
For every connection $A\in \mathcal A(P)$ with 
$$\|F_A\|_{L^p(\Sigma)}\leq \delta$$
there exists a unique section $\eta \in \Omega^0(\Sigma,\mathfrak g_P)$ such that
$$F_{A+*d_A\eta}=0,\qquad \|d_A\eta\|_{L^q(\Sigma)}\leq c\|F_A\|_{L^p(\Sigma)}.$$
\end{lemma}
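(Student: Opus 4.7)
The plan is to solve the equation $F_{A+*d_A\eta}=0$ for $\eta$ by a Banach fixed point argument, using the invertibility of $d_A^*d_A = \pm d_A*d_A$ provided by the previous lemma. Expanding the curvature we get
\begin{equation*}
F_{A+*d_A\eta}=F_A+d_A*d_A\eta+\tfrac12[*d_A\eta\wedge*d_A\eta],
\end{equation*}
so the equation to be solved is
\begin{equation*}
d_A*d_A\eta=-F_A-\tfrac12[*d_A\eta\wedge*d_A\eta].
\end{equation*}
This is a semilinear elliptic PDE whose linearisation at $\eta=0$ is the operator $d_A*d_A$; by the remark following the definition of $\mathcal{G}_0(P)$, this operator is invertible on $\Omega^0(\Sigma,\mathfrak g_P)$ whenever $A\in\mathcal A(P)$ lies in the relevant class (the non-trivial $\mathbf{SO}(3)$-bundle), and lemma \ref{lemma76dt94} supplies the $L^p\to L^q$ a priori estimate on $d_A$ that we need.

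First I would define the nonlinear map
\begin{equation*}
T(\eta):=-(d_A*d_A)^{-1}\Bigl(F_A+\tfrac12[*d_A\eta\wedge*d_A\eta]\Bigr)
\end{equation*}
on the closed ball $B_r:=\{\eta\in\Omega^0(\Sigma,\mathfrak g_P)\mid\|d_A\eta\|_{L^q(\Sigma)}\le r\}$ for a radius $r$ to be chosen. Applying the second estimate of lemma \ref{lemma76dt94}, for any $\eta\in B_r$
\begin{equation*}
\|d_A T(\eta)\|_{L^q(\Sigma)}\le c\,\bigl\|F_A+\tfrac12[*d_A\eta\wedge*d_A\eta]\bigr\|_{L^p(\Sigma)}\le c\|F_A\|_{L^p(\Sigma)}+c'\,\|d_A\eta\|_{L^q(\Sigma)}^2,
\end{equation*}
where in the quadratic term I use H\"older on $\Sigma$ together with the case $p=2$, $q$ finite (or $p>2$, $q=\infty$) to dominate the pointwise bracket by $\|d_A\eta\|_{L^q}^2$. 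Choosing $r=2c\,\|F_A\|_{L^p(\Sigma)}$ and shrinking $\delta$ so that $c'r\le 1/2$, we obtain $T(B_r)\subset B_r$. A parallel computation on the difference $T(\eta_1)-T(\eta_2)$, whose numerator equals $\tfrac12[*d_A(\eta_1-\eta_2)\wedge*d_A(\eta_1+\eta_2)]$, gives
\begin{equation*}
\|d_A(T(\eta_1)-T(\eta_2))\|_{L^q(\Sigma)}\le c'(\|d_A\eta_1\|_{L^q}+\|d_A\eta_2\|_{L^q})\,\|d_A(\eta_1-\eta_2)\|_{L^q}\le \tfrac12\|d_A(\eta_1-\eta_2)\|_{L^q};
\end{equation*}
hence, once we turn this into a contraction on $\eta$ itself via the first estimate of lemma \ref{lemma76dt94}, Banach's theorem produces a unique fixed point $\eta\in B_r$ solving $F_{A+*d_A\eta}=0$ and satisfying the required bound $\|d_A\eta\|_{L^q(\Sigma)}\le 2c\,\|F_A\|_{L^p(\Sigma)}$.

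Uniqueness of $\eta$ (rather than just of $d_A\eta$) then follows by feeding $d_A\eta$ back into the first inequality of lemma \ref{lemma76dt94}, namely $\|\eta\|_{L^q}\le c\|d_A\eta\|_{L^p}$, which is meaningful because on the non-trivial $\mathbf{SO}(3)$-bundle $d_A:\Omega^0(\Sigma,\mathfrak g_P)\to\Omega^1(\Sigma,\mathfrak g_P)$ is injective. The main technical obstacle I anticipate is not the contraction itself, which is standard, but the need to verify carefully that the constant $c$ in lemma \ref{lemma76dt94} can be used uniformly for all connections $A$ in the $L^p$-neighborhood $\|F_A\|_{L^p}\le\delta$ of the flat locus; this is built into the statement of lemma \ref{lemma76dt94}, but one has to make sure the $L^p$-to-$L^q$ estimate for $d_A*d_A$ used in bounding $(d_A*d_A)^{-1}$ remains valid for the connection $A+*d_A\eta$ showing up in the iteration as well, which is what lets us conclude the fixed-point argument without losing control of constants as $\eta$ evolves.

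\end{appendix}
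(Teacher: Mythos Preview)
Your approach is correct and is precisely the standard one: the paper does not give its own proof but cites Dostoglou--Salamon \cite{MR1283871}, lemma~8.2, where exactly this Banach fixed-point argument is carried out for $p>2$, $q=\infty$, and remarks that the case $p=2$, $2\le q<\infty$ is similar.

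Two small remarks. First, your worry in the last paragraph is misplaced: the operator $(d_A*d_A)^{-1}$ in the definition of $T$ uses the \emph{fixed} connection $A$ throughout the iteration, so the connection $A+*d_A\eta$ never enters the linear estimate and there is no drifting of constants. What you do need (and have, from lemma~\ref{lemma76dt94}) is that the constant $c$ there is uniform over all $A$ with $\|F_A\|_{L^p}\le\delta$; since $d_A*d_A$ is elliptic of index zero, the injectivity estimate from that lemma already yields the surjectivity needed to define $(d_A*d_A)^{-1}$. Second, in the case $p=2$ with $q<4$ your quadratic bound $\|[*d_A\eta\wedge*d_A\eta]\|_{L^2}\le c'\|d_A\eta\|_{L^q}^2$ fails as written; the clean fix is to run the contraction with $q=4$, obtain $\eta$, and then read off the estimate for general $q$ by one more application of lemma~\ref{lemma76dt94} to $d_A*d_A\eta=-F_A-\tfrac12[*d_A\eta\wedge*d_A\eta]$.
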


The following lemma is a symplified version of the lemma B.2. in \cite{MR1736219} where Salamon allows also to modify the complex structure on $\Sigma$ if it is $C^1$-closed to a fixed one.
\begin{lemma}\label{flow:lemma:lpl2}
Fix a connection $A^0\in \mathcal A_0(P)$. Then, for every $\delta>0$, $C>0$, and $p\geq2$, there exists a constant $c=c(\delta, C,A^0)\geq 1$ such that, if $A\in\mathcal A(P)$ satisfy $\|A-A^0\|_{L^\infty(\Sigma)}\leq C$ then, for every $\psi\in \Omega^0(\Sigma,\mathfrak g_P)$ and every $\alpha\in \Omega^1(\Sigma,\mathfrak g_P)$,
\begin{equation}
\|\psi\|_{L^p(\Sigma)}^p\leq\delta \|d_A\psi\|_{L^p(\Sigma)}^p+c\|\psi\|_{L^2(\Sigma)}^p,
\end{equation}
\begin{equation}
\|\alpha\|_{L^p(\Sigma)}^p\leq \delta\left(\|d_A\alpha\|_{L^p(\Sigma)}^p+\|d_A*\alpha\|_{L^p(\Sigma)}^p\right)+c\|\alpha\|_{L^2(\Sigma)}^p.
\end{equation}

\end{lemma}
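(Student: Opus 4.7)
\textbf{Proof plan for Lemma \ref{flow:lemma:lpl2}.} For $p=2$ both inequalities are trivial (take $c=1$), so I would assume $p>2$ throughout. The core mechanism is a Gagliardo--Nirenberg interpolation on the compact 2-manifold $\Sigma$ combined with an elliptic estimate for the operator $d_{A^0}$ (resp.\ $d_{A^0}\oplus d_{A^0}^*$ for 1-forms), plus a perturbation argument that exchanges $A^0$ for $A$ using the bound $\|A-A^0\|_{L^\infty}\le C$. Throughout, $W^{1,p}$ norms are defined via a fixed smooth reference connection on $\mathfrak g_P$ (say the one induced by $A^0$), and the constants are allowed to depend on $A^0$, on $C$, and on $\delta$.

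For 0-forms I would proceed as follows. On $\Sigma$ (compact, $2$-dimensional), the Gagliardo--Nirenberg inequality applied to sections of $\mathfrak g_P$ yields
\begin{equation*}
\|\psi\|_{L^p(\Sigma)}\le c_0\,\|\psi\|_{W^{1,p}(\Sigma)}^{\theta}\,\|\psi\|_{L^2(\Sigma)}^{1-\theta}+c_0\,\|\psi\|_{L^2(\Sigma)},\qquad \theta=\tfrac{p-2}{2(p-1)}\in(0,1),
\end{equation*}
where the additive lower-order term handles the kernel of $\nabla$. Since $d_{A^0}$ is first-order elliptic on $\mathfrak g_P$-valued functions (its symbol $i\xi\otimes\mathrm{id}$ is injective), the Calderon--Zygmund / G\aa rding estimate gives $\|\psi\|_{W^{1,p}}\le c(\|d_{A^0}\psi\|_{L^p}+\|\psi\|_{L^p})$. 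Substituting this into the Gagliardo--Nirenberg bound and applying Young's inequality $xy\le \eta x^{1/\theta}+c_\eta y^{1/(1-\theta)}$ (twice, once to absorb the resulting $\|\psi\|_{L^p}^{\theta}\|\psi\|_{L^2}^{1-\theta}$ term into the left side, once on the $L^p$/$L^2$ product) and then raising to the $p$-th power produces, for any $\delta'>0$,
\begin{equation*}
\|\psi\|_{L^p(\Sigma)}^{p}\le \delta'\|d_{A^0}\psi\|_{L^p(\Sigma)}^{p}+c(\delta',A^0)\|\psi\|_{L^2(\Sigma)}^{p}.
\end{equation*}

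Next I would pass from $d_{A^0}$ to $d_A$. The identity $d_A\psi-d_{A^0}\psi=[A-A^0,\psi]$ and the pointwise bound $\|A-A^0\|_{L^\infty}\le C$ give $\|d_{A^0}\psi\|_{L^p}^p\le 2^{p-1}(\|d_A\psi\|_{L^p}^p+C^p\|\psi\|_{L^p}^p)$. Substituting and absorbing the $\|\psi\|_{L^p}^p$-term on the left requires $2^{p-1}\delta' C^p<\tfrac12$, so I would first fix $\delta'=\min\{\delta/2^p,\;1/(2^pC^p)\}$; this yields $\|\psi\|_{L^p}^p\le\delta\|d_A\psi\|_{L^p}^p+c(\delta,C,A^0)\|\psi\|_{L^2}^p$, which is the first claim. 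For 1-forms $\alpha\in\Omega^1(\Sigma,\mathfrak g_P)$ the argument is formally identical after replacing the elliptic estimate for $d_{A^0}$ by the one for $d_{A^0}\oplus d_{A^0}^*$ acting on 1-forms, and noting that on an oriented 2-manifold $d_{A^0}^*\alpha=-*d_{A^0}*\alpha$, so $\|d_{A^0}^*\alpha\|_{L^p}=\|d_{A^0}*\alpha\|_{L^p}$; the perturbation step uses both $\|d_A\alpha-d_{A^0}\alpha\|_{L^p}\le C\|\alpha\|_{L^p}$ and $\|d_A\!*\!\alpha-d_{A^0}\!*\!\alpha\|_{L^p}\le C\|\alpha\|_{L^p}$ (since $*$ is an isometry).

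The main technical point to keep in mind is the coupling between the interpolation parameter and the perturbation: both $\delta'$ (which controls Young's inequality) and the smallness condition needed to absorb the $\|\cdot\|_{L^p}^p$ term generated by $[A-A^0,\cdot]$ depend on $C$, so $\delta'$ must be chosen \emph{first} as the minimum of the two requirements, and only then is $c$ determined as $c=c(\delta,C,A^0)$. This ordering, rather than any analytic difficulty, is the only step where care is required; all the ingredients (Gagliardo--Nirenberg, elliptic regularity for $d_{A^0}$ and $d_{A^0}\oplus d_{A^0}^*$, Young's inequality, and the Leibniz-type identity for covariant derivatives) are standard on a closed Riemann surface.
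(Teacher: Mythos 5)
Your argument is sound, but note that the paper does not prove this lemma at all: it is stated as ``a simplified version of Lemma B.2'' in Salamon's work and the proof is delegated to that reference. So you are not reproducing the paper's argument but supplying a self-contained one, and it holds up. The chain Gagliardo--Nirenberg (with the additive $L^2$ term, applied to $|\psi|$ via Kato's inequality or local trivializations) $\to$ elliptic estimate for the overdetermined elliptic operator $d_{A^0}$ on $0$-forms, resp.\ $d_{A^0}\oplus d_{A^0}^*$ on $1$-forms $\to$ Young's inequality $\to$ zeroth-order perturbation $d_A-d_{A^0}=[A-A^0,\cdot\,]$ is exactly the kind of proof that backs such statements in the cited literature, and your observation that the interpolation parameter $\delta'$ must be fixed \emph{before} the absorption of the commutator term (hence $c=c(\delta,C,A^0)$ but the estimate is uniform over the whole $L^\infty$-ball of radius $C$ around $A^0$) is the one genuinely non-cosmetic point, and you get it right. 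Two small touch-ups: the bound $\|[A-A^0,\psi]\|_{L^p}\le \kappa C\|\psi\|_{L^p}$ carries the norm $\kappa$ of the Lie bracket on the fibres, so your threshold should read $\delta'=\min\{\delta/2^p,\,1/(2^p(\kappa C)^p)\}$; and for $1$-forms the identity $d_{A}^*\alpha=-*d_{A}*\alpha$ is what lets you phrase the conclusion with $\|d_A*\alpha\|_{L^p}$ as in the statement, which you correctly note. Neither affects the validity of the proof.
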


\begin{lemma}\label{flow:lemma40} We choose $p\geq2$. There is a positive constant $c$ such that the following holds. For any connection $A\in \mathcal A_0(P)$ and any $\alpha\in \Omega^1(\Sigma,\mathfrak g_P)$
\begin{equation}
\begin{split}
\|\alpha\|_{L^p(\Sigma)}+\|d_A\alpha\|_{L^p(\Sigma)}&+\|d_A^*\alpha\|_{L^p(\Sigma)}+\|d_A^*d_A\alpha\|_{L^p(\Sigma)}+\|d_A^*d_A^*\alpha\|_{L^p(\Sigma)}\\
\leq& c\|(d_Ad_A^*+d_A^*d_A)\alpha\|_{L^p}+\|\pi_A(\alpha)\|_{L^p(\Sigma) }.
\end{split}
\end{equation}
\end{lemma}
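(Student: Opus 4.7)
The strategy is to use the orthogonal Hodge-type decomposition (\ref{split}) and reduce everything to a single elliptic estimate for the twisted Hodge Laplacian $\Delta_A := d_Ad_A^* + d_A^*d_A$. First I would write, for $\alpha\in\Omega^1(\Sigma,\mathfrak g_P)$ and $A\in\mathcal A_0(P)$ flat,
\begin{equation*}
\alpha = d_A\psi + \pi_A(\alpha) + d_A^*\omega,
\end{equation*}
where $\psi\in \Omega^0$ and $\omega\in\Omega^2$ are uniquely determined (using that $d_A^*d_A$ is invertible on $\Omega^0$, cf.\ the remark after the definition of $\mathcal M^g(P)$, and that $d_Ad_A^*$ is invertible on $\textrm{im}\,d_A$ via the same argument). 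Since $F_A=0$ implies $d_A^2=0$, these three summands are orthogonal and eigen-like for $\Delta_A$, so that
\begin{equation*}
\Delta_A\alpha = d_A^*d_Ad_A^*\omega + d_Ad_A^*d_A\psi, \qquad \pi_A(\Delta_A\alpha)=0.
\end{equation*}

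Next I would establish the fundamental elliptic estimate
\begin{equation}\label{eq:plan:ellest}
\|\alpha\|_{L^p} \leq c \bigl(\|\Delta_A\alpha\|_{L^p} + \|\pi_A(\alpha)\|_{L^p}\bigr),
\end{equation}
together with its higher-order counterpart controlling $\|d_A\alpha\|_{L^p}$ and $\|d_A^*\alpha\|_{L^p}$. This follows the same scheme as the proof of Theorem~\ref{lemma:evaluate0}: work first in a local trivialisation where $\Delta_A$ differs from the flat Hodge Laplacian by a zeroth-order operator of bounded coefficients, apply the Calderon-Zygmund estimate
\begin{equation*}
\|u\|_{W^{2,p}} \leq c\bigl(\|\Delta u\|_{L^p} + \|u\|_{W^{1,p}}\bigr),
\end{equation*}
absorb lower-order terms, and then patch using a finite atlas and partition of unity exactly as in Step~4 of Theorem~\ref{lemma:evaluate0}. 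The projection term $\pi_A(\alpha)$ appears because $\Delta_A$ has a nontrivial kernel $H^1_A$; elements of this kernel must be controlled by hand, whence its appearance on the right-hand side.

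Finally, having (\ref{eq:plan:ellest}) together with the $W^{2,p}$-version
\begin{equation*}
\|\alpha\|_{W^{2,p}} \leq c\bigl(\|\Delta_A\alpha\|_{L^p} + \|\pi_A(\alpha)\|_{L^p}\bigr),
\end{equation*}
the remaining terms $\|d_A\alpha\|_{L^p}$, $\|d_A^*\alpha\|_{L^p}$, $\|d_Ad_A^*\alpha\|_{L^p}$, $\|d_A^*d_A\alpha\|_{L^p}$ are all bounded by $\|\alpha\|_{W^{2,p}}$ (with constants depending only on $A$ through its $C^1$-norm relative to a fixed reference connection), and summing yields the claimed estimate. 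The main technical obstacle is the pointwise absorption step in the local/rescaled setting that produces (\ref{eq:plan:ellest}) with a constant independent of $A$ in the allowed range; once this is handled as in Theorem~\ref{lemma:evaluate0}, the rest is routine triangle inequalities and Kato-type bounds $\|d_A\alpha\|_{L^p}+\|d_A^*\alpha\|_{L^p}\leq c\|\nabla_A\alpha\|_{L^p}$ on $\Sigma$.
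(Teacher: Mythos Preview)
Your approach is correct in outline, but the paper's proof is considerably shorter because it avoids redoing the Calderon--Zygmund theory. The paper observes that, since $A$ is flat, $d_Ad_A^*\alpha\in\textrm{im}\,d_A$ and $d_A^*d_A\alpha\in\textrm{im}\,d_A^*$ land in different summands of the splitting (\ref{split}); the $L^p$--boundedness of the Hodge projections then gives immediately
\[
\|d_Ad_A^*\alpha\|_{L^p(\Sigma)}+\|d_A^*d_A\alpha\|_{L^p(\Sigma)}\leq c_0\,\|(d_Ad_A^*+d_A^*d_A)\alpha\|_{L^p(\Sigma)}.
\]
After that the paper simply cascades downward using Lemma~\ref{lemma76dt94} (applied to $d_A^*\alpha\in\Omega^0$ and, via Hodge star, to $*d_A\alpha\in\Omega^0$) to control $\|d_A\alpha\|_{L^p}$, $\|d_A^*\alpha\|_{L^p}$, and then $\|\alpha-\pi_A(\alpha)\|_{L^p}$. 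In other words, all the elliptic input you propose to rebuild via local Calderon--Zygmund estimates and a partition of unity is already packaged in Lemma~\ref{lemma76dt94}, and the proof reduces to two lines. Your route would work, but it duplicates the machinery behind Lemma~\ref{lemma76dt94}; the paper's argument is more economical and makes the uniformity of the constant over $\mathcal A_0(P)$ transparent, since Lemma~\ref{lemma76dt94} only needs $\|F_A\|_{L^p}\leq\delta$.
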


\begin{proof}
For any flat connection $A$, the orthogonal splitting of $\Omega^1(\Sigma)=\textrm{im } d_A\oplus \textrm{im } d_A^*\oplus H_A^1(\Sigma,\mathfrak g_P)$ implies that there is a positive constant $c_0$ such that
\begin{equation*}
\|d_Ad_A^*\alpha\|_{L^p(\Sigma)}+\|d_A^*d_A\alpha\|_{L^p(\Sigma)}\leq c_0 \|(d_Ad_A^*+d_A^*d_A)\alpha\|_{L^p(\Sigma)};
\end{equation*}
thus, we can conclude the proof applying the lemma \ref{lemma76dt94}.
\end{proof}

\begin{lemma}\label{lemma:dAalphaest}
We choose $p\geq2$. There is a positive constant $c$ such that the following holds. For any $\delta>0$, any connection $A\in\mathcal A_0(P)$, $\alpha\in\Omega^1(\Sigma,\mathfrak g_P)$ and $\psi \in\Omega^0(\Sigma,\mathfrak g_P)$
\begin{equation}
\begin{split}
\left\| d_A\alpha\right\|_{L^p(\Sigma) }\leq c\left(\delta^{-1}\, \left\| \alpha  \right\|_{L^p(\Sigma)}+ \delta\, \left\| d_A^*d_A\alpha  \right\|_{L^p(\Sigma)}\right),\\
\left\| d_A^*\alpha\right\|_{L^p(\Sigma)}\leq c\left(\delta^{-1}\, \left\| \alpha  \right\|_{L^p(\Sigma)}+ \delta\, \left\| d_Ad_A^*\alpha  \right\|_{L^p(\Sigma)}\right),\\
\left\| d_A\psi\right\|_{L^p(\Sigma) }\leq c\left(\delta^{-1}\, \left\| \psi \right\|_{L^p(\Sigma)}+ \delta\, \left\| d_A^*d_A\psi  \right\|_{L^p(\Sigma)}\right).
\end{split}
\end{equation}
Furthermore, for any $\delta>0$, any connection $A+\Psi dt\in\mathcal A(P\times S^1)$, $\alpha+\psi dt\in\Omega^1(\Sigma\times S^1,\mathfrak g_P)$ 
\begin{equation}
\begin{split}
\varepsilon\left\| \nabla_t\alpha\right\|_{L^p(\Sigma\times S^1) }\leq c\left(\delta^{-1}\, \left\| \alpha  \right\|_{L^p(\Sigma\times S^1)}+ \delta\varepsilon^2 \left\|\nabla_t\nabla_t\alpha  \right\|_{L^p(\Sigma\times S^1)}\right),\\
\varepsilon^2\left\| \nabla_t\psi\right\|_{L^p(\Sigma\times S^1) }\leq c\left(\delta^{-1}\varepsilon\, \left\| \psi \right\|_{L^p(\Sigma\times S^1)}+ \delta\varepsilon^3 \left\| \nabla_t\nabla_t\psi  \right\|_{L^p(\Sigma\times S^1)}\right).
\end{split}
\end{equation}
\end{lemma}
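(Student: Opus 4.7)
The first three inequalities all have the same structure: for a first order operator $D \in \{d_A, d_A^*\}$ acting on forms over $\Sigma$, bound $\|D\xi\|_{L^p}$ by $\|\xi\|_{L^p}$ and $\|D^*D\xi\|_{L^p}$ in an interpolated fashion. The plan is to reduce everything to the Kolmogorov-type estimate
\begin{equation*}
\|D\xi\|_{L^p(\Sigma)}^2 \leq C\,\|\xi\|_{L^p(\Sigma)}\,\|D^*D\xi\|_{L^p(\Sigma)},
\end{equation*}
and then apply Young's inequality $ab \leq \tfrac{1}{2}(\delta^{-2}a^2 + \delta^2 b^2)$ with $a = \|\xi\|_{L^p}$ and $b = \|D^*D\xi\|_{L^p}$.

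For $p=2$ the interpolation is immediate by integration by parts and Cauchy--Schwarz:
\begin{equation*}
\|D\xi\|_{L^2}^2 = \langle D^*D\xi,\xi\rangle \leq \|D^*D\xi\|_{L^2}\,\|\xi\|_{L^2}.
\end{equation*}
For $p>2$, since $A \in \mathcal A_0(P)$ is flat, $d_A^2=0$, so the image $D\xi$ lies in the kernel of $d_A$ (resp.\ $d_A^*$), and the full Hodge Laplacian $\Delta_A = d_Ad_A^* + d_A^*d_A$ acts on $D\xi$ as $DD^*D\xi$ (with one of the two terms vanishing). Combining the standard Calder\'on--Zygmund estimate
$\|u\|_{W^{2,p}(\Sigma)} \leq c(\|\Delta_A u\|_{L^p} + \|u\|_{L^p})$ with the interpolation
$\|u\|_{W^{1,p}(\Sigma)} \leq \delta\|u\|_{W^{2,p}(\Sigma)} + C\delta^{-1}\|u\|_{L^p(\Sigma)}$ on the closed surface $\Sigma$, and specializing to the subspaces where $\Delta_A$ acts as $D^*D$, yields exactly the desired form. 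The $p=\infty$ boundary and the zero order absorptions are absorbed into the constant $c$ by shrinking $\delta$ if necessary.

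For the two $\nabla_t$ estimates on $\Sigma \times S^1$, the idea is to reduce to the one-dimensional interpolation in the $t$-variable. Freezing $x\in\Sigma$ and applying the classical inequality
\begin{equation*}
\|\partial_t f\|_{L^p(S^1)} \leq c_0\,\|f\|_{L^p(S^1)}^{1/2}\,\|\partial_t^2 f\|_{L^p(S^1)}^{1/2}
\end{equation*}
to $f(t) = \alpha(x,t)$ (and analogously for $\psi$), then raising to the $p$-th power and integrating over $\Sigma$, gives
\begin{equation*}
\|\nabla_t\alpha\|_{L^p(\Sigma\times S^1)}^2 \leq c_0^2\,\|\alpha\|_{L^p(\Sigma\times S^1)}\,\|\nabla_t^2\alpha\|_{L^p(\Sigma\times S^1)},
\end{equation*}
after replacing $\partial_t$ by the covariant $\nabla_t$ (the commutator with $[\Psi,\cdot]$ contributes only a zero order term absorbable into the constant $c$). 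Multiplying through by $\varepsilon$, respectively $\varepsilon^2$, and then applying Young's inequality with parameter chosen equal to $\delta$ yields the two claimed estimates. The main technical point will be ensuring that the zero order covariant correction term from $\nabla_t = \partial_t + [\Psi,\cdot\,]$ does not introduce extra $\varepsilon$ powers; but since it is absorbed against $\delta^{-1}\|\alpha\|_{L^p}$ (respectively $\delta^{-1}\varepsilon\|\psi\|_{L^p}$), this is harmless.
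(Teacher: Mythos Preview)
Your approach is correct and yields the result, but it takes a different route from the paper. The paper argues by duality: writing $\tfrac1p+\tfrac1q=1$, it represents
\[
\|d_A\alpha\|_{L^p(\Sigma)}=\sup_{\bar\alpha\neq 0}\ \frac{\bigl\langle d_A\alpha,\ \delta^{-1}\bar\alpha+\delta\,d_Ad_A^*\bar\alpha\bigr\rangle}{\bigl\|\delta^{-1}\bar\alpha+\delta\,d_Ad_A^*\bar\alpha\bigr\|_{L^q(\Sigma)}},
\]
the supremum taken over $2$-forms $\bar\alpha$; this is legitimate because the positive operator $\delta^{-1}\mathrm{Id}+\delta\,d_Ad_A^*$ is surjective. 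Integration by parts turns the numerator into $\langle\delta^{-1}\alpha+\delta\,d_A^*d_A\alpha,\ d_A^*\bar\alpha\rangle$, and an elliptic lower bound $\|\delta^{-1}\bar\alpha+\delta\,d_Ad_A^*\bar\alpha\|_{L^q}\geq c^{-1}\|d_A^*\bar\alpha\|_{L^q}$ on the denominator finishes the estimate. The other two inequalities on $\Sigma$ are declared analogous, and the two $\nabla_t$-inequalities are only referred to an external source (lemma~D.4 of the cited paper of Salamon--Weber), so your one-dimensional Landau--Kolmogorov reduction for those is essentially the intended argument.

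Your interpolation-and-Young strategy is more direct in spirit, but as written it hides one point. From Calder\'on--Zygmund plus Gagliardo--Nirenberg you only obtain
\[
\|D\xi\|_{L^p}^2\ \leq\ C\bigl(\|\xi\|_{L^p}\|D^*D\xi\|_{L^p}+\|\xi\|_{L^p}^2\bigr),
\]
because the elliptic estimate on a compact manifold carries the lower-order term $\|\xi\|_{L^p}$. Since the lemma asks for a constant $c$ uniform over \emph{all} $\delta>0$, this is equivalent to the pure multiplicative form, and the extra $\|\xi\|_{L^p}^2$ must be removed. The way to do that is to project $\alpha$ onto $\mathrm{im}\,d_A^*$ (where $d_A^*d_A$ coincides with $\Delta_A$ and has a spectral gap, so $\|\xi\|_{L^p}\leq c\|d_A^*d_A\xi\|_{L^p}$ there) and then invoke the $L^p$-boundedness of the Hodge projector to control $\|\Pi_{\mathrm{im}\,d_A^*}\alpha\|_{L^p}$ by $\|\alpha\|_{L^p}$. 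Your phrase ``specializing to the subspaces where $\Delta_A$ acts as $D^*D$'' points at this, but these two facts are what carry the weight and should be stated explicitly.
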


\begin{proof}
The last two estimates follow analogously to the lemma D.4.  in \cite{MR2276534}. The first can be proved as follows. We choose $q$ such that $\frac 1p+\frac 1q=1$ then
\begin{equation*}
\begin{split}
\left\| d_A\alpha\right\|_{L^p(\Sigma)}=& \sup_{\bar \alpha} 
\frac {\langle d_A\alpha, \delta^{-1}\bar \alpha +\delta d_Ad_A^*\bar \alpha\rangle}{\left\| \delta^{-1}\bar\alpha+\delta d_Ad_A^*\bar\alpha\right\|_{L^q(\Sigma)}}\\
\leq & \sup_{\bar \alpha} 
\frac {c \langle\delta^{-1}\alpha+\delta d_A^*d_A\alpha, d_A^*\bar \alpha\rangle}{\delta^{-1}\left\|\bar\alpha\right\|_{L^q(\Sigma)}+\delta \left\|d_Ad_A^*\bar\alpha\right\|_{L^q(\Sigma)}+\left\| d_A^*\bar\alpha\right\|_{L^q}(\Sigma)}\\
\leq&\left(\delta^{-1}\left\|\alpha\right\|_{L^p(\Sigma)}+\delta \left\|d_A^*d_A\alpha\right\|_{L^p(\Sigma)}\right)
 \sup_{\bar \alpha}
 \frac {c \left\| d_A^*\bar \alpha\right\|_{L^q(\Sigma)}}{\left\| d_A^*\bar\alpha\right\|_{L^q(\Sigma)}}\\
 =&c\left(\delta^{-1}\left\|\alpha\right\|_{L^p(\Sigma)}+\delta \left\|d_A^*d_A\alpha\right\|_{L^p(\Sigma)}\right).
\end{split}
\end{equation*}
where the supremum is taken over all non-vanishing $1$-forms $\bar \alpha\in L^q$ with $d_Ad_A^*\bar \alpha\in L^q$. The norm $\left\| \delta^{-1}\bar\alpha+\delta d_Ad_A^*\bar\alpha\right\|_{L^q(\Sigma)}$ is never $0$ because
$$\left\| \delta^{-1}\bar\alpha+\delta d_Ad_A^*\bar\alpha\right\|_{L^2(\Sigma)}^2=\delta^{-2}\left\|\bar\alpha\right\|_{L^2(\Sigma)}^2+\delta^2 \left\|d_Ad_A^*\bar\alpha\right\|_{L^2(\Sigma)}^2+2\left\| d_A^*\bar\alpha\right\|_{L^2(\Sigma)}^2\neq 0,$$
otherwise we would have a contradiction by the H\"older inequality and the operator $\delta^{-1}+\delta d_Ad_A^*$ is surjective. The second and the third estimate of the lemma can be shown exactly in the same way.
\end{proof}

\end{appendix}

\bibliographystyle{amsplain}
\bibliography{bib}
\end{document}